\theoremstyle{plain}
\newtheorem{theo}{Theorem}[section]
\newtheorem{prop}[theo]{Proposition}
\newtheorem{lemma}[theo]{Lemma}
\newtheorem{cor}[theo]{Corollary}
\theoremstyle{definition}
\newtheorem{defi}[theo]{Definition}
\newtheorem{remark}[theo]{Remark}
\newcommand{\R}{\mathbb{R}}
\newcommand{\Z}{\mathbb{Z}}
\newcommand{\C}{\mathbb{C}}
\newcommand{\D}{\mathbb{D}}
\newcommand{\E}{\mathcal{E}}
\newcommand{\J}{\mathcal{J}}
\newcommand{\M}{\mathcal{M}}
\newcommand{\W}{\mathcal{W}}
\renewcommand{\P}{\mathcal{P}}
\renewcommand{\sp}{\text{Sp}}
\newcommand{\Ss}{\mathcal{S}}
\newcommand{\U}{\mathcal{U}}
\newcommand{\V}{\mathcal{V}}
\newcommand{\OO}{\mathcal{O}}
\newcommand{\DD}{\mathcal{D}}
\newcommand{\Y}{\mathcal{Y}}
\newcommand{\interior}[1]{\mathring{{#1}}}
\newcommand{\norma}[1]{\left|{#1}\right|}
\newcommand{\est}{e^{2\pi(s+it)}}
\newcommand{\util}{\tilde{u}}
\newcommand{\wtil}{\tilde{w}}
\newcommand{\vtil}{\tilde{v}}
\newcommand{\jtil}{\tilde J}
\newcommand{\wind}{\text{wind}}
\newcommand{\cl}[1]{\overline{{#1}}}
\newcommand{\maslov}{\text{Maslov}}
\newcommand{\sign}{\text{sign}}
\renewcommand{\sl}{\text{sl}}
\begin{document}

\title[On the existence of disk-like global sections]{On the existence of disk-like global sections for Reeb flows on the tight $3$-sphere}
\author{Umberto Hryniewicz}
\address[Umberto Hryniewicz]{Departamento de Matem\'atica Aplicada, Instituto de Matem\'atica - Universidade Federal do Rio de Janeiro}
\email[Umberto Hryniewicz]{umberto@labma.ufrj.br}
\author{Pedro A. S. Salom\~ao}
\address[Pedro A. S. Salom\~ao]{Departamento de Matem\'atica, Instituto de Matem\'atica e Estat\'istica - Universidade de S\~ao Paulo}
\email[Pedro A. S. Salom\~ao]{psalomao@ime.usp.br}
\date{June 20, 2011}
\subjclass[2000]{Primary 37J05, 37J55; Secondary 53D35}
\keywords{Hamiltonian dynamics, pseudo-holomorphic curves, contact geometry}

\begin{abstract}
We consider Reeb dynamics on the $3$-sphere associated to a tight contact form. Our main result gives necessary and sufficient conditions for a periodic Reeb orbit to bound a disk-like global section for the Reeb flow, when the contact form is assumed to be non-degenerate.
\end{abstract}

\maketitle

\tableofcontents

\section{Introduction}

In this work we study global dynamical properties of Reeb flows associated to tight contact forms on $S^3$. Recall that a $1$-form $\lambda$ on a $3$-manifold is a contact form if $\lambda \wedge d\lambda$ never vanishes. The Reeb vector field $R$ is uniquely determined by the equations
\begin{equation}\label{reeb_vector}
 \begin{array}{cc}
   i_R d\lambda = 0, & i_R \lambda = 1
 \end{array}
\end{equation}
and its flow $\{\phi_t\}$ is called the Reeb flow. The associated contact structure is the $2$-plane distribution
\begin{equation}\label{contact_str}
 \xi = \ker \lambda.
\end{equation}
An overtwisted disk is an embedded disk $D$ satisfying $T\partial D \subset \xi$, $T_pD \not= \xi_p \ \forall p\in\partial D$, and the contact structure is called tight if there are no overtwisted disks. We abuse the terminology and say that $\lambda$ is tight in this case. As an example, the form $\lambda_0 = \frac{1}{2} \sum_{j=1}^2 q_jdp_j - p_jdq_j$ on $\R^4$ with coordinates $(q_1,p_1,q_2,p_2)$ restricts to a contact form on $S^3$, and a result of Bennequin states that $\lambda_0|_{S^3}$ is tight. If $f:S^3 \to \R\setminus\{0\}$ is smooth then the same is obviously true for $f\lambda_0|_{S^3}$ and, by a deep theorem of Eliashberg, these are precisely the tight contact forms on $S^3$, up to diffeomorphism. The Reeb flow associated to $f\lambda_0|_{S^3}$ is equivalent to the Hamiltonian flow on a star-shaped energy level inside $\R^4$ equipped with its canonical symplectic structure $d\lambda_0$. We look for disks that are global sections in the following sense.

\begin{defi}
Let $\lambda$ be a contact form on $ S^3$.
A disk-like global surface of section for the Reeb dynamics associated to $\lambda$ is an embedded disk $D\hookrightarrow  S^3$ such that $\partial D$ is a closed Reeb orbit, $\interior D$ is transversal to the Reeb vector field and all Reeb trajectories in $S^3 \setminus \partial D$ hit $\interior D$ infinitely often forward and backward in time.
\end{defi}

It is our purpose to investigate the following \\

\noindent {\bf Question:} Given a tight contact form on $S^3$, which closed Reeb orbits are the boundary of some disk-like global surface of section for the associated Reeb flow? \\

If a Reeb orbit is the boundary of a disk which is a global section then it must be unknotted and linked to all other closed orbits. Also, it must satisfy certain contact-topological restrictions since it bounds a disk transversal to the Reeb vector field. This last piece of information is encoded in the self-linking number. Moreover, in the non-degenerate case, the flow must ``twist'' enough with respect to the disk, forcing certain restrictions on the linearized dynamics along the Reeb orbit, which are encoded in its Conley-Zehnder index. For the precise statement of these necessary conditions see Theorem~\ref{main1} below.

\subsection{Main result and sketch of the proof}

We need to consider systems of disk-like global sections as follows.

\begin{defi}
An open book decomposition\footnote{Recall that an open book decomposition of a $3$-manifold $M$ is a pair $(L,p)$ where $L \subset M$ is a link, and $p:M\setminus L \rightarrow S^1$ is a fibration such that each fiber $p^{-1}(\theta)$ is the interior of a compact embedded surface $S_\theta \hookrightarrow M$ satisfying $\partial S_\theta = L$. $L$ is called the binding and the fibers are called pages. If the pages are disks we say that $(L,p)$ has disk-like pages.} with disk-like pages of $S^3$ is said to be adapted to $\lambda$ if its binding consists of a closed Reeb orbit, and each page is a disk-like global surface of section for the Reeb dynamics.
\end{defi}

Our main result is

\begin{theo}\label{main1}
Let $\lambda$ be a non-degenerate tight contact form on $S^3$ and $\bar P=(\bar x,\bar T)$ be a simply covered closed Reeb orbit. Then $\bar P$ bounds a disk-like global section for the Reeb flow if, and only if, $\bar P$ is unknotted, its Conley-Zehnder index $\mu_{CZ}(\bar P)$ is greater than or equal to $3$, $\bar P$ has self-linking number $-1$ and all periodic orbits $P$ satisfying $\mu_{CZ}(P) = 2$ are linked to~$\bar P$. In this case, there exists an open book decomposition with disk-like pages of $S^3$ adapted to $\lambda$ with binding $\bar x(\R)$.
\end{theo}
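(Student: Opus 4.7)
The theorem is an ``if and only if'', so the proof splits accordingly. For necessity, assume $D$ is a disk-like global section with $\partial D = \bar x(\R)$, so that $\bar P$ is unknotted by definition. Using $D$ as a Seifert surface, I would compute $\sl(\bar P)$: the line field $TD \cap \xi$ along $\partial D$ is a nowhere vanishing section of $\xi|_{\partial D}$ since $\interior D$ is transverse to $R$, and a direct framing comparison against any trivialization of $\xi|_{\bar x}$ yields $\sl(\bar P)=-1$. For the Conley--Zehnder bound, I would trivialize $\xi|_{\bar x}$ using $D$ and observe that interior orbits near $\partial D$ must return to $D$ infinitely often; this, combined with non-degeneracy, forces the transverse rotation number of the linearized Reeb flow along $\bar P$ to be strictly greater than $1$, so $\mu_{CZ}(\bar P) \geq 3$. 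Finally, any closed orbit $P \neq \bar P$ crosses $\interior D$ transversely and is therefore linked with $\bar P$; in particular, every orbit with $\mu_{CZ}(P)=2$ is linked with $\bar P$.

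The sufficiency direction is the heart of the theorem and relies on pseudoholomorphic curve techniques in the symplectization $(\R \times S^3, d(e^a\lambda))$. The plan is to produce a stable finite-energy foliation of $\R \times S^3$ whose only binding orbit is $\bar P$ and whose leaves project to embedded disks in $S^3$ transverse to the Reeb flow; these projected leaves will then be the pages of the desired open book. I would fix a $d\lambda$-compatible complex structure $J$ on $\xi$ and the induced $\R$-invariant almost complex structure $\jtil$ on $\R \times S^3$, and study the moduli space $\M$ of embedded finite-energy $\jtil$-holomorphic planes $\tilde u : \C \to \R \times S^3$ asymptotic to $\bar P$ at $+\infty$. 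Hofer's asymptotic analysis, together with the hypotheses $\sl(\bar P)=-1$ and $\mu_{CZ}(\bar P) \geq 3$, ensures that such planes project to embedded disks in $S^3 \setminus \bar x(\R)$ with the correct asymptotic winding behaviour and places the moduli problem in a Fredholm setup admitting a $1$-parameter family of geometrically distinct planes modulo the $\R$-action.

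Existence of at least one plane in $\M$ is produced by a continuation argument: one deforms $\lambda$ through non-degenerate tight contact forms to an explicit model, such as a generic irrational ellipsoid, for which the foliation is known by hand, and then transports $\M$ back along the homotopy using SFT compactness. The main obstacle, and where I expect essentially all of the work to lie, is ruling out unwanted degenerations during the homotopy: a sequence of planes can in principle converge to a broken holomorphic building whose asymptotic limits include orbits distinct from $\bar P$. Positivity of intersections in dimension four forces every constituent piece of such a building to be asymptotic to orbits linked with $\bar P$, and the index/action constraints restrict the Conley--Zehnder indices of the possible limit orbits to low values; a case analysis then singles out $\mu_{CZ}=2$ orbits unlinked with $\bar P$ as the unique obstruction to compactness, precisely the configuration excluded by the hypothesis. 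Hence $\M/\R$ is compact. Automatic transversality for embedded planes of the relevant index, together with positivity of intersections between distinct leaves and between leaves and $\bar P$, then shows that the planes in $\M$ are pairwise disjoint, fill $\R \times (S^3 \setminus \bar x(\R))$, and project to an $S^1$-family of embedded disks foliating $S^3 \setminus \bar x(\R)$ transversely to the Reeb flow. These are the pages of the sought open book decomposition adapted to $\lambda$, and each page is by construction a disk-like global section.
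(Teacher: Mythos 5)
Your necessity sketch follows essentially the same line as the paper: unknottedness is immediate, the linking of all other orbits follows from transversality of $\interior D$ with $R$, and $\sl(\bar P)=-1$ is computed from the characteristic line field on $D$. However, your argument that ``the transverse rotation number... is strictly greater than $1$, so $\mu_{CZ}(\bar P)\geq 3$'' has a gap: a hyperbolic orbit with $\mu_{CZ}=2$ has rotation interval \emph{containing} $1$, not strictly above it, so the rotation‐number argument only rules out $\mu_{CZ}\leq 1$. To exclude $\mu_{CZ}=2$ the paper makes a separate argument: such a $P$ is hyperbolic with a $2$-dimensional stable manifold $W^s$, and an orbit on $W^s$ fails to wind around $\bar x(\R)$ as $t\to\infty$, contradicting the global-section property. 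You need some version of this hyperbolic case analysis; the rotation-number heuristic does not cover it.

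For sufficiency you propose a genuinely different route: rather than a continuation/deformation argument to a model ellipsoid controlled by SFT compactness, the paper uses disk-filling. It constructs a carefully normalized spanning disk $\DD_1$ (Proposition~\ref{convenientdisk}), runs a Bishop family of pseudoholomorphic disks with boundary on $\DD_1\setminus\{e\}$, and uses the bubbling-off tree analysis of Section~\ref{boff_analysis} (Proposition~\ref{compactness_theorem_1}) to show that, because of the linking hypothesis, the family cannot break before producing a fast finite-energy plane asymptotic to $\bar P$ (Proposition~\ref{existence_plane}); the open book is then obtained from Theorems~\ref{comp_fast} and~\ref{ob2}. Your continuation proposal has a structural difficulty that the disk-filling method avoids: the hypotheses of Theorem~\ref{main1}---in particular that \emph{every} $\mu_{CZ}=2$ orbit is linked to $\bar P$---are statements about one fixed contact form and there is no reason such a deformation through non-degenerate tight forms preserving them exists, nor is it clear which orbit in the model ellipsoid should correspond to $\bar P$ (for the irrational ellipsoid there are no $\mu_{CZ}=2$ orbits at all, so the linking hypothesis is vacuous at the far end and can be violated at intermediate times). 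SFT compactness would also need index and linking control \emph{along the whole homotopy}, which is precisely the information you do not have. The Bishop family argument is ``local'' in the space of contact forms: it only uses the stated hypotheses on the given $\lambda$, and the compactness needed is exactly supplied by the linking condition via the Conley--Zehnder index estimates of Lemmas~\ref{wind1}, \ref{wind2}, and \ref{estimate_cz_2}. As written, your sufficiency sketch identifies the right obstruction (unlinked $\mu_{CZ}=2$ orbits appearing in a broken building) but does not supply a working mechanism for producing the initial plane.
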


In the statement above a Reeb orbit $P$ is a pair $(x,T)$, where $x$ is a Reeb trajectory and $T>0$ is a period of $x$. It is called simply covered if $T$ is its prime period, and unknotted when $x(\R)$ is the unknot. We say that $P$ is linked to the unknotted orbit $\bar P = (\bar x,\bar T)$ if the homology class of $t\in \R/\Z\mapsto x(Tt)$ in $H_1(S^3\setminus \bar x(\R),\Z)$ is non-zero. The Conley-Zehnder index, the self-linking number and the non-degeneracy of $\lambda$ are discussed below.

The proof of sufficiency of the above conditions on $\bar P$ is based on the theory of pseudo-holomorphic curves in symplectizations, as introduced by Hofer in his ground-breaking work~\cite{93}. There are two main steps. First, we use disk-filling methods as in~\cite{filling,gromov,93,char1,char2} to obtain finite-energy planes asymptotic to $\bar P$ from a Bishop family of disks with boundary on a suitable disk $\mathcal D$ spanning $\bar P$. The necessary compactness for this analysis comes from the linking assumptions between $\bar P$ and the orbits with index $2$, we explain. If the Bishop family breaks before producing a plane we get a punctured pseudo-holomorphic disk with boundary on $\mathcal D$, all its punctures are negative and asymptotic to Reeb orbits with Conley-Zehnder index $\geq 2$. If one orbit has index $\geq 3$ then the Fredholm index at the punctured disk becomes negative and, consequently, all asymptotic orbits have index $2$ when the almost complex structure is generic. By standard intersection arguments the obtained orbits can not be linked to $\bar P$. This shows that the Bishop family does not brake before it approaches $\partial \mathcal D$. In a second step we prove that the Bishop family can be forced to produce planes with very ``fast'' exponential decay. This decay implies good embeddedness and regularity properties, see~\cite{hryn1}. Then, the linking assumptions on $\bar P$ again provide the necessary compactness for these planes to be pages of an open book decomposition adapted to $\lambda$.

\begin{figure}
  \includegraphics[width=250\unitlength]{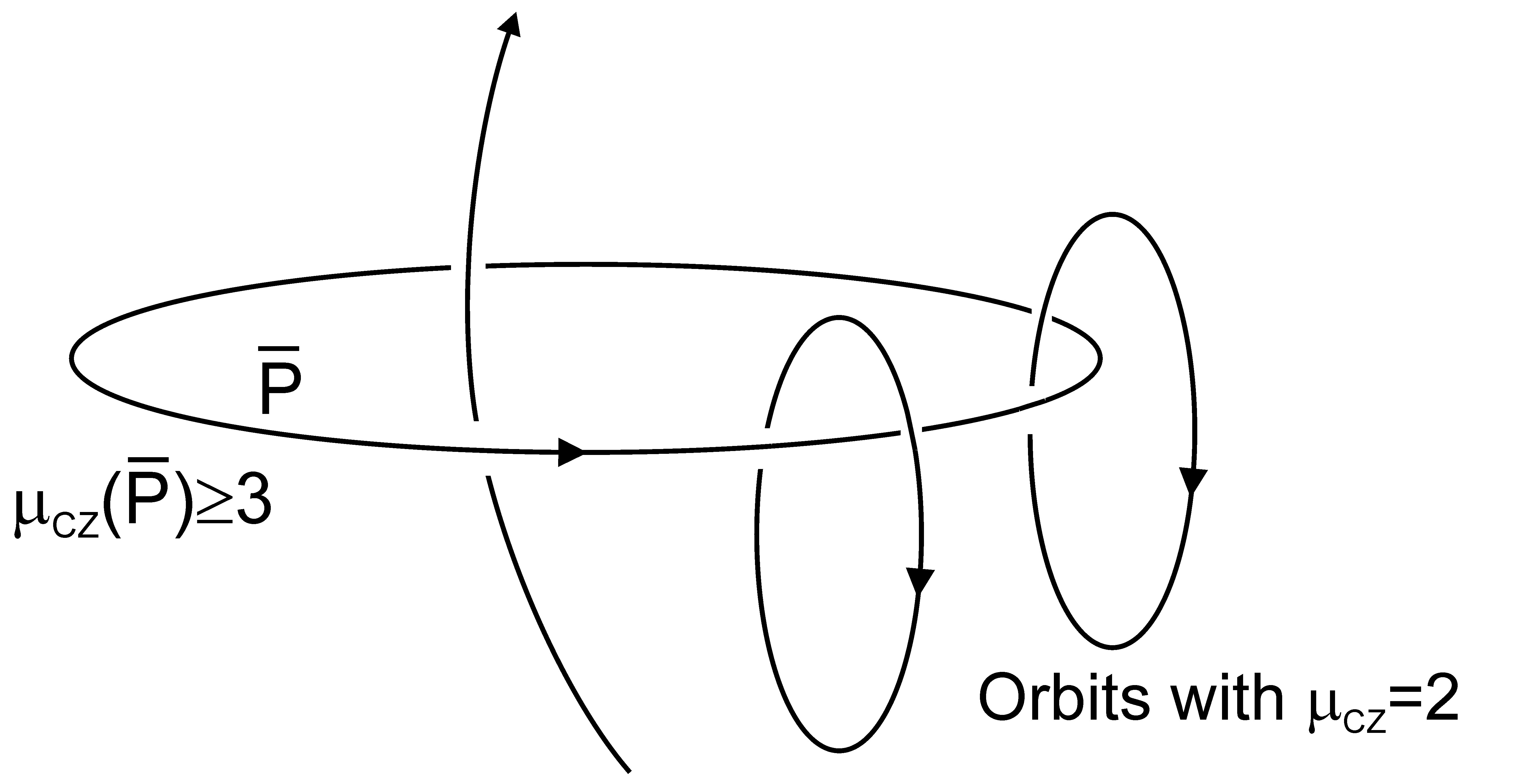}
  \caption{Unknotted orbit $\bar P$ as in Theorem~\ref{main1}.}
\end{figure}

\subsection{Discussion and historical remarks}\label{historical}

The existence question of global sections for Reeb flows was first studied using pseudo-holomorphic curve theory in~\cite{char1,char2,convex} by Hofer, Wysocki and Zehnder. We establish some notation and recall the notions of Conley-Zehnder index and self-linking number before discussing their results.

If $\lambda$ is a tight contact form on $S^3$ and $x$ is a Reeb trajectory of period $T>0$ then
\[
  \begin{array}{cc}
    x_T :  S^1 \simeq \R/\Z \to  S^3, & x_T(t) := x(Tt)
  \end{array}
\]
defines an element of $C^\infty(S^1,S^3)$. We can identify $P=(x,T)$ with the element of $C^\infty(S^1,S^3)/S^1$ induced by the loop $x_T$. Since $\phi_t$ preserves $\lambda$, we have $d\lambda$-symplectic maps $d\phi_t:\xi_{x(0)} \to \xi_{x(t)}$ and $P$ is called non-degenerate if $1$ is not an eigenvalue of $d\phi_{T}|_{\xi_{x(0)}}$. If $T_{min}>0$ is the minimal (or prime) period of $x$ then its multiplicity is $T/T_{min} \in \Z^+$, and $P$ is simply covered if the multiplicity is $1$. The set of periodic Reeb orbits will be denoted by $\P(\lambda)$, or simply by $\P$. If every $P\in\P(\lambda)$ is non-degenerate then $\lambda$ is called non-degenerate. This is a $C^\infty$-generic condition on $\lambda$.

Denoting by $\sp(n)$ the symplectic group in dimension $2n$, consider the set $\Sigma^*(1)$ of paths $\varphi \in C^\infty([0,1],\sp(1))$ such that $\varphi(0) = I$ and $\det (\varphi(1)-I) \not= 0$.
%
%
%
The following statement describes the Conley-Zehnder index axiomatically, see~\cite{fols}.

\begin{theo}\label{cz_index_axioms}
There is a unique surjective map $\mu : \Sigma^*(1) \to \Z$, called the Conley-Zehnder index, satisfying the following axioms:
\begin{enumerate}
 \item The map $s \mapsto \mu(\varphi^s)$ is constant if $\{\varphi^s\}$ is a homotopy of paths in $\Sigma^*(1)$.
 \item If $\psi:[0,1] \to \sp(1)$ is a smooth closed loop based at $I$ then $$ \mu(\psi\varphi) = 2\maslov(\psi) + \mu(\varphi) \ \forall \varphi \in \Sigma^*(1). $$
 \item If $\varphi \in \Sigma^*(1)$ then $\mu(\varphi^{-1}) = -\mu(\varphi)$.
 \item If $\varphi(t) = e^{i\pi t}$ then $\mu(\varphi) = 1$.
\end{enumerate}
\end{theo}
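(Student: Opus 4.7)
The plan is to prove existence and uniqueness of $\mu$ separately, using a rotation-number model of the Conley-Zehnder index.

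For uniqueness, I would first analyze the topology of $\Sigma^*(1)$. Since $\mathrm{Sp}(1)=\mathrm{SL}(2,\mathbb{R})$ and $\det(M-I)=2-\mathrm{tr}(M)$ for $M\in\mathrm{SL}(2,\mathbb{R})$, the set $\mathrm{Sp}(1)^*:=\{M:\det(M-I)\neq 0\}$ splits into two components according to the sign of $2-\mathrm{tr}(M)$. Combined with $\pi_1(\mathrm{Sp}(1))\cong\mathbb{Z}$ generated by the Maslov loop, this shows that the path components of $\Sigma^*(1)$ are indexed by $\mathbb{Z}$. Any $\mu$ satisfying the axioms is constant on each component by axiom (1); by axiom (2), concatenation with a full Maslov loop shifts $\mu$ by $2$. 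So $\mu$ is determined everywhere once it is known on one odd and one even representative. Axiom (4) pins down the odd case (namely $\mu(e^{i\pi t})=1$), while axiom (3) applied to a short path ending just past a positive-hyperbolic element (compared via axiom (2) to the odd representatives) pins down the even case. This yields uniqueness.

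For existence, I would construct $\mu$ explicitly. Given $\varphi\in\Sigma^*(1)$, the induced path of diffeomorphisms of $\mathbb{R}P^1\cong\mathbb{R}/\pi\mathbb{Z}$ lifts uniquely to a path of diffeomorphisms of $\mathbb{R}$ starting at the identity; let $\Delta(\varphi)\in\mathbb{R}$ be the translation number of the resulting time-$1$ diffeomorphism, normalized so that a full rotation $R_{2\pi}$ of $\mathbb{R}^2$ corresponds to $\Delta=1$. A case analysis by trace shows that $\Delta\in\mathbb{Z}$ iff $\varphi(1)$ is positive hyperbolic and $\Delta\notin\mathbb{Z}$ otherwise. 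Define
\begin{equation*}
\mu(\varphi):=\begin{cases} 2\Delta(\varphi) & \text{if } \Delta(\varphi)\in\mathbb{Z},\\ 2\lfloor\Delta(\varphi)\rfloor+1 & \text{otherwise.} \end{cases}
\end{equation*}
Axiom (2) is immediate because a loop of Maslov index $k$ shifts $\Delta$ by $k$; axiom (3) follows from $\Delta(\varphi^{-1})=-\Delta(\varphi)$ with the identity $\lfloor -x\rfloor=-\lceil x\rceil$ balancing the two pieces of the formula; axiom (4) is the computation $\Delta(e^{i\pi t})=\tfrac{1}{2}$, yielding $\mu=1$; surjectivity onto $\mathbb{Z}$ follows by combining axiom (4) with iterated Maslov loops through axiom (2).

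The main obstacle is verifying axiom (1), the homotopy invariance. Because the formula for $\mu$ is piecewise, one must check that no homotopy inside $\Sigma^*(1)$ can drag $\Delta$ across an integer, which would switch the defining piece. This reduces to the geometric observation that $\Delta(\varphi)\in\mathbb{Z}$ if and only if $\varphi(1)$ lies in the positive-hyperbolic region $\{\mathrm{tr}>2\}$, whose boundary $\{\mathrm{tr}=2\}$ coincides with the discriminant $\{\det(M-I)=0\}$ that is explicitly excluded from $\Sigma^*(1)$. Hence a homotopy through $\Sigma^*(1)$ stays in one regime, $\Delta$ varies continuously, and $\mu$ remains constant. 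Once this transversality picture is in place, the rest of the verification is a routine normalization check, and uniqueness follows from the component analysis in the first step.
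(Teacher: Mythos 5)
Your proposal is correct in its essential outline and, in fact, reconstructs a standard alternative model of the Conley–Zehnder index, but the route is genuinely different from the one the paper sketches. The paper (following the appendix of~\cite{fols}) introduces the \emph{winding interval} $I(\varphi)=\{\Delta(z):z\in\C\setminus\{0\}\}$, the set of angular displacements of \emph{all} rays under $\varphi$, and then relies on two nontrivial geometric facts: that $I(\varphi)$ has length strictly less than $1/2$, and that $\partial I(\varphi)\cap\Z=\emptyset$ (forced by $\det(\varphi(1)-I)\neq 0$). The case split ``$k\in I(\varphi)$'' vs.\ ``$I(\varphi)\subset(k,k+1)$'' then yields formula \eqref{cz_index_geometric}. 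You instead compress the whole interval to a single number $\Delta(\varphi)$, the translation number of the lifted projective action, and replace the length-$<\!1/2$ estimate with the dichotomy ``$\Delta\in\Z$ iff $\varphi(1)$ is positive hyperbolic,'' controlled by the fact that the forbidden locus $\{\det(M-I)=0\}$ is precisely the trace-$2$ wall. Your resulting formula is equivalent to the paper's, since $\mu=\lfloor\Delta\rfloor+\lceil\Delta\rceil$ coincides with the piecewise definition given by $I(\varphi)$ (the integer in $I(\varphi)$, when present, is exactly the translation number of the lift pinned by the positive eigendirection). What your route buys is a cleaner continuity argument for axiom (1): the translation number is a single continuous function and you only need to show it cannot cross an integer inside $\Sigma^*(1)$. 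What the paper's route buys is the quantitative control $|I(\varphi)|<1/2$, which is used later in the text (e.g.\ in Lemma~\ref{cz_index_iterates} and the arguments of~\cite{fols}) and is \emph{not} recovered by your construction.

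Two small points you should tighten. First, in the surjectivity remark at the end of the existence step, axiom (4) together with Maslov loops via axiom (2) produces only the \emph{odd} integers; to get the evens you need to exhibit a path with $\mu=0$ (e.g.\ $t\mapsto\mathrm{diag}(e^{t},e^{-t})$, whose $\Delta$ is $0$), which your uniqueness argument already identifies but which should be invoked explicitly. Second, when you argue that negative-hyperbolic and negative-parabolic endpoints give $\Delta\notin\Z$, the reason is that the winding of the (negative-eigenvalue) eigendirection is a half-integer but not an integer; stating this explicitly would complete the ``$\Delta\in\Z$ iff positive hyperbolic'' dichotomy, which is the crux of your homotopy-invariance argument. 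With those additions the proof is complete and self-contained.
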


The bilinear form $d\lambda$ turns $\xi=\ker\lambda$ into a trivial symplectic bundle. Fix $P =(x,T) \in \P$. Any global symplectic trivialization of $(\xi,d\lambda)$ represents $d\phi_{Tt}:\xi_{x(0)} \to \xi_{x(Tt)}$, $t\in[0,1]$, as a smooth map $\varphi:[0,1]\to\sp(1)$. If $P$ is non-degenerate then $\varphi \in \Sigma^*(1)$ and we can use the index $\mu$ from Theorem~\ref{cz_index_axioms} to define $$ \mu_{CZ}(P) := \mu(\varphi) \in \Z. $$ This is independent of the chosen global trivialization. The following important definition is originally found in~\cite{convex}.

\begin{defi}[Hofer, Wysocki and Zehnder]
The contact form $\lambda$ on $S^3$ is called dynamically convex if $\mu_{CZ}(P) \geq 3$ for every closed Reeb orbit $P \in \P(\lambda)$.
\end{defi}

The self-linking number $\sl(L)$ of any knot $L \subset S^3$ transverse to $\xi$ is defined as follows. Choose a Seifert surface\footnote{A Seifert surface for $L$ is an orientable embedded connected compact surface $\Sigma$ such that $L = \partial \Sigma$.} $\Sigma$ for $L$ and a smooth non-vanishing section $Z$ of $\xi \to S^3$. $Z$ can be used to slightly perturb $L$ away from itself to another transverse knot $L_\epsilon = \{ \exp_x (\epsilon Z_x) : x\in L \}$. A choice of orientation for $\Sigma$ induces orientations of $L$ and $L_\epsilon$. Then
\begin{equation}\label{defselflink0}
 \sl(L) := L_\epsilon \cdot \Sigma \in \Z,
\end{equation}
where $S^3$ is oriented by $\lambda\wedge d\lambda$ and $L_\epsilon \cdot \Sigma$ is the oriented intersection number of $L$ and $\Sigma$. It is independent of all choices.

Disk-filling methods were first used in~\cite{char1,char2} to obtain disk-like global sections of dynamically convex Reeb flows. For instance, the following theorem is a consequence of results from~\cite{char2} and~\cite{convex}.

\begin{theo}
If the non-degenerate dynamically convex tight contact form $\lambda$ on $S^3$ admits an unknotted prime closed Reeb orbit $P_0=(x_0,T_0)$ satisfying $\mu_{CZ}(P_0)=3$ and $\sl(P_0)=-1$, then $P_0$ is the binding of an open book decomposition with disk-like pages adapted to $\lambda$.
\end{theo}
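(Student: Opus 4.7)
The plan is to imitate the disk-filling strategy sketched earlier in this introduction, which simplifies considerably under dynamical convexity. Equip $\xi$ with a $d\lambda$-compatible complex structure $J$ and let $\tilde J$ denote its standard cylindrical extension to the symplectization $(\R\times S^3, d(e^a\lambda))$, characterized by $\tilde J\partial_a = R$. The condition $\sl(P_0)=-1$ permits one to construct an embedded disk $\mathcal D \hookrightarrow S^3$ with $\partial\mathcal D = x_0(\R)$ whose characteristic foliation has a unique nondegenerate elliptic singularity and no interior tangencies, so that $\R \times \mathcal D \subset \R \times S^3$ is a totally real surface. Near the elliptic singularity a classical local construction produces a one-parameter family $\{\tilde u_\tau\}_{\tau\in(0,\tau_0)}$ of small embedded $\tilde J$-holomorphic disks with boundary on $\R \times \mathcal D$, shrinking to $\{0\}\times \partial\mathcal D$ as $\tau\to 0$. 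The hypothesis $\mu_{CZ}(P_0)=3$ delivers Fredholm regularity of the linearization along these disks, so the family extends smoothly along a maximal parameter interval $[0,\tau_*)$.

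Next I would analyze the limit as $\tau \to \tau_*$ via Hofer's SFT-type compactness. Sphere bubbles are excluded by the exactness of $d\lambda$ on the symplectization. If no breaking at all occurred, the family would extend past $\tau_*$, contradicting maximality; hence the limit must contain either a finite-energy plane or a finite-energy punctured disk with boundary on $\R\times \mathcal D$. A bubbled-off plane asymptotic to some $P\in\P(\lambda)$ would force $\mu_{CZ}(P)\le 1$ by the usual energy-plus-index count, which is incompatible with dynamical convexity. A bubbled-off punctured disk with boundary on $\R\times\mathcal D$ carries only negative punctures, each asymptotic to an orbit with $\mu_{CZ}\ge 2$; the Fredholm-index book-keeping (using $\mu_{CZ}(P_0)=3$ and $\sl(P_0)=-1$) then forces every negative asymptote to have index exactly $2$ for generic $\tilde J$, which again contradicts dynamical convexity. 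Hence $\{\tilde u_\tau\}$ does not break, and the only possibility is that it terminates by producing a finite-energy $\tilde J$-holomorphic plane $\tilde u_{\tau_*} : \C \to \R \times S^3$ asymptotic to $P_0$ at $\infty$.

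To conclude, I would promote this single plane to a smooth one-parameter family of embedded, pairwise disjoint, finite-energy planes asymptotic to $P_0$ whose $S^3$-projections sweep out $S^3\setminus x_0(\R)$. Automatic transversality in the symplectization, available because $\mu_{CZ}(P_0)=3$ as in \cite{hryn1}, gives fast exponential decay at the puncture and embeddedness with the projected planes transverse to $R$. Positivity of intersections for pseudo-holomorphic curves in $\R\times S^3$ forces pairwise disjointness of the members of this family, and repeating the compactness argument along the continuation parameter shows that no further breaking occurs and that the projected planes cannot re-approach $x_0(\R)$ without exhausting $S^3\setminus x_0(\R)$. The union of these projected planes, together with the binding $x_0(\R)$, then assembles into the desired open book decomposition with disk-like pages adapted to $\lambda$.

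The main technical obstacle is the exclusion of bubbled-off punctured disks along the Bishop family: in the present corollary, dynamical convexity eliminates them at once, whereas in Theorem~\ref{main1} the same step must be performed using only the linking hypothesis on the index-$2$ orbits, which is the principal new analytic content of this paper.
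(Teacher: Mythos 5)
Your overall strategy --- a Bishop family of pseudo-holomorphic disks with boundary on a spanning disk $\mathcal{D}$ with a single elliptic singularity, Fredholm-index book-keeping under dynamical convexity to control bubbling, then promoting the resulting fast plane to an open book via automatic transversality as in~\cite{hryn1} --- is the right one, and it is the approach the paper uses for the more general Theorem~\ref{main1}. But the middle step of your argument contains a genuine error.

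You claim that a bubbled-off finite-energy plane asymptotic to $P\in\P(\lambda)$ would force $\mu_{CZ}(P)\le 1$, and dismiss this as incompatible with dynamical convexity. This is backwards. For any finite-energy plane $\util$ one has $\wind_\pi(\util)\ge 0$ and hence, by Theorem~\ref{winds_thm}, $\wind_\infty(\util)\ge 1$; combined with Corollary~\ref{winds_spec} and the formula $\mu_{CZ}(P)=2\wind(\nu^{neg},\alpha_P)+p$ this forces $\mu_{CZ}(P)\ge 2$, not $\le 1$. So dynamical convexity does \emph{not} forbid planes from bubbling off; quite the opposite, a bubbled-off plane asymptotic to $P_0$ is exactly the object you want to produce. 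Because your purported contradiction rests on a false premise, nothing is ruled out at this step. Moreover the conclusion ``$\{\tilde u_\tau\}$ does not break'' contradicts your own (correct) observation that the family cannot extend past $\tau_*$: the family \emph{does} break at the maximal parameter, and the real issue is to identify \emph{how} it breaks.

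What is missing is precisely that analysis. One must first show that the limiting punctured disk $\util_0$ satisfies $\pi\cdot du_0\equiv 0$, and hence that it is a degree-one cover of the trivial cylinder over $P_0$ (so $\tau^*=\bar\tau$). This is where your Fredholm argument for disks, which is correct, actually applies: if $\pi\cdot du_0\not\equiv 0$ with negative punctures $\Gamma_0\ne\emptyset$ all asymptotic to orbits of index $\ge 3$, then the index $\#\Gamma_0-\sum_{z}\mu_{CZ}(P_z)+1\le 1-2\#\Gamma_0<0$, impossible for generic $J$. One then soft-rescales near the unique bubbling point to obtain a germinating sequence, and shows the rescaled limit has no further negative punctures --- dynamical convexity enters a second time here, since by Lemma~\ref{wind2}(i) any such puncture would carry an orbit of index exactly $2$. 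Only after verifying the resulting plane is fast ($\wind_\pi=0$) and embedded does your concluding step, feeding it into Theorem~\ref{ob2}, go through.
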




In particular, $P_0$ bounds a disk-like global section. It is also shown in~\cite{char2} that the tight $3$-sphere can be dynamically characterized as a contact $3$-manifold admitting a pair $(\lambda,P_0)$ with these properties\footnote{A contact form $\lambda$ on a $3$-manifold $M$ is called dynamically convex if $c_1(\ker\lambda)$ vanishes on $\pi_2(M)$ and all contractible closed Reeb orbits have Conley-Zehnder index at least $3$. See~\cite{char2} for details.}. However, the assumption on the Conley-Zehnder index of $P_0$ is too restrictive and can be dropped, see Theorem~\ref{main_hryn1} below.

Reeb flows on the tight $3$-sphere are equivalent to Hamiltonian flows on star-shaped energy levels in $\R^4$, and one obtains dynamically convex contact forms when this energy level is strictly convex, see~\cite{convex}. The following remarkable existence theorem is the main result of~\cite{convex}.

\begin{theo}[Hofer, Wysocki and Zehnder]\label{main_convex}
Let $\lambda$ be any dynamically convex tight contact form on $S^3$. Then there exists a disk-like global surface of section $D \hookrightarrow S^3$ for the Reeb flow such that the closed Reeb orbit $P = \partial D$ satisfies $\mu_{CZ}(P) = 3$.
\end{theo}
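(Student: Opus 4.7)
The plan is to produce the required disk as the projection to $S^3$ of a suitable finite-energy pseudo-holomorphic plane in the symplectization $(\R\times S^3, d(e^\tau\lambda))$ equipped with an $\R$-invariant compatible almost complex structure $\tilde J$. A $(\tilde J,i)$-holomorphic map $\tilde u=(a,u):\C\to\R\times S^3$ of finite Hofer energy is asymptotic at $\infty$ to a closed Reeb orbit $P$, with Fredholm index essentially $\mu_{CZ}(P)-1$; dynamical convexity keeps this index positive, leaving room for a moduli space of positive dimension. To produce such a plane I would argue by homotopy from a model: perturb $\lambda_0$ to a non-degenerate dynamically convex ellipsoidal contact form $\lambda^*$ whose two simply covered periodic orbits have Conley-Zehnder index exactly $3$, and fix an explicit embedded finite-energy plane $\tilde u^*$ asymptotic to one of them, coming from a page of the standard open book of $S^3$. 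Connect $\lambda^*$ to $\lambda$ by a smooth path $\{\lambda_s\}_{s\in[0,1]}$ of non-degenerate dynamically convex tight contact forms with a generic path $\{\tilde J_s\}$ of admissible almost complex structures, and consider the parametrized moduli space
\[
\M = \{(s,[\tilde u]) : \tilde u \text{ is a } \tilde J_s\text{-holomorphic embedded finite-energy plane asymptotic to a simply covered index-}3\text{ orbit}\},
\]
which, modulo reparametrization, should be a smooth $1$-manifold with $\tilde u^*$ sitting in the boundary at $s=0$.

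The main obstacle is compactness: one must show that the connected component of $\M$ containing $\tilde u^*$ extends all the way to $s=1$. By SFT-type compactness, any limit of a sequence of such planes is a pseudo-holomorphic building whose levels consist of finite-energy curves asymptotic to Reeb orbits of $\lambda_{s_\infty}$. Sphere bubbling is absent since $d(e^\tau\lambda)$ is exact, and dynamical convexity combined with the index formula for planes and additivity of Fredholm indices across levels rules out all degenerations except convergence to a single plane asymptotic to an index-$3$ orbit. In particular, breaking off at an index-$2$ orbit cannot occur, because no such orbits exist under dynamical convexity, and breaking off at a higher-index orbit is excluded by the index bookkeeping. This step is where the hypothesis $\mu_{CZ}\geq 3$ is essential; without it, a sequence of planes in $\M$ could degenerate to a genuine building and never reach $s=1$.

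Once such a plane $\tilde u$ is produced at $s=1$, the asymptotic analysis of Hofer, Wysocki and Zehnder shows that finite-energy planes with index-$3$ asymptotic limit are automatically embedded, and that their projections $u:\C\to S^3$ are transverse to $R$ everywhere on $\C$, since the relevant asymptotic winding numbers are forced to be strictly positive by $\mu_{CZ}=3$. Intersection-positivity arguments in dimension~$4$, together with the $\R$-action on the target and the two-dimensional family of holomorphic reparametrizations of $\C$ fixing $\infty$, show that the planes in this component foliate $\R\times S^3$ away from the cylinder $\R\times \bar x(\R)$ over $P=(\bar x,\bar T)$. Their projection to $S^3$ then gives an open book decomposition with binding $P$ and disk-like pages, any one of which is the desired disk-like global surface of section with $\mu_{CZ}(\partial D)=3$.
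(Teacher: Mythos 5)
The paper does not reprove this statement; it is quoted as the main result of the cited reference \cite{convex}, whose argument (like the proof of Theorem~\ref{main1} given in this paper) is a disk-filling/Bishop-family argument: one starts from a family of holomorphic disks with boundary on a spanning disk emanating from a nicely elliptic singular point of its characteristic foliation, and uses dynamical convexity to control bubbling as the family is pushed toward the boundary orbit. Your proposal is a genuinely different route, a parametrized continuation argument over a path $\{\lambda_s\}$ of contact forms starting at an explicit irrational ellipsoid. That kind of cobordism argument is attractive and is used elsewhere in the subject, but as written it has at least three gaps that the disk-filling approach avoids.

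First, you assert without justification that $\lambda^*$ can be joined to $\lambda$ through non-degenerate dynamically convex tight contact forms. Connectedness of the space of dynamically convex contact forms on $S^3$ is not a known (or obvious) fact: the condition $\mu_{CZ}\ge 3$ is an open condition but is not preserved under the naive linear interpolation of defining functions, and unlike genuine convexity of an energy hypersurface it does not behave well under convex combination. Your whole compactness argument hinges on every $\lambda_s$ along the path being dynamically convex, so this is a real hole, not a technicality. (One could try to replace the homotopy by an exact symplectic cobordism between the two symplectizations; then the winding estimate $\wind_\pi\geq 0$ and the index bookkeeping still apply level by level and no intermediate contact forms appear. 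But that is a different argument from the one you wrote, and it would still need care.) Second, the theorem as stated allows $\lambda$ to be degenerate, whereas your argument uses generic transversality and a non-degenerate path; a substantial part of \cite{convex} is precisely the asymptotic and index theory needed to handle degenerate asymptotic limits, and you would need a limiting argument $\lambda_k\to\lambda$ together with a compactness statement for the resulting planes. Third, the jump in your last paragraph from ``a fast embedded plane asymptotic to an index-$3$ orbit exists'' to ``the planes foliate $\R\times S^3$ away from the orbit cylinder and project to an open book whose pages are global sections'' is itself a theorem (it is essentially Theorem~\ref{ob2}/\cite{hryn1} in this paper, and the corresponding statement in \cite{convex}), requiring Fredholm theory with exponential weights, automatic transversality, positivity of intersections, and a separate compactness argument for the family of planes; it cannot be dismissed in one sentence. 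Your index/winding heuristics for ruling out breaking are essentially correct in spirit and match the $\wind_\pi\geq 0$ mechanism used in the paper's Lemmas~\ref{wind1} and~\ref{wind2}, so the skeleton of a proof is visible, but the three issues above are not cosmetic.
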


Motivated by the above statements, one might ask what are the necessary and sufficient conditions for a closed Reeb orbit of a dynamically convex contact form on $S^3$ to bound a disk-like global section. In the non-degenerate case this question is answered in~\cite{hryn1}, where the corresponding generalized characterization of the tight $3$-sphere\footnote{A closed connected contact $3$-manifold is the tight $3$-sphere if, and only if, it admits a non-degenerate dynamically convex tight contact form with an unknotted closed Reeb orbit having self-linking number $-1$.} is also proved.

\begin{theo}\label{main_hryn1}
Let $\lambda$ be a non-degenerate dynamically convex tight contact form on $S^3$. A closed Reeb orbit $P$ bounds a disk-like global surface of section if and only if $P$ is unknotted and $\sl(P) = -1$. When $P$ satisfies these conditions it is the binding of an open book decomposition with disk-like pages adapted to $\lambda$.
\end{theo}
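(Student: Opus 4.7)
The plan is to prove necessity by a direct topological computation and sufficiency by a pseudo-holomorphic disk-filling argument in the spirit of~\cite{char1,char2,convex}, with dynamical convexity providing the compactness needed to bypass the restrictive extra hypothesis $\mu_{CZ}(P)=3$ present in those earlier works.

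Necessity is essentially a local computation: if $P=(x,T)$ bounds a disk-like global surface of section $D$, then $P$ is unknotted because $D$ is an embedded disk. Using $D$ itself as a Seifert surface and any non-vanishing section $Z$ of $\xi$ to push $\partial D$ off, Darboux coordinates around $P$ in which $\interior D$ appears transverse to $R$ show directly that $P_\epsilon \cdot D = -1$, hence $\sl(P)=-1$.

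For sufficiency, I would fix a generic $d\lambda$-compatible $\R$-invariant almost complex structure $\tilde J$ on $\R\times S^3$ and a smooth embedded spanning disk $\mathcal{D}$ of $P$ whose characteristic foliation has a single elliptic singularity in the interior. Standard disk-filling methods yield a maximal $1$-parameter Bishop family of embedded $\tilde J$-holomorphic disks with boundary on $\{0\}\times \mathcal{D}$, starting at the elliptic singularity. The core claim is that this family reaches $\partial \mathcal{D}=P$ and in the limit produces a finite-energy $\tilde J$-holomorphic plane $\tilde u$ asymptotic to $P$. If instead the family were to break before doing so, SFT compactness would produce a non-trivial bottom-level punctured $\tilde J$-holomorphic disk with negative punctures asymptotic to Reeb orbits; by a Fredholm index count these asymptotic orbits would have $\mu_{CZ}=2$ for generic $\tilde J$, which dynamical convexity excludes. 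This rules out breaking.

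Once $\tilde u$ is produced, $\mu_{CZ}(P)\geq 3$ together with the spectral gap of the asymptotic operator forces $\tilde u$ to have fast exponential decay at $\infty$, which as in~\cite{hryn1} provides embeddedness, automatic transversality, and positivity of intersections with nearby planes. The moduli space of such fast finite-energy planes asymptotic to $P$, modulo $\R$-translation and domain reparameterization, is $2$-dimensional; SFT compactness combined with dynamical convexity again shows it is also compact. Positivity of intersections in dimension $4$ implies distinct planes project to disjoint embedded disks in $S^3$, so this moduli space foliates $S^3\setminus P$ by disks transverse to $R$, giving the desired open book with binding $P$. The main obstacle is the compactness argument ruling out breakings, both for the Bishop family and for the family of finite-energy planes; unlike in~\cite{char2}, neither step is controlled solely by the index of $P$, and the key insight is that \emph{global} dynamical convexity of $\lambda$ provides enough control to complete both steps.
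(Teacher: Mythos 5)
Theorem~\ref{main_hryn1} is cited by the paper from~\cite{hryn1} and is also an immediate corollary of the main Theorem~\ref{main1}: dynamical convexity makes $\mu_{CZ}(P)\geq 3$ automatic and renders the linking requirement vacuous, since there are no index-$2$ orbits. Your proposal follows essentially the same route used to prove Theorem~\ref{main1}, specialized to this case, so the strategy is sound; a few steps are however blurred. The spanning disk with one elliptic singularity exists because $\sl(P)=-1$ (Theorems~\ref{disk_0}--\ref{disk_1}), and the additional genericity built into Proposition~\ref{convenientdisk} (no interior periodic orbits on the disk) is actually needed to identify the asymptotic limit of the eventually-produced plane with $P$. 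The $C^\infty_{loc}$-limit of the maximal Bishop family is at first only a punctured disk $\util_0$; ruling out genuine breaking here means showing $\pi\cdot du_0\equiv 0$, so that $\util_0$ is a reparametrization of the trivial half-cylinder over $P$, and the fast finite-energy plane asymptotic to $P$ appears only after a second soft-rescaling at the bubbling point, as in Subsection~\ref{existence_special_plane_section} --- your sketch passes directly from the disk family to the plane. Finally, your dimension count is off by one: the unparametrized moduli space of nearby embedded fast planes is a $2$-ball (Theorem~\ref{lemmafredholm}), and after quotienting by the target $\R$-translation it is $1$-dimensional, which is what gives the $S^1$ of pages in Theorem~\ref{ob2}. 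None of these points affects the overall soundness of the approach.
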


The proofs of Theorems~\ref{main_convex} and~\ref{main_hryn1}, as well as of the results from~\cite{char2}, strongly rely on dynamical convexity and fail in the general case addressed here. We overcome this difficulty in two steps. First we note that the linking hypothesis of Theorem~\ref{main1} provides the necessary compactness to apply disk-filling methods. Secondly, we show that the Bishop family can be forced to produce fast finite-energy planes. 

It is conjectured in~\cite{fols} that the following dichotomy holds for tight Reeb flows on $S^3$: there are either $2$ or $\infty$-many closed orbits. In~\cite{convex} Hofer, Wysocki and Zehnder explain how the existence of an open book decomposition with disk-like pages that are global sections can be used to confirm the above dichotomy. In fact, the first return map to a page preserves a finite area form. Brouwer's translation theorem gives a second periodic orbit which is simply linked to the binding. Results of J. Franks~\cite{franks} imply the existence of $\infty$-many distinct orbits in the presence of a third orbit. Let us refer to this argument as the ``method of disk-like global sections''. Hofer, Wysocki and Zehnder also show in~\cite{fols} that if the contact form is generic enough (being non-degenerate is not enough here) then the conjecture holds. Theorem~\ref{main1} tells us, in the non-degenerate case, exactly when the above conjecture can be verified by the method of disk-like global sections. \\

\noindent {\bf Organization of the article.} Section~\ref{basic_defns} consists of preliminaries. In~\ref{periodic_orbits} and~\ref{cz_index} we give a more detailed definition of the Conley-Zehnder index. Asymptotic operators, as considered in~\cite{props1}, are defined in~\ref{asymp_operators}. In~\ref{holcurve_theory} we recall the basics of pseudo-holomorphic curve theory in symplectizations. In section~\ref{boff_analysis} we revisit the compactness arguments from~\cite{fols} and prove Proposition~\ref{compactness_theorem_1} which is an important tool for our bubbling-off analysis. The goal of section~\ref{comp_fast_planes} is to prove the compactness result for families of fast planes, Theorem~\ref{comp_fast}. The linking assumptions made in Theorem~\ref{main1} are crucial both in sections~\ref{boff_analysis} and~\ref{comp_fast_planes}. Section~\ref{non_deg_case} is devoted to the proof of sufficiency in Theorem~\ref{main1}. In~\ref{special_spanning_disk} we use results from~\cite{hryn1} to obtain special boundary conditions for the Bishop family considered in~\ref{the_bishop_family}. The existence of a page of the desired open book is proved in subsection~\ref{existence_special_plane_section}, where the results from section~\ref{boff_analysis} are used. In~\ref{ob_decomp} we explain how Theorem~\ref{comp_fast} and results from subsection~\ref{existence_special_plane_section} combine together to conclude the argument. Necessity is proved in section~\ref{section_necessity}. \\

\noindent {\bf Acknowledgements.} This work has its origin when both authors were at the Courant Institute, and we thank Professor Helmut Hofer for creating an active and fruitful research environment. We thank the referee for suggestions that significantly improved the presentation. We also thank Al Momin for many discussions about our results and possible applications. We were partially supported by FAPESP (2006/03829-2, Projeto Tem\'atico). The second author was partially supported by CNPq (304759/2007-4, 303651/2010-5).

\section{Preliminaries}\label{basic_defns}

From now on $\lambda$ denotes a tight contact form on $S^3$, unless otherwise stated.

\subsection{Conley-Zehnder indices of periodic Reeb orbits}\label{periodic_orbits}

Fix $P =(x,T) \in \P(\lambda)$. As explained in the introduction, the vector bundle $\xi_P:= x_T^*\xi \to S^1$ becomes symplectic with the bilinear form $d\lambda$, and we denote
\[
 \Ss_P = \{\text{homotopy classes of smooth symplectic trivializations of }\xi_P\}.
\]
If we fix a class $\alpha \in \Ss_P$ and a smooth symplectic trivialization $\Psi : \xi_P \to  S^1 \times \R^2$ representing it then there is a bijection $\Ss_P \leftrightarrow \Z$ defined as follows. Let $\beta \in \Ss_P$ be represented by $\Phi$. The homotopy class of the smooth closed loop $\Phi_t\circ\Psi_t^{-1}$ of symplectic matrices depends only on $\alpha$ and $\beta$. Here $\Phi_t$ and $\Psi_t$ denote the restrictions of $\Phi$ and $\Psi$ to $\xi_{x(Tt)}$. The function $\beta \in \Ss_P \mapsto \maslov(\Phi_t\circ\Psi_t^{-1}) \in \Z$ is the required bijection, where $\maslov$ is the usual Maslov index for closed symplectic loops, see~\cite{intro}. The formula
\begin{equation}\label{local_linear_Reeb_flow}
  \varphi(t) = \Psi_t \cdot d\phi_{Tt}|_{x(0)} \cdot \Psi_0^{-1}
\end{equation}
defines a map $\varphi \in C^\infty([0,1],\sp(1))$ satisfying $\varphi(0) = I$, and $P$ is non-degenerate if, and only if, $\varphi \in \Sigma^*(1)$. We define $\mu_{CZ}(P,\alpha) := \mu(\varphi) \in \Z$ where $\mu$ is the index described in Theorem~\ref{cz_index_axioms}. It follows that $\mu_{CZ}(P,\beta) = \mu_{CZ}(P,\alpha) + 2\maslov(\Phi_t \circ \Psi_t^{-1})$.

The symplectic bundle $(\xi,d\lambda)$ over $ S^3$ is trivial, so we can find a global symplectic frame. It trivializes $\xi_P$ and singles out a class
\begin{equation}\label{alpha_P}
 \alpha_P \in \Ss_P
\end{equation}
for every $P\in\P$. We define the Conley-Zehnder index of a closed orbit $P$ by
\begin{equation}
 \mu_{CZ}(P) := \mu_{CZ}(P,\alpha_P).
\end{equation}

\begin{remark}\label{marked_point}
In the following we choose a point in the geometric image $x(\R)$ of each periodic Reeb orbit $P = (x,T)$. It is implicit that $x$ maps $0 \in\R$ to this chosen point.
\end{remark}

A number $T>0$ will be called a period if it is the period of some $P \in \P$. We define
\begin{equation}\label{sigma_1}
 \sigma_1 = \inf \{ T>0 : T \text{ is a period} \}.
\end{equation}
If the contact form $\lambda$ on $ S^3$ is non-degenerate then, for a fixed $C>0$, we can define
\begin{equation}\label{sigma}
 \begin{aligned}
  & \sigma_2(C) = \inf \{ |T^\prime - T| : T^\prime,T \leq C \text { are distinct periods} \} \\
  & 0< \sigma(C) < \min \{ \sigma_1, \sigma_2(C) \}.
 \end{aligned}
\end{equation}
These numbers will be of crucial importance for the analysis that follows.

\subsection{A geometric description of the Conley-Zehnder index}\label{cz_index}

Later on we will need a more geometric description of the index. Fix $\varphi \in \Sigma^*(1)$ and define
\begin{equation}
  \begin{array}{ccc}
    \Delta : \C \setminus \{0\} \to \R & \text{by} & \Delta(z) = \frac{\theta(1)-\theta(0)}{2\pi}
  \end{array}
\end{equation}
where $\theta:[0,1]\to\R$ is a continuous function satisfying $\varphi(t)z \in \R^+ e^{i\theta(t)}$. It is shown in~\cite{fols} that $I(\varphi) = \{\Delta(z) : z \in \C \setminus\{0\}\}$ is an interval of length strictly smaller than $1/2$. Since $1$ is not an eigenvalue of $\varphi(1)$ we have that $\partial I(\varphi) \cap \Z = \emptyset$. Thus either this interval contains precisely one integer in its interior or it lies strictly between two consecutive integers. Define
\begin{equation}\label{cz_index_geometric}
  \mu(\varphi) = \left\{ \begin{aligned} & 2k+1 \text{ if } I(\varphi) \subset (k,k+1), \\ & 2k \text{ if } k\in I(\varphi). \end{aligned} \right.
\end{equation}
One can verify that $\mu(\varphi)$ satisfies the axioms of Theorem~\ref{cz_index_axioms} and, consequently, agrees with the Conley-Zehnder index. See the appendix of~\cite{fols}.

The proof of the following lemma is a straightforward consequence of the above description together with an analysis of the spectrum of the linearized Poincar\'e map, and will be omitted. See~\cite{fols} for more details.

\begin{lemma}\label{cz_index_iterates}
Suppose $\lambda$ is non-degenerate and let $P = (x,T)$ be a closed Reeb orbit. For $j\in\Z^+$ consider the orbit $P^j = (x,jT)$, and fix positive integers $l\leq k$. The following assertions hold.
\begin{enumerate}
  \item[(i)] If $\mu_{CZ}(P^k) = 1$ then $\mu_{CZ}(P^l) = 1$.
  \item[(ii)] If $\mu_{CZ}(P^k) \leq 0$ then $\mu_{CZ}(P^l) \leq 0$.
  \item[(iii)] If $\mu_{CZ}(P^k) = 2$ then $k,l$ and $\mu_{CZ}(P^l)$ belong to $\{1,2\}$ and $P$ is hyperbolic. If $l=1$ and $k=2$ then $\mu_{CZ}(P)=1$. 
\end{enumerate}
\end{lemma}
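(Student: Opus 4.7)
The plan is to reduce each assertion to an analysis of the rotation interval $I(\varphi)$ introduced in~\ref{cz_index}, exploiting the fact that if $\varphi \in C^\infty([0,1],\sp(1))$ represents the linearized transverse flow along $P$, then the path representing $P^j$ is, up to homotopy in $\Sigma^*(1)$, the concatenation of $j$ copies of $\varphi$. Fix a continuous argument $\theta:[0,\infty)\to \R$ associated to the extended flow as in~\ref{cz_index}, and for any $z\in\C\setminus\{0\}$ define $\Delta_j(z) := (\theta_z(j) - \theta_z(0))/(2\pi)$. Then $I(P^j) = \{\Delta_j(z):z\neq 0\}$ has length strictly smaller than $1/2$, and the key algebraic fact is $\Delta_j(z) = j\Delta_1(z) + o(1)$ in a controlled way: more precisely, in the elliptic case all $\Delta_1(z)$ coincide with a single $\rho\in\R\setminus\Z$ and $\Delta_j \equiv j\rho$; in the positive hyperbolic case $\Delta_1$ takes an integer value $k$ on both eigenlines and $\Delta_j$ takes the integer value $jk$; and in the negative hyperbolic case $\Delta_1$ takes the half-integer value $k+\tfrac12$ on both eigenlines and $\Delta_j$ takes $j(k+\tfrac12)$. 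In all three cases there is a ``mean rotation number'' $\alpha(P)\in \R$ lying in $I(P)$ with $\alpha(P^j)=j\alpha(P)$, and by \eqref{cz_index_geometric} one has $\mu_{CZ}(P^j) = 2\lfloor j\alpha(P)\rfloor + 1$ when $j\alpha(P)\notin \Z$ (elliptic or odd iterate of negative hyperbolic) and $\mu_{CZ}(P^j) = 2j\alpha(P)$ when $j\alpha(P)\in \tfrac12\Z$ with the eigenvector contribution landing on an integer (the hyperbolic case).

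With this machinery the three items become pure arithmetic. For (i), $\mu_{CZ}(P^k)=1$ forces $k\alpha(P) \in (0,1)$, the hyperbolic subcases being ruled out except for $k=1$ (where the statement is trivial), so $\alpha(P)\in(0,1/k)$ and for $1\leq l\leq k$ one gets $l\alpha(P)\in(0,1)\setminus\Z$, hence $\mu_{CZ}(P^l)=1$. For (ii), $\mu_{CZ}(P^k)\leq 0$ forces $k\alpha(P)<0$ (the boundary value $k\alpha(P)=0$ is excluded by non-degeneracy), so $\alpha(P)<0$ and consequently $l\alpha(P)<0$ for every $l\geq 1$, giving $\mu_{CZ}(P^l)\leq 0$. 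For (iii), $\mu_{CZ}(P^k)=2$ is even and strictly positive, which already excludes the elliptic case and the odd-iterate negative hyperbolic case, so $P^k$ must be positive hyperbolic with $k\alpha(P)=1$. The integrality/half-integrality of $\alpha(P)$ then forces either $k=1$, $\alpha(P)=1$ (so $l=1$ and $\mu_{CZ}(P)=2$, $P$ positive hyperbolic) or $k=2$, $\alpha(P)=1/2$ with $P$ negative hyperbolic (so for $l=1$ one has $\mu_{CZ}(P)=1$, and for $l=2=k$ one has $\mu_{CZ}(P^2)=2$). This exactly yields the claim $k,l,\mu_{CZ}(P^l)\in\{1,2\}$, $P$ hyperbolic, and $\mu_{CZ}(P)=1$ when $l=1, k=2$.

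The main obstacle is really just setting up the ``mean rotation number'' $\alpha(P)$ so that $\alpha(P^j)=j\alpha(P)$ holds on the nose and so that \eqref{cz_index_geometric} translates into the clean formulas used above; once one verifies the iteration formula for the eigendirections in each spectral type of $\varphi(1)$ (positive hyperbolic, negative hyperbolic, elliptic), combined with the length-$<1/2$ bound for $I(P^j)$ that pins down the integer part of $j\alpha(P)$, everything else is bookkeeping. The remaining case analysis is entirely elementary and requires no further input beyond Theorem~\ref{cz_index_axioms} and the geometric description in~\ref{cz_index}, which is precisely why the authors omit the details.
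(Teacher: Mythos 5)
Your proof is correct and uses exactly the tools the paper references for this (omitted) proof: the geometric interval description of the Conley--Zehnder index from~\eqref{cz_index_geometric} and a spectral analysis of the linearized Poincar\'e map, organized around the mean rotation number $\alpha(P)$ with $\alpha(P^j)=j\alpha(P)$. The one small imprecision is the claim that $\Delta_1(z)$ is constant in $z$ in the elliptic case; that holds only after homotoping $\varphi$ within $\Sigma^*(1)$ to an honest rotation path, but since both the index and the mean rotation number are homotopy invariants this does not affect the argument.
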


\subsection{Asymptotic operators}\label{asymp_operators}

A smooth complex structure $J$ on $\xi$ is said to be $d\lambda$-compatible if the bilinear form $d\lambda(\cdot,J\cdot)$ is a positive inner-product on $\xi_p$ for every point $p\in S^3$. As is well-known, the set $\J(\xi,d\lambda)$ of such complex structures is non-empty and contractible.

Fix $J \in \J(\xi,d\lambda)$ and $P=(x,T)$. Then $J$ induces the complex structure $x_T^*J$ on $\xi_P$, still denoted by $J$ for simplicity. We have an unbounded closed operator
\begin{equation}\label{asymp_op}
 \begin{aligned}
  A_P &: W^{1,2}(\xi_P) \subset L^2(\xi_P) \to L^2(\xi_P) \\
  \eta &\mapsto J(-\nabla_t\eta + T\nabla_\eta R).
 \end{aligned}
\end{equation}
Here $R$ is the Reeb vector and $\nabla$ is any symmetric connection on $T S^3$. The reader easily checks that $A_P$ is independent of $\nabla$.

\begin{remark}[Winding Numbers]\label{winding_numbers}
Let $(E,J)$ be any complex line bundle over $S^1$, and consider two non-vanishing sections $Z$ and $W$ of $E$. We find smooth functions $a,b: S^1\to\R$ satisfying $W = aZ + bJZ$ since $\{Z,JZ\}$ is pointwise linearly independent. The function $f: S^1\to\C$ defined by $f(t) = a(t) + ib(t)$ does not vanish and we define
\begin{equation}\label{}
  \wind(W,Z) = \deg \frac{f}{|f|} \in \Z.
\end{equation}
This integer depends only on the homotopy classes of the non-vanishing sections $Z$ and $W$, and on the homotopy class of the fiberwise complex structure $J$. Here we understand $S^1$ as oriented counter-clockwise. If $(E,\Omega)$ is a rank-$2$ symplectic bundle over $S^1$ then there is a preferred homotopy class of complex structures on $E$, namely, the ones which are $\Omega$-compatible. Hence $\wind(W,Z)$ also makes sense for non-vanishing sections $W,Z$ in this case.
\end{remark}

The bundle $\xi_P$ is symplectic with the $2$-form $d\lambda$. Let $W$ be a non-vanishing section $\xi_P$ and choose $\alpha \in \Ss_P$. If $\Psi$ is some trivialization in class $\alpha$ then we can define
\[
 \wind(W,\alpha) = \wind(W,\{t\mapsto \Psi_t^{-1} \cdot e_1\})
\]
where $\Psi_t$ is the restriction of $\Psi$ to $\xi_{x(Tt)}$ and $e_1 = (1,0) \in \R^2$. Here the winding number is computed with respect to any $J \in \J(\xi,d\lambda)$. We shall say that $W$ does not wind with respect to $\alpha$ if $\wind(W,\alpha) = 0$. This is equivalent to saying that there exists a symplectic trivialization $\Psi'$ in class $\alpha$ such that $\Psi'_t\cdot W(t) = e_1 \ \forall t\in S^1$.

Choose a symplectic trivialization $\Psi$ of $\xi_P$. As in subsection~\ref{periodic_orbits} we denote $\varphi(t) = \Psi_t \cdot d\phi_{Tt} \cdot \Psi_0^{-1} \in \sp(1)$ the local representation of the linearized Reeb flow given by $\Psi$. Let $\eta$ be any section of $\xi_P$ and define $v :  S^1 \to \R^2$ by $v(t) = \Psi_t \cdot \eta(t)$. Then
\[
 \Psi_t \cdot (A_P\eta)(t) = -J(t) \dot v (t) + S(t)v(t),
\]
where $J(t) = \Psi_t \cdot J \cdot \Psi_t^{-1}$ and $S(t) = J(t)\dot\varphi(t)\varphi^{-1}(t)$. The reason is that the linear flow generated by the equation
\[
  -\nabla_t\zeta + T\nabla_\zeta R = 0
\]
is precisely $d\phi_{Tt}$, that is, the solution of the above system is the section
\[
 \zeta(t) = d\phi_{Tt} \cdot \zeta_0
\]
where $\zeta(0) = \zeta_0 \in \xi_{x(0)}$. Since $J$ is $d\lambda$-compatible, $\left< \cdot,-J_0J(t)\cdot \right>$ is an inner-product on $\R^2$ and the matrix $S(t)$ is symmetric with respect to this inner-product. A number of non-trivial facts about $A_P$ follows from this local representation, as pointed out in~\cite{props2}. We summarize them in the following statements.

\begin{lemma}[Hofer, Wysocki and Zehnder]\label{winding_spectrum_1}
$A_P$ has a discrete real spectrum accumulating only at $\pm\infty$. The geometric and algebraic multiplicities of each eigenvalue agree and this multiplicity is either $1$ or $2$. Fix eigenvalues $a_j$ of $A_P$ and let $\eta_j$ be a non-trivial eigenfunction of $a_j$, for $j=1,2$. Fix any $\alpha \in \Ss_P$.
\begin{enumerate}
 \item If $a_1=a_2$ then $\wind(\eta_1,\alpha) = \wind(\eta_2,\alpha)$.
 \item If $a_1 \not= a_2$ and $\wind(\eta_1,\alpha) = \wind(\eta_2,\alpha)$ then $\eta_1$ and $\eta_2$ are pointwise linearly independent.
 \item If $a_1\leq a_2$ then $\wind(\eta_1,\alpha) \leq \wind(\eta_2,\alpha)$.
\end{enumerate}
\end{lemma}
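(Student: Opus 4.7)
The plan is to use a symplectic trivialization to recast $A_P$ as a self-adjoint first-order ODE operator on $S^1$ and then exploit the monotone dependence of the associated scalar angular equation on the eigenvalue parameter.

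First, fix a smooth symplectic trivialization $\Psi$ of $\xi_P$ representing some class $\alpha_\Psi \in \Ss_P$; as pointed out just before the lemma, $A_P$ is intertwined with $Lv := -J(t)\dot v + S(t)v$ on $L^2(S^1,\R^2)$, where $S(t)$ is symmetric with respect to the time-dependent inner product $\langle \cdot, -J_0J(t) \cdot\rangle$. Integrating this inner product along $S^1$ yields an $L^2$-pairing in which a direct integration by parts shows that $L$ is symmetric on $W^{1,2}$; since $S(t)$ is bounded multiplication and $-J(t)\partial_t$ has compact resolvent on $L^2(S^1,\R^2)$, $L$ is essentially self-adjoint with compact resolvent. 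Its spectrum is therefore real, discrete, and accumulates only at $\pm\infty$, with algebraic and geometric multiplicities coinciding. The eigenvalue equation $Lv = av$ is the linear first-order ODE $\dot v = J(t)(aI - S(t))v$, whose solution space in $\R^2$ is two-dimensional; hence the space of $1$-periodic solutions has dimension at most two, and uniqueness of initial value problems forces each eigenfunction to be nowhere vanishing, so $\wind(\eta,\alpha)$ is well defined for every class $\alpha \in \Ss_P$.

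After an additional smooth, homotopically trivial frame change (which does not affect winding numbers) one may assume $J(t) \equiv J_0$, and then writing $v(t) = r(t) e^{i\theta(t)}$ and pairing $\dot v = J_0(aI - S(t))v$ with $e^{i\theta}$ and $J_0 e^{i\theta}$ produces the decoupled scalar equation
\begin{equation*}
  \dot\theta = a - F(t,\theta), \qquad F(t,\theta) := \langle e^{i\theta}, S(t) e^{i\theta}\rangle.
\end{equation*}
Two features drive the proof: $F$ is $\pi$-periodic in $\theta$ because $e^{i(\theta+\pi)} = -e^{i\theta}$, and $\partial \dot\theta/\partial a = 1 > 0$. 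Consequently, for any non-zero initial datum with lifted angle $\theta_0$, the time-$1$ angular increment $\Delta_a(\theta_0) := \theta_a(1;\theta_0) - \theta_0$ is continuous in $\theta_0$, strictly increasing in $a$, and $\pi$-periodic in $\theta_0$; for an eigenfunction $\eta$ at eigenvalue $a$ with $\eta(0)$ of angle $\theta_0$, periodicity forces $\Delta_a(\theta_0) = 2\pi\, \wind(\eta,\alpha_\Psi)$.

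Given these ingredients, assertions (1)--(3) follow. For (1), if $a_1 = a_2 = a$ and $\eta_1, \eta_2$ are linearly independent, the time-$1$ fundamental matrix fixes a basis of $\R^2$, hence equals the identity, so the path $v_s := (1-s)\eta_1(0) + s\eta_2(0)$ is nowhere zero and generates a continuous family of periodic eigenfunctions whose winding is integer-valued and continuous in $s$, therefore constant. For (2), suppose $a_1 < a_2$, $\wind(\eta_1) = \wind(\eta_2)$, and $\eta_1(t_0) \in \R\eta_2(t_0)$; choose angle lifts $\theta_1,\theta_2$ with $\theta_1(t_0) = \theta_2(t_0)$. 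Then $g := \theta_1 - \theta_2$ is $1$-periodic on $\R$ (equal windings), satisfies $g(t_0) = 0$, and $\dot g = a_1 - a_2 < 0$ at every point where $g \in \pi\Z$ (by $\pi$-periodicity of $F$ in $\theta$). Thus $g$ becomes negative just after $t_0$, must return to $0$ at $t_0 + 1$, and the return requires an intermediate re-crossing of $0 \in \pi\Z$ with $\dot g \geq 0$, contradicting $\dot g < 0$ on $\pi\Z$. Finally, for (3), assume $a_1 < a_2$ and suppose for contradiction $\wind(\eta_1) > \wind(\eta_2)$; letting $\theta_i^0$ be the initial angle of $\eta_i$ and $h(\theta_0) := \Delta_{a_1}(\theta_0) - 2\pi \wind(\eta_2)$, strict monotonicity in $a$ gives $h(\theta_2^0) < \Delta_{a_2}(\theta_2^0) - 2\pi\wind(\eta_2) = 0$, while $h(\theta_1^0) = 2\pi(\wind(\eta_1) - \wind(\eta_2)) > 0$. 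The intermediate value theorem then produces $\theta_*$ with $h(\theta_*) = 0$, yielding an eigenfunction at $a_1$ with winding $\wind(\eta_2) \neq \wind(\eta_1)$, contradicting (1).

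The technical heart is the derivation of the scalar angular equation together with its $\pi$-periodicity and strict $a$-monotonicity; the main obstacle is ensuring that the sign conventions in passing from $A_P$ to the ODE are consistent, and carefully tracking the $\pi$-periodicity in (2) so that the sign of $\dot g$ on $\pi\Z$ really gives the contradiction. Everything else reduces to standard self-adjoint spectral theory on $S^1$ and elementary scalar ODE comparison.
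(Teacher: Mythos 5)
The paper does not reproduce the proof of this lemma (it is cited from Hofer--Wysocki--Zehnder, \cite{props2}), so I can only assess your argument on its own terms. Your overall strategy --- trivialize $\xi_P$, write $A_P$ as $-J(t)\dot v + S(t)v$, pass to the scalar angular equation $\dot\theta = a - F(t,\theta)$ and exploit the $\pi$-periodicity of $F$ in $\theta$ together with strict monotonicity in $a$ --- is precisely the standard approach of \cite{props2}. Items (1) and (2) are handled correctly: in (1) the monodromy $\Phi_a(1)$ fixes a basis, hence is the identity, and the continuity argument is sound; in (2) the observation that $\dot g = a_1 - a_2 < 0$ whenever $g \in \pi\Z$ (by $\pi$-periodicity of $F$) cleanly prevents $g$ from returning to $0$ in one period.

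There is, however, a genuine gap in your proof of (3). You apply the intermediate value theorem to find $\theta_*$ with $\Delta_{a_1}(\theta_*) = 2\pi\wind(\eta_2,\alpha)$ and then assert this ``yields an eigenfunction at $a_1$.'' It does not, as stated: the condition $\Delta_{a_1}(\theta_*) \in 2\pi\Z$ only says that the solution $v_*$ of $\dot v = J_0(a_1 I - S(t))v$ with initial direction $\theta_*$ satisfies $v_*(1) = \mu\, v_*(0)$ for some $\mu > 0$, i.e.\ that $\theta_*$ is a direction fixed (with positive multiplier) by the monodromy matrix $\Phi_{a_1}(1)$. You have not argued that $\mu = 1$, and for a general first-order linear ODE on $S^1$ it need not be. The step can be repaired by invoking the Hamiltonian structure: since $S(t)$ is symmetric, $J_0(a_1 I - S(t)) \in \mathfrak{sp}(1)$, so $\Phi_{a_1}(1) \in SL(2,\R)$. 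Because $\eta_1$ is a genuine $1$-periodic eigenfunction at $a_1$, $\Phi_{a_1}(1)$ already has $1$ as an eigenvalue, and since its eigenvalues multiply to $1$ the spectrum is $\{1,1\}$; thus the only possible positive real eigenvalue is $1$, forcing $\mu = 1$. Alternatively, one can bypass the construction of a second eigenfunction entirely by noting that the order-preservation of $\theta_0 \mapsto \theta_a(1;\theta_0)$ together with its commuting with $\theta_0 \mapsto \theta_0 + \pi$ yields the oscillation bound $\left|\Delta_a(\theta_0) - \Delta_a(\theta_0')\right| < \pi$ for all $\theta_0,\theta_0'$; then $\Delta_{a_1}(\theta_1^0) - \Delta_{a_1}(\theta_2^0) > 2\pi(\wind(\eta_1,\alpha)-\wind(\eta_2,\alpha)) \geq 2\pi$ gives an immediate contradiction. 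Either fix closes the gap; as written the step does not follow.
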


Let $a$ be an eigenvalue of $A_P$ and $\alpha \in \Ss_P$. In view of the lemma above we can define
\begin{equation}\label{}
  \wind(a,\alpha) := \wind(\eta,\alpha)
\end{equation}
where $\eta$ is any non-trivial eigenfunction of $a$.

\begin{lemma}[Hofer, Wysocki and Zehnder]\label{winding_spectrum_2}
Fix any $\alpha \in \Ss_P$. For every $k\in\Z$ there are precisely two eigenvalues $a$ and $b$ of $A_P$ (counting multiplicities) such that
\[
 k = \wind(a,\alpha) = \wind(b,\alpha).
\]
\end{lemma}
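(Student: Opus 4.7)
My plan is to reduce the claim to an explicit model via a deformation argument, using Lemma~\ref{winding_spectrum_1} to control the spectrum throughout the homotopy. Fix $\alpha \in \Ss_P$ and, after choosing a symplectic trivialization in class $\alpha$ that sends the fiberwise complex structure $J$ to the standard $J_0$, represent $A_P$ as $L = -J_0 \frac{d}{dt} + S(t)$ acting on $L^2(S^1,\R^2)$, where $S(t)$ is a smooth loop of symmetric matrices; a direct integration by parts confirms that $L$ is self-adjoint with respect to the standard inner product. For $k \in \Z$ let $N(k,L)$ denote the number of eigenvalues of $L$, counted with multiplicity, whose eigenfunctions have winding $k$ with respect to $\alpha$; this is well-defined by Lemma~\ref{winding_spectrum_1}(1). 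The upper bound $N(k,L) \leq 2$ is a direct consequence of Lemma~\ref{winding_spectrum_1}: every eigenvalue has algebraic multiplicity at most two, and three distinct eigenvalues sharing the winding $k$ would, by item~(2), produce three pairwise pointwise linearly independent eigenfunctions, an obstruction in a real rank-two bundle in view of the $L^2$-orthogonality of distinct eigenspaces.

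For the lower bound I would first handle the model $S \equiv cI$ for a small constant $c \in (0, 2\pi)$. The eigenvalue equation reduces to $\dot v = (\mu - c) J_0 v$, and since $e^{\theta J_0}$ is rotation by $\theta$, the $1$-periodicity condition forces $\mu - c \in 2\pi \Z$. Thus the eigenvalues are $\mu_k = c + 2\pi k$, each carrying a two-dimensional eigenspace spanned by rotations $t \mapsto e^{2\pi k J_0 t} v_0$, and every such eigenfunction winds exactly $k$ times in the chosen trivialization. So $N(k, L_{\mathrm{mod}}) = 2$ for all $k \in \Z$.

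To transfer this to the general $L$, I would connect $S(t)$ to $cI$ by a smooth path $\{S_r(t)\}_{r \in [0,1]}$ of loops of symmetric matrices, producing a continuous family $\{L_r\}$ of self-adjoint operators with $L_0 = L$ and $L_1 = L_{\mathrm{mod}}$. Standard perturbation theory supplies locally continuous branches of eigenvalues $\mu_j(r)$ with $L^2$-normalized eigenfunctions $\eta_j(r)$, the latter never vanishing on $S^1$ by uniqueness of solutions to the first-order ODE $\dot v = J_0(\mu I - S_r(t)) v$. Hence $\wind(\eta_j(r),\alpha)$ is an integer-valued continuous function of $r$ along each branch, so it is constant between collisions. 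At a collision of two branches at some parameter $r_0$, Lemma~\ref{winding_spectrum_1}(1) forces both colliding eigenfunctions to share a common winding at $r_0$, which by continuity equals the winding on either side of the collision. Combining local constancy on each branch with this collision rule shows that $N(k, L_r)$ is independent of $r$, and therefore $N(k, A_P) = N(k, L_{\mathrm{mod}}) = 2$ for every $k$.

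The main obstacle I expect is the bookkeeping at eigenvalue collisions, where a multiplicity-two eigenvalue can split into two simple ones or vice versa and the eigenprojections need not be globally smooth in $r$. The input that saves the argument is the combination of item~(1) of Lemma~\ref{winding_spectrum_1}, which assigns a well-defined winding to each eigenvalue independently of the choice of eigenbasis, together with the non-vanishing of eigenfunctions from ODE uniqueness, which keeps the winding an integer-valued invariant throughout the entire homotopy.
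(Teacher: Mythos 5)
The paper states this lemma without proof, attributing it to Hofer, Wysocki and Zehnder~\cite{props2}, so there is no in-paper argument to compare against; what follows evaluates your proposal on its own terms.

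Your deformation argument is sound in outline: trivialize unitarily in class $\alpha$, connect $A_P$ to the model $-J_0\frac{d}{dt}+cI$ through the convex path of symmetric perturbations $S_r=(1-r)S+rcI$, compute the count $N(k,\cdot)=2$ explicitly for the model, and transport it via continuous eigenvalue branches whose integer-valued winding cannot jump. Two points ought to be made explicit. First, $\|S_r-S_0\|_{C^0}$ is bounded uniformly in $r\in[0,1]$, so the spectrum of $L_r$ is displaced from that of $L_0$ by a uniformly bounded amount; hence branches cannot escape to $\pm\infty$ and the count $N(k,L_r)$ cannot change by eigenvalues appearing from or disappearing to infinity. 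Second, at a collision $r_0$ one should invoke item~(1) of Lemma~\ref{winding_spectrum_1} for the double eigenvalue at $r_0$, together with the continuity and integer-valuedness of the winding along each branch, to conclude that the two colliding branches already share the same winding for $r<r_0$; so branches of distinct winding never meet, and $N(k,L_r)$ is preserved through every merging and splitting. With these additions the homotopy already gives the exact count.

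The separate upper-bound paragraph, however, is not correct as written and should be dropped. Three nowhere-vanishing, pairwise pointwise linearly independent, pairwise $L^2$-orthogonal sections of a real rank-two bundle over $S^1$ are not a contradiction: at each $t$ one simply has three distinct lines in $\R^2$, which is perfectly possible, and $L^2$-orthogonality constrains only the \emph{integral} of the pointwise pairing, not the pointwise values. The genuine upper bound in~\cite{props2} comes from a different mechanism, the strict monotonicity in $\mu$ of the rotation of solutions of $\dot v=J_0(\mu I-S(t))v$. Since your homotopy argument already establishes the equality $N(k,A_P)=2$, removing this flawed paragraph costs you nothing.
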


It was shown in~\cite{props2} that the Conley-Zehnder index can be described in terms of the spectrum of $A_P$ in the following manner. The orbit $P = (x,T)$ is non-degenerate if, and only if, $0$ is not an eigenvalue of $A_P$. In any case, set
\begin{equation}\label{extreme_evalues}
 \begin{aligned}
  & \nu^{neg} = \max \{ \nu < 0 \text{ is an eigenvalue of } A_P \} \\
  & \nu^{pos} = \min \{ \nu > 0 \text{ is an eigenvalue of } A_P \}.
 \end{aligned}
\end{equation}
Fix some $\alpha \in \Ss_P$ and define $p = \frac{1}{2} (1+(-1)^b)$ where $b$ is the number of strictly negative eigenvalues (counted with multiplicities) which have the same winding as $\nu^{neg}$. We have the formula
\begin{equation}\label{cz_index_alternative}
 \mu_{CZ}(P,\alpha) = 2\wind(\nu^{neg},\alpha) + p.
\end{equation}
This provides an extension of the Conley-Zehnder index to degenerate orbits. See~\cite{convex} for more details.

\subsection{Pseudo-holomorphic curves in symplectizations}\label{holcurve_theory}

We now recall the basic definitions and facts from the theory of pseudo-holomor\-phic curves in symplectizations introduced by Hofer and later developed by Hofer, Wysocki and Zehnder. Pseudo-holomorphic curves were introduced in symplectic geometry by M. Gromov in~\cite{gromov}. In his fundamental work~\cite{93} H. Hofer was the first to use them in the non-compact setting of symplectizations in order to study the three dimensional Weinstein conjecture. We now recall the most basic definitions of this theory and refer the reader to the literature as we proceed.

In this subsection $\lambda$ will be a contact form on the closed $3$-manifold $M$ and $\xi = \ker \lambda$ the induced co-oriented contact structure. The Reeb vector field $R$ is defined as before in (\ref{reeb_vector}). There is a projection
\begin{equation}\label{projection}
 \pi : TM \to \xi
\end{equation}
uniquely determined by $\ker \pi = \R R$. As remarked in subsection~\ref{asymp_operators}, the set $\J(\xi,d\lambda)$ of $d\lambda$-compatible complex structures on $\xi$ is non-empty and contractible and, from now on, we fix some $J \in \J(\xi,d\lambda)$. The equations
\begin{equation}\label{jtil}
  \left\{ \begin{aligned} & \jtil \partial_a = R \\ & \jtil|_\xi = J \end{aligned} \right.
\end{equation}
induce an almost complex structure $\jtil$ on $\R \times M$, where $a$ is the $\R$-component in $\R\times M$.

\subsubsection{Finite energy surfaces}

\begin{defi}[Hofer]\label{FES}
Let $(S,j)$ be a closed Riemann surface and $\Gamma \subset S$ be a finite set. A map $\util : S \setminus \Gamma \rightarrow \R\times M$ is called a finite-energy surface if it is $\jtil$-holomorphic, that is, it satisfies the non-linear Cauchy-Riemann equations
\begin{equation}\label{cr}
 \bar\partial_{\jtil}(\util) = \frac{1}{2} (d\util + \jtil(\util) \cdot d\util \cdot j) =0
\end{equation}
and also an energy condition $0<E(\tilde{u})<+\infty$. The energy $E(\tilde{u})$ is defined as follows. Set $\Lambda := \{\phi\in C^\infty(\R,[0,1]):\phi^{\prime}\geq0\}$ and define
\[
 E(\tilde{u})=\sup_{\phi\in\Lambda}\int_{S\setminus\Gamma} \tilde{u}^*d(\phi\lambda).
\]
If $S = S^2$ and $\#\Gamma=1$ we call $\util$ a finite-energy plane.
\end{defi}

The elements of $\Gamma$ are called punctures. Write $\util = (a,u)$, choose $z \in \Gamma$, a holomorphic chart $\varphi : (U,0) \to (\varphi(U),z)$ centered at $z$ and set $\util(s,t) = \util\circ\varphi(e^{-2\pi(s+it)})$ for $s\gg1$. Condition $E(\util)<\infty$ implies that
\begin{equation}\label{mass_defn}
 m = \lim_{s\to+\infty} \int_{\{s\}\times S^1} u^*\lambda
\end{equation}
exists. The puncture $z$ is called removable if $m=0$, positive if $m>0$ and negative if $m<0$. A removable puncture can be removed by an application of Gromov's removable singularity theorem. The $d\lambda$-energy of $\util$ is defined by
\[
 \int_{S\setminus\Gamma} u^*d\lambda.
\]
It is clearly bounded by $E(\util)$ from above. Note that at least one puncture is positive and non-removable.

\begin{theo}[Hofer]\label{thm93}
In the notation explained above, suppose $z$ is non-removable and let $\epsilon=\pm1$ be the sign of $m$ in (\ref{mass_defn}). Then every sequence $s_n \rightarrow +\infty$ has a subsequence $s_{n_k}$ such that the following holds: there exists a real number $c$ and a periodic Reeb orbit $P = (x,T)$ such that $u(s_{n_k},t) \rightarrow x(\epsilon Tt+c)$ in $C^\infty(S^1,M)$ as $k \rightarrow +\infty$.
\end{theo}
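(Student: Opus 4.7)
The plan is to combine a uniform gradient bound with an elliptic compactness argument and a bubbling-off analysis, in the style of Hofer. First I would pick a holomorphic identification of a punctured neighborhood of $z$ with a half-cylinder $[s_0, \infty) \times S^1$ and write $\tilde u = (a, u)$ on this cylinder. The Cauchy-Riemann equations (\ref{cr}) take the familiar form
\begin{equation*}
\pi \partial_s u + J(u) \pi \partial_t u = 0, \qquad \partial_s a = \lambda(\partial_t u), \qquad \partial_t a = -\lambda(\partial_s u),
\end{equation*}
and the definition of $E(\tilde u)$ yields $\int_{[s_0,\infty) \times S^1} u^* d\lambda \leq E(\tilde u) < \infty$, so the $d\lambda$-energy of $\tilde u$ beyond $s$ tends to $0$ as $s \to \infty$. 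The identity $u^* d\lambda = |\pi \partial_t u|^2_J\, ds \wedge dt$, which follows from the first CR equation and $d\lambda$-compatibility of $J$, will be used later.

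The main step is a uniform gradient bound on the cylindrical end: there exist $C > 0$ and $s_1 \geq s_0$ with $|d\tilde u| \leq C$ on $[s_1, \infty) \times S^1$. I would prove this by contradiction using the standard rescaling scheme. If the bound fails, choose $z_n = s_n + it_n$ with $s_n \to \infty$ and $R_n := |d\tilde u(z_n)| \to \infty$. By Hofer's lemma, replace $z_n$ by nearby $z_n'$ with new gradient $R_n'$ and radii $\epsilon_n' \to 0$ so that $R_n' \epsilon_n' \to \infty$ and $|d\tilde u| \leq 2R_n'$ on $B_{\epsilon_n'}(z_n')$. Define rescaled maps
\begin{equation*}
\tilde v_n(\zeta) = \bigl(a(z_n' + \zeta/R_n') - a(z_n'),\, u(z_n' + \zeta/R_n')\bigr).
\end{equation*}
These are $\tilde J$-holomorphic, satisfy $|d\tilde v_n| \leq 2$, $|d\tilde v_n(0)| = 1$ and $E(\tilde v_n) \leq E(\tilde u)$. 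Interior elliptic regularity for the CR operator together with Arzelà-Ascoli produces a $C^\infty_{\mathrm{loc}}$-subsequential limit $\tilde v : \C \to \R \times M$ which is a non-constant finite-energy plane. Its $d\lambda$-energy equals $\lim_R \lim_n \int_{|\zeta| \leq R} \tilde v_n^* d\lambda = \lim_R \lim_n \int_{|z - z_n'| \leq R/R_n'} u^* d\lambda = 0$. However, a non-constant finite-energy plane with vanishing $d\lambda$-energy cannot exist: its image would lie in a single Reeb trajectory, and the parametrization by the unique (positive) puncture at infinity would have to carry a positive asymptotic mass, which is incompatible with smooth extension across the interior point of the plane. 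This contradiction establishes the gradient bound.

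With the gradient controlled on $[s_1, \infty) \times S^1$, the translated maps $\tilde u_n(s,t) := (a(s_n + s, t) - a(s_n, 0),\, u(s_n + s, t))$ are equicontinuous on compact subsets of $\R \times S^1$. Arzelà-Ascoli and elliptic bootstrapping on the CR equation yield a $C^\infty_{\mathrm{loc}}$-convergent subsequence with limit $\tilde u_\infty = (a_\infty, u_\infty)$, a $\tilde J$-holomorphic cylinder $\R \times S^1 \to \R \times M$. Since $\int_{[s,\infty) \times S^1} u^* d\lambda \to 0$, one has $\int_{\R \times S^1} u_\infty^* d\lambda = 0$; hence $\pi \partial_t u_\infty \equiv 0$ and, via CR, $\pi \partial_s u_\infty \equiv 0$. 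Therefore $u_\infty$ takes values in a single Reeb flowline $x$; writing $u_\infty(s,t) = x(\tau(s,t))$ with $\dot x = R$, the remaining CR equations say that $a_\infty + i\tau$ is holomorphic in $s + it$, while the mass condition (\ref{mass_defn}) gives $\int_0^1 \partial_t \tau(s, t)\, dt = m$ for all $s$. The winding $m$ of $\tau$ in the $t$-direction forces the flowline to be periodic of period $T_0 > 0$ with $|m|/T_0 \in \Z^+$, and the finite-energy condition on a $\tilde J$-holomorphic cylinder over a periodic orbit then forces $\tau(s,t) = mt + c_1$ and $a_\infty(s,t) = ms + c_0$, i.e.\ $\tilde u_\infty$ is a trivial cylinder over the periodic orbit $P = (x, |m|)$. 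Restricting to $s = 0$ yields $u(s_{n_k}, t) \to x(mt + c_1) = x(\epsilon T t + c)$ in $C^\infty(S^1, M)$, with $T = |m|$, $\epsilon = \sign(m)$, $c = c_1$.

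The main obstacle is the rescaling/bubbling-off argument supporting the gradient bound, in particular the non-existence of non-constant finite-energy planes with vanishing $d\lambda$-energy. Once that is cleared, the identification of the limit cylinder as a trivial one over a periodic Reeb orbit is a routine consequence of the CR equations and the finite-energy constraint.
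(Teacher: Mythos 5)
The paper does not prove this statement; it is cited directly from Hofer's 1993 paper (reference~\cite{93}), so there is no in-paper proof to compare against. Your argument reproduces the standard one: uniform gradient bound on the half-cylinder via rescaling and the non-existence of non-constant zero-$d\lambda$-energy finite-energy planes, followed by translation, $C^\infty_{loc}$ convergence, and identification of the limit cylinder as a trivial cylinder over a periodic orbit; this is correct in outline and is essentially Hofer's proof. Two places are stated a little loosely but can be made precise along exactly the lines you indicate: (a) the contradiction for a zero-$d\lambda$-energy bubble plane is cleanest by writing $v(z)=x(g(z))$ with $g:\C\to\R$ globally defined, so that $\int_{\partial B_R}v^*\lambda=\int_{\partial B_R}dg=0$ for all $R$, contradicting the positive asymptotic mass at the unique puncture $\infty$; and (b) the passage from ``$u_\infty$ lies in a flowline'' to ``trivial cylinder'' should use that the holomorphic $h=a_\infty+ig$ satisfies $h(z+i)=h(z)+im$ and that the uniform gradient bound inherited by $\tilde u_\infty$ makes $h'$ a bounded holomorphic function on $\C^*$, hence constant, giving $h(z)=mz+c$.
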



\subsubsection{Asymptotic behavior}

\begin{defi}\label{behavior}
Let $(S,j)$, $\Gamma$ and $\util$ be as in Definition~\ref{FES}. Fix a non-removable puncture $z \in \Gamma$, choose a holomorphic chart $\varphi : (U,0) \rightarrow (\varphi(U),z)$ centered at $z$ and write $\util (s,t) = (a(s,t),u(s,t)) = \util \circ \varphi ( e^{-2\pi(s+it)} )$ for $s\gg1$. Define $m$ by~\eqref{mass_defn} and let $\epsilon = \pm1$ be its sign. We say that $z$ is a \textbf{non-degenerate puncture of $\util$} if there exists a periodic Reeb orbit $P = (x,T)$ and constants $c,d\in\R$ such that
\begin{enumerate}
 \item $\sup_{t\in S^1} \norma{a(s,t) - \epsilon Ts - d} \rightarrow 0$ as $s\rightarrow +\infty$.
 \item $u(s,t) \rightarrow x(\epsilon Tt + c)$ in $C^0(S^1,M)$ as $s\rightarrow +\infty$.
 \item If $\pi \cdot du$ does not vanish identically over $S\setminus\Gamma$ then $\pi \cdot du (s,t) \not=0$ when $s$ is large enough.
 \item If we define $\zeta(s,t)$ by $u(s,t) = \exp_{x(\epsilon Tt+c)} \zeta(s,t)$ then $\exists b>0$ such that $\sup_{t\in S^1} e^{bs}\norma{\zeta(s,t)} \rightarrow 0$ as $s\rightarrow +\infty$.
\end{enumerate}
Here $\pi$ is the projection (\ref{projection}). This definition is independent of the holomorphic chart $\varphi$ and the exponential map. In this case we say $\util$ is asymptotic to $P$ at $z$ and that $\util$ has non-degenerate asymptotic behavior at $z$. We say that $\util$ has non-degenerate asymptotics if this holds for every puncture.
\end{defi}

The following theorem is a partial result extracted from~\cite{props1}, where a much more precise asymptotic behavior is described.

\begin{theo}[Hofer, Wysocki and Zehnder]\label{behavior_HWZ_1}
If $\lambda$ is non-degenerate then every finite-energy surface has non-degenerate asymptotics.
\end{theo}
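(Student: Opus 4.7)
The plan is to upgrade the convergence along subsequences provided by Theorem~\ref{thm93} to the sharper asymptotic behavior of Definition~\ref{behavior}, with the exponential decay being the essential analytic step.

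First I would fix a non-removable puncture $z$, write $\tilde u(s,t)=(a(s,t),u(s,t))$ in cylindrical coordinates near $z$, and let $\epsilon=\pm1$ be the sign of the asymptotic mass $m$ in~\eqref{mass_defn}. Theorem~\ref{thm93} already gives, along some sequence $s_n\to\infty$, convergence $u(s_n,\cdot)\to x_T(\epsilon\cdot+c)$ for a Reeb orbit $P=(x,T)$ with $T=|m|$. The first step is to promote this to convergence in $C^0(S^1,M)$ as $s\to\infty$ (no subsequence). Non-degeneracy of $\lambda$ is used here: the set of closed Reeb orbits with period bounded by a given constant is isolated in $C^\infty(S^1,M)/S^1$, so any two sub-sequential limits must give the same simple orbit $P$ up to the $S^1$-shift; a standard continuity argument then pins down the constant $c$ uniquely. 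A parallel argument on the $\R$-component, using $\partial_s a = \lambda(\partial_t u)$ (from $\tilde J$-holomorphicity) and $\int u^*\lambda\big|_{\{s\}\times S^1}\to \epsilon T$, yields existence of $d\in\R$ with $a(s,t)-\epsilon Ts-d\to 0$ uniformly, establishing conditions (1) and (2).

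Next I would pass to the linearized asymptotic equation. Using the exponential map on a tubular neighborhood of $x(\R)$ and appropriate trivializations, write $u(s,t)=\exp_{x(\epsilon Tt+c)}\zeta(s,t)$ with $\zeta$ a section of $\xi_P$ over the cylinder $[s_0,\infty)\times S^1$; similarly shift $a$ by $\epsilon Ts+d$. The nonlinear Cauchy--Riemann equation~\eqref{cr} for $\tilde u$, combined with the Reeb equations and $d\lambda$-compatibility of $J$, becomes
\begin{equation*}
  \partial_s\zeta + J(t)\partial_t\zeta + S(t)\zeta = r(s,t),
\end{equation*}
where $-J(t)\partial_t + S(t)$ is precisely the asymptotic operator $A_P$ from~\eqref{asymp_op} written in the chosen trivialization, and $r$ is a quadratically small remainder in $\zeta$ together with lower-order errors coming from the $\R$-direction. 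By step one, $\|\zeta(s,\cdot)\|_{C^0}\to 0$, so $\zeta$ is a bounded solution of a perturbed autonomous CR-type equation.

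The core step is to derive exponential decay from spectral properties of $A_P$. Since $\lambda$ is non-degenerate, Lemma~\ref{winding_spectrum_1} (and the characterization in~\ref{cz_index}) implies $0\notin\sigma(A_P)$, so $A_P$ has a spectral gap around zero. A standard energy/ODE argument in the cylinder then applies: test $\|\zeta(s,\cdot)\|_{L^2(S^1)}^2$ against the spectral decomposition, exploit the gap to obtain a differential inequality of the form $f''\geq \delta^2 f$ up to errors controlled by $\|\zeta\|_{C^0}$, and invoke a standard lemma on subsolutions on $[s_0,\infty)$ to conclude $\|\zeta(s,\cdot)\|_{L^2}\le Ce^{-bs}$ for some $0<b<\min\{|\nu^{neg}|,\nu^{pos}\}$. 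Elliptic bootstrapping on the CR equation promotes this to $C^k$-exponential decay, giving (4), and the analogous decay for $a(s,t)-\epsilon Ts - d$. Finally, condition (3) follows from the Carleman similarity principle applied to $\pi\cdot du$, which satisfies a Cauchy--Riemann-type equation with a zero-order potential; thus its zeros are isolated, and by the exponential convergence to a loop on which $\pi\cdot du$ corresponds to a nonzero eigensection of $A_P$ in the non-vanishing case, the zeros cannot accumulate at $z$.

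The main obstacle is the exponential decay bootstrap: one must convert the a priori $C^0$-smallness of $\zeta$ into quantitative decay using only the spectral gap of $A_P$, handling the non-autonomous error $r$ and the coupling between $\zeta$ and the $\R$-component carefully. This is the heart of the argument in~\cite{props1}; everything else is either standard elliptic regularity or a consequence of the non-degeneracy (discreteness) of the orbit spectrum.
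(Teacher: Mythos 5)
The paper only cites this result from \cite{props1} and offers no proof, so I am comparing your sketch against the argument there. Your overall strategy is the right one: after the subsequential convergence of Theorem~\ref{thm93}, linearize the Cauchy--Riemann equation in a tubular neighborhood of the limiting orbit, observe that non-degeneracy of $P$ is equivalent to $0\notin\sigma(A_P)$ so that $A_P$ has a genuine spectral gap, feed the gap into a second-order differential inequality for $\|\zeta(s,\cdot)\|_{L^2}^2$ to obtain exponential $L^2$-decay, bootstrap to $C^k$, and handle condition~(3) via the Carleman similarity principle. That is indeed the skeleton of~\cite{props1}.

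There is, however, a genuine gap in your ``first step''. You claim that isolation of non-degenerate orbits plus ``a standard continuity argument'' pins down the phase constant $c$ and hence upgrades subsequential to full $C^0$-convergence, and you then linearize \emph{assuming} that $u(s,t)=\exp_{x(\epsilon T t+c)}\zeta(s,t)$ with $\|\zeta(s,\cdot)\|_{C^0}\to 0$. Isolation plus continuity of $s\mapsto u(s,\cdot)$ only yields that the set of subsequential limits is a \emph{connected} subset of the reparametrization circle $\{t\mapsto x(\epsilon T t + c)\}_{c\in S^1}$ of a single orbit $P$; since that circle is connected, this does not rule out the set being the whole circle, i.e.\ slow ``spiraling'' of $u(s,\cdot)$ around $x(\R)$ without convergence of the phase. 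A drift like $c(s)\sim\log s$ is compatible with everything you have used up to that point, including the uniform gradient bounds and the vanishing of $\int_{\{s\}\times S^1}|\pi\cdot\partial_s u|^2$ along a subsequence. Ruling this out is precisely one of the delicate points in~\cite{props1}, and it is obtained \emph{jointly} with the exponential decay, not prior to it. Your proposed ordering therefore presupposes what you are trying to prove: the smallness of $\zeta$ in a fixed parametrization is a consequence of the spectral-gap argument, not an input to it. The fix is to linearize along the geometric orbit $x(\R)$ rather than a fixed parametrization, decompose into the orbit-tangent component (the slowly varying phase $c(s)$) and the normal component, and derive the decay estimates for both simultaneously from $0\notin\sigma(A_P)$; only then does the unique asymptotic phase $c$ and the statement $\|\zeta(s,\cdot)\|_{C^0}\to 0$ emerge.

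A minor point: the equivalence ``$P$ non-degenerate $\Leftrightarrow$ $0\notin\sigma(A_P)$'' is the relevant input for the spectral gap; Lemma~\ref{winding_spectrum_1} by itself is about winding numbers of eigenfunctions and does not give this, so the attribution should be to the discussion around~\eqref{cz_index_alternative} rather than to that lemma.
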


\subsubsection{Algebraic invariants}\label{alg_inv}

Let $\util$, $S$ and $\Gamma$ be as in Definition~\ref{FES} and assume $\util$ has non-degenerate asymptotics. If we write $\util = (a,u)$ then $\pi \cdot du$ is a section of the complex line bundle $\E = \wedge^{0,1}T^*(S\setminus\Gamma) \otimes_\C u^*\xi$ over $S\setminus \Gamma$. It follows from (\ref{cr}) that it satisfies a Cauchy-Riemann type equation
\[
 \pi \cdot du \cdot j = J(u) \cdot \pi \cdot du.
\]
Thus either $\pi \cdot du \equiv 0$ or the zeros are all isolated and count positively for the oriented intersection number with the zero section of $\E$. This follows from an application of Carleman's Similarity Principle, see~\cite{props2}. Consequently $$ \pi \cdot du \equiv 0 \Leftrightarrow \int_{S\setminus\Gamma} u^*d\lambda = 0. $$
When $\pi \cdot du \not\equiv 0$, Hofer, Wysocki and Zehnder define in~\cite{props2}
\begin{equation}\label{wind_pi}
 \wind_\pi(\util) = \# \{ \text{zeros of } \pi \cdot du \}
\end{equation}
where the zeros are counted with multiplicity. In this case, the above remarks show that $0 \leq \wind_\pi(\util) < \infty$
coincides with the oriented intersection number of $\pi \cdot du$ and the zero section of $\E$.

We know that the bundle $u^*\xi$ is trivial as a complex (or symplectic) bundle since $\Gamma \not= \emptyset$ and $u^*\xi$ carries a symplectic form. Now consider a non-vanishing section $Z$ of $u^*\xi$. If $z\in\Gamma$ is non-removable choose a holomorphic chart $\varphi : (B_1(0),0) \to (\varphi(B_1(0)),z)$ for $S$ centered at $z$ and write $u(s,t) = u(\varphi(e^{-2\pi(s+it)}))$ for $s>0$. Define
\[
 \wind_\infty(\util,z,Z) = \lim_{s\to+\infty} \wind(t\mapsto \pi\cdot \partial_su(s,\epsilon t),t\mapsto Z(u(s,\epsilon t))
\]
where $\epsilon=\pm 1$ is the sign of $z$ as a puncture. Here the winding numbers are computed with respect to any $d\lambda$-compatible $J:\xi\to\xi$, as explained after Remark~\ref{winding_numbers}. This is independent of the choice of $\varphi$ and $J$. Now assume $\Gamma$ consists of non-removable punctures and split $\Gamma = \Gamma^+ \sqcup \Gamma^-$ into positive and negative punctures. Finally we define
\begin{equation}\label{}
  \wind_\infty(\util) = \sum_{z\in\Gamma^+} \wind_\infty(\util,z,Z) - \sum_{z\in\Gamma^-} \wind_\infty(\util,z,Z).
\end{equation}
It follows from standard degree theory that this number is independent of $Z$.

\begin{remark}\label{remark_winds_s3}
Later on we will be dealing only with the tight contact structure on $S^3$, which is a trivial bundle. Then we shall choose a global non-vanishing section $Z : S^3 \to \xi$ and write simply $\wind_\infty(\util,z)$ instead of $\wind_\infty(\util,z,Z)$.
\end{remark}

\begin{theo}[Hofer, Wysocki and Zehnder]\label{winds_thm}
Let $\util = (a,u)$ be a finite-energy surface defined on $S\setminus\Gamma$, where $(S,j)$ is a closed Riemann surface and $\Gamma \subset S$ is a finite set consisting of non-removable punctures. If $\util$ has non-degenerate asymptotics then $$ \wind_\pi(\util) = \wind_\infty(\util) - \chi(S) + \#\Gamma. $$
\end{theo}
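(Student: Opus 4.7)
The plan is to interpret $\wind_\pi(\util)$ as the signed zero-count of $\pi\cdot du$ viewed as a section of the complex line bundle $\E$ and compute it as a relative first Chern number after excising small disks around each puncture, then match the boundary contribution with $\wind_\infty(\util)$.

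First I would use non-degenerate asymptotics to compactify. For each $z\in\Gamma$ fix a holomorphic chart $\zeta_z:(U_z,z)\to(\C,0)$ and set $D_z:=\{|\zeta_z|\le\epsilon_z\}$. By the asymptotic formulas of \cite{props1}, in the cylindrical coordinate $\zeta_z=e^{-2\pi(s+it)}$ the section $\pi\cdot du$ has leading behaviour $e^{\nu s}(\eta(t)+o(1))$, with $\nu$ an eigenvalue of $A_P$ and $\eta$ a non-vanishing eigensection of $A_P$; so choosing each $\epsilon_z$ small enough guarantees that $\pi\cdot du$ is non-vanishing on $D_z\setminus\{z\}$. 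Set $\Sigma:=S\setminus\bigsqcup_{z\in\Gamma}\mathrm{int}(D_z)$, with $\partial\Sigma$ oriented as the boundary of $\Sigma$. By the Carleman Similarity Principle applied to the Cauchy--Riemann type equation $\pi\cdot du\circ j=J(u)\circ\pi\cdot du$, the interior zeros of $\pi\cdot du$ on $\Sigma$ are isolated and each contributes positively to the oriented intersection count with the zero section, so $\wind_\pi(\util)$ equals the signed zero count of $\pi\cdot du|_\Sigma$.

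Since $\pi\cdot du$ is non-vanishing on $\partial\Sigma$, this signed count is the relative first Chern number $c_1(\E|_\Sigma,\pi\cdot du|_{\partial\Sigma})$. I would evaluate it by decomposing $\E|_\Sigma=\wedge^{0,1}T^*\Sigma\otimes_\C u^*\xi$ and comparing to the tensor framing $d\bar\zeta_z\otimes Z$ on each $\partial D_z$; additivity of relative Chern numbers over tensor products then gives
\[
\wind_\pi(\util)=c_1\!\bigl(\wedge^{0,1}T^*\Sigma,\{d\bar\zeta_z\}\bigr)+c_1\!\bigl(u^*\xi|_\Sigma,Z\bigr)+\sum_{z\in\Gamma}\wind\!\bigl(\pi\cdot du|_{\partial D_z},\,d\bar\zeta_z\otimes Z\bigr).
\]
The $u^*\xi$ term vanishes because $Z$ is a globally non-vanishing section. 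The $\wedge^{0,1}T^*\Sigma$ term equals $-\chi(S)$: each $d\bar\zeta_z$ extends without zero across $D_z$ in its chart, so the relative Chern number coincides with the first Chern number of the cotangent line bundle on the closed surface. For the boundary term, writing $\pi\cdot du$ explicitly in the cylindrical coordinate $\zeta_z=e^{-2\pi(s+it)}$ and comparing with the definition of $\wind_\infty(\util,z,Z)$ shows that each boundary circle contributes $\wind_\infty(\util,z,Z)+1$ when $z\in\Gamma^+$ and $-\wind_\infty(\util,z,Z)+1$ when $z\in\Gamma^-$; the extra $+1$ per puncture arises from the Jacobian factor relating $\partial_s$ to $\partial_{\zeta_z}$, while the sign flip at negative punctures accounts for the $\varepsilon=-1$ convention built into $\wind_\infty$. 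Summing, the total boundary contribution equals $\wind_\infty(\util)+\#\Gamma$, which combined with the previous two terms yields $\wind_\pi(\util)=\wind_\infty(\util)-\chi(S)+\#\Gamma$.

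The hard part is uniform sign-bookkeeping: tracking the orientation of $\partial D_z$ as a component of $\partial\Sigma$ versus as the boundary of $D_z$, the factor $\varepsilon=\pm 1$ in the definition of $\wind_\infty$ at negative punctures, and the extra $+1$ per puncture appearing when changing from the intrinsic frame $(\partial_s,\partial_t)$ to the chart frame $\zeta_z$. Once these signs are consistently tracked, the identity follows immediately by summing the three contributions.
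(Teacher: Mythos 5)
Your proof is correct and, up to bookkeeping, reproduces the original Chern-number argument of Hofer--Wysocki--Zehnder (Properties II), which the paper cites without proof. The decomposition of $c_1(\E|_\Sigma,\pi\cdot du|_{\partial\Sigma})$ into a $T^*S$-contribution of $-\chi(S)$, a vanishing $u^*\xi$-contribution, and boundary windings $\wind_\infty(\util,z,Z)+1$ at positive punctures and $-\wind_\infty(\util,z,Z)+1$ at negative ones is exactly the right arithmetic; I verified the $\pm 1$ corrections (arising from $\partial_s=-2\pi\zeta\,\partial_\zeta$) and the $\epsilon=-1$ sign flip, and the sum gives $\wind_\infty(\util)+\#\Gamma$ as claimed. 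One notational caveat: since $\pi\cdot du\cdot j = J\cdot\pi\cdot du$, the map $\pi\cdot du$ is $\C$-\emph{linear}, so the natural bundle is $\mathrm{Hom}_\C(TS,u^*\xi)\cong\wedge^{1,0}T^*S\otimes_\C u^*\xi$, whose cotangent factor has $c_1=-\chi(S)$; the paper's label ``$\wedge^{0,1}$'' and your use of $d\bar\zeta_z$ are at odds with the value $-\chi(S)$ you (correctly) use, so for a clean write-up you should name the bundle $\mathrm{Hom}_\C(TS,u^*\xi)$ and trivialize by $d\zeta_z\otimes Z$.
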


Since $\wind_\pi(\util)$ is always non-negative, it follows from the above theorem that $\wind_\infty(\util) \geq 1$ for planes.

\begin{defi}[Fast Planes]
A finite-energy plane $\util$ is called fast if it has non-degenerate asymptotics, its asymptotic limit at $\infty$ is a simply covered closed Reeb orbit and $\wind_\pi(\util) = 0$.
\end{defi}

\subsubsection{Surfaces with vanishing $d\lambda$-energy}

The following important theorem is pro\-ved in~\cite{props2}.

\begin{theo}[Hofer, Wysocki and Zehnder]\label{zero_dlmabda_FES}
Let $\util = (a,u) : \C \setminus \Gamma \to \R \times M$ be a finite-energy punctured sphere, where $\Gamma \subset \C$ is the finite set of negative punctures and $\infty$ is the unique positive puncture. If $\pi \cdot du \equiv 0$ then there exists a non-constant polynomial $p: \C \to \C$ and a periodic Reeb orbit $P = (x,T) \in \P$ such that
\[
\begin{array}{ccc}
  p^{-1}(0) = \Gamma & \text{ and } & \util = F_P \circ p
\end{array}
\]
where $F_P : \C \setminus\{0\} \to \R\times M$ is defined by $F_P(z=\est) = (Ts,x(Tt))$.
\end{theo}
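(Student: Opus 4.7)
The plan is to exploit the condition $\pi \cdot du \equiv 0$ to see that $u$ traces out a single periodic Reeb orbit, and then to use the Cauchy-Riemann equation on $\util$ to identify $\util$ with a holomorphic reparametrization of $F_P$. First I would observe that $\pi \cdot du \equiv 0$ forces $du(X) \in \R R(u)$ pointwise, so the image of $u$ is contained in a single integral curve of $R$; parametrize it by $x : \R \to M$ with $\dot x = R(x)$, and on any simply connected open subset of $\C \setminus \Gamma$ write $u = x \circ f$ for a smooth real-valued $f$. Theorem~\ref{thm93} provides closed Reeb orbits as asymptotic limits at each puncture, and the images of these orbits lie in the image of $x$, so $x$ itself is periodic. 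Let $T>0$ be its prime period, so that each asymptotic period equals $k_j T$ for some $k_j \in \Z^+$, and the positive puncture at $\infty$ corresponds to $k_\infty \in \Z^+$.

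Next I would unpack (\ref{cr}) in the splitting $TM = \R R \oplus \xi$. Using $\jtil \partial_a = R$ and $\jtil|_\xi = J$, the Cauchy-Riemann equation reduces to the pair
\[
\pi \cdot du \circ j = J \pi \cdot du, \qquad da = u^* \lambda \circ j.
\]
The first is automatic. Since $\lambda(R)\equiv 1$ and $du = df \cdot R(u)$, the second becomes $da = df \circ j$; these are exactly the Cauchy-Riemann equations saying that $G := a + if$ is holomorphic on every simply connected piece of $\C \setminus \Gamma$. Globally $G$ is multi-valued, but the monodromy is purely imaginary because $a$ is single-valued, and the asymptotic formula (Definition~\ref{behavior}) pins the monodromy of $f$ around $z_j$ to an element of $T_j\Z \subset T\Z$. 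Consequently
\[
p(z) := \exp\!\left( \frac{2\pi G(z)}{T} \right)
\]
defines a single-valued holomorphic function on $\C \setminus \Gamma$.

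Finally I would analyse $p$ at the punctures. At a negative puncture $z_j$, the asymptotic formula gives $a \sim (T_j/2\pi) \log|z - z_j|$, hence $|p(z)| \sim C_j |z - z_j|^{k_j}$; so $p$ extends holomorphically across $z_j$ with a zero of order $k_j$. At $\infty$, the positive puncture yields $a \sim (T_\infty/2\pi) \log|z|$, hence $|p(z)| \sim C_\infty |z|^{k_\infty}$; so $p$ is a polynomial of degree $k_\infty = \sum_j k_j$ with zero set exactly $\Gamma$. Writing $p(z) = e^{2\pi(s(z)+it(z))}$, we read off $a(z) = T s(z)$ and $u(z) = x(T t(z))$, which is precisely $\util = F_P \circ p$ with $P = (x, T)$. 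The main obstacle is the single-valuedness of $p$, which rests on the monodromy computation and the fact, established in the first step, that every asymptotic limit is an iterate of the simple orbit underlying $x$; once that integrality is in hand, the remaining analysis is a straightforward extraction of a polynomial from the asymptotic data of $a$.
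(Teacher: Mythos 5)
The paper does not prove this statement; it cites it from~\cite{props2} (Hofer--Wysocki--Zehnder, \emph{Properties~II}), so there is no in-paper proof to compare against. Judged on its own, your argument is correct and is essentially the standard HWZ argument: $\pi\cdot du\equiv 0$ forces $u$ into a single flow line, which must be periodic because the circles $u(s,\cdot)$ are non-constant closed loops in that flow line and converge to closed Reeb orbits; the Cauchy--Riemann equation splits so that $a\circ j = -u^*\lambda$, and with $u^*\lambda = df$ the function $G=a+if$ is holomorphic with single-valued differential $dG=da+i\,df$; the monodromy of $G$ lies in $iT\Z$ because the asymptotic limits are covers of the prime orbit, so $p=\exp(2\pi G/T)$ is single-valued; and the logarithmic growth of $a$ at each puncture makes $p$ extend holomorphically with a zero of order $k_j$ at $z_j$ and polynomial growth of degree $k_\infty$ at $\infty$, which forces $p$ to be a polynomial of degree $k_\infty=\sum_j k_j$. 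The identification $\util=F_P\circ p$ with $T$ the prime period then follows by unwinding the exponential.

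Two points are worth flagging. First, the precise monodromy of $f$ around $z_j$ is $\pm T_j$, not merely ``an element of $T_j\Z$''; what you actually need, and do use, is only $T_j\in T\Z$, so the conclusion is unaffected, but the phrasing could mislead. Second, and more substantively, you invoke the exponential asymptotic formula of Definition~\ref{behavior} to get $a\sim (T_j/2\pi)\log|z-z_j|$. That formula is guaranteed only for non-degenerate punctures (Theorem~\ref{behavior_HWZ_1}, i.e.\ when $\lambda$ is non-degenerate), whereas Theorem~\ref{zero_dlmabda_FES} as stated makes no such assumption. In the present paper this is harmless because the theorem is applied only under the standing non-degeneracy hypothesis, but for the statement in full generality one should instead argue directly from the harmonicity of $a$, the single-valuedness of $dG$, and the finite-energy bound: writing $G=\frac{\nu_j}{2\pi}\log(z-z_j)+H(z)$ with $H$ single-valued near $z_j$, the finite-energy condition rules out a pole or essential singularity of $H$, and the sign of the mass forces $\nu_j<0$, from which the same conclusion about $p$ follows without Definition~\ref{behavior}.
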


\begin{remark}\label{rmk_simp_cov}
In the above statement note that if $p$ has degree $k$ then the asymptotic limit of $\util$ at $\infty$ is $P^k=(x,kT)$.
\end{remark}

\begin{cor}
If $\util=(a,u)$ is a finite-energy plane then $\int_\C u^*d\lambda > 0$.
\end{cor}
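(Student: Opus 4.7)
The plan is a short proof by contradiction using Theorem~\ref{zero_dlmabda_FES} and the fundamental theorem of algebra.

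First I would record the non-negativity: since $\util = (a,u)$ satisfies the Cauchy--Riemann equations \eqref{cr} and $J$ is $d\lambda$-compatible, the pullback $u^*d\lambda$ is pointwise non-negative as a $2$-form on $\C$ (this is the standard computation showing $|{\pi\cdot du}|^2 \, dA = u^*d\lambda$ up to a positive factor). In particular $\int_\C u^*d\lambda \geq 0$, with equality if and only if $\pi\cdot du \equiv 0$. So the claim reduces to excluding $\pi\cdot du \equiv 0$.

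Now suppose for contradiction that $\pi\cdot du\equiv 0$. Viewing the finite-energy plane as a punctured sphere with unique positive puncture at $\infty$ and empty set of negative punctures $\Gamma = \emptyset \subset \C$, Theorem~\ref{zero_dlmabda_FES} applies and yields a non-constant polynomial $p:\C\to\C$ together with a periodic Reeb orbit $P=(x,T)\in\P$ such that
\[
  p^{-1}(0) = \Gamma = \emptyset \qquad \text{and} \qquad \util = F_P \circ p.
\]
But a non-constant polynomial on $\C$ must have at least one root by the fundamental theorem of algebra, so $p^{-1}(0) = \emptyset$ is impossible. This contradiction shows $\pi\cdot du \not\equiv 0$, hence $\int_\C u^*d\lambda > 0$.

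There is no real obstacle here; the only thing to be careful about is checking that Theorem~\ref{zero_dlmabda_FES} genuinely allows $\Gamma = \emptyset$ (the statement is phrased so that $\Gamma$ is the set of negative punctures, which is empty for a plane), and that the positive puncture hypothesis on $\infty$ is automatic — the plane has only one puncture, and by the discussion following \eqref{mass_defn} at least one puncture must be positive and non-removable, so $\infty$ is positive.
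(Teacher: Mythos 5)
Your proof is correct and is the argument the paper clearly intends (the corollary is stated without proof immediately after Theorem~\ref{zero_dlmabda_FES}, and the paper has already recorded in Section~\ref{alg_inv} the equivalence $\pi\cdot du \equiv 0 \Leftrightarrow \int u^*d\lambda = 0$). Your careful check that $\Gamma = \emptyset$ is allowed and that $\infty$ is automatically a positive puncture addresses exactly the two points one should verify.
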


\subsubsection{Cylinders with small energy}

The following theorem is Lemma 4.9 from ~\cite{fols}, and is crucial for the so-called ``soft-rescalling''. See also~\cite{long}.

\begin{theo}[Hofer, Wysocki and Zehnder]\label{small_energy}
Fix numbers $C>0$, $e>0$ and let $\W$ be a $ S^1$-invariant open neighborhood of the set of loops
\[
 \{ t \in  S^1 \mapsto x(Tt) \in M : P = (x,T) \in \P \}
\]
in the space $C^\infty( S^1,M)$. Assume that $\lambda$ is non-degenerate and that if $P$ and $\hat P$ are distinct closed Reeb orbits then the loops $t\mapsto x(Tt + c)$ and $t\mapsto \hat x (\hat Tt + d)$ belong to distinct components of $\W$, for every $c,d \in \R$. Let $\sigma(C)$ be defined by (\ref{sigma}). Then there exists $h>0$ such that the following holds. If $$ \util = (a,u) : [r,R] \to \R \times M $$ is a smooth map satisfying
\[
  \bar\partial_{\jtil}(\util) = 0,\ E(\util) \leq C,\ \int_{[r,R]\times S^1} u^*d\lambda \leq \sigma(C) \text{ and } \int_{\{r\}\times  S^1} u^*\lambda \geq e
\]
and if $r+h \leq R-h$ then each loop $t \in  S^1 \mapsto u(s,t) \in M$ with $s\in [r+h,R-h]$ belongs to $\W$.
\end{theo}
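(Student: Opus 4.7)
The plan is to argue by contradiction: if the conclusion fails, then for each $n \in \N$ one obtains a $\jtil$-holomorphic cylinder $\util_n = (a_n, u_n) : [r_n, R_n] \times S^1 \to \R \times M$ satisfying the hypotheses with $R_n - r_n \geq 2n$, together with $s_n \in [r_n + n, R_n - n]$ such that $u_n(s_n, \cdot) \notin \W$. Translating, I set $\vtil_n(s, t) := (a_n(s + s_n, t) - a_n(s_n, 0),\, u_n(s + s_n, t))$; these $\jtil$-holomorphic maps are defined on domains exhausting $\R \times S^1$. My goal is to extract a $C^\infty_{loc}$ subsequential limit $\vtil = (a, v) : \R \times S^1 \to \R \times M$, identify it as a trivial cylinder over a periodic Reeb orbit, and conclude via openness of $\W$.

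The main technical obstacle is establishing uniform gradient bounds on every compact subset of $\R \times S^1$; this I would handle by a standard Hofer rescaling argument. If gradients blew up along a subsequence, rescaling around the blow-up points would produce a non-constant $\jtil$-holomorphic map $\wtil : \C \to \R \times M$ with $E(\wtil) \leq C$ and $d\lambda$-energy at most $\sigma(C)$. Since $d(e^a \lambda)$ tames $\jtil$ and is exact there are no non-constant $\jtil$-holomorphic spheres, so $\infty$ must be a non-removable (necessarily positive) puncture and $\wtil$ is a finite-energy plane. By the corollary to Theorem~\ref{zero_dlmabda_FES} its $d\lambda$-energy is strictly positive and, by Stokes' theorem, equals the period of its asymptotic limit, which is at least $\sigma_1 > \sigma(C)$---a contradiction.

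Granted the gradient bounds, Arzel\`a--Ascoli plus elliptic regularity yield the desired $C^\infty_{loc}$ limit $\vtil$, which is $\jtil$-holomorphic with $E(\vtil) \leq C$. Since $v^*d\lambda \geq 0$ the map $s \mapsto \int_{\{s\}\times S^1} v^*\lambda$ is non-decreasing, and passing to the limit in the pointwise estimate $\int_{\{s + s_n\} \times S^1} u_n^*\lambda \geq \int_{\{r_n\} \times S^1} u_n^*\lambda \geq e$ (valid whenever $s + s_n \geq r_n$, automatic for large $n$) gives $\int_{\{s\}\times S^1} v^*\lambda \geq e$ for every $s \in \R$. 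Setting $T^\pm = \lim_{s\to\pm\infty} \int_{\{s\}\times S^1} v^*\lambda$, I get $e \leq T^- \leq T^+ \leq E(\vtil) \leq C$ and $T^+ - T^- = \int_{\R \times S^1} v^*d\lambda \leq \sigma(C)$. Theorem~\ref{thm93} identifies $T^\pm$ as periods of closed Reeb orbits, and because they lie in $(0, C]$ and differ by strictly less than $\sigma_2(C)$, the definition of $\sigma_2(C)$ forces $T^+ = T^-$. Hence $\int_{\R \times S^1} v^*d\lambda = 0$.

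Viewing $\vtil$ as a map $\C \setminus \{0\} \to \R \times M$ via $z = e^{2\pi(s + it)}$, the puncture $0$ is negative and $\infty$ is the unique positive puncture, so Theorem~\ref{zero_dlmabda_FES} applies and yields $\vtil = F_P \circ p$ for a closed Reeb orbit $P = (x, T)$ and a polynomial $p$ with $p^{-1}(0) = \{0\}$, i.e.\ $p(z) = c z^k$. Direct substitution then gives $\vtil(s, t) = (Tks + d_0,\, x(Tkt + c_0))$ for some constants $c_0, d_0$; in particular the loop $t \mapsto v(0, t)$ parametrizes a periodic Reeb orbit and therefore lies in $\W$. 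Openness of $\W$ in $C^\infty(S^1, M)$ combined with $C^\infty_{loc}$ convergence $u_n(s_n, \cdot) = v_n(0, \cdot) \to v(0, \cdot)$ then forces $u_n(s_n, \cdot) \in \W$ for all sufficiently large $n$, contradicting the standing choice of $s_n$ and completing the argument. The hypothesis that distinct Reeb loops lie in distinct components of $\W$ is not used to obtain ``belongs to $\W$'' itself, but is the natural companion assumption when applying the theorem to track a single orbit along a cylinder.
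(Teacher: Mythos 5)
The paper does not prove this theorem itself; it cites it as Lemma~4.9 of~\cite{fols}. Your argument is correct and is, up to presentation, the standard proof of that lemma: argue by contradiction, translate so that the offending circle sits at $s=0$, rule out bubbling with the $d\lambda$-energy bound $\sigma(C)<\sigma_1$, extract a $C^\infty_{loc}$-limit cylinder $\vtil$, use the chain $e\le T^-\le T^+\le C$ together with $T^+-T^-\le\sigma(C)<\sigma_2(C)$ and the gap condition defining $\sigma_2(C)$ to force $T^+=T^-$ (this is where non-degeneracy enters), conclude $\pi\cdot dv\equiv0$, and identify $\vtil$ as a cover of a trivial cylinder by Theorem~\ref{zero_dlmabda_FES}, so that $v(0,\cdot)\in\W$, contradicting $u_n(s_n,\cdot)\notin\W$ via openness. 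Your closing observation is also accurate: the ``distinct components'' hypothesis on $\W$ plays no role in deriving the stated conclusion and is there so that, when the lemma is applied (e.g.\ in the soft-rescaling in Section~\ref{bubbling_section}), all the loops $u(s,\cdot)$ for $s\in[r+h,R-h]$ can be placed in a single component and thus identified with a fixed periodic orbit.
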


\section{Bubbling-off Analysis}\label{boff_analysis}

In this section we make the following important standing assumption:
\[
  \lambda \text{ is a non-degenerate tight contact form on } S^3.
\]
We fix a Riemannian metric on $\R \times  S^3$ of the form
\begin{equation}\label{metric_g0}
 g_0 = da \otimes da + \pi_{ S^3}^*g
\end{equation}
where $g$ is some Riemannian metric on $ S^3$. Here $\pi_{ S^3} : \R \times  S^3 \to  S^3$ and $a : \R \times  S^3 \to \R$ are projections onto the second and first coordinates, respectively. For any point $(r,p) \in \R\times  S^3$ and any real linear map $L:\C \to T_{(r,p)}(\R\times  S^3)$ we shall always denote by $|L|$ the norm induced by the Euclidean inner-product of $\C$ and by the metric $g_0$.

We also fix some $J \in \J(\xi,d\lambda)$ which, in turn, induces the almost complex structure $\jtil$ on $\R\times  S^3$ by formula \eqref{jtil}.

\subsection{Estimates on the Conley-Zehnder indices}

The following theorem can be extracted from~\cite{props1}.

\begin{theo}[Hofer, Wysocki and Zehnder]\label{asymp_efunction}
Let $(S,j)$ be a closed Riemann surface, $\Gamma \subset S$ be a finite set and
\[
 \vtil = (b,v) : S \setminus \Gamma \to \R \times  S^3
\]
be a finite energy surface such that $\Gamma$ consists of non-removable punctures. Fix $z \in \Gamma$ and assume $\vtil$ is asymptotic to $P=(x,T)$ at $z$. Choose a holomorphic chart $\psi:(B_1(0),0) \to (V,z)$ centered at $z$. Write $\vtil(s,t) = \vtil(\psi(e^{-2\pi(s+it)}))$ for $(s,t) \in \R^+ \times  S^1$ if $z$ is a positive puncture, or $\vtil(s,t) = \vtil(\psi(e^{2\pi(s+it)}))$ for $(s,t) \in \R^- \times  S^1$ if $z$ is a negative puncture. By rotating the chart we can assume $v(s,t) \to x(Tt)$ in $C^\infty( S^1, S^3)$ as $|s|\to+\infty$. Then either $\pi \cdot dv$ vanishes identically or
\begin{enumerate}
 \item If $z$ is a positive puncture then there exists a smooth non-vanishing function $f : \R^+\times  S^1 \to \R$ such that
     \[
      \lim_{s\rightarrow+\infty} f(s,t) \pi\cdot\partial_sv = \eta(t)\text{ in } C^\infty(S^1,\xi)
     \]
     where $\eta$ is an eigenfunction of $A_P$ associated to an eigenvalue $\nu < 0$.
 \item If $z$ is a negative puncture then there exists a smooth non-vanishing function $f : \R^-\times  S^1 \to \R$ such that
     \[
      \lim_{s\rightarrow-\infty} f(s,t) \pi\cdot\partial_sv = \eta(t)\text{ in } C^\infty(S^1,\xi)
     \]
     where $\eta$ is an eigenfunction of $A_P$ associated to an eigenvalue $\nu > 0$.
\end{enumerate}
\end{theo}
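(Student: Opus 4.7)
The plan is to reduce the assertion to the linearization of the Cauchy–Riemann equation at the asymptotic orbit $P=(x,T)$ and then apply the spectral theory of $A_P$; if $\pi\cdot dv\equiv 0$ there is nothing to prove, so I assume $\pi\cdot dv\not\equiv 0$. I treat the positive puncture (the negative case is symmetric under $s\mapsto -s$). Since $\lambda$ is non-degenerate, Theorem~\ref{behavior_HWZ_1} gives non-degenerate asymptotics at $z$, so after rotating $\psi$ one has $u(s,t)=\exp_{x(Tt+c)}\zeta(s,t)$, with $\zeta(s,\cdot)$ a section of $x_T^*\xi$ that decays exponentially in every $C^k(S^1,\xi)$ as $s\to +\infty$ (the higher-order decay comes from elliptic bootstrapping applied to \eqref{cr}). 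Trivializing $\xi_P$ by a smooth symplectic frame $\Psi_t$ and regarding $\zeta$ as a $\C$-valued map, a direct expansion of $\bar\partial_{\jtil}\vtil=0$ followed by projection to $\xi$ produces an equation
\begin{equation*}
\partial_s\zeta + J_0\,\partial_t\zeta + S(s,t)\,\zeta \;=\; r(s,t,\zeta),
\end{equation*}
where $J_0$ is constant, $S(s,t)\to S_\infty(t)$ exponentially with $S_\infty$ symmetric for a natural inner product, the operator $L:=-J_0\partial_t-J_0 S_\infty$ represents $A_P$ in the trivialization $\Psi$, and the remainder satisfies $|r(s,t,\zeta)|=O(|\zeta|^2)+O(e^{-\delta s}|\zeta|)$ for some $\delta>0$.

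The key analytic ingredient is the asymptotic formula
\begin{equation*}
\zeta(s,t)=e^{\nu s}\bigl(c_0\,\eta(t)+\rho(s,t)\bigr),\qquad \rho(s,\cdot)\to 0 \text{ in } C^\infty(S^1,\C) \text{ as } s\to+\infty,
\end{equation*}
for some negative eigenvalue $\nu$ of $A_P$, a corresponding nontrivial eigenfunction $\eta$, and a nonzero constant $c_0$. This is the content of the main asymptotic theorem of~\cite{props1}: viewing the PDE above as a non-autonomous ODE on $L^2(S^1,\C)$ with asymptotically self-adjoint generator, one combines exponential decay of $\zeta$, the spectral gap of $A_P$, and a centre-stable manifold (or projection-plus-Gronwall) argument to show that $\zeta(s,\cdot)/\|\zeta(s,\cdot)\|_{L^2}$ converges smoothly to a single eigenfunction governing the leading order; ruling out ``switching'' of dominant eigenspaces in the presence of the nonlinear remainder $r$ is the technical heart of that proof. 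The constraint $\nu<0$ is then forced by $\zeta\to 0$ as $s\to+\infty$, and $c_0\neq 0$ is equivalent to $\pi\cdot dv$ not vanishing identically near $z$ (the alternative is consistent with Theorem~\ref{zero_dlmabda_FES}).

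To conclude, I would translate $\pi\cdot\partial_s v$ through $\Psi_t$: up to multiplicative and additive corrections that converge smoothly to the identity and zero respectively, $\pi\cdot\partial_s v$ corresponds to $\partial_s\zeta$. Differentiating the asymptotic expansion yields $\partial_s\zeta(s,t)=\nu c_0 e^{\nu s}\eta(t)+o(e^{\nu s})$ in every $C^k$-norm in $t$. Setting $f(s,t):=(\nu c_0 e^{\nu s})^{-1}$ multiplied by the smooth non-vanishing factor that undoes $\Psi$, one obtains a smooth non-vanishing real-valued function on $\R^+\times S^1$ with $f(s,t)\,\pi\cdot\partial_s v\to \eta$ in $C^\infty(S^1,\xi)$, where $\eta$ is reinterpreted as a section of $x_T^*\xi$ via $\Psi$. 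The negative puncture case is identical after $s\mapsto -s$, and decay of $\zeta$ as $s\to -\infty$ now forces $\nu>0$. The main obstacle in this programme is the eigenfunction-alignment step imported from~\cite{props1}; once it is granted, converting the expansion of $\zeta$ into the required statement for $\pi\cdot\partial_s v$ is straightforward.
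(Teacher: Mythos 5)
The paper states this theorem without proof, presenting it as ``extracted from~\cite{props1}''; your proposal faithfully outlines the argument from that reference, correctly isolating the HWZ asymptotic representation formula for $\zeta$ (the eigenfunction-alignment step) as the technical core and explaining how the stated limit for $\pi\cdot\partial_s v$ then follows by differentiation and translation through the symplectic trivialization. Aside from minor slips---the operator matching $A_P$ in the trivialization should read $-J_0\partial_t - S_\infty$ rather than $-J_0\partial_t - J_0 S_\infty$, and since $\vtil = (b,v)$ the exponential-map normal form should be written for $v$ rather than $u$---your sketch is sound and consistent with the cited source.
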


Note that the above statement is valid if one assumes, as we have, that the contact form is non-degenerate.

\begin{cor}\label{winds_spec}
Let $(S,j)$, $\Gamma$, $\vtil = (b,v)$, $z\in\Gamma$ and $P=(x,T)$ be as in the statement above. Define $\nu^{pos}$ and $\nu^{neg}$ by (\ref{extreme_evalues}). If $\pi \cdot dv$ does not vanish identically then
\begin{enumerate}
 \item $\wind_\infty(\vtil,z) \leq \wind(\nu^{neg},\alpha_P)$ if $z$ is a positive puncture.
 \item $\wind_\infty(\vtil,z) \geq \wind(\nu^{pos},\alpha_P)$ if $z$ is a negative puncture.
\end{enumerate}
\end{cor}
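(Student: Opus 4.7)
The plan is to deduce the corollary directly from Theorem~\ref{asymp_efunction} combined with the monotonicity of winding numbers along the spectrum of $A_P$ provided by Lemma~\ref{winding_spectrum_1}(3).

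Since $\pi\cdot dv$ does not vanish identically, Theorem~\ref{asymp_efunction} produces, on a half-cylinder near $z$, a smooth non-vanishing real function $f$ and an eigenfunction $\eta$ of $A_P$ such that $f(s,\cdot)\,\pi\cdot\partial_s v(s,\cdot)\to \eta$ in $C^\infty(S^1,\xi)$. The associated eigenvalue $\nu$ is strictly negative when $z$ is a positive puncture and strictly positive when $z$ is negative; in particular $\nu\le \nu^{neg}$ in the first case and $\nu\ge \nu^{pos}$ in the second.

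Next I would translate $\wind_\infty(\vtil,z)$ into the winding number of $\eta$ with respect to the class $\alpha_P$. This class is determined by any global symplectic trivialization of $(\xi,d\lambda)\to S^3$, equivalently by any global non-vanishing section $Z$ of $\xi$, as in Remark~\ref{remark_winds_s3}. Since winding numbers are invariant under multiplication by nowhere-zero real functions and depend continuously on non-vanishing loops in $C^0$, the convergence above yields at a positive puncture the identity $\wind_\infty(\vtil,z) = \wind(\eta,\alpha_P)$. At a negative puncture, one must reconcile the parameterization $e^{-2\pi(s+it)}$ used in the definition of $\wind_\infty$ (where the factor $\epsilon=-1$ reverses the $t$-direction) with the parameterization $e^{2\pi(s+it)}$, $s\to -\infty$, used in Theorem~\ref{asymp_efunction}; a direct change of variables shows that the two loops traverse the asymptotic orbit in the same sense and that $\pi\cdot\partial_s v$ in one convention differs from its counterpart in the other by an overall sign. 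Winding is insensitive to this sign, so the identity $\wind_\infty(\vtil,z)=\wind(\eta,\alpha_P)$ continues to hold at negative punctures.

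Finally, Lemma~\ref{winding_spectrum_1}(3) delivers $\wind(\eta,\alpha_P)\le \wind(\nu^{neg},\alpha_P)$ when $\nu\le \nu^{neg}$, and $\wind(\eta,\alpha_P)\ge \wind(\nu^{pos},\alpha_P)$ when $\nu\ge \nu^{pos}$, which combined with the previous step gives the two claimed inequalities. The only non-mechanical part of the argument is the orientation bookkeeping at the negative puncture, where one must be careful to match the parameterization conventions used in Theorem~\ref{asymp_efunction} and in the definition of $\wind_\infty$; everything else is a direct quotation of results already stated.
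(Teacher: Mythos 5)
Your proof is correct and is exactly the argument the paper intends; the paper simply states that the corollary "follows immediately from the definition of $\wind_\infty$, from Theorem~\ref{asymp_efunction} and from Lemma~\ref{winding_spectrum_1}," and you have filled in precisely those steps, including the orientation/parameterization check at negative punctures (the $e^{2\pi(s+it)}$, $s\to-\infty$ convention in Theorem~\ref{asymp_efunction} versus the $e^{-2\pi(s+it)}$, $s\to+\infty$, $\epsilon=-1$ convention in the definition of $\wind_\infty$) and the invariance of winding under multiplication by a nowhere-zero real factor.
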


The proof of the above corollary follows immediately from the definition of $\wind_\infty$, from Theorem~\ref{asymp_efunction} and from Lemma~\ref{winding_spectrum_1}. See also Remark~\ref{remark_winds_s3}.

\begin{lemma}\label{wind1}
Let $\Gamma = \{z_1,\dots,z_N\} \subset \C$ be non-empty and finite, and let $$ \util = (a,u) : \dot \C = \C \setminus \Gamma \to \R\times  S^3 $$ be a finite-energy surface with exactly one positive puncture at $\infty$, and negative punctures at the points of $\Gamma$. If $\util$ is asymptotic to $P_\infty$ at $\infty$ and is asymptotic to $P_j$ at $z_j$ ($j=1\dots N$) then the following assertions are true.
\begin{enumerate}
 \item If $\int_{\dot \C} u^*d\lambda > 0$ and $\mu_{CZ}(P_\infty) \leq 1$ then $\mu_{CZ}(P_{j_0}) \leq 1$ for some $j_0 \in \{1,\dots,N\}$.
 \item If $\int_{\dot \C} u^*d\lambda = 0$ and $\mu_{CZ}(P_\infty) = 1$ then $\mu_{CZ}(P_j) = 1$ $\forall j=1,\dots N$.
 \item If $\int_{\dot \C} u^*d\lambda = 0$ and $\mu_{CZ}(P_\infty) \leq 0$ then $\mu_{CZ}(P_j) \leq 0$ $\forall j=1,\dots N$.
\end{enumerate}
\end{lemma}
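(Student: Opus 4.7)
For claim (1), the plan is to argue by contradiction using the winding formula of Theorem~\ref{winds_thm} together with the asymptotic control from Corollary~\ref{winds_spec}. Since $\int_{\dot\C}u^*d\lambda>0$, we have $\pi\cdot du\not\equiv 0$ by the discussion preceding~\eqref{wind_pi}, and non-degenerate asymptotics hold by Theorem~\ref{behavior_HWZ_1}. Viewing $\util$ as a finite-energy surface on $S^2\setminus(\Gamma\cup\{\infty\})$ with $N+1$ punctures, Theorem~\ref{winds_thm} gives $\wind_\pi(\util)=\wind_\infty(\util)+N-1$. A short case analysis using formula~\eqref{cz_index_alternative} and Lemmas~\ref{winding_spectrum_1}, \ref{winding_spectrum_2} yields the dictionary
\[
\wind(\nu^{neg}_P,\alpha_P)=\lfloor \mu_{CZ}(P)/2\rfloor, \qquad \wind(\nu^{pos}_P,\alpha_P)=\lceil \mu_{CZ}(P)/2\rceil
\]
for every non-degenerate $P$. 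Suppose now $\mu_{CZ}(P_j)\geq 2$ for every $j$. Then $\wind(\nu^{pos}_{P_j},\alpha_{P_j})\geq 1$, while $\mu_{CZ}(P_\infty)\leq 1$ gives $\wind(\nu^{neg}_{P_\infty},\alpha_{P_\infty})\leq 0$. Applying Corollary~\ref{winds_spec} at $\infty$ and at each $z_j$ (remembering the sign conventions built into the definition of $\wind_\infty(\util)$) yields $\wind_\infty(\util)\leq -N$, hence $\wind_\pi(\util)\leq -1$. This contradicts $\wind_\pi(\util)\geq 0$, so some $j_0$ must satisfy $\mu_{CZ}(P_{j_0})\leq 1$.

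For (2) and (3), the hypothesis $\int_{\dot\C}u^*d\lambda=0$ puts us in the setting of Theorem~\ref{zero_dlmabda_FES}: there is a non-constant polynomial $p:\C\to\C$ with $p^{-1}(0)=\Gamma$ and a closed Reeb orbit $P=(x,T)$ such that $\util=F_P\circ p$. Setting $k=\deg p$ and letting $m_j$ denote the order of the zero of $p$ at $z_j$, Remark~\ref{rmk_simp_cov} identifies the asymptotic limit at $\infty$ as $P_\infty=P^k$, while inspecting $F_P\circ p$ on a small punctured disk around $z_j$ identifies $P_j=P^{m_j}$. Since each $m_j\leq k$, conclusion (2) is precisely Lemma~\ref{cz_index_iterates}(i) applied to $(l,k)=(m_j,k)$, and conclusion (3) is precisely Lemma~\ref{cz_index_iterates}(ii) applied to the same pair.

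The argument is essentially bookkeeping once the correct tools are in place; the only subtle point is the translation between the bound on $\mu_{CZ}$ and bounds on $\wind(\nu^{neg})$, $\wind(\nu^{pos})$. In particular one must verify that a single extremal winding value suffices to force the sign of the expression $\wind_\infty(\util)+N-1$, which is exactly why the hypothesis $\mu_{CZ}(P_\infty)\leq 1$ (rather than some weaker inequality) is needed to generate the $-1$ at the end of the estimate.
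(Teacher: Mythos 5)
Your proof is correct and follows essentially the same route as the paper: for claim (1) you contradict $\wind_\pi\geq 0$ via Theorem~\ref{winds_thm}, Corollary~\ref{winds_spec}, and formula~\eqref{cz_index_alternative}, and for claims (2) and (3) you invoke Theorem~\ref{zero_dlmabda_FES} to realize the asymptotic orbits as iterates of a single orbit and then apply Lemma~\ref{cz_index_iterates}. The explicit $\lfloor\cdot\rfloor/\lceil\cdot\rceil$ dictionary you record is a pleasant way to package the step the paper summarizes as ``follows easily from~\eqref{cz_index_alternative},'' but the underlying argument is identical.
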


\begin{proof}
Equations (\ref{extreme_evalues}) define special eigenvalues $\nu^{pos}_j>0>\nu^{neg}_j$ of the asymptotic operators $A_{P_j}$. In the same way we have special eigenvalues $\nu^{pos}_\infty>0>\nu^{neg}_\infty$ of the asymptotic operator $A_{P_\infty}$.

Suppose, by contradiction, that $\mu_{CZ}(P_j) \geq 2$ for every $j=1\dots N$, $\mu_{CZ}(P_\infty) \leq 1$ and that $\pi\cdot dv$ does not vanish identically. It follows easily from this assumption and from (\ref{cz_index_alternative}) that $\wind(\nu^{pos}_j,\alpha_{P_j}) \geq 1$ for every $j=1\dots N$. By the same reasoning we can estimate $\wind(\nu^{neg}_\infty,\alpha_{P_\infty}) \leq 0$.
By Corollary~\ref{winds_spec} we have
\begin{gather*}
 \wind_\infty(\util,\infty) \leq \wind(\nu^{neg}_\infty,\alpha_{P_\infty}) \leq 0 \\
 \wind_\infty(\util,z_j) \geq \wind(\nu^{pos}_j,\alpha_{P_j}) \geq 1 \ \forall j=1\dots N.
\end{gather*}
Theorem~\ref{winds_thm} implies
\[
 \begin{aligned}
  1-N &\leq \wind_\pi(\util) + 1 - N \\
  &= \wind_\pi(\util) + \chi( S^2) - (N+1) = \wind_\infty(\util) \\
  &= \wind_\infty(\util,\infty) - \sum_{j=1}^N \wind_\infty(\util,z_j) \\
  &\leq 0 - \sum_{j=1}^N 1 = -N
 \end{aligned}
\]
which is a contradiction. The first assertion is proved.

Assume $\pi \cdot du$ vanishes identically. If $P_\infty = (x_\infty,T_\infty)$ then denote by $\tau>0$ the minimal positive period of the Reeb trajectory $x_\infty$. Theorem~\ref{zero_dlmabda_FES} implies that each orbit $P_j$ is of the form $P_j = (x_\infty,k_j\tau)$ with integers $k_j\geq 1$, and that $T_\infty = \sum_{j=1}^N k_j\tau$. It follows from Lemma~\ref{cz_index_iterates} that 
\[
 \begin{aligned}
  & \mu_{CZ}(P_\infty) = 1 \Rightarrow \mu_{CZ}(P_j) = 1 \ \forall j=1\dots N, \\
  & \mu_{CZ}(P_\infty) \leq 0 \Rightarrow \mu_{CZ}(P_j) \leq 0 \ \forall j=1\dots N.
 \end{aligned}
\]
\end{proof}

\begin{lemma}\label{wind2}
Let $\Gamma = \{z_1,\dots,z_N\} \not= \emptyset$, $\dot \C$, $\util = (a,u)$, $P_\infty,P_1,\dots,P_N$ be exactly as in the statement of Lemma~\ref{wind1}. Suppose that at least one of the following assertions is true.
\begin{enumerate}
  \item[(i)] $\mu_{CZ}(P_j) \geq 2$ $\forall j=1\dots N$, $\int_{\dot \C} u^*d\lambda > 0$ and $\wind_\infty(\util,\infty) \leq 1$.
  \item[(ii)] $\mu_{CZ}(P_j) \geq 2$ $\forall j=1\dots N$ and $\mu_{CZ}(P_\infty) \leq 2$.
\end{enumerate}
Then $\mu_{CZ}(P_j) = 2$ $\forall j=1\dots N$.
\end{lemma}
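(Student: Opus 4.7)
The plan is to treat the two hypotheses in turn, reducing (ii) to (i) whenever possible, and using in each case the winding identity of Theorem~\ref{winds_thm} together with Corollary~\ref{winds_spec}.

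For case (i), I would first record a preliminary observation on the asymptotic operator $A_P$: combining the formula $\mu_{CZ}(P,\alpha) = 2\wind(\nu^{neg},\alpha) + p$ with $p \in \{0,1\}$ from (\ref{cz_index_alternative}) with the pairing of eigenvalues by winding provided by Lemma~\ref{winding_spectrum_2}, one checks that $\mu_{CZ}(P) \geq 2$ is equivalent to $\wind(\nu^{pos},\alpha_P) \geq 1$, and that $\wind(\nu^{pos},\alpha_P) = 1$ in turn forces $\mu_{CZ}(P) \leq 2$. Since $\int_{\dot\C} u^*d\lambda > 0$ implies $\pi\cdot du \not\equiv 0$, Theorem~\ref{winds_thm} yields
$$ \wind_\pi(\util) = \wind_\infty(\util,\infty) - \sum_{j=1}^N \wind_\infty(\util,z_j) + N - 1 \geq 0, $$
while Corollary~\ref{winds_spec} and the hypotheses give $\wind_\infty(\util,z_j) \geq \wind(\nu^{pos}_j,\alpha_{P_j}) \geq 1$ and $\wind_\infty(\util,\infty) \leq 1$. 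These bounds squeeze $\sum_{j=1}^N \wind_\infty(\util,z_j)$ to equal exactly $N$, whence $\wind(\nu^{pos}_j,\alpha_{P_j}) = 1$ for every $j$, i.e.\ $\mu_{CZ}(P_j) = 2$.

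For case (ii), I would split on whether $\int_{\dot\C} u^*d\lambda$ is positive. If it is, the main step is to upgrade the hypothesis $\mu_{CZ}(P_\infty) \leq 2$ to $\mu_{CZ}(P_\infty) = 2$: any value $\leq 1$ would, by Lemma~\ref{wind1}(1), produce some index $j_0$ with $\mu_{CZ}(P_{j_0}) \leq 1$, contradicting the standing assumption. Once $\mu_{CZ}(P_\infty) = 2$, one has $\wind(\nu^{neg}_\infty,\alpha_{P_\infty}) = 1$, and Corollary~\ref{winds_spec} gives $\wind_\infty(\util,\infty) \leq 1$; this places us in case (i). If instead $\int_{\dot\C} u^*d\lambda = 0$, Theorem~\ref{zero_dlmabda_FES} writes $\util = F_{P_0} \circ p$ for a simply covered orbit $P_0$ and a polynomial $p$, so $P_\infty = P_0^k$ with $k = \deg p$ and each $P_j = P_0^{k_j}$ with $k_j \leq k$; Lemma~\ref{cz_index_iterates}(i)--(ii) rule out $\mu_{CZ}(P_\infty) \leq 1$ (else all $\mu_{CZ}(P_j) \leq 1$), and (iii) applied to the remaining case $\mu_{CZ}(P_\infty) = 2$ gives $\mu_{CZ}(P_j) \in \{1,2\}$, forcing equality with $2$.

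The one place that needs care — routine but easy to get wrong — is the bookkeeping relating $\mu_{CZ}(P)$ to $\wind(\nu^{pos},\alpha_P)$: the parity correction $p$ in (\ref{cz_index_alternative}) together with the pairing in Lemma~\ref{winding_spectrum_2} must be tracked so that both $\mu_{CZ}(P) \geq 2 \Leftrightarrow \wind(\nu^{pos},\alpha_P) \geq 1$ and $\mu_{CZ}(P) = 2 \Leftrightarrow \wind(\nu^{pos},\alpha_P) = 1$ come out correctly. Past this step, each branch of the argument is a direct algebraic manipulation of the winding identity already exploited in the proof of Lemma~\ref{wind1}.
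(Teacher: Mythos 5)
Your proof is correct and takes essentially the same route as the paper: combine the winding formula of Theorem~\ref{winds_thm} with Corollary~\ref{winds_spec} and the dictionary (\ref{cz_index_alternative}) between $\mu_{CZ}$ and the extremal winding numbers, reduce (ii) with positive $d\lambda$-energy to (i), and handle the $\pi\cdot du\equiv 0$ case via Theorem~\ref{zero_dlmabda_FES} and Lemma~\ref{cz_index_iterates}. Two small remarks. First, the stated equivalence ``$\mu_{CZ}(P)\geq 2 \Leftrightarrow \wind(\nu^{pos},\alpha_P)\geq 1$'' is not quite right: from (\ref{cz_index_alternative}) and Lemma~\ref{winding_spectrum_2} one sees that $\mu_{CZ}(P)=1$ also has $\wind(\nu^{pos},\alpha_P)=1$, so the correct equivalence is with $\mu_{CZ}(P)\geq 1$; fortunately your argument only uses the true implication $\mu_{CZ}(P)\geq 2 \Rightarrow \wind(\nu^{pos},\alpha_P)\geq 1$ together with $\wind(\nu^{pos},\alpha_P)=1 \Rightarrow \mu_{CZ}(P)\leq 2$, both of which hold, so nothing breaks. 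Second, in case (ii) with $\int_{\dot\C} u^*d\lambda>0$ the detour through Lemma~\ref{wind1}(1) to first establish $\mu_{CZ}(P_\infty)=2$ is unnecessary: as in the paper, $\mu_{CZ}(P_\infty)\leq 2$ already forces $\wind(\nu^{neg}_\infty,\alpha_{P_\infty})\leq 1$, and hence $\wind_\infty(\util,\infty)\leq 1$ by Corollary~\ref{winds_spec}, placing you directly in case (i).
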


\begin{proof}
Assume (i). As in the proof of the previous lemma, we note that it follows from our assumptions, from Theorem~\ref{asymp_efunction}, Corollary~\ref{winds_spec} and (\ref{cz_index_alternative}) that $\wind_\infty(\util,z_j) \geq 1$ for $j=1,\dots,N$. Suppose, by contradiction, that there exists $j_0$ such that $\mu_{CZ}(P_{j_0}) \geq 3$. Then, as before we have $\wind_\infty(\util,z_{j_0}) \geq 2$. Consequently
\[
 \begin{aligned}
  1-N &= \chi( S^2) - (N+1) \\
  &\leq \wind_\pi(\util) + \chi( S^2) - (N+1) \\
  &= \wind_\infty(\util) \\
  &= \wind_\infty(\util,\infty) - \sum_{j=1}^N \wind_\infty(\util,z_j) \\
  &\leq 1 - (N-1) - 2 \\
  &= -N.
 \end{aligned}
\]
This is a contradiction.

Now assume (ii). If $\int_{\dot \C} u^*d\lambda > 0$ then $\mu_{CZ}(P_\infty) \leq 2$ implies $\wind_\infty(\util,\infty) \leq 1$, and we have (i). If $\int_{\dot \C} u^*d\lambda = 0$, $P_\infty = (x_\infty,T_\infty)$ and $\tau>0$ is the minimal positive period of $x_\infty$ then it follows easily from Theorem~\ref{zero_dlmabda_FES} that each orbit $P_j$ is of the form $P_j = (x_\infty,k_j\tau)$ with integers $k_j\geq 1$ satisfying $T_\infty = \sum_{j=1}^N k_j\tau$. Lemma~\ref{cz_index_iterates} implies that $\mu_{CZ}(P_j) \in \{1,2\}$ for $j=1\dots N$. Our assumptions imply $\mu_{CZ}(P_j) = 2$ for $j=1\dots N$.
\end{proof}

\subsection{The bubbling-off tree}\label{bubbling_section}

The following statement is a standard tool for the bubbling-off analysis.

\begin{lemma}\label{standardtool}
Fix any contact form $\lambda$ on $S^3$ and any $J \in \J(\xi=\ker\lambda,d\lambda)$. Let $\Gamma \subset \C$ be finite, $U_k \subset \C\setminus\Gamma$ be an increasing sequence of open sets such that $\cup_k U_k = \C\setminus\Gamma$ and $f_k$ be a sequence of positive smooth real functions on $S^3$ satisfying $f_k\to1$ in $C^\infty$. Define $\lambda_k=f_k\lambda$ and suppose $J_k \in\J(\xi,d\lambda_k)$ satisfy $J_k\to J$ in $C^\infty$. Let $\vtil_k = (b_k,v_k) : U_k \rightarrow \R \times  S^3$ be a sequence of $\jtil_k$-holomorphic maps satisfying $E(\vtil_k) \leq C < \infty \ \forall k$, and $z_k \in U_k$ be a sequence satisfying $|d\vtil_k(z_k)| \rightarrow +\infty$ and $\inf_k \text{dist}(z_k,\Gamma) > 0$. Suppose further that $\{z_k\}$ is bounded or that $\exists n$ such that $U_n$ is a neighborhood of $\infty$. Then for every $0<s<1$ there exist subsequences $\{\vtil_{k_j}\}$ and $\{z_{k_j}\}$, sequences $\{z^\prime_j\}$ and $\{r_j\}$, a contractible periodic Reeb orbit $P'$ for $\lambda$ of period $T'$ such that $|z_{n_j}-z_j^\prime| \rightarrow 0$, $r_j \rightarrow 0^+$, $T' \leq C$ and
\[
 \limsup_{j\rightarrow+\infty} \int_{|z-z_j^\prime|\leq r_j} v_{n_j}^*d\lambda \geq sT'.
\]
\end{lemma}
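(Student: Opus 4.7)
My plan is to run Hofer's classical rescaling and bubbling-off argument: rescale the $\vtil_k$ around well-chosen points to produce a non-constant finite-energy $\jtil$-holomorphic plane as a limit, whose asymptotic limit will be the desired orbit $P'$. First, using $\inf_k\mathrm{dist}(z_k,\Gamma)>0$ together with the dichotomy in the hypothesis (``$\{z_k\}$ bounded or some $U_n$ is a neighborhood of $\infty$''), I can choose a sequence $\delta_k\to 0$ slowly enough that $\delta_k|d\vtil_k(z_k)|\to\infty$ and $\overline{B}_{2\delta_k}(z_k)\subset U_k$ for all large $k$. Hofer's topological lemma applied to the continuous function $z\mapsto |d\vtil_k(z)|$ on the complete metric space $\overline{B}_{2\delta_k}(z_k)$ then yields points $z'_k$ with $|z'_k-z_k|\le 2\delta_k\to 0$ and scales $\epsilon_k\in(0,\delta_k]$ such that, writing $R_k:=|d\vtil_k(z'_k)|$, one has $R_k,\ \epsilon_kR_k\to +\infty$ and $|d\vtil_k|\le 2R_k$ throughout $\overline{B}_{\epsilon_k}(z'_k)$.

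Next I will rescale by defining
\[
\wtil_k(\zeta)=\bigl(b_k(z'_k+\zeta/R_k)-b_k(z'_k),\; v_k(z'_k+\zeta/R_k)\bigr)
\]
on $B_{\epsilon_kR_k}(0)$. Each $\wtil_k$ is $\jtil_k$-holomorphic, satisfies $|d\wtil_k(0)|=1$, $|d\wtil_k|\le 2$, has vanishing $\R$-component at $0$, and (via the product structure \eqref{metric_g0} of $g_0$ and compactness of $S^3$) is uniformly $C^0_{\mathrm{loc}}$-bounded. Combined with $\jtil_k\to\jtil$ in $C^\infty$, standard elliptic regularity for the Cauchy--Riemann operator and a diagonal argument will extract a subsequence converging in $C^\infty_{\mathrm{loc}}(\C,\R\times S^3)$ to a $\jtil$-holomorphic map $\wtil=(a,w):\C\to\R\times S^3$. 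Non-constancy follows from $|d\wtil(0)|=1$, and a lower-semicontinuity argument, exploiting $f_k\to 1$, will give $0<E(\wtil)\le C$.

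Thus $\wtil$ will be a finite-energy plane, and its unique puncture at $\infty$ must be non-removable and positive (removability would extend $\wtil$ to a compact $\jtil$-holomorphic sphere, which the maximum principle forces to be constant). Theorem~\ref{thm93} then provides a periodic Reeb orbit $P'=(x',T')$ for $\lambda$, contractible since $\pi_1(S^3)=0$, with $0<T'\le E(\wtil)\le C$, as the asymptotic limit of $\wtil$ at $\infty$. Stokes' theorem yields $\int_\C w^*d\lambda=T'$; given $s\in(0,1)$, I choose $r_0$ with $\int_{B_{r_0}(0)}w^*d\lambda>sT'$, so that by $C^\infty_{\mathrm{loc}}$-convergence and change of variables $\int_{B_{r_0/R_{k_j}}(z'_{k_j})}v_{k_j}^*d\lambda\to \int_{B_{r_0}(0)}w^*d\lambda>sT'$. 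Setting $r_j:=r_0/R_{k_j}\to 0^+$ and recalling $|z_{k_j}-z'_{k_j}|\to 0$ completes the argument.

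The main technical points I anticipate are the careful set-up of Hofer's lemma using the two-case hypothesis on $\{z_k\}$ (to guarantee the rescaling disks sit inside the domains $U_k$), and ensuring in the rescaling step that the energies $E(\wtil_k)$---computed with respect to the fixed form $\lambda$, not the varying $\lambda_k=f_k\lambda$---pass to the limit correctly; this is where the hypothesis $f_k\to 1$ in $C^\infty$ enters essentially, as does the compatibility between the $\jtil_k$ and $\jtil$.
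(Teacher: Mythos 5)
The paper states Lemma~\ref{standardtool} without proof, referring to it as ``a standard tool for the bubbling-off analysis'' with references to~\cite{93,fols} and related works, so there is no in-paper proof to compare against. Your proposal correctly reproduces the canonical Hofer rescaling argument: the use of Hofer's topological lemma on $\overline{B}_{2\delta_k}(z_k)$ (with the two-case hypothesis and $\inf_k\mathrm{dist}(z_k,\Gamma)>0$ ensuring these disks lie in $U_k$), the rescaling to a gradient-bounded sequence with $|d\wtil_k(0)|=1$, the $C^\infty_{\mathrm{loc}}$ limit to a non-constant finite-energy $\jtil$-plane using $\jtil_k\to\jtil$, the non-removability of the puncture via the maximum principle, Theorem~\ref{thm93} to extract the orbit $P'$ (automatically contractible in $S^3$), Stokes to identify $\int_\C w^*d\lambda=T'\le C$, and the change-of-variables/convergence step to obtain the $d\lambda$-area concentration, which is exactly how the conclusion is proved in the cited references.
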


Let us fix a sequence
\begin{equation}\label{radii_sequence_1}
 R_n \in [0,+\infty] \text{ satisfying } R_n \to +\infty.
\end{equation}
We study a sequence of non-constant $\jtil$-holomorphic maps
\begin{equation}\label{sequence_1}
 \vtil_n = (b_n,v_n) : B_{R_n}(0) \subset \C \to \R \times  S^3 \\
\end{equation}
satisfying
\begin{equation}\label{props_sequence1}
 \begin{aligned}
  & \sup_n E(\vtil_n) = C < +\infty \\
  & \sup_n \int_{B_{R_n}(0) \setminus \D} v_n^*d\lambda \leq \sigma(C) \\
  & \{b_n(2)\} \text{ is bounded}.
 \end{aligned}
\end{equation}
Here $\sigma(C)$ is the constant in~\eqref{sigma}. Define
\begin{equation}\label{set_Gamma}
 \Gamma = \{ z \in \C : \exists n_j \to \infty \text{ and } z_j \to z \text{ satisfying }|d\vtil_{n_j}(z_j)| \to +\infty \}.
\end{equation}
Then, by Lemma~\ref{standardtool},  $\Gamma \subset \D$ and we may assume $\#\Gamma<\infty$ after the selection of a subsequence. We have uniform bounds for the first derivatives of $\vtil_n$ over compact subsets of $\C \setminus \Gamma$. It also follows from standard arguments that after extracting a subsequence of $\vtil_n$, still denoted $\vtil_n$, we can assume the existence of a $\jtil$-holomorphic map $\vtil : \C \setminus \Gamma \to \R \times S^3$ such that
\begin{equation}\label{limit_sequence_1}
 \vtil_n \to \vtil \text{ in } C^\infty_{loc}(\C\setminus\Gamma,\R\times S^3).
\end{equation}
It is not hard to see that if $\Gamma \not= \emptyset$ then $\vtil$ is not constant, but $\vtil$ could be constant if $\Gamma = \emptyset$. We want to rule out this situation and, therefore, assume
\begin{equation}\label{vtil_nonconstant}
  E(\vtil) > 0.
\end{equation}
The inequality $E(\vtil) \leq C$ follows from Fatou's Lemma.

\begin{defi}\label{germ}
A germinating sequence with energy bounded by $C>0$ is a quadruple $(R_n,\vtil_n,\Gamma,\vtil)$ where $R_n$ and $\vtil_n$ are as in (\ref{radii_sequence_1}) and (\ref{sequence_1}) satisfying (\ref{props_sequence1}) and (\ref{limit_sequence_1}) for some $\vtil \in C^\infty(\C \setminus \Gamma,\R \times  S^3)$. Here $\Gamma \subset \C$ is the set defined in (\ref{set_Gamma}), which is assumed finite, and $\vtil$ satisfies (\ref{vtil_nonconstant}).
\end{defi}

\begin{lemma}\label{props_limit_sequence_1}
Fix $C>0$ and let $(R_n,\vtil_n,\Gamma,\vtil)$ be a germinating sequence with energy bounded by $C$. The following assertions are true:
\begin{enumerate}
 \item Every point $z \in \Gamma$ is a negative puncture and $\vtil$ is asymptotic to some periodic Reeb orbit $P_z \in \P$ at $z$.
 \item $\infty$ is a positive puncture and $\vtil$ is asymptotic to some periodic Reeb orbit $P_\infty \in \P$ at $\infty$.
\end{enumerate}
\end{lemma}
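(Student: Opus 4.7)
The plan is to handle the two assertions separately, and then to invoke Theorem~\ref{thm93} together with Theorem~\ref{behavior_HWZ_1} (applicable since $\lambda$ is non-degenerate) to convert non-removability at each puncture into the existence of the asymptotic Reeb orbit. As a preliminary I would note that $0 < E(\vtil) \leq C$, the lower bound by definition of germinating sequence and the upper bound by Fatou; hence $\vtil$ is itself a finite-energy surface on $S^2 \setminus (\Gamma \cup \{\infty\})$ and all the machinery of Subsection~\ref{holcurve_theory} applies.

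For (1), fix $z_0 \in \Gamma$; by definition there are $n_k \to \infty$ and $z_k \to z_0$ with $|d\vtil_{n_k}(z_k)| \to \infty$. The first delicate point is that Lemma~\ref{standardtool} cannot be applied with $\Gamma$ itself as its singular set, since $\text{dist}(z_k, \Gamma) \to 0$; I would instead apply it with $\Gamma' = \Gamma \setminus \{z_0\}$ and $U_k = B_{R_{n_k}}(0) \setminus \Gamma'$, so that $\inf_k \text{dist}(z_k, \Gamma') > 0$ holds. For any fixed $s \in (0,1)$ the lemma then delivers, along a subsequence, a closed Reeb orbit $P' = (x',T')$ with $T' \leq C$ and sequences $z_j' \to z_0$, $r_j \to 0^+$ satisfying $\limsup_j \int_{|z-z_j'| \leq r_j} v_{n_j}^*d\lambda \geq sT'$. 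Choosing $\epsilon > 0$ small enough that $\cl{B_\epsilon(z_0)} \cap \Gamma = \{z_0\}$ and using that $\vtil_{n_j}$ is smooth on all of $B_\epsilon(z_0)$, Stokes gives $\int_{\partial B_\epsilon(z_0)} v_{n_j}^*\lambda \geq sT'$ for $j$ large, and $C^\infty_{loc}$-convergence on the circle $\partial B_\epsilon(z_0) \subset \C \setminus \Gamma$ transfers this bound to $F(\epsilon) := \int_{\partial B_\epsilon(z_0)} v^*\lambda \geq sT'$. Pointwise non-negativity of $v^*d\lambda$ makes $F$ non-decreasing in $\epsilon$, so the right-limit satisfies $F(0^+) \geq sT' > 0$. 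Translating through the puncture chart $w \mapsto z_0 + e^{-2\pi(s+it)}$, which traverses small circles clockwise in $z$, the mass at $z_0$ equals $-F(0^+) \leq -sT' < 0$, so $z_0$ is a non-removable negative puncture.

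For (2), I would apply Stokes on $S^2$ minus shrinking neighborhoods of all punctures to derive the mass-balance identity $\int v^*d\lambda = m_\infty + \sum_{z_0 \in \Gamma} m_{z_0}$. Combined with $\int v^*d\lambda \geq 0$ and with each $m_{z_0} < 0$ from step (1), this already forces $m_\infty > 0$ in every case except $\Gamma = \emptyset$ together with $\int v^*d\lambda = 0$. That exceptional case I would rule out as follows: removability of $\infty$ would extend $\vtil$ to a $\jtil$-holomorphic sphere $S^2 \to \R \times S^3$, and $\int v^*d\lambda = 0$ then forces $\pi \cdot dv \equiv 0$; writing $dv = (v^*\lambda) R$ with $v^*\lambda$ a closed $1$-form on $S^2$, the Cauchy-Riemann relations show that $a : S^2 \to \R$ is harmonic, hence constant, whence $v$ is also constant, contradicting $E(\vtil) > 0$. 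Thus $\infty$ is a non-removable positive puncture.

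I expect the main obstacle to be the bubbling step of (1): restricting the singular set to $\Gamma \setminus \{z_0\}$ rather than $\Gamma$ so that Lemma~\ref{standardtool} is available at all, keeping the orientation flip straight when translating the counterclockwise Stokes integral into the puncture chart, and making sure that the uniform lower bound $sT'$ on $F(\epsilon)$ transfers cleanly to the right-limit $F(0^+)$. Once (1) is in hand, (2) is mostly bookkeeping with Stokes and the standard extension-to-a-sphere argument.
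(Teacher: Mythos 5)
Your proof is correct, and the overall strategy — Stokes plus pointwise non-negativity of $v^*d\lambda$ plus $C^\infty_{loc}$-convergence of $\vtil_{n_j}$ on circles avoiding $\Gamma$ — is the same as the paper's. Where you diverge is in the choice of input for the circle-integral estimate in part (1): the paper argues by contradiction, invoking Theorem~\ref{thm93} directly on the limit $\vtil$ under the hypothesis that $z$ is a positive puncture, obtaining $\int_{\partial B_r(z)} v^*\lambda<0$ and hence $\int_{B_r(z)} v_k^*d\lambda<0$ for large $k$; you instead run Lemma~\ref{standardtool} on the \emph{sequence} $\vtil_{n_j}$ to extract a quantitative $d\lambda$-energy concentration $\geq sT'\geq s\sigma_1$ near $z_0$, then transfer this to a lower bound on $F(\epsilon)=\int_{\partial B_\epsilon(z_0)}v^*\lambda$ for all small $\epsilon$. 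Your route buys two things the paper's sketch leaves implicit: it explicitly rules out $z_0$ being a removable puncture of $\vtil$ (the paper only dichotomizes positive vs.\ negative, taking non-removability for granted), and it yields the mass lower bound $|m_{z_0}|\geq s\sigma_1$, which is exactly the estimate the paper states separately later as \eqref{mass_estimate} and needs anyway in the tree construction. Your observation that Lemma~\ref{standardtool} cannot be applied with singular set $\Gamma$ itself (since $z_k\to z_0\in\Gamma$) and must instead be applied with $\Gamma'=\Gamma\setminus\{z_0\}$ is a genuine subtlety handled correctly. For part (2) you and the paper do essentially the same thing; your mass-balance identity and the removability exclusion via subharmonicity of $a$ on $S^2$ are the explicit version of the paper's one-line assertion that $E(\vtil)>0$ forces $\infty$ to be a non-removable positive puncture. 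One small remark on phrasing: monotonicity of $F$ alone does not give $F(0^+)\geq sT'$; what gives it is that the bound $F(\epsilon)\geq sT'$ holds for \emph{every} sufficiently small $\epsilon$ — you clearly have this in mind, but it is worth stating.
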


The arguments for the proof of the lemma above can be found in the now vast literature concerning bubbling-off of finite-energy surfaces in symplectizations, see~\cite{93,sftcomp,cm,char1,char2,convex,fols}. We only sketch them here.

\begin{proof}[Sketch of the proof of Lemma~\ref{props_limit_sequence_1}]
Assume $z\in\Gamma$ is a positive puncture of $\vtil=(b,v)$. We know from Theorem~\ref{thm93} that there exists a sequence $r_n\to0^+$ such that $\int_{\partial B_{r_n}(z)} v^*\lambda \to -T'$ as $n\to+\infty$, where $T'>0$ is the period of some closed Reeb orbit and $\partial B_{r_n}(z)$ is oriented counter-clockwise. So, fixing $r>0$ small such that $\int_{\partial B_{r}(z)} v^*\lambda < -T'/2$ we find $k$ large satisfying $\int_{B_{r}(z)} v_k^*d\lambda < -T'/2<0$, a contradiction. This proves $z$ is a negative puncture. The assumption $E(\vtil)>0$ now implies that $\infty$ is a non-removable positive puncture. Theorem~\ref{behavior_HWZ_1} finishes the proof.
\end{proof}

As is well-known, a germinating sequence can be seen as the germ of a so-called bubbling-off tree of finite energy spheres. The construction of this tree was first done in the pioneering work~\cite{fols}. We shall not give all the details of the construction and will only consider what we need for our results.

Let $T$ be a finite rooted tree, that is, $T$ is a connected graph with no cycles and with a distinguished vertex called the root. The set of edges will be denoted by $E$ and the set of vertices by $V$. We orient each edge as going away from the root. Thus each vertex distinct of the root has a unique incoming edge coming from its parent and possibly many outgoing edges going to its children. A leaf is a vertex with no children. There is also a level structure on $V$, namely, the level of a vertex is the minimal number of edges necessary to reach the root plus one. Thus, the root is the only vertex in the first level, its children are the vertices in the second level, and so on.

Let us associate a finite energy surface $\util_q : \C \setminus \Gamma \to \R \times  S^3$ to each vertex $q\in V$ such that $\infty$ is the unique positive puncture of $\util_q$ and $\Gamma \subset \C$ is a finite set consisting of negative punctures of $\util_q$. Moreover, the collection $\{\util_q:q\in V\}$ is required to satisfy the following compatibility conditions:
\begin{itemize}
  \item Each edge going out of $q$ corresponds to a unique negative puncture of $\util_q$, and vice-versa.
  \item If $q^\prime$ is a child of $q$ and $z$ is the negative puncture of $\util_q$ corresponding to the edge going from $q$ to $q^\prime$ then there exists a closed Reeb orbit $P$ such that $\util_q$ is asymptotic to $P$ at the negative puncture $z$ and $\util_{q^\prime}$ is asymptotic to $P$ at its positive puncture $\infty$.
\end{itemize}
The set $\{\util_q : q \in V\}$ of finite energy spheres satisfying the above properties and compatibility conditions will be called a \textbf{bubbling-off tree of finite energy spheres modeled on $T$}. The situation considered here is, of course, much simpler than the more general holomorphic buildings treated in~\cite{sftcomp,sft}. For example, according to the above definition, leaves correspond to planes (no outgoing edges = no negative punctures) and the whole tree is topologically a disk.

We shall now indicate how a germinating sequence induces such a bubbling-off tree. This beautiful construction, originally due to Hofer, Wysocki and Zehnder~\cite{fols}, is the precursor of the \textbf{SFT-Compactness Theorem} from~\cite{sftcomp, sft} which adapts \textbf{Gromov's Compactness Theorem} to the more general situations needed in Symplectic Field Theory.

\begin{figure}
  \includegraphics[width=250\unitlength]{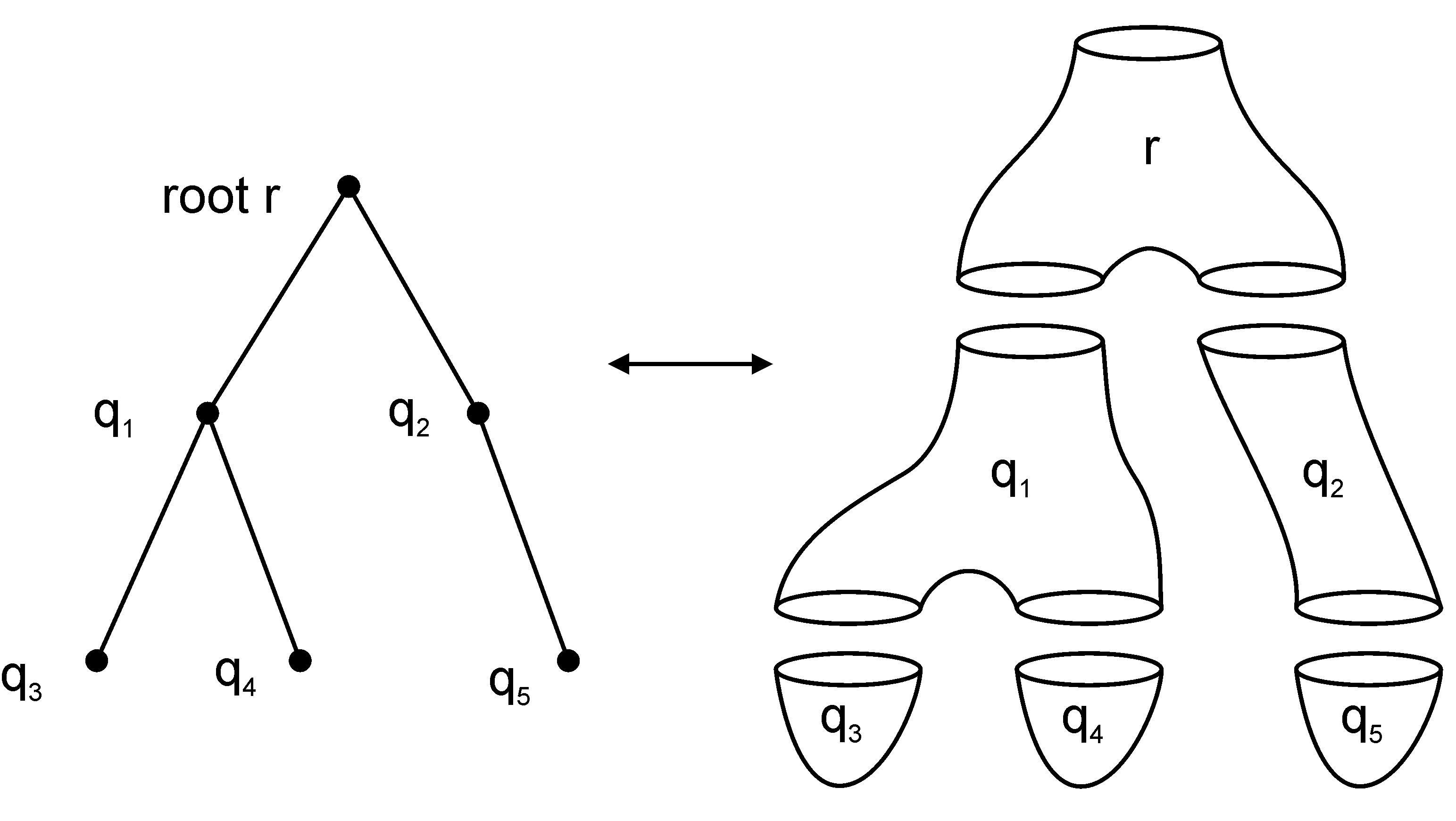}
  \caption{A finite tree $T$ and a bubbling-off tree of finite energy spheres modeled on $T$.}
\end{figure}

Fix $C>0$ and let $(R_n,\vtil_n = (b_n,v_n),\Gamma,\vtil = (b,v))$ be a germinating sequence with energy bounded by $C$, as in Definition~\ref{germ}. Let $\{P_z : z \in \Gamma\}$ and $P_\infty$ be the asymptotic limits at the punctures $\Gamma \cup \{\infty\}$ of $\vtil$. It follows from Lemma~\ref{props_limit_sequence_1} that each $z\in\Gamma$ is a negative puncture and $\infty$ is the unique positive puncture of $\vtil$. Let $T_0$ be the rooted tree consisting of exactly one vertex $\bar q$ (the root) and no edges. We associate the finite energy surface $\vtil$ to the root of $T_0$. If $\Gamma = \emptyset$ then we are done constructing the bubbling-off tree. If not then we proceed.

The mass of a puncture $z\in\Gamma$ is defined in the following manner. For each $\epsilon>0$ small the limit
\[
 m_\epsilon(z) = \lim_{n\to\infty} \int_{B_\epsilon(z)} v_n^*d\lambda = \lim_{n\to\infty} \int_{\partial B_\epsilon(z)} v_n^*\lambda = \int_{\partial B_\epsilon(z)} v^*\lambda
\]
exists in view of (\ref{limit_sequence_1}). It is a non-decreasing function of $\epsilon$ since $\vtil$ is $\jtil$-holomorphic. Following~\cite{fols} one defines
\begin{equation}\label{mass}
  m(z) = \lim_{\epsilon \to 0^+} m_\epsilon(z).
\end{equation}

Fix a negative puncture $z^\prime \in\Gamma$.
It follows easily from Lemma~\ref{standardtool} that
\begin{equation}\label{mass_estimate}
 m(z^\prime) \geq \sigma_1
\end{equation}
where $\sigma_1$ was defined in (\ref{sigma_1}). Fix $\epsilon>0$ small enough so that
\begin{equation}\label{neck_1}
  m_\epsilon(z^\prime) - m(z^\prime) \leq \frac{\sigma(C)}{2}
\end{equation}
where $\sigma(C)$ is given in (\ref{sigma}). Let $z_n \in \cl{B_\epsilon(z^\prime)}$ be such that
\begin{equation}\label{neck_center}
  b_n(z_n) = \inf \{ b_n(\zeta) : \zeta \in \cl{B_\epsilon(z^\prime)} \}.
\end{equation}
Then $z_n \to z^\prime$. This follows easily from the fact that $\vtil_n \to \vtil$ in $C^\infty_{loc}(\C\setminus\Gamma)$ and that $z^\prime$ is a negative puncture of $\vtil$. Since $m_\epsilon(z^\prime) \geq m(z^\prime) \geq \sigma_1 > \sigma(C)$ we can find $0<\delta_n<\epsilon$ such that
\begin{equation}\label{neck_2}
  \int_{B_\epsilon(z^\prime) \setminus B_{\delta_n}(z_n)} v_n^*d\lambda = \sigma(C).
\end{equation}
If $\liminf \delta_n > 0$ choose $0<\epsilon^\prime<\min\{\liminf \delta_n,\epsilon\}$. We get
\[
 \begin{aligned}
  \frac{\sigma(C)}{2} &\geq m_\epsilon(z^\prime) - m(z^\prime) \geq m_\epsilon(z^\prime) - m_{\epsilon^\prime}(z^\prime) \\
  &= \lim_{n\to\infty} \int_{B_\epsilon(z^\prime) \setminus B_{\epsilon^\prime}(z^\prime)} v_n^*d\lambda \\
  &\geq \lim_{n\to\infty} \int_{B_\epsilon(z^\prime) \setminus B_{\delta_n}(z_n)} v_n^*d\lambda = \sigma(C).
 \end{aligned}
\]
This contradiction proves that $\liminf \delta_n = 0$. Thus we can assume $\delta_n \to 0$. Define
\begin{equation}\label{next_level_sequence}
 \begin{aligned}
  & \wtil_n = (d_n,w_n) : B_{R^\prime_n}(0) \to \R \times  S^3 \\
  & \left\{ \begin{aligned} & d_n(z) = b_n(z_n+\delta_nz) - b_n(z_n+2\delta_n) \\ & w_n(z) = v_n(z_n+\delta_nz) \end{aligned} \right.
 \end{aligned}
\end{equation}
where $R^\prime_n \to \infty$ is some sequence such that $B_{\delta_nR^\prime_n}(z_n) \subset B_\epsilon(z^\prime)$. It follows immediately from (\ref{neck_2}) that
\begin{equation}\label{neck_3}
  \limsup \int_{B_{R^\prime_n}(0) \setminus \D} w_n^*d\lambda \leq \sigma(C).
\end{equation}

Now we define
\[
 \Gamma^\prime = \{ z \in \C : \exists n_j\to\infty \text{ and }\zeta_j\to z \text{ such that } |d\wtil_{n_j}(\zeta_j)| \to +\infty \}.
\]
It follows from Lemma~\ref{standardtool} and~\eqref{neck_3} that $\Gamma^\prime\subset\D$ and, up to the choice of a subsequence, we may assume $\#\Gamma^\prime<\infty$. Thus we have uniform bounds of the derivatives of the sequence $\wtil_n$ on compact subsets of $\C\setminus\Gamma^\prime$. Elliptic boot-strapping arguments and the condition $\wtil_n(2) \in \{0\} \times  S^3$ together imply that we can find a smooth $\jtil$-holomorphic map $$ \wtil = (d,w) : \C \setminus \Gamma^\prime \to \R \times S^3 $$ and extract a subsequence of $\wtil_n$, still denoted by $\wtil_n$, such that
\[
 \wtil_n \to \wtil \text{ in }C^\infty_{loc}(\C\setminus\Gamma^\prime,\R\times S^3).
\]
Clearly if $\Gamma^\prime \not= \emptyset$ then $\wtil$ is not constant. This follows easily from Lemma~\ref{standardtool}. If $\Gamma^\prime = \emptyset$ then
\[
 \begin{aligned}
  \int_\D w^*d\lambda &= \lim_{n\to\infty} \int_\D w_n^*d\lambda = \lim_{n\to\infty} \int_{B_{\delta_n}(z_n)} v_n^*d\lambda \\
  &= \lim_{n\to\infty} \int_{B_\epsilon(z^\prime)} v_n^*d\lambda - \int_{B_\epsilon(z^\prime)\setminus B_{\delta_n}(z_n)} v_n^*d\lambda \\
  &= m_\epsilon(z^\prime) - \sigma(C) \geq \sigma_1-\sigma(C) > 0,
 \end{aligned}
\]
which shows that $\wtil$ is not constant in this case as well. Hence
\[
 0 < E(\wtil) \leq C.
\]
We proved that $(R'_n,\wtil_n,\Gamma',\wtil)$ is a germinating sequence with energy bounded by $C$. Lemma~\ref{props_limit_sequence_1} will tell us that $\infty$ is a positive puncture of $\wtil$ and the points of $\Gamma^\prime$ are negative punctures of $\wtil$.

By Theorem~\ref{behavior_HWZ_1} we know $\wtil$ is asymptotic to some closed Reeb orbit $\tilde P$ at the positive puncture $\infty$. Suppose $\vtil$ is asymptotic to $P_{z^\prime}$ at the negative puncture $z^\prime$. One can argue using Theorem~\ref{small_energy} that $\tilde P = P_{z^\prime}$. We reproduce the argument here for the reader's convenience.

Let $\W$ be a $ S^1$-invariant open neighborhood in $C^\infty(S^1,S^3)$ of the set of loops
\[
 \{ t \in  S^1 \mapsto x(Tt) \in  S^3 : P=(x,T) \in \mathcal P(\lambda) \text{ satisfies } T \leq C \}
\]
such that if $P = (x,T) \not= \hat P = (\hat x,\hat T)$ are distinct closed orbits with $\max\{T,\hat T\} \leq C$ then the loops $t\mapsto x(Tt + c)$ and $t\mapsto \hat x (\hat Tt + d)$ belong to distinct components of $\W$, for every $c,d \in \R$. Such $\W$ exists since $\lambda$ is assumed to be non-degenerate. Now, in view of Theorem~\ref{behavior_HWZ_1}, we can find $0<\rho_0<\epsilon$ such that $\rho<\rho_0$ implies that the loop
\begin{equation}\label{loop_up}
 t \mapsto v(z^\prime + \rho e^{i2\pi t})
\end{equation}
belongs to $\W$. By the same token, we find $R_0 \gg 1$ such that if $R^\prime\geq R_0$ then the loop
\begin{equation}\label{loop_down}
 t \mapsto w(R^\prime e^{i2\pi t})
\end{equation}
belongs to $\W$. For any $0<\rho\leq \rho_0$ the loop (\ref{loop_up}) is the limit in $C^\infty( S^1, S^3)$ of the sequence $t \mapsto v_n(z_n + \rho e^{i2\pi t})$. Analogously, for any fixed $R^\prime \geq R_0$ the loop (\ref{loop_down}) is the limit of the sequence $t \mapsto v_n(z_n + \delta_n R^\prime e^{i2\pi t})$. It follows easily from (\ref{mass_estimate}) that
\[
 e := \liminf \int_{\partial B_{\delta_nR_0}(z_n)} v_n^*\lambda > 0.
\]
We can now apply Theorem~\ref{small_energy} to $C$, $e$ and $\W$ to find $h>0$ such that if $\rho \in [\delta_n R_0 e^h,\rho_0e^{-h}]$ then the loop $t\mapsto v_n(z_n + \rho e^{i2\pi t})$ belongs to $\W$, for every $n$. This shows that $t \mapsto v(z^\prime + e^{-h}\rho_0 e^{i2\pi t})$ and $t \mapsto w(e^hR_0 e^{i2\pi t})$ are loops in the same component of $\W$. It follows that $P_{z^\prime} = \tilde P$.

We can now add a new vertex $q^\prime$ to our tree $T_0$, an edge going from the root $\bar q$ to $q^\prime$ and associate the finite energy sphere $\wtil$ to $q^\prime$.


Obviously one can do the same for every negative puncture of the curve $\vtil$ in the set $\Gamma$. After this process we obtain a new tree $T_1$ with root $\bar q$, and we have succeeded in constructing the second level of our final bubbling-off tree.


We now proceed in the same manner with the negative punctures of the curves associated to the second level of tree. If there are any such punctures then we construct the third level, and so on. After each step of the rescaling procedure described above we produce a curve which has non-vanishing $d\lambda$-energy or at least two negative punctures. This observation, first proved in~\cite{errata}, implies that this procedure has to end after a finite number of steps in view of the estimate $\sup_n E(\vtil_n) = C < \infty$.


Thus, the arguments of~\cite{fols} explained above prove

\begin{theo}\label{theorem_tree}
Let $(R_n,\vtil_n=(b_n,v_n),\Gamma,\vtil)$, be a germinating sequence with energy bounded by $C>0$. Then there exists a finite rooted tree $T$, a bubbling-off tree of finite energy spheres $\{\util_q : q \text{ is a vertex of }T \}$ modeled on $T$ and a subsequence $\vtil_{n_j}$ such that the following holds.
\begin{enumerate}
  \item If $\bar q$ is the root then $\util_{\bar q} = \vtil$. 
  \item If $q$ is not the root then we find sequences $\{z_j\} \subset \C$, $\{\delta_j\} \subset\R$ and $\{c_j\} \subset \R$ such that $z_j$ is bounded, $\delta_j \to 0^+$ and $$ \tilde U_j(z) := (b_{n_j}(z_j+\delta_jz)+c_j,v_{n_j}(z_j+\delta_jz)) \to \util_q \text{ in } C^\infty_{loc} \text{ as } j\to\infty. $$
\end{enumerate}
\end{theo}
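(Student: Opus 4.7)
The plan is to carry out, in a fully general level-by-level induction, the soft-rescaling construction that the paragraphs preceding the theorem already describe in detail for the second level. First I would define the root $\bar q$ of $T$, set $\util_{\bar q} := \vtil$, and appeal to Lemma~\ref{props_limit_sequence_1} to guarantee that $\bar q$ carries a bona fide finite-energy punctured sphere with a single positive puncture at $\infty$ and negative punctures exactly at $\Gamma$. If $\Gamma = \emptyset$ then the tree with only the root already satisfies the required compatibility conditions and the theorem is proved. Otherwise, for each $z' \in \Gamma$ I would produce a child vertex $q'$ of $\bar q$ together with a finite-energy surface $\util_{q'}$ associated to it.

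To build $\util_{q'}$ I would follow verbatim the procedure displayed in the text: choose $\epsilon>0$ small so that the mass defect \eqref{neck_1} holds, pick the minimum locus $z_n$ as in \eqref{neck_center}, and select radii $\delta_n \to 0^+$ via \eqref{neck_2}. The rescaled sequence $\wtil_n$ defined in \eqref{next_level_sequence} is $\jtil$-holomorphic, has $E(\wtil_n) \leq C$, satisfies the small $d\lambda$-energy condition outside $\D$ by \eqref{neck_3}, and the normalization built into \eqref{next_level_sequence} keeps $\wtil_n(2)$ bounded in the $\R$-factor. Passing to a subsequence, one extracts a $\jtil$-holomorphic limit $\wtil$ defined on $\C \setminus \Gamma'$ for some finite $\Gamma' \subset \D$. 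The mass estimate \eqref{mass_estimate} combined with Lemma~\ref{standardtool} shows $E(\wtil) > 0$, so $(R'_n,\wtil_n,\Gamma',\wtil)$ is again a germinating sequence with energy bounded by $C$; Lemma~\ref{props_limit_sequence_1} then applies and yields the positive puncture at $\infty$ and the negative punctures at $\Gamma'$. The small-energy argument based on Theorem~\ref{small_energy}, reproduced in the excerpt, identifies the positive asymptotic limit of $\wtil$ with the asymptotic limit $P_{z'}$ of $\vtil$ at the negative puncture $z'$. I would then set $\util_{q'} := \wtil$, add the edge from $\bar q$ to $q'$, and record the rescaling data that realizes item (2) for this vertex.

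Iterating the same rescaling at every negative puncture of every newly created vertex produces, level by level, an increasing sequence of trees $T_0 \subset T_1 \subset T_2 \subset \dots$ and compatible finite-energy surfaces $\util_q$. Consistent subsequences are extracted by a standard diagonal argument, and this diagonal subsequence is precisely the one needed for the convergence statement in item (2). The key obstacle, and the only genuinely non-routine point, is to show that this process halts after finitely many steps. Here I would invoke the observation from~\cite{errata} cited in the text: each newly created rescaled limit $\wtil$ satisfies the dichotomy that either its $d\lambda$-energy is bounded below by a positive constant (as exhibited in the excerpt for the case $\Gamma' = \emptyset$, where $\int_\D w^*d\lambda \geq \sigma_1 - \sigma(C) > 0$), or else it has at least two negative punctures. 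Since the total $d\lambda$-energy of the tree is bounded by $C$, vertices of the first type are finite in number; since a rooted tree in which every internal vertex has at least two children has boundedly many internal vertices in terms of its leaves and its energy budget, the combinatorics also bounds vertices of the second type. Therefore the construction terminates at some finite level, producing the desired tree $T$ and bubbling-off tree of finite-energy spheres modeled on it.
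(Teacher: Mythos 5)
Your proposal reproduces the paper's own argument essentially verbatim: the paper explicitly presents Theorem~\ref{theorem_tree} as a summary of the soft-rescaling construction carried out in the paragraphs preceding it, and you follow that construction level by level, with the same invocations of Lemma~\ref{props_limit_sequence_1}, Theorem~\ref{small_energy}, and the observation from~\cite{errata} for termination. The tree/leaf combinatorics and the diagonal extraction are also in line with the paper's intent.

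One point in your termination argument is stated imprecisely. You claim the dichotomy is ``$d\lambda$-energy bounded below by a positive constant or $\geq 2$ negative punctures,'' and cite the computation $\int_{\D} w^*d\lambda \geq \sigma_1 - \sigma(C)$ as the source of the lower bound. But that estimate is established in the text only for the case $\Gamma' = \emptyset$, i.e.\ when the new vertex is a leaf; it does not cover the case of a vertex carrying a single negative puncture, which is precisely the case one must control to rule out infinite non-branching chains. For a vertex with one negative puncture and positive $d\lambda$-energy, the required uniform lower bound instead comes from non-degeneracy: the asymptotic periods $T_{\mathrm{top}}$ and $T_{\mathrm{bot}}$ at the two punctures satisfy $T_{\mathrm{top}} - T_{\mathrm{bot}} = \int u^*d\lambda > 0$, both periods are $\leq C$, and so $\int u^*d\lambda \geq \sigma_2(C)$ by the very definition~\eqref{sigma}. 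With that amendment, the count is: leaves have $d\lambda$-energy $\geq \sigma_1$, non-branching internal vertices have $d\lambda$-energy $\geq \sigma_2(C)$, branching vertices are bounded by the number of leaves, and the total $d\lambda$-energy is $\leq C$; this gives finiteness of the tree.
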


We take the opportunity to prove a useful lemma. Consider a vertex $q_0$ of $T$ and the associated finite energy sphere
\[
 \util_{q_0} : \C \setminus \Gamma_0 \to \R \times  S^3
\]
in the bubbling-off tree. Here $\Gamma_0 \subset \C$ is the finite set of negative punctures of $\util_{q_0}$ and each element of $\Gamma_0$ corresponds to an edge going out of $q_0$. Suppose that $\util_{q_0}$ is asymptotic to the periodic Reeb orbit $P^+$ at its positive puncture $\infty$ and that $\mu_{CZ}(P^+) \leq 1$. If $\Gamma_0 = \emptyset$ then $\util_{q_0}$ is a finite-energy plane satisfying $\wind_\pi(\util_{q_0}) < 0$, a contradiction. If not we can use Lemma~\ref{wind1} to find a negative puncture $z \in \Gamma_0$ of $\util_{q_0}$ and a closed Reeb orbit $P_1$ such that $\util_{q_0}$ is asymptotic to $P_1$ at $z$ and $\mu_{CZ}(P_1) \leq 1$. The edge of $T$ corresponding to $z$ goes from $q_0$ to one of its children $q_1$. The curve $\util_{q_1}$ is asymptotic to $P_1$ at its (unique) positive puncture. If $q_1$ is not a leaf we can use Lemma~\ref{wind1} to find a negative puncture $z^\prime$ of $\util_{q_1}$ and a closed Reeb orbit $P_2$ such that $\util_{q_1}$ is asymptotic to $P_2$ at $z^\prime$ and $\mu_{CZ}(P_2) \leq 1$. The edge corresponding to $z^\prime$ goes from $q_1$ to one if its children $q_2$. As before, the curve $\util_{q_2}$ is asymptotic to $P_2$ at its (unique) positive puncture. Continuing this process we find a finite path $$ q_0q_1\dots q_N $$ in the tree $T$ such that
\begin{itemize}
  \item $q_i$ is a parent of $q_{i+1}$, $q_N$ is a leaf,
  \item if $i \in \{1,\dots,N\}$ then $\util_{q_i}$ is asymptotic at its positive puncture to a closed Reeb orbit $P_i$ satisfying $\mu_{CZ}(P_i) \leq 1$.
\end{itemize}

Note that $\util_{q_N}$ is a finite energy plane since $q_N$ is a leaf. Its asymptotic limit (at its unique positive puncture) is the orbit $P_N$ satisfying $\mu_{CZ}(P_N) \leq 1$. This implies $\wind_\pi(\util_{q_N}) < 0$. This contradiction proves

\begin{lemma}\label{estimate_cz_1}
Let $(R_n,\vtil_n,\Gamma,\vtil)$ be a germinating sequence with energy bounded by $C>0$. Consider the finite rooted tree $T$, the bubbling-off tree of finite energy spheres $\{\util_q : q \text{ is a vertex of }T \}$ modeled on $T$ and the subsequence $\vtil_{n_j}$ given by applying Theorem~\ref{theorem_tree}. If $q$ is any vertex of $T$ and $\util_q$ is asymptotic to the closed orbit $P$ at its unique positive puncture then $\mu_{CZ}(P) \geq 2$.
\end{lemma}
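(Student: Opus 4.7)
The plan is to argue by contradiction, exploiting the combinatorial structure of the bubbling-off tree $T$ to propagate a low Conley--Zehnder index from the root-most offending vertex down to a leaf, where a plane with small index at infinity violates the fundamental inequality $\wind_\pi \geq 0$.

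Suppose there exists a vertex $q_0$ of $T$ such that $\util_{q_0}$ is asymptotic at its positive puncture to an orbit $P_0$ with $\mu_{CZ}(P_0) \leq 1$. I will build inductively a descending path $q_0 q_1 q_2 \dots$ in $T$ along which the positive asymptotic orbit $P_i$ of $\util_{q_i}$ satisfies $\mu_{CZ}(P_i) \leq 1$. Given $q_i$, if it is not a leaf, I apply Lemma~\ref{wind1}: when $\int u_{q_i}^* d\lambda > 0$ case (1) applies and produces a negative puncture of $\util_{q_i}$ whose asymptotic orbit has $\mu_{CZ} \leq 1$; when $\int u_{q_i}^* d\lambda = 0$ cases (2)--(3) apply and all negative asymptotic orbits have $\mu_{CZ} \leq 1$. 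Either way, pick such a negative puncture $z$ and let $q_{i+1}$ be the child of $q_i$ corresponding to the edge at $z$. The compatibility condition for a bubbling-off tree of finite-energy spheres guarantees that $\util_{q_{i+1}}$ is asymptotic at its unique positive puncture to the same orbit $P_{i+1}$, which has $\mu_{CZ}(P_{i+1}) \leq 1$. Since $T$ is finite, this process must terminate at a leaf $q_N$.

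At the leaf, $\util_{q_N}$ is a finite-energy plane asymptotic to $P_N$ with $\mu_{CZ}(P_N) \leq 1$. By the corollary to Theorem~\ref{zero_dlmabda_FES}, $\int u_{q_N}^* d\lambda > 0$, so $\pi \cdot du_{q_N} \not\equiv 0$. The formula \eqref{cz_index_alternative} together with $\mu_{CZ}(P_N) \leq 1$ forces $\wind(\nu^{neg}, \alpha_{P_N}) \leq 0$, and Corollary~\ref{winds_spec}(1) then gives $\wind_\infty(\util_{q_N},\infty) \leq 0$. Applying Theorem~\ref{winds_thm} with $S = S^2$ and $\#\Gamma = 1$ yields
\[
\wind_\pi(\util_{q_N}) = \wind_\infty(\util_{q_N}) - \chi(S^2) + 1 = \wind_\infty(\util_{q_N},\infty) - 1 \leq -1,
\]
contradicting $\wind_\pi \geq 0$. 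Hence no such $q_0$ exists and the lemma follows.

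I do not expect any real obstacle; Lemma~\ref{wind1} was evidently designed precisely for this propagation, and the only subtlety is remembering to invoke the two distinct branches of that lemma according to whether $\pi \cdot du$ vanishes identically on some intermediate sphere. The finiteness of $T$, guaranteed by Theorem~\ref{theorem_tree}, makes the induction terminate in finitely many steps.
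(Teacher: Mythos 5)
Your proposal is correct and follows essentially the same argument as the paper: propagate a small Conley--Zehnder index from the hypothetical offending vertex down through the tree via Lemma~\ref{wind1} to reach a leaf, then derive $\wind_\pi(\util_{q_N}) < 0$ at that leaf (a plane) for a contradiction. The only difference is cosmetic: you spell out the final computation with \eqref{cz_index_alternative}, Corollary~\ref{winds_spec}, and Theorem~\ref{winds_thm} explicitly, whereas the paper states that $\mu_{CZ}(P_N)\leq 1$ implies $\wind_\pi(\util_{q_N})<0$ without elaboration.
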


\subsection{Compactness}

We are ready to state and prove the main result of this section.

\begin{prop}\label{compactness_theorem_1}
Let $(R_n,\vtil_n = (b_n,v_n),\Gamma,\vtil = (b,v))$ be a germinating sequence with energy bounded by $C>0$ such that $\Gamma \not= \emptyset$.
Suppose further that at least one of the following holds.
\begin{enumerate}
  \item[(i)]  $\int_{\C\setminus\Gamma} v^*d\lambda > 0$ and $\wind_\infty(\vtil,\infty) \leq 1$.
  \item [(ii)]  The curve $\vtil$ is asymptotic at $\infty$ to some $P_\infty\in\P$ satisfying $\mu_{CZ}(P_\infty) \leq 2$.
\end{enumerate}
Then there exist $P_0 = (x_0,T_0) \in \P$ and a finite energy plane $\util_0 = (a_0,u_0) : \C \to \R \times  S^3$ satisfying:
\begin{enumerate}
  \item $u_0 : \C \to  S^3$ is an immersion transversal to the Reeb vector.
  \item $\mu_{CZ}(P_0) = 2$ and $\util_0$ is asymptotic to $P_0$ at the puncture $\infty$.
  \item If some $P=(x,T) \in \P$ satisfies $v_n(B_{R_n}(0)) \cap x(\R) = \emptyset \ \forall n$ then $u_0(\C) \cap x(\R) = \emptyset$. If, in addition, $P$ is simply covered and satisfies $\mu_{CZ}(P) \geq 3$ then $\cl{u_0(\C)} \cap x(\R) = \emptyset$.
\end{enumerate}
\end{prop}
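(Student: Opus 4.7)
The plan is to apply Theorem~\ref{theorem_tree} to the germinating sequence $(R_n,\vtil_n,\Gamma,\vtil)$, extract a bubbling-off tree of finite-energy spheres $\{\util_q\}$ modeled on a finite rooted tree $T$ with $\util_{\bar q} = \vtil$, and then walk down a path in $T$ from the root to a leaf in order to produce a finite-energy plane whose asymptotic limit has Conley--Zehnder index exactly~$2$. By construction, a leaf has no outgoing edges, hence no negative punctures, and the associated sphere is automatically a finite-energy plane, so the only real task at that leaf is to arrange the index and verify the geometric properties (1)--(3).

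Step 1 (propagating $\mu_{CZ} = 2$ down the tree). By Lemma~\ref{estimate_cz_1} the positive asymptotic limit of $\util_q$ has index at least $2$ for every vertex $q$; hence every negative asymptotic limit of $\vtil = \util_{\bar q}$ has index $\geq 2$ since it is the positive limit of the corresponding child curve. Hypothesis (i) or (ii) then feeds directly into Lemma~\ref{wind2} and forces every negative limit of $\vtil$ to have Conley--Zehnder index exactly $2$. Pick any child $q_1$ of $\bar q$; the positive limit $P_1$ of $\util_{q_1}$ satisfies $\mu_{CZ}(P_1) = 2$. If $q_1$ is a leaf we stop. Otherwise $\util_{q_1}$ has positive limit of index $2$ and negative limits of index $\geq 2$ (again by Lemma~\ref{estimate_cz_1}), so Lemma~\ref{wind2}(ii) makes each of its negative limits have index $2$; choose a child $q_2$ and iterate. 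Finiteness of $T$ produces a leaf $q_N$, and $\util_0 := \util_{q_N}$ is a finite-energy plane asymptotic to an orbit $P_0$ with $\mu_{CZ}(P_0) = 2$, giving property~(2).

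Step 2 (immersion and transversality). From $\mu_{CZ}(P_0) = 2$ and formula (\ref{cz_index_alternative}) one reads $\wind(\nu^{neg},\alpha_{P_0}) = 1$, so Corollary~\ref{winds_spec} gives $\wind_\infty(\util_0,\infty) \leq 1$. The corollary following Theorem~\ref{zero_dlmabda_FES} forbids $\pi\cdot du_0 \equiv 0$ since a finite-energy plane has positive $d\lambda$-area, so $\wind_\pi(\util_0)$ is a well-defined nonnegative integer and the universal lower bound $\wind_\infty(\util_0) \geq 1$ for planes applies. Theorem~\ref{winds_thm} then yields $\wind_\pi(\util_0) = \wind_\infty(\util_0) - \chi(S^2) + 1 = 0$, whence $\pi \cdot du_0$ never vanishes; the standard consequence is that $u_0$ is an immersion everywhere transverse to the Reeb vector field, establishing property~(1).

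Step 3 (disjointness from external orbits). Let $P = (x,T)\in\P$ satisfy $v_n(B_{R_n}(0)) \cap x(\R) = \emptyset$ for every $n$. By item~(2) of Theorem~\ref{theorem_tree}, $\util_0$ is a $C^\infty_{loc}$ limit of rescaled restrictions $\tilde U_j(z) = (b_{n_j}(z_j+\delta_jz)+c_j,\, v_{n_j}(z_j+\delta_jz))$, whose projections to $S^3$ avoid $x(\R)$ by hypothesis. The orbit cylinder $\R\times x(\R)$ is $\jtil$-holomorphic and does not contain $\util_0$ (as $\int_\C u_0^*d\lambda > 0$), so positivity of intersections for $\jtil$-holomorphic curves makes any intersection of $\util_0$ with $\R\times x(\R)$ isolated and stable under $C^\infty_{loc}$ perturbations; this would force $\tilde U_j$ to meet $\R\times x(\R)$ for large $j$, a contradiction. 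Hence $u_0(\C)\cap x(\R) = \emptyset$. For the closure statement, $\cl{u_0(\C)}$ differs from $u_0(\C)$ only by the asymptotic limit $x_0(\R)$, so it meets $x(\R)$ only if $x_0(\R)\cap x(\R)\neq\emptyset$. If in addition $P$ is simply covered with $\mu_{CZ}(P) \geq 3$, then simplicity of $x(\R)$ forces $P_0 = P^k$ for some $k\geq 1$, and Lemma~\ref{cz_index_iterates}(iii) applied to $\mu_{CZ}(P^k)=2$ forces $\mu_{CZ}(P)\in\{1,2\}$, contradicting $\mu_{CZ}(P)\geq 3$. The main obstacle I anticipate is precisely this step: the disjointness hypothesis sits at the top of the tree, and propagating it through the rescalings down to $\util_0$ without allowing $\util_0$ to be swallowed by the orbit cylinder is the only place where a geometric, rather than index-theoretic, input is needed; once that is handled, the rest of the proof is a clean accounting with Lemmas~\ref{wind1}--\ref{wind2} and Corollary~\ref{winds_spec}.
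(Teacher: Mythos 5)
Your proposal follows essentially the same approach as the paper's proof: apply Theorem~\ref{theorem_tree}, propagate the Conley--Zehnder index $2$ down to a leaf via Lemma~\ref{estimate_cz_1} and Lemma~\ref{wind2} (the paper proves this for all vertices as Lemma~\ref{estimate_cz_2}; you trace a single path, which is an equivalent shortcut), deduce $\wind_\pi(\util_0)=0$ from Corollary~\ref{winds_spec} and Theorem~\ref{winds_thm}, and use $C^\infty_{loc}$ convergence of the rescaled maps together with positivity and stability of intersections to obtain disjointness. The only tactical difference in Step 3 is that the paper first establishes transversality of $\util_0$ to the cylinder $\R\times x(\R)$ (via Carleman's Similarity Principle and the immersion property from Step 2) before invoking stability of transversal intersections, whereas you appeal directly to the general positivity-of-intersections principle for $\jtil$-holomorphic curves that are not locally identical; both are standard and valid, so the arguments are interchangeable.
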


The rest of this subsection is dedicated to the proof of the above statement. Let $\vtil_n$ be a sequence as in Proposition~\ref{compactness_theorem_1}. We find a finite rooted tree $T$, a bubbling-off tree of finite energy spheres $$ \{\util_q : q \text{ is a vertex of } T\} $$ modeled on $T$ and a subsequence $\vtil_{n_j}$ such that the conclusions of Theorem~\ref{theorem_tree} are true.

\subsubsection{Estimating the Conley-Zehnder Indices}

By \emph{1.} of Theorem~\ref{theorem_tree} we know that if $\bar q$ is the root of $T$ then $\vtil=\util_{\bar q}$. Let $\Gamma \subset \C$ the set of negative punctures of $\util_{\bar q}$. The curve $\util_{\bar q}$ is asymptotic to a periodic orbit $P_z$ at each $z\in\Gamma$. Lemma~\ref{estimate_cz_1} implies $\mu_{CZ}(P_z) \geq 2 \ \forall z\in\Gamma$. An application of Lemma~\ref{wind2} shows that $\mu_{CZ}(P_z) = 2 \ \forall z\in\Gamma$. The orbits $\{P_z:z\in\Gamma\}$ are precisely the asymptotic limits at the positive punctures of the curves in the second level of the bubbling-off tree. We succeeded in showing that if $q$ is vertex in the second level of $T$ and $\util_q$ is asymptotic to $P$ at its (unique) positive puncture then $\mu_{CZ}(P) = 2$. We can repeat the above argument inductively on each level of the tree, using lemmas~\ref{estimate_cz_1} and~\ref{wind2} at each step, to prove

\begin{lemma}\label{estimate_cz_2}
If a vertex $q$ is not the root and $\util_q$ is asymptotic to the closed orbit $P$ at its (unique) positive puncture then $\mu_{CZ}(P) = 2$.
\end{lemma}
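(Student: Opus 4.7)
The plan is to proceed by induction on the level of the vertex in the rooted tree $T$, using Lemma~\ref{estimate_cz_1} as a lower bound and Lemma~\ref{wind2}(ii) as an upper bound at every inductive step. The preceding paragraph already does the base case (level $2$), so the real task is to iterate the same argument down the tree.

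The base case is exactly the argument just given: the root $\bar q$ carries $\util_{\bar q}=\vtil$, and by hypothesis (i) or (ii) of Proposition~\ref{compactness_theorem_1}, $\vtil$ verifies the assumptions of Lemma~\ref{wind2}(i) or (ii). Combined with Lemma~\ref{estimate_cz_1}, which supplies $\mu_{CZ}(P_z)\ge 2$ at every negative puncture $z\in\Gamma$, Lemma~\ref{wind2} forces $\mu_{CZ}(P_z)=2$ for each such $z$. The orbits $\{P_z\}_{z\in\Gamma}$ are precisely the positive asymptotic limits of the finite-energy spheres attached to the vertices at level~$2$, so the claim holds there.

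For the inductive step, assume the claim is true at level $k\ge 2$, and let $q$ be a vertex at level $k$ which is not a leaf (leaves impose no further condition). By the inductive hypothesis, $\util_q$ is asymptotic at its unique positive puncture $\infty$ to an orbit $P$ with $\mu_{CZ}(P)=2$. Its negative punctures $\Gamma_q=\{z_1,\dots,z_N\}\neq\emptyset$ carry asymptotic orbits $P_1,\dots,P_N$, and Lemma~\ref{estimate_cz_1} applied to the sub-bubbling-off tree rooted at $q$ yields $\mu_{CZ}(P_j)\ge 2$ for each $j$. Hence the hypotheses of Lemma~\ref{wind2}(ii) are satisfied by $\util_q$, and its conclusion gives $\mu_{CZ}(P_j)=2$ for every $j=1,\dots,N$. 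Each $P_j$ is the positive asymptotic limit of $\util_{q_j}$ for the child $q_j$ of $q$ corresponding to $z_j$, so the claim now holds at level $k+1$. Since $T$ is finite, the induction exhausts $T\setminus\{\bar q\}$.

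I do not foresee a serious obstacle beyond bookkeeping: Lemma~\ref{wind2} is perfectly tailored to the situation, since each curve in the tree is defined on a punctured sphere with a unique positive puncture at $\infty$ and finitely many negative punctures. The only point that requires mild care is treating vertices $q$ with empty $\Gamma_q$ (leaves), where Lemma~\ref{wind2} does not apply; but leaves simply terminate the induction and contribute no new orbits to constrain.
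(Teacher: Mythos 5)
Your proof is correct and follows the same strategy as the paper: the paper also argues by induction on the level of the tree, establishing the base case at level $2$ explicitly (via Lemma~\ref{estimate_cz_1} combined with Lemma~\ref{wind2} applied to the root) and then noting that the same pair of lemmas can be iterated level by level. Your write-up of the inductive step is a faithful unpacking of the paper's one-line remark that the argument repeats inductively.
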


As a consequence of the above lemma we conclude that each edge of $T$ corresponds to a closed Reeb orbit with Conley-Zehnder index equal to $2$.


\subsubsection{Conclusion of the Proof}

Since $\Gamma$ is assumed to be non-empty the tree $T$ obtained by applying Theorem~\ref{theorem_tree} to the sequence $\vtil_n = (b_n,v_n)$ has a leaf $\underline{q}$ distinct of the root $\overline q$. We set $\util_0 := \util_{\underline q}$. By Lemma~\ref{estimate_cz_2} we know that
\begin{equation}\label{}
 \util_0 = (a_0,u_0) : \C \to \R \times  S^3
\end{equation}
is a finite energy plane asymptotic (at $\infty$) to a closed Reeb orbit $P_0$ satisfying
\[
 \mu_{CZ}(P_0) = 2.
\]

We now show that $u_0$ is an immersion transversal to the Reeb vector. Let $\nu^{neg}$ be the largest negative eigenvalue of the asymptotic operator $A_{P_0}$. In view of (\ref{cz_index_alternative}) we have $\mu_{CZ}(P_0) = 2 \Rightarrow \wind(\nu^{neg},\alpha_{P_0}) = 1$. Corollary~\ref{winds_spec} gives $\wind_\infty(\util_0) = \wind_\infty(\util_0,\infty) \leq 1$. Theorem~\ref{winds_thm} shows that $\wind_\infty(\util_0) = 1$ and $\wind_\pi(\util_0) = 0$. As a consequence of the definition of $\wind_\pi$ we conclude that
\begin{equation}\label{u_0_transversal}
 \R R_{u_0(z)} \oplus du_0(z)(T_z\C) = T_{u_0(z)} S^3 \ \forall z\in \C,
\end{equation}
proving $u_0$ is an immersion transversal to the Reeb vector.

Let $P=(x,T) \in \P$ satisfy $v_n(B_{R_n}(0)) \cap x(\R) = \emptyset \ \forall n$. Consider the finite energy immersion
\[
 \begin{array}{ccc}
   F : \C \setminus \{0\} \to \R \times  S^3, &  &  \zeta \mapsto \left( \frac{T \log |\zeta|}{2\pi},x\left(\frac{T \arg \zeta}{2\pi}\right) \right)
 \end{array}
\]
and define
\[
 A = \{ (z,\zeta) \in \C \times (\C\setminus\{0\}) : \util_0(z) = F(\zeta) \}.
\]
Suppose $(z^*,\zeta^*) \in A$ is not an isolated point of $A$. Since both $\util_0$ and $F$ are immersions, it follows from Carleman Similarity Principle that we can find open neighborhoods $\OO$ and $\OO^\prime$ of $z^*$ and $\zeta^*$, respectively, and a holomorphic diffeomorphism $f : \OO \to \OO^\prime$ such that $F \circ f = \util_0$ on $\OO$. This is an immediate consequence of Lemma 2.4.3 of~\cite{mcdsal}. We get a contradiction to (\ref{u_0_transversal}) and prove that $A$ consists only of isolated points if it is non-empty.

Arguing indirectly, suppose $A \not= \emptyset$ and choose $(z^*,\zeta^*) \in A$. By Theorem~\ref{theorem_tree} we find a bounded sequence $\{z_j\} \subset \C$ and real numbers $\delta_j \to 0^+$ and  $c_j$ with the following properties. If
\[
 \tilde U_j(z) := (b_{n_j}(z_j+\delta_jz)+c_j,v_{n_j}(z_j+\delta_jz))
\]
then $\tilde U_j \to \util_0$ in $C^\infty_{loc}$ as $j\to\infty$. In view of (\ref{u_0_transversal}) the maps $\util_0$ and $F$ intersect transversally at the pair $(z^*,\zeta^*)$. By positivity and stability of intersections of pseudo-holomorphic immersions we find $\tilde z_j \to z^*$ such that $\tilde U_j(\tilde z_j) \in F(\C\setminus\{0\})$. Thus the image of the maps $v_{n_j}$ intersect $x(\R)$ if $j$ is large enough, contradicting our hypotheses. We showed $A = \emptyset$ and $u_0(\C) \cap x(\R) = \emptyset$.

If, in addition, $P=(x,T)$ is simply covered and $\mu_{CZ}(P) \geq 3$ then it follows easily from $\mu_{CZ}(P_0) = 2$ and from the description of the Conley-Zehnder index in subsection~\ref{cz_index} that $P$ and $P_0$ are geometrically distinct. The proof of Proposition~\ref{compactness_theorem_1} is now complete.

\section{Compactness of Fast Planes}\label{comp_fast_planes}

We again assume in this section that $\lambda$ is non-degenerate, and choose $J \in \J(\xi,d\lambda)$. Fix $P = (x,T) \in \P$ and $H \subset \R \times S^3$. Define
\begin{equation}\label{}
  \Theta(H,P,\lambda,J) \subset C^\infty(\C,\R\times S^3)
\end{equation}
by requiring that $\util \in \Theta(H,P,\lambda,J)$ if, and only if, $\util = (a,u)$ is a $\jtil$-holomorphic fast finite-energy plane asymptotic to $P$ and satisfying
\begin{equation}\label{norm_theta}
\begin{array}{ccc}
  \util(0) \in H & \text{and} & \int_{\C\setminus\D} u^*d\lambda = \sigma(T).
\end{array}
\end{equation}
The constant $\sigma(C)>0$ is defined in (\ref{sigma}) for any $C>0$. Here $\jtil$ is the almost complex structure on $\R\times S^3$ induced by $J$ via (\ref{jtil}). Define
\begin{equation}\label{}
  \Lambda(H,P,\lambda,J) = \{ \util \in \Theta(H,P,\lambda,J) : \util \text{ is an embedding.}\}
\end{equation}
In the following statement and in the rest of this section we abbreviate $$ \Theta(H,P,\lambda,J) = \Theta(H,P) \text{ and } \Lambda(H,P,\lambda,J) = \Lambda(H,P). $$
By the definition of fast planes, these sets are empty if $P$ is not simply covered.

\begin{theo}\label{comp_fast}
Suppose the orbit $P=(x,T)$ is linked to every $P^* \in \P$ satisfying $\mu_{CZ}(P^*) = 2$. Suppose also $\mu_{CZ}(P) \geq 3$, $H \cap (\R \times x(\R)) = \emptyset$ and $H$ is compact. Then $\Lambda(H,P)$ is compact in $C^\infty(\C,\R\times S^3)$.
\end{theo}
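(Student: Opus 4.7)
Let $\{\util_n=(a_n,u_n)\}\subset\Lambda(H,P)$ be a sequence; the goal is to extract a $C^\infty_{\mathrm{loc}}$-subsequential limit lying in $\Lambda(H,P)$. Since every $\util_n$ is asymptotic to $P=(x,T)$, Stokes' theorem gives the uniform bound $\int_{\C}u_n^*d\lambda=T=E(\util_n)$. Compactness of $H$ controls $\{a_n(0)\}$, and the normalization $\int_{\C\setminus\D}u_n^*d\lambda=\sigma(T)<\sigma_1$ provides the crucial quantitative input.

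I would first run the standard bubbling analysis. Define $\Gamma=\{z\in\C:\exists\,n_j\to\infty,\ z_j\to z,\ |d\util_{n_j}(z_j)|\to+\infty\}$. By Lemma~\ref{standardtool} each bubble absorbs $d\lambda$-energy arbitrarily close to some period $T'\geq\sigma_1$, so the normalization forces $\Gamma$ to be finite and contained in $\cl{\D}$. Passing to a subsequence, $\util_n\to\vtil=(b,v)$ in $C^\infty_{\mathrm{loc}}(\C\setminus\Gamma,\R\times S^3)$; smooth convergence on $\{|z|>1\}$ preserves the normalization, so $\int_{\C\setminus\D}v^*d\lambda=\sigma(T)>0$ and $\vtil$ is nonconstant. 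Since $2\notin\Gamma$ and $a_n(0)$ is bounded, $\{b_n(2)\}$ is bounded, and $(+\infty,\util_n,\Gamma,\vtil)$ is a germinating sequence in the sense of Definition~\ref{germ}.

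The heart of the proof is excluding bubbling, i.e.\ $\Gamma=\emptyset$. Suppose the contrary. Each $\util_n$ is fast, so $\wind_\pi(\util_n)=0$ and $\wind_\infty(\util_n,\infty)=1$ by Theorem~\ref{winds_thm}. For $s$ large enough that $\pi\cdot\partial_sv(s,\cdot)$ is nowhere zero on $S^1$, uniform smooth convergence gives $\wind(\pi\cdot\partial_sv(s,\cdot))=\wind(\pi\cdot\partial_su_n(s,\cdot))=1$ for $n$ large; hence $\wind_\infty(\vtil,\infty)\leq 1$. Combined with $\int_{\C\setminus\Gamma}v^*d\lambda>0$, this verifies hypothesis (i) of Proposition~\ref{compactness_theorem_1}, which produces a fast finite-energy plane $\util_0=(a_0,u_0)$ asymptotic to some $P_0=(x_0,T_0)\in\P$ with $\mu_{CZ}(P_0)=2$, $u_0$ immersed transverse to $R$. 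Because $\util_n$ is an embedded fast plane asymptotic to the simply covered $P$, asymptotic intersection theory for fast planes (see~\cite{hryn1}) gives $u_n(\C)\cap x(\R)=\emptyset$ for all $n$. Using that $P$ is simply covered with $\mu_{CZ}(P)\geq 3$, item~(3) of Proposition~\ref{compactness_theorem_1} then yields $\cl{u_0(\C)}\cap x(\R)=\emptyset$. But $u_0$ compactified by its asymptote $x_0(\R)$ is a continuous singular disk bounding $x_0(\R)$ and disjoint from $x(\R)$; hence $\link(P_0,P)=0$. This contradicts the hypothesis that every orbit of Conley-Zehnder index $2$ is linked to $P$, so $\Gamma=\emptyset$.

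With bubbling excluded, $\util_n\to\vtil$ in $C^\infty_{\mathrm{loc}}(\C)$ and no $d\lambda$-energy is lost, so $\int_{\C}v^*d\lambda=T$. Applying Theorem~\ref{small_energy} on the cylindrical end together with the gap $\sigma(T)<\sigma_2(T)$ from~(\ref{sigma}) forces the asymptotic limit $P'$ of $\vtil$ to have period exactly $T$; uniform asymptotic control for fast planes (\cite{props1,hryn1}) then identifies $P'=P$. Theorem~\ref{winds_thm} gives $\wind_\pi(\vtil)=\wind_\infty(\vtil,\infty)-1=0$, so $\vtil$ is fast; embeddedness of $\vtil$ passes to the limit by positivity of intersections together with the Carleman similarity principle. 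Continuity yields $\vtil(0)\in H$ and $\int_{\C\setminus\D}v^*d\lambda=\sigma(T)$, so $\vtil\in\Lambda(H,P)$. The main obstacle is the bubbling-exclusion step: controlling $\wind_\infty(\vtil,\infty)$ in the $C^\infty_{\mathrm{loc}}$-limit of fast planes, and converting the intersection estimate of item~(3) of Proposition~\ref{compactness_theorem_1} into a concrete contradiction with the linking hypothesis via the continuous disk represented by $u_0$.
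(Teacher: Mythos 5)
Your plan follows the paper's proof quite closely in outline: form a germinating sequence, rule out bubbling by combining Proposition~\ref{compactness_theorem_1} and Theorem~\ref{lemmafredholm} with the linking hypothesis, then extract the $C^\infty_{\mathrm{loc}}$-limit and show it is an embedded fast plane. However, there is a genuine gap at the foundation of the bubbling-exclusion step. You assert that ``smooth convergence on $\{|z|>1\}$ preserves the normalization, so $\int_{\C\setminus\D}v^*d\lambda=\sigma(T)>0$''. This is not valid: $C^\infty_{\mathrm{loc}}$ convergence controls integrals only over compact sets, and $\C\setminus\D$ is non-compact, so $d\lambda$-energy could escape to $\infty$; for free one only gets $\int_{\C\setminus\D}v^*d\lambda\leq\sigma(T)$. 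In particular you have not ruled out the degenerate scenario $\pi\cdot dv\equiv 0$, in which $\vtil$ collapses to a (translated) trivial cylinder $F_P\circ p$ over $P$ with exactly one negative puncture. There $\int_{\C\setminus\Gamma}v^*d\lambda=0$ and $\wind_\infty(\vtil,\infty)$ is not even defined (no asymptotic eigenfunction), so neither hypothesis (i) nor (ii) of Proposition~\ref{compactness_theorem_1} is available---note $\mu_{CZ}(P)\geq 3$ kills (ii)---and your contradiction argument never gets started.

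This is precisely where the hypothesis $H\cap(\R\times x(\R))=\emptyset$ must enter, and you never use it. The paper (Lemma~\ref{comp_wind}) excludes $\pi\cdot dv\equiv 0$ as follows: Theorem~\ref{zero_dlmabda_FES} and simple-coveredness of $P$ force $\vtil=F_P\circ p$ with $\deg p=1$, so $\Gamma$ is a single point $z_0$; if $z_0\in\interior\D$ then $\int_{\partial\D}v^*\lambda=T$, contradicting $\lim_n\int_\D v_n^*d\lambda=T-\sigma(T)$; hence $z_0\in\partial\D$, which forces $v(0)\in x(\R)$, so $\util_n(0)$ accumulates on $H\cap(\R\times x(\R))$, contradicting the hypothesis. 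Only then is $\int_{\C\setminus\Gamma}v^*d\lambda>0$ known, which also underwrites the asymptotic eigenfunction you implicitly use to define and compute $\wind_\infty(\vtil,\infty)$. A smaller technical point: you invoke boundedness of $\{b_n(2)\}$ via boundedness of $a_n(0)$, but since $\Gamma\subset\cl{\D}$ may a priori contain $0$, uniform gradient bounds away from $\Gamma$ do not directly propagate $a_n(0)$ to $a_n(2)$; the paper sidesteps this by working with the shifted $b_n(z)=a_n(z)-a_n(2)$ and only recovering convergence of $a_n(2)$ after $\Gamma=\emptyset$ is established.
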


This subsection is devoted to the proof of the above theorem. We shall make use of the following result from~\cite{hryn1}, concerning the perturbation theory of fast planes.

\begin{theo}\label{lemmafredholm}
Let $\lambda$ be any contact form on a closed $3$-manifold $M$ and denote $\xi = \ker \lambda$. Let $J$ be any $d\lambda$-compatible complex structure $J:\xi\rightarrow\xi$. Suppose $\util = (a,u)$ is an embedded fast finite-energy $\jtil$-holomorphic plane asymptotic to a periodic Reeb orbit $P = (x,T)$ at the puncture $\infty$. If $\mu = \mu(\util) \geq 3$ then $u(\C) \cap x(\R) = \emptyset$ and $u : \C \rightarrow M \setminus x(\R)$ is a smooth proper embedding. Moreover, for any $l\geq1$ there exists an open ball $B_r(0)\subset\R^2$ and a $C^l$ embedding $f : \C \times B_r(0) \rightarrow \R\times M$ satisfying the following properties:
\begin{enumerate}
 \item $f(z,0)=\util(z)$.
 \item If $|\tau|<r$ then $f(\cdot,\tau)$ is an embedded fast finite-energy plane in $\R\times M$ asymptotic to $P$ satisfying $\mu(f(\cdot,\tau)) = \mu$.
 \item Fix $\tau_0 \in B_r(0)$ and let $\{\util_n\}$ be a sequence of embedded fast finite-energy planes asymptotic to $P$ satisfying $\util_n \to f(\cdot,\tau_0)$ in $C^\infty_{loc}$ and $\mu(\util_n) = \mu \ \forall n$. Then there exist sequences $\tau_n \rightarrow \tau_0$, $A_n \rightarrow 1$ and $B_n \rightarrow 0$ such that $$ f(A_n z + B_n,\tau_n) = \util_n(z) \ \forall z \in \C $$ if $n$ is large enough.
\end{enumerate}
\end{theo}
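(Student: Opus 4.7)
The plan is to prove the three assertions in sequence: (A) the disjointness $u(\C) \cap x(\R) = \emptyset$ together with the proper embedding property of $u$; (B) existence of the $C^l$ family $f$ via an implicit function theorem in a ``fast'' weighted Sobolev setting; and (C) the local uniqueness assertion via a second implicit function theorem that incorporates the reparametrization action of $\text{Aut}(\C)$.

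For (A), I would first prove disjointness by intersection theory with the orbit cylinder $F_P(\est) = (Ts, x(Tt))$, which is $\jtil$-holomorphic. Any point of $u(\C) \cap x(\R)$ produces an intersection of $\util$ with $F_P$; by positivity of intersections these points are isolated and count positively, so it suffices to show the algebraic intersection number vanishes. This is a Siefring-type computation: fastness gives $\wind_\pi(\util) = 0$, hence by Theorem~\ref{winds_thm} $\wind_\infty(\util,\infty) = 1$, which by Corollary~\ref{winds_spec} forces the asymptotic eigenvalue $\nu$ of $\pi \cdot du$ at infinity to satisfy $\wind(\nu,\alpha_P) = 1$; the hypothesis $\mu \geq 3$ and formula~\eqref{cz_index_alternative} give $\wind(\nu^{neg},\alpha_P) \geq 1$, and comparing asymptotic winding numbers via Lemma~\ref{winding_spectrum_1} forces $\util$ and $F_P$ to carry no algebraic intersection, hence no geometric intersection. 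Smoothness and the immersion property of $u$ follow from elliptic regularity together with the fact that $\wind_\pi(\util) = 0$ forces $\pi \cdot du$ nowhere vanishing, so $u$ is transverse to $R$; properness of $u:\C\to M\setminus x(\R)$ is then immediate from the asymptotic convergence in Definition~\ref{behavior}. Global injectivity is obtained by a McDuff-type argument with the $\R$-translates $\util_c = (a+c, u)$: any self-intersection of $u$ would produce a pseudo-holomorphic intersection $\util \cap \util_c$ for some $c\neq 0$, and the same asymptotic intersection calculation, combined with the embeddedness of $\util$ itself, rules this out.

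For (B), I would set up the linearized Cauchy-Riemann operator $D\util$ on a weighted Sobolev space of sections of $\util^*T(\R\times M)$, where the exponential weight $\delta>0$ at the puncture lies in the spectral gap between $|\nu|$ and the next negative eigenvalue of $A_P$. In this ``fast'' setting $D\util$ is Fredholm, and automatic transversality in the sense of Wendl --- valid because $u$ is immersed transverse to $R$, $\wind_\pi(\util)=0$, and the target is four-dimensional --- gives surjectivity. A standard index computation (Riemann-Roch / spectral flow as in~\cite{props2}) shows that the moduli space of fast planes asymptotic to $P$, modulo the reparametrization group $\text{Aut}(\C)$, is two-dimensional, so the implicit function theorem produces a $C^l$ slice $f:\C\times B_r(0)\to \R\times M$ with $f(\cdot, 0) = \util$ and each $f(\cdot,\tau)$ a fast embedded plane asymptotic to $P$ of index $\mu$. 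Openness of the fast and embedded conditions under $C^l$ perturbation justifies items (1) and (2), and, after possibly shrinking $r$, the map $f$ itself is an embedding of $\C\times B_r(0)$ into $\R\times M$ because applying (A) to pairs $f(\cdot,\tau_1), f(\cdot,\tau_2)$ with $\tau_1 \neq \tau_2$ shows that distinct slices transverse to the reparametrization orbit have disjoint images.

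For (C), consider the $C^l$ map $\Phi(A,B,\tau) := f(A\,\cdot + B, \tau)$ from $(\C\setminus\{0\}) \times \C \times B_r(0)$ into the Banach manifold $\M_{\text{fast}}(P)$ of fast planes asymptotic to $P$ modeled on the weighted Sobolev space of (B). At $(1,0,\tau_0)$ the derivative of $\Phi$ is an isomorphism onto $T_{f(\cdot,\tau_0)}\M_{\text{fast}}(P)$: the $(A,B)$-variations span the four-dimensional tangent space to the reparametrization orbit, and the $\tau$-variation contributes the complementary two-dimensional slice from (B). The implicit function theorem makes $\Phi$ a local $C^l$ diffeomorphism near $(1, 0, \tau_0)$, so for $n$ large the hypothesis $\util_n \to f(\cdot, \tau_0)$ in $C^\infty_{loc}$ --- upgraded to convergence in the weighted norm using the uniform exponential asymptotics from~\cite{props1} and the spectral-gap choice of weight --- yields unique $(A_n, B_n, \tau_n) \to (1,0,\tau_0)$ with $\util_n = f(A_n \cdot + B_n, \tau_n)$. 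The main technical obstacle is the intersection-theoretic step in (A) proving $u(\C) \cap x(\R) = \emptyset$: one needs a Siefring-style refinement of positivity of intersections that handles the asymptotic contribution at the puncture, resting on the precise exponential asymptotic formulas of~\cite{props1} and the winding analysis of Theorems~\ref{asymp_efunction} and~\ref{winds_thm}; setting up the correct weighted Fredholm theory and verifying automatic transversality in this weighted regime is the second delicate point.
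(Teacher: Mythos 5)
The paper does not actually prove Theorem~\ref{lemmafredholm}: it is imported verbatim from~\cite{hryn1} (``We shall make use of the following result from~\cite{hryn1}\dots''), so there is no in-text proof to compare yours against. Judged on its own terms, your outline follows the strategy one finds in~\cite{hryn1} and in Wendl's work: asymptotic intersection/winding analysis for part (A), a weighted Fredholm setup with exponential weights in the spectral gap plus automatic transversality coming from $\wind_\pi(\util)=0$ for part (B), and an implicit-function-theorem argument incorporating the four-dimensional group $\mathrm{Aut}(\C)$ together with an upgrade from $C^\infty_{loc}$ to weighted convergence for part (C). The dimension counts (two-dimensional constrained unparametrized moduli space, six parameters in $\Phi(A,B,\tau)$) are consistent.

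Two soft spots deserve flagging. First, in (B) you justify that $f$ is an embedding of $\C\times B_r(0)$ by ``applying (A) to pairs $f(\cdot,\tau_1),f(\cdot,\tau_2)$,'' but (A) only concerns intersections of a single plane with the orbit cylinder $\R\times x(\R)$; disjointness of two distinct fast planes asymptotic to the same simply covered $P$ requires a separate positivity-of-intersections argument for the pair (controlling intersections escaping to the common end via the relative asymptotic eigenfunctions), or equivalently the observation that nontrivial kernel elements of the weighted normal operator are nowhere vanishing because their algebraic zero count is forced to be $0$. Second, the passage in (C) from $C^\infty_{loc}$ convergence of $\util_n$ to convergence in the weighted norm is not automatic: one needs uniform exponential decay for the whole sequence (no loss of energy or breaking at the end), which is itself a compactness statement of the type proved in Section~\ref{comp_fast_planes}; you name this but should not treat it as a routine upgrade. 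With those caveats the proposal is a faithful sketch of the cited proof rather than a new route.
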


\begin{remark}
We point out that results of C. Wendl in~\cite{wendl} could be used as an alternative starting point for proving Theorem~\ref{comp_fast}. However, introducing the necessary notation in order to discuss and apply results from~\cite{wendl} would make our presentation not self-contained.
\end{remark}

The index $\mu(\util)$ above is just the Conley-Zehnder index of $P$ computed with respect to the capping disk given by $\util$. Note that we do not make any non-degeneracy assumptions on $\lambda$. We take the opportunity to state a useful lemma without proof\footnote{The proof of Lemma~\ref{useful_ends} follows from the Banach space set-up defined in~\cite{props3} where the Fredholm theory for embedded finite-energy surfaces is developed. However note that $\lambda$ is possibly degenerate, so it is important that we only deal with planes with non-degenerate asymptotics, see~\cite{hryn1} for details. More precisely, if $\util$ is an embedded fast finite energy plane then we can choose a bundle $N_{\util} \subset \util^*T(\R\times M)$ complementary to $T\util$ which coincides with $\util^*\xi$ near $\infty$. There is a suitable Banach space of sections of $N_{\util}$ with a fixed exponential decay at $\infty$, and the graph of a section models an embedded surface near $\util(\C)$. Such a nearby surface has a $\jtil$-invariant tangent space if, and only if, the corresponding section belongs to the zero set of a suitable Fredholm map. The lemma follows immediately from the exponential decay mentioned before.}.

\begin{lemma}\label{useful_ends}
Let $f=(h,g) : \C\times B_r(0) \to \R\times M$ be the map obtained by Theorem~\ref{lemmafredholm}. If $K \subset B_r(0)$ is compact and $U$ is an open neighborhood of $x(\R)$ in $M$ then there exists $R>0$ such that $g(z,\tau) \in U$ for every $(z,\tau) \in (\C\setminus B_R(0)) \times K$.
\end{lemma}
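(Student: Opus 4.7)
The plan is to trace back through the Banach-space construction of the family $f$ alluded to in the footnote. In that setup one fixes a rank-$2$ subbundle $N_{\util}\subset \util^*T(\R\times M)$ complementary to $T\util$ and chosen so that $N_{\util}=\util^*\xi$ outside a compact set, and then represents nearby embedded fast planes as exponential graphs $\zeta\mapsto \exp_{\util(\zeta)}\eta(\zeta)$ of smooth sections $\eta$ of $N_{\util}$. The Fredholm theory for these graphs is carried out in a Banach space $\B$ of $W^{k,p}$-sections of $N_{\util}$ equipped with an exponential weight $e^{\delta s}$ at the puncture, where $z=e^{2\pi(s+it)}$ are cylindrical coordinates near $\infty$ and $\delta>0$ is strictly smaller than the modulus of the distinguished negative eigenvalue of $A_P$ controlling the asymptotic decay of $\util$.

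The construction produces a $C^l$ map $\tau\in B_r(0)\mapsto \eta_\tau\in \B$ such that $f(\cdot,\tau)$ is the exponential graph of $\eta_\tau$. For $K\subset B_r(0)$ compact, continuity makes $\{\eta_\tau:\tau\in K\}$ a compact, hence bounded, subset of $\B$. A weighted Sobolev embedding into a weighted $C^0$-space then yields a constant $C>0$ with
\[
 |\eta_\tau(s,t)|\leq C\,e^{-\delta s},\qquad \tau\in K,\ s\geq s_0,\ t\in S^1,
\]
so that $\eta_\tau\to 0$ uniformly in $\tau\in K$ as $s\to+\infty$.

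Writing $\util=(a,u)$, since $\util$ is asymptotic to $P=(x,T)$ the loop $t\mapsto u(s,t)$ converges uniformly to $t\mapsto x(Tt)$ as $s\to+\infty$. Because $N_{\util}$ coincides with $\util^*\xi$ near $\infty$, hence projects isomorphically onto a subbundle of $TM$, the $M$-component of $f$ takes the form $g(e^{2\pi(s+it)},\tau)=\exp_{u(s,t)}\pi_{TM}\eta_\tau(s,t)$ for $s$ large. Combining the uniform exponential decay of $\eta_\tau$ with the asymptotic behavior of $u$ then gives $g(z,\tau)\in U$ for $|z|$ sufficiently large, uniformly in $\tau\in K$, which is the required $R$. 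The only point needing care is that the weight $\delta$ can be chosen independently of $\tau$; this is automatic because $\B$ is defined relative to the single reference plane $\util$. The main obstacle is thus essentially bookkeeping: recalling the weighted Sobolev set-up of~\cite{props3} and checking that the identification $N_{\util}\simeq \util^*\xi$ near infinity translates the decay estimate on $\eta_\tau$ directly into a uniform bound on $\text{dist}_M(g(z,\tau),x(\R))$.
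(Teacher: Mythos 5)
Your proof is correct and follows essentially the same route as the paper's footnote sketch for this lemma: the weighted Banach-space set-up from~\cite{props3} with a normal bundle $N_{\util}$ coinciding with $\util^*\xi$ near $\infty$, and the uniform exponential decay on $K$ obtained from compactness and continuity of $\tau\mapsto\eta_\tau$. The paper omits the proof, giving only this sketch in a footnote, and your argument is a faithful and accurate expansion of it.
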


Fix a sequence
\begin{equation}\label{}
  \{\util_n = (a_n,u_n)\} \in \Lambda(H,P)
\end{equation}
and define
\[
 \Gamma = \{ z \in \C : \exists n_j\to\infty \text{ and } \{z_j\} \subset\C \text{ such that } z_j \to z \text{ and }|d\util_{n_j}(z_j)| \to +\infty \}.
\]
Again we consider norms taken with respect to the metric $g_0$ (\ref{metric_g0}). Clearly $E(\util) = T$ whenever $\util \in \Theta(H,P)$. In view of~\eqref{norm_theta} and of Lemma~\ref{standardtool} we may assume, up to selection of a subsequence, that
\[
\begin{array}{ccc}
  \#\Gamma<\infty & \text{ and }  & \Gamma \subset \D.
\end{array}
\]
Define $\vtil_n = (b_n,v_n) : \C \to \R\times S^3$ by
\begin{equation}\label{}
\begin{array}{cc}
  b_n(z) = a_n(z) - a_n(2), & v_n(z) = u_n(z).
\end{array}
\end{equation}
Since $g_0$ is $\R$-invariant we know $|d\vtil_n|$ is uniformly bounded on compact subsets of $\C \setminus \Gamma$. Standard elliptic boot-strapping arguments show there exists a subsequence, still denoted $\vtil_n$, and a non-constant finite-energy $\jtil$-holomorphic map $\vtil : \C \setminus \Gamma \to \R \times S^3$ such that
\begin{equation}\label{}
  \begin{aligned}
   \vtil(2) &\in \{0\} \times S^3 \\
   \vtil_n \to \vtil \text{ in } &C^\infty_{loc}(\C\setminus \Gamma,\R\times S^3) \\
   E(\vtil) &\leq \sup_n E(\util_n) = T.
  \end{aligned}
\end{equation}
The inequality $E(\vtil)>0$ is easily verified. Since
\[
 \int_{\C\setminus\D} v_n^*d\lambda = \int_{\C\setminus\D} u_n^*d\lambda = \sigma(T) \ \forall n
\]
if we set $R_n = +\infty$ one sees that $(\vtil_n,R_n,\Gamma,\vtil)$ is a germinating sequence with energy bounded by $T$.

Lemma~\ref{props_limit_sequence_1} shows that $\Gamma$ consists of negative punctures of $\vtil$ and $\infty$ is its unique positive puncture. Using Theorem~\ref{small_energy} as in subsection~\ref{bubbling_section} one proves $\vtil$ is asymptotic to $P$ at $\infty$.

\begin{lemma}\label{comp_wind}
$\int v^*d\lambda > 0$, $\Gamma = \emptyset$ and $\wind_\infty(\vtil) = 1$.
\end{lemma}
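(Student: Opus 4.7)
The plan is to show $\Gamma = \emptyset$; the other two conclusions will follow easily. I would rule out two obstructions in sequence.

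First, I would eliminate the possibility $\pi \cdot dv \equiv 0$ via a direct integral comparison. Suppose for contradiction $\pi \cdot dv \equiv 0$. By Theorem~\ref{zero_dlmabda_FES}, $\vtil = F_P \circ p$ for a non-constant polynomial $p$; since $\vtil$ is asymptotic to $P$ at $\infty$ and $P$ is simply covered, $p$ must have degree $1$, so $\#\Gamma = 1$ with its single zero $z^*\in\Gamma\subset\D$. Applying Stokes' theorem to each $\util_n$ and using the asymptotic $\int_{\partial B_R(0)} v_n^*\lambda \to T$ as $R\to\infty$ gives
\[
\int_{\partial \D} v_n^* \lambda \;=\; T \;-\; \int_{\C\setminus\D} v_n^* d\lambda \;=\; T - \sigma(T).
\]
Since $\partial \D$ is a compact subset of $\C \setminus \Gamma$, the $C^\infty_{loc}$ convergence $v_n\to v$ yields $\int_{\partial \D} v_n^* \lambda \to \int_{\partial \D} v^* \lambda$. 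On the other hand, direct computation using $v(z) = (T\log|p(z)|/2\pi,\,x(T\arg p(z)/2\pi))$ together with the fact that $p|_{\partial \D}$ winds once around $0$ (because $|z^*|<1$) gives $\int_{\partial \D} v^* \lambda = T$. Comparing yields $\sigma(T)=0$, contradicting $\sigma(T)>0$. Hence $\pi \cdot dv\not\equiv 0$ and $\int v^*d\lambda>0$.

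Second, I would establish $\wind_\infty(\vtil,\infty)\leq 1$. Each $\util_n$ being a fast plane means $\wind_\pi(\util_n)=0$, so $\pi\cdot du_n$ is nowhere vanishing, and the winding of $\pi\cdot\partial_s u_n$ on every circle $C_R=\partial B_R(0)$ equals $\wind_\infty(\util_n,\infty)=1$. Choose $R>1$ with $C_R\subset \C\setminus\Gamma$ and such that $\pi\cdot dv$ does not vanish on $C_R$ (possible since the zeros of $\pi\cdot dv$ are isolated). The $C^\infty_{loc}$ convergence $v_n\to v$ then gives $\wind(\pi\cdot\partial_s v,C_R)=1$ for large $n$. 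Since each zero of $\pi\cdot dv$ in $|z|>R$ contributes $+1$ to the difference $\wind(\pi\cdot\partial_s v,C_R)-\wind_\infty(\vtil,\infty)$, we conclude $\wind_\infty(\vtil,\infty)\leq 1$.

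Third, suppose for contradiction that $\Gamma\neq\emptyset$. Then hypothesis~$(i)$ of Proposition~\ref{compactness_theorem_1} is satisfied, and the proposition yields a finite-energy plane $\util_0 = (a_0,u_0)$ asymptotic to an orbit $P_0=(x_0,T_0)$ with $\mu_{CZ}(P_0)=2$. By Theorem~\ref{lemmafredholm} each $u_n=v_n$ satisfies $u_n(\C)\cap x(\R)=\emptyset$, and since $P$ is simply covered with $\mu_{CZ}(P)\geq 3$, the last clause of Proposition~\ref{compactness_theorem_1} gives $\cl{u_0(\C)}\cap x(\R)=\emptyset$. In particular $x_0(\R)\cap x(\R)=\emptyset$, and $u_0$ extends continuously to a closed-disk map into $S^3\setminus x(\R)$ with boundary $x_0$, so $[x_0]=0$ in $H_1(S^3\setminus x(\R);\Z)$. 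This contradicts the hypothesis that every closed orbit of Conley-Zehnder index $2$, in particular $P_0$, must be linked to $P$. Hence $\Gamma=\emptyset$.

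Consequently $\vtil$ is a finite-energy plane asymptotic to $P$. We already know $\int v^*d\lambda>0$, and Theorem~\ref{winds_thm} specializes to $\wind_\pi(\vtil) = \wind_\infty(\vtil,\infty) - 1$. Combining $\wind_\pi(\vtil)\geq 0$ with $\wind_\infty(\vtil,\infty)\leq 1$ yields $\wind_\pi(\vtil)=0$ and $\wind_\infty(\vtil)=1$. The most delicate step is the integral comparison in the first paragraph, which rules out the orbit-cylinder scenario $\pi\cdot dv\equiv 0$; once that is in place, the remainder combines Proposition~\ref{compactness_theorem_1} with the linking hypothesis in a routine way.
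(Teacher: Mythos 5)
Your argument follows the paper's general strategy (rule out $\pi\cdot dv\equiv 0$, establish $\wind_\infty\leq 1$, then feed case~(i) of Proposition~\ref{compactness_theorem_1} the linking hypothesis), but there are two issues, one of which is a genuine gap.

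The gap is in the first paragraph. After obtaining $\vtil=F_P\circ p$ with $\deg p=1$, you write ``with its single zero $z^*\in\Gamma\subset\D$'' and then assert ``$|z^*|<1$'' without justification. Lemma~\ref{standardtool} only places $\Gamma$ in the \emph{closed} unit disk, so $z^*$ may very well lie on $\partial\D$; if it does, $\partial\D$ is not a compact subset of $\C\setminus\Gamma$ and the step ``$\int_{\partial\D}v_n^*\lambda\to\int_{\partial\D}v^*\lambda$'' fails. The paper precisely separates these two cases: the Stokes argument you wrote handles $z^*\in\interior\D$, and the case $\Gamma\subset\partial\D$ is excluded because then $0\notin\Gamma$, so $u_n(0)\to v(0)\in x(\R)$, which makes the (subsequential) limit of $\util_n(0)\in H$ lie in $H\cap(\R\times x(\R))=\emptyset$ --- this is exactly where the normalization $\util(0)\in H$ in the definition of $\Theta(H,P)$ and the hypothesis $H\cap(\R\times x(\R))=\emptyset$ are used. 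Your proof never invokes this hypothesis, which is a signal that a case is missing.

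The second issue is the sign in your winding argument. You state that each zero of $\pi\cdot dv$ in $\{|z|>R\}$ contributes $+1$ to $\wind(\pi\cdot\partial_s v,C_R)-\wind_\infty(\vtil,\infty)$, and deduce $\wind_\infty\leq 1$. The contribution is to $\wind_\infty(\vtil,\infty)-\wind(\pi\cdot\partial_s v,C_R)$, because for $R<R'$ the algebraic zero count on the annulus is the outer winding minus the inner winding; as written, your inequality runs the wrong way and merely re-proves $\wind_\infty\geq 1$, which is already implied by $\wind_\pi\geq 0$. The conclusion can be salvaged either by noting that stability of zeros together with the nonvanishing of $\pi\cdot du_n$ forces $\pi\cdot dv$ to have no zeros at all on $\C\setminus\Gamma$ (so equality holds), or, as the paper does, by counting zeros of $\pi\cdot dv_n$ itself on large annuli $B_{\rho_n}\setminus B_{R_0}$ and passing to the limit, which cleanly yields $1-\wind_\infty(\vtil,\infty)\geq 0$.

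The rest --- applying Proposition~\ref{compactness_theorem_1} with the linking hypothesis to kill $\Gamma$, and then reading off $\wind_\pi=0$, $\wind_\infty=1$ from Theorem~\ref{winds_thm} --- matches the paper.
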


\begin{proof}
Let us assume, by contradiction, that $\pi \cdot dv \equiv 0$. Consider the trivial cylinder $F : \C\setminus\{0\} \to\R \times S^3$ defined by $$ z=\est \mapsto F(z) = (Ts, x(Tt)). $$ By Theorem~\ref{zero_dlmabda_FES} there exists a non-constant polynomial $p:\C\to\C$ such that $\vtil = F\circ p$. This proves $\Gamma \not= \emptyset$ and $\vtil(\C\setminus\Gamma) \subset \R\times x(\R)$. The degree of $p$ must be $1$ since $P$ is simply covered. If the zero of $p$ lies in $\interior{\D}$ we obtain
\[
 T = \int_{\partial \D} v^*\lambda = \lim_{n\to\infty} \int_{\D} v_n^*d\lambda = T-\sigma(T),
\]
which is impossible and proves that $\Gamma \subset \partial \D$. Consequently, up to selection of a subsequence, $\util_n(0)$ converges to a point in $H \cap (\R\times x(\R))$. This is absurd and shows that $\int_{\C\setminus\D} v^*d\lambda > 0$.

Fix a global non-vanishing section
\[
 Z : S^3 \to \xi = \ker \lambda.
\]
It follows from our assumptions and from Theorem~\ref{behavior_HWZ_1} that $\exists R_0 \gg1$ such that $\pi \cdot dv(z) \not=0$ when $|z|\geq R_0$. Then $\pi \cdot dv_n(z) \not=0$ when $|z|= R_0$ and $n$ is large enough. Moreover,
\[
 l := \wind_\infty(\vtil,\infty) = \wind(t\mapsto \pi \cdot \partial_r v(R_0e^{i2\pi t}), t\mapsto Z(v(R_0e^{i2\pi t}))).
\]
Define
\[
 l_n := \wind(t\mapsto \pi \cdot \partial_r v_n(R_0e^{i2\pi t}), t\mapsto Z(v_n(R_0e^{i2\pi t})))
\]
when $n$ is large. Choose $\rho_n \to +\infty$ so that $\pi\cdot dv_n(z)$ does not vanish if $|z|=\rho_n$ and
\[
 \wind(t\mapsto \pi \cdot \partial_r v_n(\rho_ne^{i2\pi t}), t\mapsto Z(v_n(\rho_ne^{i2\pi t}))) = \wind_\infty(\vtil_n) = \wind_\infty(\util_n) = 1.
\]
Since each $\pi \cdot dv_n$ satisfies an equation of Cauchy-Riemann type, it has only isolated zeros that count positively in the algebraic intersection count with the zero section of $\wedge^{0,1}T^*\C\otimes_J v_n^*\xi$. We can estimate by standard degree theory:
\[
 \begin{aligned}
  0 &\leq \#\{\text{zeros of } \pi\cdot dv_n \text{ on } B_{\rho_n}(0) \setminus B_{R_0}(0)\} \\
  &= \wind(t\mapsto \pi \cdot \partial_r v_n(\rho_ne^{i2\pi t}), t\mapsto Z(v_n(\rho_ne^{i2\pi t}))) \\
  &- \wind(t\mapsto \pi \cdot \partial_r v_n(R_0e^{i2\pi t}), t\mapsto Z(v_n(R_0e^{i2\pi t}))) \\
  &= 1-l_n.
 \end{aligned}
\]
Since $l_n \to l$ this proves
\[
 1-\wind_\infty(\vtil,\infty) \geq 0.
\]
Theorem~\ref{lemmafredholm} implies $v_n(\C) \cap x(\R) = u_n(\C) \cap x(\R) = \emptyset \ \forall n$. Let us assume $\Gamma \not= \emptyset$ and argue indirectly. Applying Proposition~\ref{compactness_theorem_1} to the sequence $\vtil_n$ we obtain a periodic Reeb orbit $P_0$ not linked to $P$ and satisfying $\mu_{CZ}(P_0)=2$. This is a contradiction to the hypotheses of Theorem~\ref{comp_fast}. Consequently we must have $\Gamma = \emptyset$, that is, $\vtil$ is a finite-energy plane satisfying
\[
 0 \leq \wind_\pi(\vtil) = \wind_\infty(\vtil)-1 \leq 0.
\]
\end{proof}

By Lemma~\ref{comp_wind} we have $\Gamma=\emptyset$, $\wind_\infty(\vtil)=1$ and $$ \lim_{n\to+\infty} a_n(2) =:c $$ exists. Thus $\util_n \to \util$ in $C^\infty(\C,\R\times S^3)$ where
\[
 \util(z) = (b(z)+c,v(z)).
\]
Clearly
\[
 \int_\D u^*d\lambda = \lim_{n\to\infty} \int_\D u_n^*d\lambda = T-\sigma(T).
\]
and $\wind_\infty(\util) = \wind_\infty(\vtil) = 1$. In particular, $\util$ is an immersion.

It remains to show that $\util$ is an embedding. Let $\Delta \subset \C \times \C$ be the diagonal and consider
\[
 D := \{ (z_1,z_2) \in \C \times \C \setminus \Delta : \util(z_1) = \util(z_2) \}.
\]
If $D$ has a limit point in $\C \times \C \setminus \Delta$ then we find, using Carleman's Similarity Principle as in~\cite{props2}, a polynomial $p : \C \to \C$ of degree at least $2$ and a $\jtil$-holomorphic map $f : \C \to \R \times S^3$ such that $\util = f\circ p$. This forces zeros of $d\util$. But this is impossible since $\util$ is an immersion, proving that $D$ consists only of isolated points in $\C \times \C \setminus \Delta$. If $D \not= \emptyset$ then, using positivity and stability of self-intersections of pseudo-holomorphic immersions, we obtain self-intersections of the maps $\util_n$. However we know that each $\util_n$ is an embedding by the definition of $\Lambda(H,P)$. This shows $D=\emptyset$ and that $\util$ is an embedding.

\section{Construction of the global sections}\label{non_deg_case}

In this section we again assume $\lambda$ is non-degenerate and prove the sufficiency statement made in Theorem~\ref{main1}. More precisely, we prove the following proposition.

\begin{prop}\label{sufficiency}
Let $\bar P = (\bar x,\bar T)$ be a simply covered and unknotted closed Reeb orbit of a non-degenerate tight contact form $\lambda$ on $ S^3$ such that $\sl(\bar P) = -1$ and $\mu_{CZ}(\bar P) \geq 3$. Assume further that $\bar P$ is linked to every orbit $P\in\P$ satisfying $\mu_{CZ}(P) =2$. Then there exists an open book decomposition with disk-like pages of $ S^3$ adapted to the Reeb dynamics, with binding $\bar x(\R)$.
\end{prop}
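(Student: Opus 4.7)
The plan is to follow the two-step strategy announced in the introduction. First I would construct a particular spanning disk $\mathcal D$ for $\bar P$ with good boundary conditions, then run a Bishop family argument against $\mathcal D$ to produce one finite-energy plane asymptotic to $\bar P$, and finally use Theorem~\ref{comp_fast} to propagate this plane to a whole $S^1$-family of embedded pages of an open book.

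The first step is to fix $J\in\J(\xi,d\lambda)$ and choose an embedded disk $\mathcal D\hookrightarrow S^3$ with $\partial \mathcal D=\bar x(\R)$ whose characteristic foliation has the standard elliptic-singularity model used in \cite{hryn1}. Since $\bar P$ is unknotted with $\sl(\bar P)=-1$, such a disk exists and comes equipped with a Bishop family of small embedded $\jtil$-holomorphic disks in $\R\times S^3$ with boundaries on $\R\times\mathcal D$, emanating from the elliptic singularity. A standard automatic transversality argument for this Bishop family (done in \cite{char2,hryn1}) lets me extend this family, preserving the boundary condition on $\R\times\mathcal D$, as long as no bubbling occurs. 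The family cannot close up on itself, since the total $d\lambda$-area grows along the family, so either it approaches $\partial\mathcal D=\bar x(\R)$ (producing a finite-energy plane asymptotic to $\bar P$) or it breaks via bubbling.

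I rule out the second alternative using Proposition~\ref{compactness_theorem_1}. If the Bishop family breaks before reaching the boundary, a standard rescaling extracts a germinating sequence whose limit is a finite-energy sphere with a non-removable positive puncture at $\infty$ and at least one negative puncture. The asymptotic orbits at the negative punctures all have Conley-Zehnder index $\geq 2$ by Lemma~\ref{estimate_cz_1}, and one can arrange that at the end of the bubbling tree a leaf plane is produced whose asymptotic limit $P_0$ satisfies $\mu_{CZ}(P_0)=2$. By conclusion (3) of Proposition~\ref{compactness_theorem_1}, since $\mathcal D$ was chosen disjoint from such orbits and $\bar P$ is simply covered with $\mu_{CZ}(\bar P)\geq 3$, the image of this plane avoids $\bar x(\R)$ and sits in $S^3\setminus\bar x(\R)$; thus $P_0$ is an index-$2$ orbit not linked to $\bar P$. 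This contradicts the hypothesis of the proposition. Hence the Bishop family terminates by producing a finite-energy plane $\util_0$ asymptotic to $\bar P$. A wind/Fredholm computation using $\mu_{CZ}(\bar P)\geq 3$, together with the normalization on the $d\lambda$-area from the Bishop disks, shows $\wind_\pi(\util_0)=0$, so $\util_0$ is \emph{fast} and embedded by Theorem~\ref{lemmafredholm}.

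The second step uses Theorem~\ref{lemmafredholm} to locally deform $\util_0$ inside the moduli space of embedded fast planes asymptotic to $\bar P$. Fixing a suitable compact transversal $H\subset \R\times(S^3\setminus\bar x(\R))$ and normalizing the $d\lambda$-area at infinity to $\sigma(\bar T)$, the moduli space $\Lambda(H,\bar P)$ of Theorem~\ref{comp_fast} is non-empty, open in the corresponding Fredholm problem (Theorem~\ref{lemmafredholm}), and compact (Theorem~\ref{comp_fast}, whose hypotheses are exactly the linking assumption on $\bar P$ together with $\mu_{CZ}(\bar P)\geq 3$). Standard intersection theory for pseudo-holomorphic curves in symplectizations then shows that distinct planes in this family either coincide up to $\R$-translation and $\C$-automorphism or have disjoint projections to $S^3\setminus\bar x(\R)$. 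Sweeping $H$ around a suitable transversal $S^1$-family and using the openness--closedness dichotomy, the projected images foliate $S^3\setminus\bar x(\R)$ by embedded disks transverse to $R$, each with boundary $\bar x(\R)$. The map $p\colon S^3\setminus\bar x(\R)\to S^1$ sending a point to the parameter of the plane through it defines the desired open book decomposition adapted to $\lambda$.

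The main obstacle I expect is the compactness in step one: controlling the Bishop family requires that the bubbling-off analysis of Section~\ref{boff_analysis} really forces a non-linked index-$2$ orbit to appear whenever the Bishop family breaks, and this is precisely the content of Proposition~\ref{compactness_theorem_1}(3). The secondary delicate point is verifying that $\wind_\pi=0$ for the plane produced, which is what allows step two to even start; this is where the assumption $\mu_{CZ}(\bar P)\geq 3$ is used, via Corollary~\ref{winds_spec} and Theorem~\ref{winds_thm}.
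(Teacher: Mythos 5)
Your high-level strategy matches the paper's: build a special spanning disk, run the Bishop family, use Proposition~\ref{compactness_theorem_1} to control breaking, extract an embedded fast plane, and apply the compactness result for $\Lambda(H,\bar P)$ to obtain the open book. Your step 2 in fact re-derives the content of Theorem~\ref{ob2} (which the paper simply quotes from~\cite{hryn1}), which is fine but longer than necessary. However, there are two genuine gaps in step 1, and a third point that you invoke with the wrong justification.

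First, you claim that after extracting a germinating sequence at a negative puncture of the limit $\util_0$ of the Bishop disks, ``one can arrange'' a leaf plane with $\mu_{CZ}(P_0)=2$. Lemma~\ref{estimate_cz_1} only gives $\mu_{CZ}\geq 2$, and Proposition~\ref{compactness_theorem_1} needs either (i) positive $d\lambda$-energy together with $\wind_\infty\leq 1$ for the root, or (ii) the root asymptotic at $\infty$ to an index $\leq 2$ orbit, before it produces an index-$2$ leaf. When $\int u_0^*d\lambda>0$ the root's positive puncture is some $P_z$ about which you know only $\mu_{CZ}(P_z)\geq 2$. The paper closes this gap by first perturbing $J$ to a residual class $\J_{gen}$ (Proposition~\ref{bad_problem}) and running a Fredholm index count on the mixed boundary value problem~\eqref{mixed}: if some negative puncture of $\util_0$ had index $\geq 3$ while the others have index $\geq 2$, the index of the normal Cauchy--Riemann operator would be negative, so no such $\util_0$ exists generically. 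Only then does one get $\mu_{CZ}(P_z)=2$ for all $z\in\Gamma_0$ and hence condition (ii). Without this genericity step your contradiction argument does not go through.

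Second, you never impose the condition (property~(4) of Proposition~\ref{convenientdisk}) that the spanning disk $\DD_1$ contains no periodic Reeb trajectory other than $\bar x(\R)$. After ruling out $\int u_0^*d\lambda>0$ as above, one concludes $\pi\cdot du_0\equiv 0$, so $u_0(\D\setminus\Gamma_0)$ lies in the image of a single periodic trajectory contained in $\DD_1$. Without property~(4), that trajectory need not be $\bar x(\R)$, and the soft rescaling might produce a plane asymptotic to the wrong orbit. The paper arranges property~(4) by a Baire category argument and it is essential. Finally, your justification for conclusion (3) of Proposition~\ref{compactness_theorem_1} -- ``since $\mathcal D$ was chosen disjoint from such orbits'' -- is not what is needed; the hypothesis $v_n(B_{R_n}(0))\cap \bar x(\R)=\emptyset$ is about the Bishop disks themselves, and this follows from Theorem~\ref{delicate} (solutions of~\eqref{bishop_disk} avoiding $\R\times\bar x(\R)$ form a closed set) rather than from any choice of $\DD_1$. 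You also gloss over the $\wind_\infty=1$ computation (Lemmas~\ref{nonzerodlambda_un} and~\ref{finalwindinfty}), which uses the characteristic vector field and the positivity of the boundary winding; this is needed to verify condition (i) of Proposition~\ref{compactness_theorem_1} for the rescaled germinating sequence once its asymptotic limit is $\bar P$ with $\mu_{CZ}(\bar P)\geq 3$.
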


\subsection{A special spanning disk for $\bar P$}\label{special_spanning_disk}

Let $F \hookrightarrow  S^3$ be an oriented embedded surface. The contact structure (\ref{contact_str}) induces a singular distribution
\begin{equation}\label{char_dist}
  (\xi \cap TF)^{\bot}
\end{equation}
called the characteristic distribution of $F$. Here $\bot$ means the $d\lambda$-symplectic orthogonal. It is parametrized by a vector field $V$ given by
\begin{equation}\label{vector_V}
 \left\{ \begin{aligned} & i_V\lambda = 0 \\ & i_Vd\lambda = dH - (i_RdH)\lambda \end{aligned} \right.
\end{equation}
where $H$ is any function defined around $F$ having $0$ as a regular value and such that $F \subset H^{-1}(0)$. In other words, $\R V_p = \xi|_p \cap T_pF$ if $V_p\not=0$ and $\xi|_p = T_pF$ if $V_p = 0$.

At a non-degenerate zero $p$ of $V$ the linearization $DV|_p$ is an isomorphism of $\xi|_p = T_pF$. If $a$ and $b$ are the eigenvalues of $DV_p$ then $ab \not= 0$. The point $p$ is called elliptic if $ab > 0$ or hyperbolic if $ab<0$. If $p$ is elliptic and $a$ and $b$ are real numbers then, following~\cite{93}, we call $p$ nicely elliptic.

Denote by $o$ the orientation of $F$ so that the induced orientation on the boundary satisfies $\lambda|_{T\partial F} > 0$. We also have a fiberwise orientation $o^\prime$ of $\xi|_F$ induced by $d\lambda$. A non-degenerate zero $p$ of $V$ is positive if $o$ coincides with $o'$ at $p$, and negative otherwise.

Now we specialize to the case that $F$ is an embedded disk and $\partial F$ is a transverse knot. The following important theorems are proved in~\cite{char1} and~\cite{93}.

\begin{theo}[Hofer, Wysocki and Zehnder]\label{disk_0}
Let $\lambda$ be a tight contact form on $S^3$ and $\xi = \ker \lambda$ be the associated contact structure. Let $L$ be a transverse unknot satisfying $\sl(L) = -1$ and let $F_0$ be an embedded disk spanning $L$. Then there exists an embedded disk $F$ spanning $L$ such that the singular characteristic distribution $\xi \cap TF$ has precisely one positive nicely elliptic singular point $e$. The new disk $F$ can be taken arbitrarily $C^0$-close to $F_0$, and arbitrarily $C^\infty$-close to $F_0$ near the boundary $L$.
\end{theo}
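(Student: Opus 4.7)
The plan is to produce $F$ from $F_0$ in three stages: first put the characteristic foliation in generic form, then kill all unwanted singularities by elimination, and finally make the remaining elliptic point nicely elliptic by a local perturbation. All three steps will be performed by $C^\infty$-small isotopies supported in the interior of the disk, away from a collar of $\partial F_0 = L$, ensuring the $C^0$ and boundary $C^\infty$ closeness requirements.

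\emph{Step 1 (genericity).} By a $C^\infty$-small perturbation of $F_0$, supported away from $\partial F_0$, I can arrange that the parametrizing vector field $V$ of (\ref{char_dist}) has only finitely many zeros in $\mathring{F}$, all of them non-degenerate, hence of elliptic or hyperbolic type with signs $\pm$. Let $e_\pm$ and $h_\pm$ denote the numbers of positive/negative elliptic and hyperbolic singularities. The Poincar\'e--Hopf count on the disk and the standard self-linking formula for a transverse unknot bounding a disk with generic characteristic foliation give
\begin{equation*}
  (e_+ + e_-) - (h_+ + h_-) = \chi(F) = 1, \qquad (e_- + h_+) - (e_+ + h_-) = \sl(L) = -1,
\end{equation*}
from which I deduce $e_+ - h_+ = 1$ and $e_- = h_-$.

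\emph{Step 2 (cancellations via Giroux elimination).} Here tightness of $\xi$ is essential. Since $\lambda$ is tight, the characteristic foliation of any embedded disk has no limit cycles (a limit cycle would, via a standard construction, force the existence of an overtwisted disk, contradicting tightness). It follows by a Poincar\'e--Bendixson argument on the disk that every separatrix of a hyperbolic singularity limits on another singularity or on $L$. Giroux's elimination lemma then allows me to cancel any elliptic--hyperbolic pair of the same sign joined by a separatrix, by a $C^0$-small isotopy of $F$ supported in an arbitrarily small neighborhood of the arc connecting them. Iterating this: first kill all negative singularities (possible since $e_- = h_-$ and tightness supplies the needed connecting separatrices), then cancel positive pairs until no positive hyperbolic point remains. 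The identity $e_+ - h_+ = 1$ is preserved at each step, so I terminate with exactly one positive elliptic singular point $e$ and no other singularities.

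\emph{Step 3 (nicely elliptic).} At the unique positive elliptic point $e$, the linearization $DV|_e$ is an automorphism of $T_e F = \xi_e$ with positive determinant. Its eigenvalues are either two real numbers of the same sign or a complex conjugate pair. If they are already real, set $F := $ the current disk and we are done; otherwise I perform one further $C^\infty$-small perturbation of the disk, supported in a small coordinate neighborhood of $e$ and disjoint from any other feature, which changes $DV|_e$ by an arbitrarily small symmetric correction sufficient to split the complex eigenvalues into a real pair. This perturbation preserves positivity and the ellipticity type, does not create new singularities, and keeps the disk $C^\infty$-close to the previous one.

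The main obstacle is Step 2: one has to be certain that the elimination lemma can actually be applied repeatedly without getting stuck. The potential failure mode is the absence of a separatrix connection between the elliptic and hyperbolic partners one wishes to cancel, and the only way this can happen is if the foliation exhibits recurrence, i.e., a limit cycle. Ruling this out is precisely where the tightness assumption enters, via the standard correspondence between limit cycles in characteristic foliations on disks and overtwisted disks. Everything else is essentially bookkeeping with the index identities from Step 1 and standard genericity and local-perturbation arguments.
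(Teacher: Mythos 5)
The paper itself gives no proof of this statement: Theorem~\ref{disk_0} is quoted from~\cite{char1} and~\cite{93}, where it is due to Eliashberg and to Hofer. So there is no ``paper's own proof'' to compare against, but your proposal does follow the general strategy of those references (index bookkeeping, Giroux elimination in the tight setting, then a local normalization near the surviving elliptic point). The index identities in Step~1 are correct.

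The real problem is Step~2, and it is not merely a matter of filling in routine details. Knowing $e_-=h_-$ and $e_+-h_+=1$, together with the absence of closed leaves, does \emph{not} by itself produce an elliptic--hyperbolic pair of the same sign joined by a separatrix. The missing failure mode is not recurrence: with no limit cycles it is still possible, a priori, that every separatrix of a negative hyperbolic point lands on a positive elliptic point or on $L$, and that every negative elliptic point is fed only by positive hyperbolic separatrices. To rule this out one also has to exclude \emph{retrograde saddle connections} (separatrices joining hyperbolic points of opposite sign) --- which is again a consequence of tightness, but a separate one from the no-closed-leaf statement --- and then run a combinatorial argument on the separatrix graph to show that whenever $e_->0$ (or $h_+>0$) a same-sign eliminable pair exists, and that after each elimination the foliation can be returned to the generic, recurrence-free situation. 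This is precisely the technical content of the proofs in~\cite{93} and~\cite{char1}; your sentence ``tightness supplies the needed connecting separatrices'' asserts the conclusion of that argument without giving it, and your diagnosis that a limit cycle is ``the only way this can fail'' is not accurate.

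There is also a smaller issue in Step~3: a $C^\infty$-small perturbation of the disk near $e$ changes $DV|_e$ only by a small amount, so it cannot turn a genuinely complex pair of eigenvalues into a real pair. What you actually have at your disposal is a $C^0$-small isotopy supported near $e$ (the theorem only asks for $C^\infty$-closeness near $L$), and such an isotopy can change the $2$-jet of the disk at $e$ by as much as needed; this is how one reaches the local model $z=-\tfrac{1}{2}xy$ of Theorem~\ref{disk_1}, which has a nicely elliptic singularity. As written, the claimed ``arbitrarily small symmetric correction'' does not suffice.
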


\begin{defi}\label{darboux_chart}
Let $\lambda$ be a contact form on the $3$-manifold $M$. A Darboux chart for $\lambda$ centered at a point $p \in M$ is a pair $(\V,\Psi)$ where $\V$ is an open neighborhood of $p$ in $M$ and $\Psi : \V \to \R^3$ is an embedding satisfying $\Psi(p) = (0,0,0)$ and $\Psi_*\lambda = dz+xdy$ (here $x,y,z$ are Euclidean coordinates on $\R^3$).
\end{defi}

\begin{theo}[Hofer]\label{disk_1}
Let $\lambda$ and $L$ be as in Theorem~\ref{disk_0}, and suppose $F$ is an embedded disk spanning $L$ so that its characteristic foliation has precisely one singular point $e$ which is positive and elliptic. Fix an arbitrary neighborhood $\U$ of $e$ in $S^3$ and let $(\V,\Psi)$ be a Darboux chart centered at $e$. There exists an embedded disk $F'$ spanning $P$ with the following properties.
\begin{enumerate}
  \item $F' \setminus \U = F \setminus \U$ and $e \in F'$.
  \item The point $e$ is the only singularity of the characteristic foliation of $F'$, and is a nicely elliptic singularity.
  \item There exists an open set $G \subset \V \cap \U$ such that $\Psi(F'\cap G) \subset \{z = - \frac{1}{2}xy\}$ and $e\in G$.
\end{enumerate}
\end{theo}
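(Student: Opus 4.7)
Plan. Identify $\V$ with a neighborhood of $0\in\R^3$ via $\Psi$, so that $e=0$ and $\lambda|_\V=dz+x\,dy$. Since $e$ is an elliptic singularity of the characteristic foliation of $F$, we have $T_eF=\xi_e=\ker(dz)$, and near $e$ the disk $F$ is a graph $\{z=f(x,y)\}$ with $f(0)=0$ and $\nabla f(0)=0$. On any graph $\{z=h(x,y)\}$ the characteristic $1$-form is $\omega_h=h_x\,dx+(h_y+x)\,dy$; its zero locus is the singular set of the characteristic foliation, and the origin is a nicely elliptic singularity of $\omega_h$ precisely when $0<\det M_h\le 1/4$, where
\[
M_h:=\begin{pmatrix} h_{xx}(0) & h_{xy}(0) \\ h_{xy}(0)+1 & h_{yy}(0) \end{pmatrix}.
\]
Both $f$ and the target model $g(x,y):=-\tfrac{1}{2}xy$ satisfy this condition, so the idea is to alter $f$ in two successive stages inside a small coordinate ball contained in $\V\cap\U$.

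Stage A replaces $f$ by its second-order Taylor polynomial $Q$ near $0$. Choose radii $0<r_1<r_2$ such that $\overline{B_{r_2}(0)}\times J\subset\Psi(\V\cap\U)$ for a $z$-interval $J$ containing $f(\overline{B_{r_2}(0)})$, and a smooth radial cutoff $\chi_1$ with $\chi_1\equiv 1$ on $B_{r_1}$ and $\chi_1\equiv 0$ outside $B_{r_2}$. Set $f^{(1)}:=(1-\chi_1)f+\chi_1 Q$. Since $f-Q=O(r^3)$ and $\nabla(f-Q)=O(r^2)$, one has $\omega_{f^{(1)}}-\omega_Q=O(r^2)$ on the transition annulus (both the $\chi_1(Q_x-f_x)$ and the $\chi_{1,x}(Q-f)$ terms being of this size when $r_2-r_1\sim r_1$), whereas $\omega_Q$ has principal linear part $M_f\cdot(x,y)^\top$ of Euclidean norm $\ge c\,r$ thanks to $\det M_f>0$. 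For $r_1,r_2$ sufficiently small the $1$-form $\omega_{f^{(1)}}$ thus vanishes only at the origin, and the graph $F^{(1)}$ of $f^{(1)}$ is an embedded disk spanning $L$, equal to $F$ outside $B_{r_2}$, still with $e$ as its unique nicely elliptic characteristic singularity.

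Stage B modifies $f^{(1)}=Q$ further inside $B_{r_1}$ so that the result equals $g$ near $0$. The algebraic input is the straight-line family $Q_s:=(1-s)Q+sg$, $s\in[0,1]$, for which a direct calculation gives
\[
\det M_{Q_s}\;=\;(1-s)^2\det M_Q\;+\;\frac{s(2-s)}{4},
\]
so $\det M_{Q_s}\in(0,1/4]$ throughout $[0,1]$; in particular each quadratic graph $\{z=Q_s\}$ has the origin as its unique nicely elliptic characteristic singularity. Pick $0<s_1<s_2<r_1$ and a smooth radial cutoff $\chi_2$ equal to $1$ on $B_{s_1}$ and $0$ outside $B_{s_2}$, and set $\tilde f:=(1-\chi_2)Q+\chi_2 g$. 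In polar coordinates $(r,\theta)$ one writes $\omega_{\tilde f}$ as a $1$-form coming from $Q_{\chi_2(r)}$ (linear in $(x,y)$, with coefficient matrix of positive determinant by the identity above) plus a strictly radial correction $\tilde\chi_2'(r)\,r^2 h(\theta)\,dr$ encoding $g-Q=r^2 h(\theta)$; in particular the $d\theta$-component of $\omega_{\tilde f}$ is determined by $Q_{\chi_2}$ alone, and its vanishing constrains $(\theta,s)$ to an isolated algebraic locus on which the radial component is then checked to be nonzero by a suitable choice of cutoff profile. Letting $F':=\{z=\tilde f\}$ inside $B_{r_2}$ glued to $F$ outside, and $G:=\Psi^{-1}(B_{s_1}(0)\times J)$, yields the three required properties.

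The main technical difficulty is Stage B: because $g-Q$ is quadratic (not cubic), the cutoff-derivative term $\chi_2'(g-Q)$ in $\omega_{\tilde f}$ is of the same order as the principal linear part in the transition annulus and is not automatically small. One exploits scale invariance of the problem in $(x,y)$ (both $Q$ and $g$ being quadratic) to reduce to a single fixed scale, and then uses the uniform positivity of $\det M_{Q_s}$ on $[0,1]$ together with the strictly radial character of the correction to eliminate spurious zeros by a cutoff profile chosen in terms of the coefficients of $Q$.
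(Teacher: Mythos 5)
This theorem is cited from Hofer's work~\cite{93,char1} without proof, so there is no argument in the paper to compare against. Your strategy --- first trade $f$ for its $2$-jet $Q$ via a cutoff (Stage~A), then interpolate the quadratics $Q_s=(1-s)Q+sg$ to the model $g=-\tfrac{1}{2}xy$ (Stage~B) --- is sound, and the computations you rely on (the normal form $\omega_h=h_x\,dx+(h_y+x)\,dy$, the elliptic criterion $\det M_h>0$, and the identity $\det M_{Q_s}=(1-s)^2\det M_Q+\tfrac{s(2-s)}{4}$) all check out.

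There is, however, a real gap precisely at the sentence ``the radial component is then checked to be nonzero by a suitable choice of cutoff profile.'' Writing $\omega_{Q_s}=r\alpha_s(\theta)\,dr+r^2\beta_s(\theta)\,d\theta$ with $\alpha_s=\alpha_0+2sh$ and $\beta_s=\beta_0+sh'$, a zero of $\omega_{\tilde f}$ at $r>0$ requires $\beta_{\chi_2(r)}(\theta)=0$ and $\alpha_0(\theta)+\kappa(r)h(\theta)=0$, where $\kappa(r):=2\chi_2(r)+r\chi_2'(r)$. The only structural fact at your disposal --- non-vanishing of each $\omega_{Q_s}$ away from the origin --- says exactly that the compact diagonal segment $\{\kappa=2s,\ s\in[0,1]\}$ is disjoint from the closed bad locus $\{(s,\kappa):\exists\,\theta,\ \beta_s(\theta)=0,\ \alpha_0(\theta)+\kappa h(\theta)=0\}$, hence at positive distance $\epsilon_0>0$ from it. One therefore needs a cutoff with small logarithmic derivative, $|r\chi_2'(r)|<\epsilon_0$, to keep the curve $r\mapsto(\chi_2(r),\kappa(r))$ inside the safe tube; such a profile decays only logarithmically, forcing $s_2/s_1\gtrsim e^{1/\epsilon_0}$, which is acceptable after shrinking $s_1$. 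Without some such mechanism the interpolated disk may well acquire new singularities, and your sketch does not supply it. (A minor slip: $\det M_{Q_s}\le 1/4$ holds only if $\det M_Q\le 1/4$, i.e.\ only if $e$ was already nicely elliptic in $F$, which the hypothesis does not grant; but all Stage~B actually needs is $\det M_{Q_s}>0$, so this does not affect the argument.)
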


The following proposition is proved in~\cite{hryn1}.

\begin{prop}\label{prop_bnd}
Let $\lambda$ be a contact form on a $3$-manifold $M$ with Reeb vector field $R$, and $P=(x,T)$ be a simply covered unknotted non-degenerate periodic Reeb orbit. Let also $D$ be an embedded disk with $\partial D = x(\R)$ and fix an arbitrary neighborhood $\hat\U$ of $x(\R)$ in $M$. Then there exists an embedded disk $D'$ spanning $P$ with the following properties.
\begin{enumerate}
  \item $D' \setminus \hat \U = D \setminus \hat \U$.
  \item There exists an open neighborhood $\OO \subset D'$ of $\partial D'=P$ such that $\OO \subset \hat \U$ and $R_p \not\in T_pD'$ for every $p \in \OO \setminus \partial D'$.
\end{enumerate}
\end{prop}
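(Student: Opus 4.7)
The plan is to modify $D$ within a tubular neighborhood of its boundary $\partial D = x(\R)$ so as to make it transverse to $R$ near the boundary, while leaving $D$ unchanged outside $\hat\U$. First, I would fix a tubular neighborhood $N \subset \hat\U$ of $x(\R)$ with coordinates $(\theta, x, y) \in (\R/T\Z) \times \R^2$ in which $x(\R) = \{x = y = 0\}$ and the Reeb vector field expands as
$$R = \partial_\theta + A(\theta)(x, y) + O(|(x, y)|^2)$$
for a smooth $T$-periodic family $A(\theta)$ of linear maps on $\R^2$. Using the Seifert framing induced by $D$, I would arrange that $D \cap N$ near $\partial D$ coincides with the half-plane $\{y = 0,\ x \geq 0\}$.

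For any smooth map $v : \R/T\Z \to \R^2 \setminus \{0\}$, consider the embedded annular collar $C_v(\theta, r) := (\theta, r v_1(\theta), r v_2(\theta))$ for $r \in [0, \epsilon)$. A direct computation of the tangent plane shows that, for $r > 0$ sufficiently small, $R$ lies in $T C_v$ at $(\theta, r v(\theta))$ if and only if $\dot v(\theta) - A(\theta) v(\theta)$ is a scalar multiple of $v(\theta)$, modulo $O(r)$. Thus the pointwise transversality requirement is $\dot v(\theta) - A(\theta) v(\theta) \notin \R\, v(\theta)$ for every $\theta \in \R/T\Z$. Moreover, for $C_v$ to extend to an embedded disk $D'$ agreeing with $D$ outside $\hat\U$, a homological computation in the solid torus $\hat\U \setminus x(\R)$ forces $v$ to have winding number zero in $\R^2 \setminus \{0\}$, so that the interpolating annulus between the outer boundary of $C_v$ and $D \cap \partial N$ (a Seifert-framed longitude of $x(\R)$) embeds in $\hat\U \setminus x(\R)$.

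The crucial step is the existence of a winding-zero $v$ satisfying the pointwise transversality condition. Writing $v = |v|\, e^{i\phi}$, the condition becomes $\dot\phi(\theta) \ne G(\theta, \phi(\theta))$ for all $\theta$, where $G$ is a smooth function determined by $A$, $\pi$-periodic in $\phi$. The non-degeneracy of $P$, i.e., $\Phi(T) := d\phi_T|_{\xi_{x(0)}}$ has no eigenvalue equal to $1$, precludes the existence of a $T$-periodic winding-zero solution of the ODE $\dot\phi = G(\theta, \phi)$, removing the main topological obstruction; a transversality/perturbation argument on the torus $(\R/T\Z) \times \R/\pi\Z$ then produces the required $\phi$.

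Given such $v$, construct $D'$ by setting $D' := C_v$ inside a smaller sub-neighborhood $N_1 \subset N$, $D' := D$ outside $N$, and smoothly gluing the two via an embedded annular surface in $N \setminus N_1$ (whose existence is enabled by the winding-zero property of $v$). The open neighborhood $\OO$ in property (2) is then the image of $C_v$ restricted to $r \in [0, \delta)$ for $\delta > 0$ sufficiently small. The main obstacle is the existence of the winding-zero $v$ satisfying the pointwise transversality condition, where non-degeneracy of $P$ plays an essential role.
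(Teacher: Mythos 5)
The paper does not give its own proof of Proposition~\ref{prop_bnd}; it is cited from~\cite{hryn1}. Evaluating your argument on its own merits: the overall strategy (normal coordinates $(\theta,x,y)$, a collar $C_v$ over the boundary, reduction to the angular ODE $\dot\phi=G(\theta,\phi)$, and the winding-zero constraint from the Seifert framing) is a reasonable route. However, the pivotal claim --- that non-degeneracy of $P$ precludes the existence of a $T$-periodic, winding-zero solution of $\dot\phi=G(\theta,\phi)$ --- is false. A $T$-periodic $\phi$ with $\phi(T)=\phi(0)$ corresponds to an eigenvector $\zeta_0$ of the linearized return map $\Phi(T)$ with a \emph{positive} eigenvalue $\mu>0$ whose associated angular winding over $[0,T]$ vanishes; nothing forces $\mu=1$. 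Concretely, take $A(\theta)\equiv\mathrm{diag}(a,-a)$ with $a\neq0$: then $\Phi(T)=\mathrm{diag}(e^{aT},e^{-aT})$ has no eigenvalue $1$ (so $P$ is non-degenerate), yet $\phi\equiv 0$ and $\phi\equiv\pi/2$ are both $T$-periodic winding-zero solutions of $\dot\phi=G$. So non-degeneracy does not ``remove the obstruction'' you describe.

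There is a second, related problem: even if that lemma were true, the existence of a $T$-periodic solution of $\dot\phi=G$ in class $(1,0)$ is not in fact an obstruction to a transverse $(1,0)$-graph. In the example above, $\phi\equiv\pi/4$ satisfies $\dot\phi=0\neq G(\theta,\pi/4)=-a$, so a transverse collar exists even though winding-zero periodic solutions are present. The genuine obstruction is that \emph{every} solution is $T$-periodic with zero winding, i.e.\ that the return map $\Psi$ on the lift is the identity; since $\det\Phi(T)=1$, this forces $\Phi(T)=I$, and is what non-degeneracy actually rules out. Turning $\Psi\not\equiv\mathrm{id}$ into a transverse $T$-periodic $\phi$ still requires an argument (e.g.\ deforming $G$ by a constant $\epsilon$, locating a value of $\epsilon$ for which the shifted flow acquires a winding-zero periodic orbit, and observing that such an orbit is automatically a strict super- or sub-solution of $\dot\phi=G$, hence transverse). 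Your ``transversality/perturbation argument on the torus'' gestures at something like this but is too vague to verify, and as written it rests on the incorrect identification of the obstruction. You should repair the lemma to ``non-degeneracy $\Rightarrow\Phi(T)\neq I\Rightarrow\Psi\not\equiv\mathrm{id}$'' and then supply the construction of the transverse graph explicitly.
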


Using the above results we prove

\begin{prop}\label{convenientdisk}
Let $\lambda$ be a tight contact form on $S^3$ with associated contact structure $\xi = \ker \lambda$ and Reeb vector field $R$. Let $P=(x,T)$ be a non-degenerate unknotted simply covered periodic Reeb orbit satisfying $\sl(P) = -1$.
Then there exists an embedded disk $\DD_1$ spanning $P$ with the following properties:
\begin{enumerate}
  \item The singular characteristic distribution $\xi \cap T\DD_1$ has precisely one positive nicely elliptic singular point $e$.
  \item There exists a neighborhood $\OO \subset \DD_1$ of $\partial \DD_1$ such that $R_p \not\in T_p\DD_1$ for every $p \in \OO \setminus \partial\DD_1$.
  \item There exists a Darboux chart $(\V,\Psi)$ for $\lambda$ centered at $e$ such that $\Psi(\V\cap\DD_1) \subset \{z=-\frac{1}{2}xy\}$.
  \item If $y$ is a periodic Reeb trajectory and $y(\R) \subset \DD_1$ then $y(\R) = x(\R)$.
\end{enumerate}
\end{prop}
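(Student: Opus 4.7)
The plan is to build $\DD_1$ in four stages, combining Proposition~\ref{prop_bnd}, Theorems~\ref{disk_0} and~\ref{disk_1}, and a Baire-category perturbation at the end. Starting from any embedded disk $D_0$ spanning $\bar P$ (which exists since $\bar P$ is unknotted) and from a small tubular neighborhood $\hat\U$ of $\bar x(\R)$, I would first apply Proposition~\ref{prop_bnd} to produce a disk $D_1$ with $D_1\setminus\hat\U = D_0\setminus\hat\U$ and an open neighborhood $\OO_1\subset D_1\cap\hat\U$ of $\partial D_1$ on which $R$ is nowhere tangent to $D_1$, securing property (2). Next, since $\sl(\bar P)=-1$, Theorem~\ref{disk_0} applied to $D_1$ produces a disk $D_2$ whose characteristic distribution has precisely one positive nicely elliptic singular point $e$; the $C^\infty$-closeness of $D_2$ to $D_1$ near the boundary lets me keep the Reeb-transversality of the previous step on a slightly shrunken neighborhood $\OO_2\subset\OO_1$, so $D_2$ satisfies both (1) and (2).

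For (3), I would choose a Darboux chart $(\V,\Psi)$ centered at $e$ together with a neighborhood $\U$ of $e$ disjoint from $\OO_2$, and apply Theorem~\ref{disk_1} to obtain a disk $D_3$ which coincides with $D_2$ outside $\U$, still has $e$ as its unique positive nicely elliptic singularity, and satisfies $\Psi(D_3\cap G)\subset\{z=-\tfrac{1}{2}xy\}$ on an open subset $G\subset\V\cap\U$ containing $e$. Using $(G,\Psi|_G)$ as the Darboux chart required in (3), the disk $D_3$ now satisfies (1), (2) and (3). Only (4) remains, and this is where the rest of the argument is concentrated.

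Since $\lambda$ is non-degenerate, the geometric images $\{y_n(\R)\}_{n\geq 1}$ of prime periodic orbits distinct from $\bar x$ form a countable family of pairwise disjoint embedded circles, each disjoint from $\bar x(\R)=\partial D_3$. I would consider the Fr\'echet space of smooth embeddings $D:\cl{\D}\hookrightarrow S^3$ with $\partial D=\bar x(\R)$ that coincide with $D_3$ on $\OO_2\cup G$, equipped with the $C^\infty$ topology. In a small enough neighborhood $\W$ of $D_3$ in this space, properties (1)--(3) persist: on $G$ the disk is literally unchanged; on $\OO_2$ Reeb-transversality is $C^1$-open; outside $G$ the open condition $\xi\neq TD_3$ prevents new singularities of the characteristic distribution from appearing; and a standard normal-form argument keeps the unique singularity near $e$ positive and nicely elliptic under a sufficiently small $C^1$-perturbation. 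For each fixed $n\geq 1$, the subset of $\W$ consisting of disks transverse to $y_n(\R)$ is open and dense by the standard transversality theorem. A Baire-category argument then produces $\DD_1\in\W$ transverse to every $y_n(\R)$; since transverse intersections of a $2$-disk and a $1$-curve in a $3$-manifold are $0$-dimensional, $y_n(\R)\not\subset\DD_1$ for every $n\geq 1$, which is (4).

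The main obstacle is the compatibility of the last step with the previous three: one must simultaneously preserve the rigid local data of (1)--(3) and achieve transversality to a countable family of curves. What makes this succeed is that the local conditions in (1)--(3) are either literally fixed (on $G$ and $\OO_2$) or $C^1$-open elsewhere, while each individual transversality condition is $C^\infty$-open and dense, so all of them can be simultaneously satisfied inside a single Baire-generic neighborhood of $D_3$.
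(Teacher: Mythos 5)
Your proposal is correct and follows essentially the same route as the paper: secure property (2) via Proposition~\ref{prop_bnd}, then (1) and (3) via Theorems~\ref{disk_0} and~\ref{disk_1}, and finish with a Baire-category perturbation for (4) in a space of embeddings frozen near $e$ and near the boundary. The only cosmetic differences are that the paper pushes $\partial D$ off to a nearby transverse unknot $L$ and patches the modified inner disk to the unchanged collar (thereby preserving the Reeb-transversal collar literally rather than by $C^\infty$-openness), and that the paper's Baire step shows directly that $\{f : y(\R)\not\subset f(\D)\}$ is open and dense rather than going through transversality to each $y(\R)$; both variations lead to the same conclusion.
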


\begin{proof}
Let $D$ be an embedded disk spanning $P$. Applying Proposition~\ref{prop_bnd} to $D$ we can assume there exists a neighborhood $\OO \subset D$ of $\partial D=P$ such that $R_p \not\in T_pD$ for every $p \in \OO \setminus \partial D$. Let $L \subset \OO$ be a transverse (un-)knot $C^\infty$-close to $P$ with $L \cap P = \emptyset$ and $\sl(L) = -1$. Let $F_0 \subset D$ be the disk satisfying $\partial F_0 = L$. We can obviously assume that the embedded strip $S := D \setminus F_0$ is never tangent to $\xi$.

Applying Theorem~\ref{disk_0} we find a smooth embedded disk $F$ spanning $L$ so that the characteristic distribution $TF \cap \xi$ has precisely one positive nicely elliptic point $e$. The disk $$ D' := S \cup F $$ is piecewise $C^1$. Since $L = \partial F_0 = \partial F$ and $F$ is obtained from $F_0$ by a perturbation that can be taken arbitrarily $C^\infty$-small near $L$, we can patch the strip $S$ and the disk $F$ to obtain a smooth disk $\DD_0$. This new disk differs from $D'$ only on an arbitrarily small neighborhood of $L$. Moreover, $e$ is the only singularity of $T\DD_0 \cap \xi$ and it is positive and nicely elliptic. The disk $\DD_0$ satisfies conditions (1) and (2). Now we can apply Theorem~\ref{disk_1} in order to obtain a disk $\DD_1$ which also fulfills condition (3).

In order to obtain condition (4) fix a smooth embedding $f_1:\D\rightarrow M$ such that $f_1(\D)=\DD_1$ and $f_1(0)=e$. For $\delta>0$ define $A_\delta := \overline{B_\delta(0)} \cup (\D\setminus B_{1-\delta}(0))$. If $\delta$ is small enough then (1) and (2) imply
\[
 \R R_{f_1(z)} \cap T_{f_1(z)}\DD_1 = \{0\}, \ \forall z\in A_\delta.
\]
Consider the set
\[
 X = \{ f \in C^\infty(\D,M) : f \equiv f_1 \text{ on } A_\delta \}.
\]
Then $X$ is closed in the complete metric space $C^\infty(\D,M)$ endowed with the $C^\infty$ topology. Hence it is also a complete metric space. For a fixed periodic Reeb trajectory $y:\R \rightarrow M$ we define
\[
 X_y := \{ f \in X : y(\R) \subset f(\D) \}.
\]
By the definition of $\delta$ we know that
\[
 y(\R) \not= x(\R) \text{ and } f \in X_y \Rightarrow y(\R) \subset f(\D\setminus A_\delta).
\]
It is easy to check that $X_y^c$ is open and dense in $X$ if $y(\R) \not = x(\R)$. There are only countably many periodic Reeb trajectories, up to translations in time, since $\lambda$ is non-degenerate. It follows from Baire's category theorem that
\[
 \bigcap \left\{ X_{x^\prime}^c : P^\prime=(x^\prime,T^\prime) \in\P \text{ and } x^\prime(\R) \not= x(\R) \right\}
\]
is residual in $X$. Hence, by an arbitrarily small $C^\infty$-perturbation supported away from $\partial \DD_1 \cup \{e\}$, we may assume that our disk $\DD_1$ contains no periodic Reeb trajectories other than $x(\R)$. Since this perturbation can be taken $C^\infty$-small, it does not create new singular points.
\end{proof}

The above proposition applied to the orbit $\bar P = (\bar x,\bar T)$ of Proposition~\ref{sufficiency} gives us an embedded disk $\DD_1$ spanning $\bar x(\R)$ with special properties. Using equations (\ref{vector_V}) we obtain a vector field $V$ on $\DD_1$ parametrizing (the $d\lambda$-symplectic orthogonal of) the characteristic distribution $\xi \cap T\DD_1$. It is, of course, implicit that $\DD_1$ is oriented so that the Reeb vector is positive on $\partial \DD_1 = \bar x(\R)$. Since we may change $H$ by $-H$ at will in (\ref{vector_V}) there is no loss of generality to assume that $V$ points outward at the boundary. Consequently, $V$ has precisely one zero $e$ in the interior of $\DD_1$ which is a positive nicely elliptic singularity and also a source for the dynamics of $V$.

\subsection{The Bishop Family}\label{the_bishop_family}

Let $\DD_1$ be the disk obtained in the previous subsection by applying Proposition~\ref{convenientdisk} to the orbit $\bar P$ from Proposition~\ref{sufficiency}. Following~\cite{93,char1,char2} we consider the following boundary value problem:

\begin{equation}\label{bishop_disk}
 \left\{
  \begin{aligned}
   & \util = (a,u) : \D \to \R \times  S^3 \text{ is an embedding} \\
   & d\util \cdot i = \jtil(\util) \cdot d\util \\
   & a \equiv 0 \text{ on } \partial \D, \ u(\partial \D) \subset \DD_1 \setminus \{e\} \\
   & u(\partial \D) \text{ winds once positively around } e
  \end{aligned}
 \right.
\end{equation}

Here $i$ denotes the standard complex structure on $\C$. For the last condition to be precise we need to orient the (embedded) loop $u(\partial \D)$ and the disk $\DD_1$. The disk $\DD_1$ is oriented so that $\lambda|_{\partial\DD_1} > 0$. The loop $u(\partial \D)$ is oriented by orienting $\partial \D$ counter-clockwise.

In the Darboux chart $(\V,\Psi)$ given by Proposition~\ref{convenientdisk} note that $\xi = \text{span}\{v_1,v_2\}$ where $v_1 = \partial_x$ and $v_2= \partial_y-x\partial_z$. Choose $J \in \J(\xi,d\lambda)$ so that $Jv_1=v_2$ on $\V$. Then $\jtil$ defined by (\ref{jtil}) is integrable and the disks $\util_\tau = (a_\tau,u_\tau)$ given by
\begin{equation}\label{disks_formulae}
  \begin{aligned}
    a_\tau(s+it) &= \frac{\tau^2}{4} (s^2 + t^2 - 1) \\
    \Psi \circ u_\tau(s+it) &= \left( \tau s , \tau t , -\frac{\tau^2}{2}st \right)
  \end{aligned}
\end{equation}
form a $1$-parameter family of solutions of (\ref{bishop_disk}), for $\tau>0$ small. Note that $\util_\tau \to (0,e)$ in $C^\infty$ as $\tau \to 0^+$. Fixing a disk $\util = (a,u)$ in this family one also notes that \[
 \int_\D u^*d\lambda > 0.
\]

Let $\M$ be the set of solutions $\util = (a,u) : \D \to \R \times  S^3$ of (\ref{bishop_disk}) that in addition satisfy $u(\partial \D) \cap \partial\DD_1 = \emptyset$. It is proved in~\cite{93} that the linearization of $\bar\partial_{\jtil}$ at any $\util \in \M$ is surjective and its Fredholm index equals $4$. There is a $3$-dimensional reparametrization group $G$ of biholomorphisms of $\D$. One can show that $G$ acts smoothly, properly and freely on $\M$. This fact together with an application of the implicit function theorem turns $\M$ into a smooth principal $G$-bundle with a $1$-dimensional base space $\M/G$ without boundary. As explained in~\cite{93} one can use results of D. McDuff from~\cite{dusa} to show that if $\util_n \in \M$ satisfies $\util_n \to \util$ in $C^\infty_{loc}(\D,\R\times S^3)$ and if $\util$ is non-constant then $\util$ solves (\ref{bishop_disk}).

\begin{figure}
  \includegraphics[width=250\unitlength]{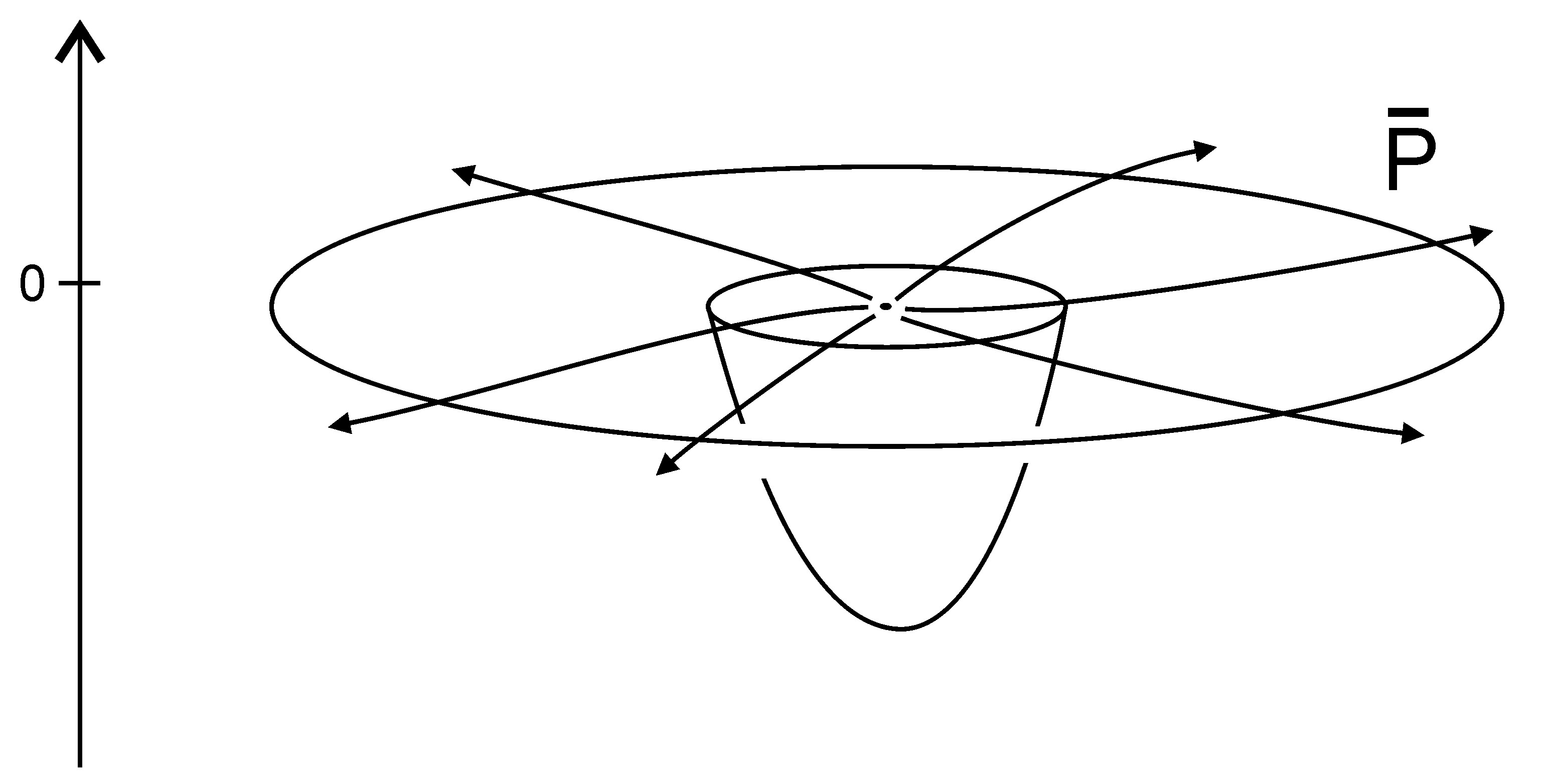}
  \caption{The characteristic foliation and a Bishop disk.}
\end{figure}

There is another very non-trivial fact proved in~\cite{93}: if $\util \in \M$ then every non-vertical vector field in $T_{\util}\M$, seen as a section of $\util^*T(\R\times  S^3)$, is never tangent to the embedded disk $\util(\D)$. We refer the reader to the proof of Theorem 17 from~\cite{93}. There are important consequences. Let $\Pi : \M \to \M/G$ denote the projection. Fix $\util_0 \in \M$ and let $t_0 = \Pi(\util_0)$. If $s$ is a section defined around $t_0$ satisfying $s(t_0) = \util_0$ then there exists a neighborhood $U$ of $t_0$ in $\M/G$ such that the map
\begin{equation}\label{local_embedding}
 \begin{aligned}
  \Phi : U \times \D & \to \R \times  S^3 \\
  (t,z) & \mapsto s(t)(z)
 \end{aligned}
\end{equation}
is a smooth embedding onto its image.

Next one needs to parametrize the $1$-dimensional base space $\M/G$. Each leaf $l$ of the characteristic foliation is a trajectory of the vector $V$. Since there are no other singularities other than $e$, the $\alpha$-limit of $l$ is the source $e$ and $l$ hits $\partial \DD_1$ transversally in forward and finite time. We used that $\partial \DD_1$ is a Reeb trajectory and $V$ points outward at $\partial \DD_1$. Moreover, $l$ has finite length since $e$ is nicely elliptic. The strong maximum principle implies that if $\util\in \M$ then $u(\partial \D)$ intersects the leaves transversally. Since $u(\partial \D)$ winds around $e$ once in $\DD_1$ then it hits every leaf exactly once.

Following~\cite{93} we choose a leaf $l_1$ and denote by $\bar \tau$ its length. Consider the $G$-invariant function $\tau : \M \to \R^+$ given by
\begin{equation}\label{}
 \tau(\util = (a,u)) = \text{length of the piece of } l_1 \text{ connecting } u(\partial\D) \text{ to }e.
\end{equation}
It defines a smooth function on $\M/G$ since each $u(\partial \D)$ intersects $l_1$ once and transversally when $\util \in \M$. The existence of local embeddings as in (\ref{local_embedding}) shows that $\tau : \M/G \to \R^+$ is a local diffeomorphism.

\begin{remark}
Actually one can show that $\tau$ induces a diffeomorphism between a component of $\M/G$ and an open interval, but we can avoid making use of this fact.
\end{remark}

Consider a finite non-empty set $\Gamma \subset \interior{\D}$ and the mixed boundary value problem
\begin{equation}\label{mixed}
 \left\{
  \begin{aligned}
   & \util = (a,u) : \D \setminus \Gamma \to \R \times  S^3 \\
   & \bar\partial_{\jtil}(\util) = 0 \text{ and } \util \text{ is an embedding} \\
   & a \equiv 0 \text{ on } \partial \D, \ u(\partial \D) \subset \DD_1 \setminus \{e\} \\
   & u(\partial \D) \text{ winds once positively around } e \\
   & \int_{\D \setminus \Gamma} u^*d\lambda > 0 \text{ and } E(\util) < \infty \\
   & \text{Every } z\in\Gamma \text{ is a negative puncture.}
  \end{aligned}
 \right.
\end{equation}
Note that we do not fix the complex structure on $\D$.

\begin{prop}\label{bad_problem}
There exists a residual set $\J_{gen} \subset \J(\xi,d\lambda)$ such that the following holds. Fix $J \in \J(\xi,d\lambda)$, let $\util$ be a solution of (\ref{mixed}) and suppose $\util$ is asymptotic to a closed Reeb orbit $P_z$ at each negative puncture $z\in\Gamma$. If $\mu_{CZ}(P_z) \geq 2 \ \forall z\in\Gamma$ and there exists at least one $z_0 \in \Gamma$ satisfying $\mu_{CZ}(P_{z_0}) \geq 3$ then $J \not\in \J_{gen}$.
\end{prop}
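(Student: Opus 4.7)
The plan is to define $\J_{gen}$ in such a way that for generic $J$ the moduli space of solutions of~(\ref{mixed}) with the given asymptotic data has negative expected dimension, and hence must be empty. For each $N\ge 1$ and each ordered $N$-tuple $\mathbf P = (P_1,\dots,P_N)$ of closed Reeb orbits of $\lambda$, I would first introduce the Banach manifold of triples $(J,\Gamma,\util)$ consisting of $J\in \J(\xi,d\lambda)$, a configuration $\Gamma = \{z_1,\dots,z_N\}\subset \interior\D$, and an embedded $\jtil$-holomorphic solution $\util$ of~(\ref{mixed}) asymptotic to $P_j$ at $z_j$. The standard universal transversality theory for somewhere-injective finite energy curves in symplectizations, due to Dragnev, combined with the Hofer--Lizan--Sikorav treatment of totally real boundary conditions, realises this universal moduli space as a smooth Banach manifold at every triple where $\util$ is somewhere injective and $\pi\cdot du\not\equiv 0$. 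Both conditions are automatic here: embeddedness forces somewhere-injectivity, while the condition $\int_{\dot\D} u^*d\lambda > 0$ in~(\ref{mixed}) forces $\pi\cdot du\not\equiv 0$.

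The Riemann--Roch formula for the linearised Cauchy--Riemann operator on $\dot\D = \D\setminus\Gamma$, with totally real boundary condition in $\{0\}\times\DD_1$ and non-degenerate asymptotic operators at the negative punctures, yields
\[
\mathrm{ind}(\util) \;=\; 2\chi(\dot\D) + \mu(F) - \sum_{z\in\Gamma} \mu_{CZ}(P_z),
\]
where $\mu(F)$ is the Maslov index of the loop of Lagrangian subspaces $t\mapsto T_{u(e^{2\pi it})}(\{0\}\times \DD_1)$ in the symplectic bundle $\util^*T(\R\times S^3)$, which is trivial since $\dot\D$ is contractible. Because $u(\partial\D)$ winds once positively around the nicely elliptic singularity $e\in \DD_1$---exactly as for the Bishop disks of subsection~\ref{the_bishop_family}---the Maslov index depends only on the homotopy class of this loop and equals the one computed in~\cite{93}; indeed, the unpunctured Bishop family corresponds to $N=0$ with $\mathrm{ind}(\util)=4$, which forces $\mu(F)=2$. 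With $\chi(\dot\D)=1-N$ one obtains
\[
\mathrm{ind}(\util) \;=\; 4 - 2N - \sum_{z\in\Gamma}\mu_{CZ}(P_z).
\]
Under the hypotheses $\mu_{CZ}(P_z)\ge 2$ for every $z\in\Gamma$ and $\mu_{CZ}(P_{z_0})\ge 3$ for some $z_0\in\Gamma$, one has $\sum_{z\in\Gamma}\mu_{CZ}(P_z)\ge 2(N-1)+3 = 2N+1$, and consequently $\mathrm{ind}(\util)\le 3-4N \le -1$ (recall $N\ge 1$ since $\Gamma\ne\emptyset$).

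Applying Sard--Smale to the projection of the universal moduli space to $\J(\xi,d\lambda)$ for each fixed $(N,\mathbf P)$ produces a residual subset $\J_{\mathrm{gen}}^{(N,\mathbf P)}\subset \J(\xi,d\lambda)$ such that for $J$ in it every embedded solution of~(\ref{mixed}) with asymptotic data $\mathbf P$ lies on a smooth manifold of dimension $\mathrm{ind}(\util)<0$, and therefore cannot exist. Non-degeneracy of $\lambda$ renders $\P(\lambda)$ countable, so
\[
\J_{gen} \;:=\; \bigcap_{N\ge 1}\;\bigcap_{\mathbf P\in\P(\lambda)^N} \J_{\mathrm{gen}}^{(N,\mathbf P)}
\]
remains residual, and no $\util$ as in the statement can exist for $J\in \J_{gen}$, which is the required contrapositive.

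The main technical point is the transversality step itself, since perturbations of $J$ must remain in $\J(\xi,d\lambda)$ and therefore act only on the contact distribution $\xi$. This is the classical difficulty resolved by Dragnev via localization of the perturbation at an interior point $z_\ast \in \dot\D$ where $\util$ is injective and $\pi\cdot du(z_\ast)\neq 0$; both features are guaranteed by the embeddedness of $\util$ together with $\int u^*d\lambda>0$. The $J$-independent totally real boundary condition on $\{0\}\times\DD_1$ plays no role in the variational argument, so the standard proof goes through without modification.
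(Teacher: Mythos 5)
Your proposal follows essentially the same blueprint as the paper's proof: establish a Fredholm index formula, show that under the hypotheses the index is negative, and deduce via Sard--Smale that for generic $J$ no solutions exist; a residual $\J_{gen}$ then arises as an intersection of residual sets indexed by the combinatorics of the asymptotic data. The differences are largely technical. The paper works with the \emph{normal} Cauchy--Riemann operator of HWZ, citing~\cite{props3} both for the index formula $\text{Ind}\,L = \#\Gamma - \sum_z\mu_{CZ}(P_z) + 1$ and for the transversality statement; you work with the \emph{full} linearised operator and invoke Dragnev--type universal transversality combined with a Hofer--Lizan--Sikorav totally real boundary condition. Both routes are reasonable, and both end up with an index that is $\le -1$ under the hypotheses.

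Two points deserve attention. First, be cautious with the Riemann--Roch formula you quote, $\text{ind}(\util) = 2\chi(\dot\D) + \mu(F) - \sum_z\mu_{CZ}(P_z)$. Compared with the paper's normal-operator formula $\#\Gamma - \sum\mu_{CZ} + 1$, the discrepancy is $3 - 3N$, which coincides with $\dim\text{Aut}(\D)$ only when $N = 0$; there are extra per-puncture corrections that your formula seems to omit, and they do not obviously reduce to the paper's expression after accounting for reparametrisations and moduli of the punctured domain. Since the sign of the index is all that matters here and both expressions remain $\le -1$ when $\sum\mu_{CZ}\ge 2N+1$, the conclusion survives, but the exact formula should be double-checked against~\cite{props3} or Wendl's index computations for cylindrical ends with totally real boundary. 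Second, your justification that the Maslov index is well defined asserts that $\dot\D$ is contractible, which is false for $N \ge 1$ (it retracts to a wedge of $N$ circles); the correct statement is that $\util^*T(\R\times S^3)$ is trivial because $\dot\D$ is a non-closed surface, and $\mu(F)$ must be computed with respect to a trivialisation consistent with the one used for the $\mu_{CZ}(P_z)$ -- in practice the global trivialisation of $\xi$ over $S^3$. Finally, the paper observes that an a priori energy bound caps $N$ and leaves only finitely many admissible orbits, so a \emph{finite} intersection of residual sets suffices; your countable intersection is still residual, but noting the finiteness makes the argument cleaner and is worth keeping.
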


\begin{proof}
For the moment we fix $k\geq 1$, $\Gamma \subset \interior{\D}$ with $\#\Gamma = k$, and the asymptotic limits $\{P_z\}_{z\in\Gamma}$. At any solution $\util$ of~\eqref{mixed} we can consider the so-called normal Cauchy-Riemann operator $L$. It is the linearization at zero of a non-linear Fredholm map defined on a space of sections of the normal bundle of $S=\util(\D\setminus \Gamma)$ with the appropriate Sobolev regularity and exponential decay at the punctures. The zero-locus of this map consists of sections representing nearby finite-energy surfaces with the same asymptotic data and boundary condition. When $L$ is surjective the moduli space of such surfaces is locally a finite-dimensional manifold and $\ker L$ is the tangent space at $S$. This analysis is done in~\cite{props3}, where the Fredholm index is computed to be
\[
 \text{Ind } L = \#\Gamma - \sum_{z\in\Gamma} \mu_{CZ}(P_z) + 1.
\]

A delicate argument, also from~\cite{props3}, shows that one can achieve transversality for this problem on a residual subset $\J_{gen}(k,\{P_z\}_{z\in\Gamma}) \subset \J(\xi,d\lambda)$, that is, if $J \in \J_{gen}(k,\{P_z\}_{z\in\Gamma})$ and $\util$ solves~\eqref{mixed} then $L$ is surjective. If the $\{P_z\}$ are as in the statement we can estimate
\begin{equation*}\label{}
 \begin{aligned}
  0 &\leq \text{Ind } L = \#\Gamma - \sum_{z\in\Gamma} \mu_{CZ}(P_z) + 1 \\ &= \#\Gamma - \mu_{CZ}(P_{z_0}) - \sum_{z\not=z_0} \mu_{CZ}(P_z) + 1 \\
  &\leq \#\Gamma - 3 - 2(\#\Gamma-1) + 1 = -\#\Gamma - 3 + 2 + 1 = -\#\Gamma.
 \end{aligned}
\end{equation*}
This contradiction shows that there are no solutions when $J \in \J_{gen}(k,\{P_z\}_{z\in\Gamma})$. The actual location of $\Gamma$ is immaterial since we can move it using a diffeomorphism of $\D$, only the number $k$ and the asymptotic and boundary data are relevant.

Now, there exists a constant $C>0$ depending only on $\DD_1$ and on the non-degenerate contact form $\lambda$ so that any solution of (\ref{mixed}), for any choice of $\Gamma$, satisfies $E(\util)\leq C$. It follows that if such $\util$ is asymptotic to $P = (x,T)$ at some $z\in\Gamma$ then $T\leq C$. It also follows that $k\leq C/\epsilon$, where $\epsilon>0$ satisfies $\epsilon<T'$ for every $P'=(x',T') \in \P$. So there is an upper bound on $k$, depending only on $\DD_1$ and $\lambda$, and only finitely many possibilities for the orbits $P_z$ ($z\in\Gamma$). This means that we only have to consider a finite number of Fredholm problems as above. Since a finite intersection of residual subsets in a complete metric space is still residual, we find $\J_{gen}$ as claimed in the statement of the proposition.
\end{proof}

We need also the following very delicate result from~\cite{char1}.

\begin{theo}[Hofer, Wysocki and Zehnder]\label{delicate}
The set of solutions $\util = (a,u)$ of (\ref{bishop_disk}) satisfying $\util(\D) \cap (\R\times \bar x(\R)) = \emptyset$ is closed in $C^\infty_{loc}$.
\end{theo}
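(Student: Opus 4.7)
My approach is a proof by contradiction using positivity of intersections between solutions of~\eqref{bishop_disk} and the trivial cylinder $C := \R\times\bar x(\R)$, which is itself a $\jtil$-holomorphic curve in $\R\times S^3$. Let $\util_n=(a_n,u_n)$ be a sequence of solutions of~\eqref{bishop_disk} with $\util_n(\D)\cap C=\emptyset$ converging to $\util=(a,u)$ in $C^\infty_{loc}$. If $\util$ is constant, it equals $(0,p)$ for some $p\in\DD_1$; since each $u_n(\partial\D)$ is an embedded loop winding once around $e$ inside $\DD_1\setminus \bar x(\R)$, such a sequence cannot shrink to a point of $\bar x(\R)$, so $p\notin\bar x(\R)$ and $\util(\D)\cap C=\emptyset$ trivially. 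Otherwise, by the closedness remark recorded right after the definition of $\M$ (based on~\cite{dusa}), $\util$ itself solves~\eqref{bishop_disk}. I then assume for contradiction that $\util(z_0)\in C$ for some $z_0\in\D$ and split into two cases.

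For the interior case $z_0\in\interior\D$, I first argue that $\util(\D)$ is not contained in $C$: if it were, $\util$ would be a holomorphic map from $\D$ into the cylinder $C$, and the harmonic function $a$ would vanish on $\partial\D$, hence vanish identically by the maximum principle, forcing $\util(\D)$ into the circle $\{0\}\times\bar x(\R)$ and contradicting embeddedness of $\util$. The Carleman similarity principle then ensures that the intersections of $\util$ with $C$ are isolated and carry strictly positive local intersection number. Standard stability of pseudoholomorphic intersections under $C^\infty_{loc}$-convergence delivers, for all large $n$, a point of intersection of $\util_n$ with $C$ in a neighborhood of $z_0$, contradicting $\util_n(\D)\cap C=\emptyset$.

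The main obstacle is the boundary case $z_0\in\partial\D$, with $\util(z_0)=(0,u(z_0))$ and $u(z_0)\in\partial\DD_1=\bar x(\R)$. The difficulty is that $C$ and $\{0\}\times\DD_1$ meet non-transversally along the circle $\{0\}\times\bar x(\R)$, since $R_p\in T_p\DD_1$ for every $p\in\partial\DD_1$, so the interior positivity argument does not apply verbatim. My plan is to reduce to the interior case by a Schwarz-type reflection across the totally real boundary $\{0\}\times\DD_1$, which is totally real in $(\R\times S^3,\jtil)$ off of $(0,e)$ because $T_p\DD_1$ fails to be $J$-invariant away from $e$. Using a Darboux chart for $\lambda$ adapted to $\bar x$ near $u(z_0)$, I would straighten $\bar x(\R)$ to a coordinate axis, view $\DD_1$ locally as a smooth half-plane bounded by that axis, and exhibit an anti-$\jtil$-holomorphic involution of an ambient neighborhood fixing $\{0\}\times\DD_1$ pointwise and preserving $C$ setwise (which is automatic since $C$ is invariant under the reflection through the plane containing $\{0\}\times\DD_1$ and the $\R$-axis in the chart). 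The reflection principle for pseudoholomorphic maps with totally real boundary (as used in~\cite{93,char1}) then extends $\util$ and each $\util_n$ past $\partial\D$ near $z_0$ to $\jtil$-holomorphic maps on a full two-sided neighborhood of $z_0$ in $\C$. Now $z_0$ is an interior intersection of the extended $\util$ with $C$ with strictly positive local multiplicity; the interior argument produces nearby interior intersections of the extended $\util_n$ with $C$, and by the symmetry of $C$ under the reflection these correspond to honest points of $\util_n(\D)\cap C$, yielding the contradiction.
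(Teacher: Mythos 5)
The paper does not prove Theorem~\ref{delicate}; it is quoted verbatim from~\cite{char1}, where the authors themselves flag it as ``very delicate.'' So there is no proof in the present text to compare against, and your proposal has to be judged on its own merits.

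Your overall strategy --- positivity of intersections with the trivial cylinder $C=\R\times\bar x(\R)$, which is $\jtil$-holomorphic --- is exactly the natural one, and your treatment of the interior case is sound: $\util(\D)\not\subset C$ by the maximum principle, Carleman similarity makes interior intersections isolated and positive, and $C^\infty_{loc}$-stability of transverse/isolated positive intersections would then force the $\util_n$ to hit $C$. Ruling out the constant limit and invoking the remark after the definition of $\M$ to see that a non-constant limit again solves~\eqref{bishop_disk} is also correct.

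The gap is precisely where you expect it, in the boundary case, and it is more serious than a detail to be filled in. First, the anti-$\jtil$-holomorphic involution you invoke does not exist for the $J$ actually in play. The almost complex structure is only normalized near the elliptic point $e$ (to make $\jtil$ integrable in the Darboux chart at $e$); near $\bar x(\R)$ it is an arbitrary $d\lambda$-compatible $J$, and generically there is no reflection of a neighborhood of $(0,u(z_0))$ that fixes $\{0\}\times\DD_1$ pointwise, preserves $C$, and anti-commutes with $\jtil$. The reflection principle that does hold for pseudoholomorphic disks with totally real boundary produces an extension across $\partial\D$ which is pseudoholomorphic only for a doubled structure with a jump (Lipschitz at best) across the wall; one cannot then quote interior positivity/stability of intersections of $\jtil$-holomorphic curves without extra work, because the extended curves are not $\jtil$-holomorphic on the reflected side. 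Second, at a boundary intersection point $z_0\in\partial\D$ with $p=u(z_0)\in\partial\DD_1$ one has a genuine tangency, not a transverse intersection: since $u(\partial\D)\subset\DD_1$ touches $\partial\DD_1$ at $p$, the tangent $\partial_\theta u(z_0)$ is proportional to $R_p$, and then the Cauchy--Riemann equations together with $a|_{\partial\D}\equiv0$ give $\pi\cdot du(z_0)=0$, $\lambda(\partial_r u(z_0))=0$, $\partial_r a(z_0)>0$; consequently $d\util(z_0)(T_{z_0}\D)=\R\,\partial_a\oplus\R R_p=T_{\util(z_0)}C$. So any boundary argument must handle a degenerate (order $\geq2$) tangency at the wall, which you do not address. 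These two points are exactly the source of the ``delicacy'' in~\cite{char1}; as stated, your reflection step is a plausible-looking sketch rather than a proof of the hard case.
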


Consider $\util_0 = (a_0,u_0) \in \M$ close to $(0,e)$ satisfying
\begin{equation}\label{}
 \util_0(\D) \cap (\R\times \bar x(\R)) = \emptyset \text{ and } \int_\D u_0^*d\lambda > 0.
\end{equation}
We now perturb $J$ to some $J^\prime \in \J_{gen}$ where the set $\J_{gen}$ is given by Proposition~\ref{bad_problem}. By automatic transversality of solutions of (\ref{bishop_disk}) proved in~\cite{93} 
the solution $\util_0$ is perturbed to a $\jtil^\prime$-holomorphic solution $\util_0^\prime$. We now relabel $J^\prime$ and $\util_0^\prime$ by $J$ and $\util_0$. This shows that we could have assumed $J \in \J_{gen}$ from the start.

Theorem~\ref{delicate} shows that $\util(\D) \cap (\R\times \bar x(\R)) = \emptyset$ whenever $\Pi(\util)$ belongs to the component $\Y$ of $\M/G$ containing $\Pi(\util_0)$. Define
\[
 \tau^* := \sup_{\util\in\pi^{-1}(\Y)} \tau(\util).
\]
Clearly $\tau^* \leq \bar \tau$.

Fix two other leaves $l_i$ and $l_{-1}$ of the characteristic foliation of $\DD_1$ distinct to $l_1$, in such a way that $\{l_1,l_i,l_{-1}\}$ is ordered according to the Reeb field along the boundary. A key ingredient in our arguments is the following statement found in~\cite{char1}.

\begin{theo}[Hofer, Wysocki and Zehnder]\label{bubb_control}
There exists $0<\rho<1$ such that for every sequence $\util_n \in \M$ satisfying $\util_n(w) \in l_w$ for $w \in \{1,i,-1\}$ we have
\[
 \sup_n \sup_{\rho<|z|\leq 1} |d\util_n(z)| < \infty.
\]
\end{theo}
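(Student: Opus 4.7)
I argue by contradiction. Suppose the claimed $\rho$ does not exist; then there is a sequence $\util_n = (a_n, u_n) \in \M$ satisfying the three-point condition $\util_n(w) \in l_w$ for $w \in \{1, i, -1\}$, together with points $z_n \in \D$ with $|z_n| \to 1$ and $|d\util_n(z_n)| \to \infty$. The $d\lambda$-area bound from Stokes and $u_n(\partial\D) \subset \DD_1$ gives a uniform Hofer-energy bound $E(\util_n) \leq C$. After passing to a subsequence, $z_n \to z_\infty \in \partial\D$ and $u_n(z_n) \to p \in \DD_1$. Applying a Hofer-type rescaling in the spirit of Lemma~\ref{standardtool} at $z_n$ with a suitable scale $\delta_n \to 0^+$, I obtain a non-constant $\jtil$-holomorphic limit $\wtil$. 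There are two cases according to whether $(1 - |z_n|)/\delta_n \to \infty$ (\emph{interior-type bubble}, with $\wtil$ defined on $\C \setminus \Gamma$) or remains bounded (\emph{boundary-type bubble}, with $\wtil$ on the closed half-plane minus a finite set of punctures).

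\medskip
\noindent\textbf{Boundary-type bubble.} Write $\wtil = (b, w)$ on $\bar H \setminus \Gamma'$, where $\bar H$ denotes the closed upper half-plane. Interior punctures in $\Gamma'$ reduce to the interior-type case, so suppose $\Gamma' = \emptyset$. The boundary condition $b \equiv 0$ combined with the Cauchy-Riemann equations forces $\partial_s w(s,0) \in \xi \cap T\DD_1 = \R V$ along the boundary, so the boundary trace is an integral curve of the characteristic field $V$. Since $\lambda|_{\R V} = 0$, Stokes' theorem yields
\[
\int_{H} w^*d\lambda = \int_{\partial H} w^*\lambda = 0,
\]
so a half-plane analog of Theorem~\ref{zero_dlmabda_FES} forces $\wtil$ to factor as a polynomial cover of the trivial cylinder $F_P$ over a closed Reeb orbit $P = (x', T')$. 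The single-point nature of the rescaling implies $\lim_{s \to \pm\infty} w(s, 0) = p$, so $x'(\R)$ is a closed Reeb orbit passing through $p \in \DD_1$. By Proposition~\ref{convenientdisk}(4) the only closed orbit contained in $\DD_1$ is $\partial\DD_1 = \bar x(\R)$, forcing $p \in \bar x(\R)$. But Theorem~\ref{delicate} (applied after observing that in our application $\util_n$ lies in the component $\Y$ of $\M/G$ avoiding $\R \times \bar x(\R)$) rules out $p \in \bar x(\R)$. The corner case $p = e$ is excluded by the explicit formulae~\eqref{disks_formulae}, which show that near $e$ Bishop disks are linear in $\tau$ and have uniformly bounded gradients.

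\medskip
\noindent\textbf{Interior-type bubble.} The rescaled sequence forms a germinating sequence in the sense of Definition~\ref{germ}. The bubble-off tree of Theorem~\ref{theorem_tree} descends to a finite-energy plane $\util_0$ asymptotic to a closed Reeb orbit $P^*$ with $\mu_{CZ}(P^*) \geq 2$ by Lemma~\ref{estimate_cz_2}, and this plane passes through $p$ since the rescaling centers at a point mapped to $p$. Using the $d\lambda$-area estimate $\int_{\{\rho < |z| \leq 1\}} u_n^*d\lambda \to 0$ as $\rho \to 1^-$, derivable from $\int_{\partial\D} u_n^*\lambda = 0$ (which holds since $\partial_s u_n \in \R V \subset \ker\lambda$ along $\partial\D$, by the boundary conditions in~\eqref{bishop_disk}), the bubble localizes at $p$ and the underlying orbit $P^*$ meets $p$. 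The same reasoning as in the boundary-type case --- Proposition~\ref{convenientdisk}(4) combined with Theorem~\ref{delicate} --- yields the contradiction.

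\medskip
\noindent\textbf{Main obstacle.} The heart of the argument is the boundary-type bubble: verifying that the trace is tangent to $V$ via CR equations and the totally-real boundary condition, trivializing $\wtil$ via zero $d\lambda$-energy and a half-plane analog of Theorem~\ref{zero_dlmabda_FES}, and invoking the essential dynamical input from Proposition~\ref{convenientdisk}(4) that $\DD_1$ contains no closed Reeb orbits other than $\partial\DD_1$. The interior-type bubble case requires an additional $d\lambda$-area concentration argument that again hinges on the tangency of the Bishop boundary traces to $\ker\lambda$ along the characteristic foliation.
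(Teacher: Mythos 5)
The paper does not actually prove Theorem~\ref{bubb_control}; it cites it from~\cite{char1} and uses it as a black box, so there is no internal proof to compare against. Evaluating your argument on its own merits, it contains a fatal error.

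You assert twice that the boundary traces are tangent to the characteristic foliation: for the boundary bubble ("$\partial_s w(s,0) \in \xi \cap T\DD_1 = \R V$ \dots so the boundary trace is an integral curve of $V$") and for the Bishop disks themselves ("$\partial_s u_n \in \R V \subset \ker\lambda$ along $\partial\D$"). Both claims are false. Working out the Cauchy--Riemann equations $d\util\cdot i = \jtil(\util)\cdot d\util$ on the boundary with $a|_{\partial\D}\equiv 0$, what one obtains is that the \emph{normal} derivative of $u$ lies in $\xi$ (i.e.\ $\lambda(\partial_r u) = -\partial_\theta a = 0$), while the \emph{tangential} derivative satisfies $\lambda(\partial_\theta u) = \partial_r a$, which is strictly positive by the Hopf lemma applied to the subharmonic function $a$. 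The paper itself records exactly this in the proof of Lemma~\ref{nonzerodlambda_un}: "the strong maximum principle asserts that $\lambda\cdot\partial_\theta u_n(z) = \partial_r a_n(z) > 0$". In other words the boundary trace is \emph{transverse} to the characteristic foliation of $\DD_1$, not tangent to it; this transversality is precisely what makes each $u_n(\partial\D)$ hit every leaf exactly once and wind once around $e$, which is used throughout the Bishop family analysis.

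This error kills both branches of your proof. For the boundary-type bubble, the Stokes computation $\int_H w^*d\lambda = \int_{\partial H} w^*\lambda = 0$ is wrong because $w^*\lambda > 0$ along $\partial H$; the bubble does not have vanishing $d\lambda$-energy, so Theorem~\ref{zero_dlmabda_FES} does not apply, and the appeal to Proposition~\ref{convenientdisk}(4) and Theorem~\ref{delicate} is unavailable. For the interior-type bubble, you derive $\int_{\{\rho<|z|\leq1\}}u_n^*d\lambda\to 0$ from $\int_{\partial\D}u_n^*\lambda=0$; but if that integral were zero then $\int_\D u_n^*d\lambda$ would vanish by Stokes, directly contradicting the positivity of the $d\lambda$-energy of the Bishop disks that the paper asserts right after formulae~\eqref{disks_formulae}. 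So neither the trivialization of the bubble nor the localization at a single point $p\in\DD_1$ goes through.

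There is also a structural red flag: the three-point normalization $\util_n(w)\in l_w$ is the essential hypothesis of the theorem — it is exactly what compensates for the non-compactness of the M\"obius reparametrization group, which is the actual source of boundary gradient blow-up — yet it plays no role anywhere in your argument. Any correct proof must use it, and its absence from your reasoning is a further indication that the degeneration has not genuinely been ruled out.
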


We need to analyze the end of the component $\Y$ in the moduli space $\M$.

\begin{prop}\label{disks_behavior}
The equality $\tau^* = \bar \tau$ holds. Moreover, if we take $\tau_n \to \bar \tau^-$ and choose disks $\util_n = (a_n,u_n) \in \Pi^{-1}(\Y)$ satisfying $\tau(\util_n) = \tau_n$ and the normalization conditions $u_n(1) \in l_1$, $u_n(i) \in l_i$ and $u_n(-1) \in l_{-1}$ then the following assertions are true. There exists a subsequence of $\util_n$, still denoted $\util_n$, such that
\[
 \Gamma_0 = \{z\in\D : \exists n_j\to\infty \text{ and } z_j \to z \text{ such that } |d\util_{n_j}(z_j)| \to \infty \}
\]
consists of a single point in $\interior{\D}$. After reparametrizing we can assume that $\Gamma_0 = \{0\}$ and, moreover,
\[
 \util_n \to F_{\bar P} \text{ in } C^\infty_{loc}(\D\setminus\{0\},\R\times  S^3).
\]
Here $F_{\bar P}$ denotes the map $z = \est \mapsto (\bar Ts,\bar x(\bar Tt))$ on $\D \setminus\{0\}$.
\end{prop}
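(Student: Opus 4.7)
The plan is to contradict $\tau^* < \bar\tau$ using the linking hypothesis combined with the bubbling-off machinery of Section~\ref{boff_analysis}, and then to show that for $\tau_n \to \bar\tau^-$ the bubble tree necessarily reduces to a single interior bubble asymptotic to $\bar P$, with the limit on the complement factoring through the trivial cylinder $\R\times \bar x(\R)$. For the first part, I would pick $\util_n \in \Pi^{-1}(\Y)$ with $\tau(\util_n) \nearrow \tau^*$, normalizing through the $G$-action so that $u_n(w) \in l_w$ for $w \in \{1,i,-1\}$. Theorem~\ref{bubb_control} places the bubbling set $\Gamma_0$ inside $\interior{\D}$. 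If $\Gamma_0 = \emptyset$, elliptic estimates yield $\util_n \to \util_\infty$ in $C^\infty(\D)$; the limit solves~(\ref{bishop_disk}) and Theorem~\ref{delicate} places it in $\Pi^{-1}(\Y)$, contradicting the openness of $\tau(\Y)$ that follows from $\tau$ being a local diffeomorphism on $\M/G$. If $\Gamma_0 \neq \emptyset$, the limit $\util_\infty$ on $\D\setminus \Gamma_0$ is a finite-energy punctured disk with negative asymptotic orbits $P_{z^*}$; because $\tau^* > 0$ and the three-leaf normalization keeps the boundary loop away from both $e$ and $\bar x(\R)$, a Stokes computation yields $\int u_\infty^* d\lambda > 0$, so $\util_\infty$ solves~(\ref{mixed}). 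Lemma~\ref{estimate_cz_1} on each bubble tree gives $\mu_{CZ}(P_{z^*}) \geq 2$, and Proposition~\ref{bad_problem}, applied for the generic $J$ arranged earlier in the subsection, forces equality. Proposition~\ref{compactness_theorem_1}(ii) applied to a bubble tree at any $z^*$ (or its item~(3) reasoning directly, when the top bubble is already a plane) then produces a finite-energy plane $\util_* = (a_*,u_*)$ asymptotic to $P_*$ with $\mu_{CZ}(P_*) = 2$ and $\cl{u_*(\C)} \cap \bar x(\R) = \emptyset$, the closure statement using that $\bar P$ is simply covered and $\mu_{CZ}(\bar P)\geq 3$. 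Hence $P_*$ bounds a singular disk in $S^3\setminus \bar x(\R)$ and is not linked to $\bar P$, contradicting the linking hypothesis.

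For the second part, with $\tau_n \to \bar\tau^-$ and the stated normalization, the same dichotomy applies but bubbling is now forced: absence of bubbling combined with $\tau(\util_\infty) = \bar\tau$ would give $u_\infty(\partial\D) \subset \bar x(\R)$, contradicting Theorem~\ref{delicate}. Hence $\Gamma_0 \neq \emptyset$. A positive $d\lambda$-energy of the limit would again produce a forbidden plane via the argument of the previous paragraph, so $\int u_\infty^* d\lambda = 0$, which forces $\pi\cdot du_\infty \equiv 0$; thus $u_\infty$ takes values in a single Reeb trajectory, and since $\int_{\partial\D} u_n^*\lambda \to \bar T$ the boundary loop $u_\infty(\partial\D)$ covers $\bar x(\R)$ exactly once in the Reeb direction, so that trajectory is $\bar x(\R)$ itself, giving $\util_\infty : \D\setminus \Gamma_0 \to \R\times \bar x(\R)$.

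To finish, I identify $\R\times \bar x(\R)$ biholomorphically with $\C^*$ via $(a,\bar x(t)) \mapsto \exp(2\pi(a+it)/\bar T)$, making $f := \Phi\circ\util_\infty : \D\setminus \Gamma_0 \to \C^*$ holomorphic with $|f|\equiv 1$ on $\partial\D$. A local model at a negative puncture $z^*$, combined with $\bar P$ simply covered so that $T_{z^*} = k_{z^*}\bar T$ for an integer $k_{z^*}\geq 1$, shows $f(z) \sim C_{z^*}(z-z^*)^{k_{z^*}}$ near $z^*$. Thus $f$ extends holomorphically to $\D$ as a finite Blaschke product with zeros exactly at $\Gamma_0$ of multiplicities $k_{z^*}$, and the argument principle combined with the boundary winding $+1$ around $0$ (from the Reeb direction of $u_\infty(\partial\D)$) yields $\sum k_{z^*} = 1$. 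Hence $\Gamma_0 = \{z_0\}$ with $k_{z_0} = 1$ and asymptotic orbit $\bar P$. The three-real-dimensional group $G$ then has exactly enough freedom to move $z_0$ to $0$ and align the boundary values at $1, i, -1$ with the limits of $l_w \cap \bar x(\R)$ for $w \in \{1,i,-1\}$; after these normalizations $f(z)=z$ and $\util_\infty = F_{\bar P}$ on $\D\setminus\{0\}$, as claimed.

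The main obstacle I anticipate lies in the bubble-tree extraction of the first paragraph: one must carefully rule out the $\int u_\infty^* d\lambda = 0$ alternative using the three-leaf normalization (which prevents the boundary loop from collapsing onto a discrete intersection with a stray Reeb orbit), and must handle the possibility that the first-level bubble is already a plane (so that Proposition~\ref{compactness_theorem_1} is invoked only through the reasoning of its item~(3), rather than as stated); the $\C^*$-analysis of the third paragraph is then essentially mechanical once the holomorphic factorization is established.
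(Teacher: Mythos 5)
Your overall outline tracks the paper's proof closely: bubbling-off via Theorem~\ref{bubb_control} and Lemma~\ref{standardtool}, the CZ-index bounds from Lemma~\ref{estimate_cz_1}, transversality via Proposition~\ref{bad_problem} at the generic $J$, the forbidden-plane contradiction via Proposition~\ref{compactness_theorem_1}, and the final Blaschke-product reduction. But the pivotal step---identifying the Reeb trajectory that carries the limit $\util_\infty$ once you have $\pi\cdot du_\infty\equiv 0$---is not correctly justified, and in fact cannot be carried out by either of the two mechanisms you propose. Your ``Stokes computation'' in the first paragraph does not yield $\int u_\infty^*d\lambda>0$: Stokes gives $\int u_\infty^*d\lambda=\int_{\partial\D}u_\infty^*\lambda-\sum_{z^*}T_{z^*}$, which has no a priori sign beyond the trivial $\geq 0$. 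And the claim in your second paragraph that $\int_{\partial\D}u_n^*\lambda\to\bar T$ and hence that the boundary loop ``covers $\bar x(\R)$ exactly once'' is circular (it presupposes $u_\infty(\partial\D)=\partial\DD_1$) and, even granted, does not identify the orbit---two geometrically distinct periodic orbits can share the same period. The missing ingredient is item~(4) of Proposition~\ref{convenientdisk}, which was arranged precisely for this purpose by a Baire-category perturbation of $\DD_1$: no periodic Reeb trajectory other than $\bar x(\R)$ is contained in $\DD_1$. Combined with the fact that $\util_\infty$ is an embedding (the analog of Lemma~\ref{props_u_0_2}, which you never establish), $\pi\cdot du_\infty\equiv 0$ forces $u_\infty(\partial\D)=\tilde x(\R)\subset\DD_1$ for a periodic $\tilde x$, and only then does item~(4) give $\tilde x(\R)=\bar x(\R)$, hence $\tau^*=\bar\tau$.

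The omission of the embedding statement for $\util_\infty$ propagates further: you need it to know $\util_\infty$ is nonconstant (so $\tilde x$ is periodic), to upgrade $u_\infty(\partial\D)\subset\tilde x(\R)$ to equality, and to conclude that $u_\infty(\partial\D)$ winds exactly once around $e$. This last point is what actually produces the degree-$1$ count $\sum k_{z^*}=1$ in your Blaschke argument; attributing it merely to ``the Reeb direction of $u_\infty(\partial\D)$'' only controls the sign, not the multiplicity. With item~(4) of Proposition~\ref{convenientdisk} and the embedding lemma put in place the rest of your argument is sound and matches the paper's, but as written these two gaps mean the key identification $\util_\infty(\D\setminus\Gamma_0)\subset\R\times\bar x(\R)$ is not established.
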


We now turn to the proof of this proposition. One easily checks that $$ \sup_{\util \in \M} E(\util) =: C < \infty. $$ Fix $\tau_n \to \tau^*$ and choose disks $\util_n = (a_n,u_n) \in \Pi^{-1}(\Y)$ satisfying $\tau(\util_n) = \tau_n$ and the normalization conditions $u_n(w) \in l_w$ for $w\in\{1,i,-1\}$. Define $\Gamma_0 \subset \D$ to be the set of points $z^\prime$ for which $\exists \{z_j\} \subset \D$ and $n_j\to\infty$ such that $|d\util_{n_j}(z_j)| \to \infty$ and $z_j \to z^\prime$. Then $\Gamma_0 \subset \interior{\D}$ by Theorem~\ref{bubb_control} and, up to selection of a subsequence, $\#\Gamma_0 < \infty$ by Lemma~\ref{standardtool}. Elliptic boot strapping arguments give us a smooth $\jtil$-holomorphic map $$ \util_0 = (a_0,u_0) : \D \setminus \Gamma_0 \to \R\times S^3 $$ and further subsequence, still denoted $\util_n$, such that $\util_n \to \util_0$ in $C^\infty_{loc}(\D\setminus\Gamma_0)$. Arguing like in Lemma~\ref{props_limit_sequence_1} we conclude that $\Gamma_0$ consists of negative punctures of $\util_0$.

\begin{lemma}\label{props_u_0_1}
$\Gamma_0 \not= \emptyset$.
\end{lemma}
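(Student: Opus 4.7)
The plan is to argue by contradiction: suppose $\Gamma_0 = \emptyset$. Then $|d\util_n|$ is uniformly bounded on all of $\D$, so standard elliptic estimates combined with Arzel\`a--Ascoli produce, after passing to a subsequence, a smooth $\jtil$-holomorphic map $\util_0 \in C^\infty(\D,\R\times S^3)$ with $\util_n \to \util_0$ in $C^\infty(\D,\R\times S^3)$. I will show that $\util_0 \in \M$ with $\Pi(\util_0) \in \Y$, and then invoke the local diffeomorphism property of $\tau \colon \M/G \to \R^+$ to contradict the definition of $\tau^* = \sup_{\Pi^{-1}(\Y)} \tau$.

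To see that $\util_0$ is non-constant, note that $\tau^* > 0$ (since $\tau$ is positive on every element of the non-empty set $\Pi^{-1}(\Y)$), so the normalization $u_n(1) \in l_1$ at distance $\tau_n \to \tau^*$ from $e$ forces the limit $u_0(1)$ to be a point of $l_1$ bounded away from $e$; combined with $u_n(i) \in l_i$ and $l_1 \cap l_i = \emptyset$, this yields $u_0(1) \ne u_0(i)$. By the choice of the component $\Y$, each $\util_n$ is a solution of~(\ref{bishop_disk}) satisfying $\util_n(\D) \cap (\R \times \bar x(\R)) = \emptyset$, and by Theorem~\ref{delicate} this class of solutions is closed under $C^\infty_{loc}$ convergence. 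The non-constant limit $\util_0$ is therefore itself such a solution: in particular $\util_0$ is an embedding, $u_0(\partial \D) \subset \DD_1 \setminus \{e\}$ winds once around $e$, and $u_0(\partial \D) \cap \partial \DD_1 = \emptyset$, so $\util_0 \in \M$. Since $\Pi$ is continuous and the components of the $1$-manifold $\M/G$ are both open and closed, $\Pi(\util_0) \in \Y$.

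The contradiction now follows from the fact that $\tau$ is a local diffeomorphism from $\M/G$ to $\R^+$: continuity gives $\tau(\util_0) = \tau^*$, and a neighborhood of $\Pi(\util_0)$ in $\Y$ is mapped by $\tau$ diffeomorphically onto an open interval around $\tau^*$, which contains values strictly larger than $\tau^*$. This contradicts the definition of $\tau^*$, so $\Gamma_0 \ne \emptyset$. The main obstacle is ensuring that the limit $\util_0$ inherits the full structure of a Bishop disk in $\M$ and does not degenerate on the boundary (by touching $\bar x(\R)$ or passing through $e$); this delicate point is handled cleanly by Theorem~\ref{delicate}, with the constant-limit case excluded by the three-point normalization.
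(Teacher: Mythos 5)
Your proof is correct and follows essentially the same route as the paper: assume $\Gamma_0 = \emptyset$, show the $C^\infty$-limit $\util_0$ is a Bishop disk in $\M$ lying in $\Y$ with $\tau(\util_0) = \tau^*$, then use the local structure of $\M/G$ (via $\tau$ being a local diffeomorphism) to exhibit elements of $\Pi^{-1}(\Y)$ with $\tau > \tau^*$. You are somewhat more explicit than the paper about checking that $\util_0$ is non-constant (via the three-point normalization and $\tau^* > 0$) and that $\Pi(\util_0) \in \Y$, which are small gaps the paper leaves implicit — the paper's own text invokes McDuff's results to pass from non-constant limits to solutions of~(\ref{bishop_disk}) and then Theorem~\ref{delicate} only for the avoidance of $\R \times \bar x(\R)$, whereas you fold both into one application of Theorem~\ref{delicate} as stated; either reading gives the same conclusion.
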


\begin{proof}
Suppose $\Gamma_0 = \emptyset$. As remarked before, results of D. McDuff imply that $\util_0$ solves (\ref{bishop_disk}). Theorem~\ref{delicate} gives $u_0(\partial \D) \cap \bar x(\R) = \emptyset$. Thus $\util_0 \in \M$ and, clearly, $\tau(\util_0) = \tau^*$. We take a local section $s_0$ of $\M$ defined on a neighborhood $U_0$ of $\Pi(\util_0)$ in $\M/G$ and define $\Phi_0 : U_0\times \D \to \R \times S^3$ by $\Phi_0(t,z)=s_0(t)(z)$. As explained before, $\Phi_0$ is a smooth embedding into $\R\times S^3$. Thus we can find elements $\util \in \Pi^{-1}(\Y)$ satisfying $\tau(\util) > \tau^*$, a contradiction.
\end{proof}

\begin{lemma}\label{props_u_0_2}
$\util_0$ is an embedding, $a_0 \equiv 0$ on $\partial \D$ and $u_0(\partial \D) \subset \DD_1 \setminus \{e\}$ winds once and positively around $e$.
\end{lemma}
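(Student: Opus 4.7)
The plan is to apply Theorem~\ref{bubb_control} to upgrade the convergence near $\partial\D$, pass the boundary conditions to the limit, use a local analysis at the elliptic singularity $e$ to keep the boundary away from $e$, and finally establish injectivity by combining the Hopf maximum principle with positivity and stability of intersections for pseudoholomorphic maps.

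First I would combine Theorem~\ref{bubb_control} with the normalizations $u_n(w)\in l_w$ for $w\in\{1,i,-1\}$ to get $\sup_n\sup_{\rho<|z|\le1}|d\util_n|<\infty$. Standard elliptic regularity for the Cauchy-Riemann system with totally real boundary condition (which holds on the open subset $\DD_1\setminus\{e\}$ of $\DD_1$) upgrades the $C^\infty_{loc}$-convergence $\util_n\to\util_0$ to $C^\infty$-convergence on the closed annulus $\{\rho<|z|\le1\}$. Passing the identities $a_n\equiv0$ on $\partial\D$ and $u_n(\partial\D)\subset\DD_1$ (closed) to the limit immediately yields $a_0\equiv0$ on $\partial\D$ and $u_0(\partial\D)\subset\DD_1$.

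Next I would rule out $e\in u_0(\partial\D)$. Suppose for contradiction $u_0(p_0)=e$ for some $p_0\in\partial\D$ and work in the integrable Darboux chart $(\V,\Psi)$ from Proposition~\ref{convenientdisk}. The explicit Bishop family $\util_\tau$ of \eqref{disks_formulae} provides a local foliation of a neighborhood of $(0,e)$ by $\jtil$-holomorphic disks with boundary on $\DD_1\cap\V$, and each member meets $e$ only at its center, which is an \emph{interior} point of $\D$. By the local uniqueness of $\jtil$-holomorphic disks at the elliptic tangency $e$ established in~\cite{93}, combined with unique continuation, $\util_0$ would have to coincide globally with either the constant map $(0,e)$ or a reparametrization of some $\util_\tau$. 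The first option is forbidden by $E(\util_0)>0$; the second contradicts $u_0(p_0)=e$ since no $u_\tau(\partial\D)$ contains $e$. Hence $u_0(\partial\D)\subset\DD_1\setminus\{e\}$, and since the winding number is preserved under $C^\infty$-convergence inside the open surface $\DD_1\setminus\{e\}$, the loop $u_0(\partial\D)$ winds once positively around $e$.

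Finally I would show $\util_0$ is an embedding. The function $a_0$ is subharmonic on $\D\setminus\Gamma_0$ with $a_0\equiv0$ on $\partial\D$; non-constancy of $a_0$ (ensured by $E(\util_0)>0$ together with the presence of negative punctures $\Gamma_0\neq\emptyset$) and the strong maximum principle yield $a_0<0$ on $\interior{\D}\setminus\Gamma_0$, which rules out any boundary-interior double point $\util_0(z_1)=\util_0(z_2)$ with $z_1\in\partial\D$ and $z_2\in\interior{\D}\setminus\Gamma_0$. The Hopf boundary-point lemma gives $\partial_r a_0>0$ on $\partial\D$, which via the Cauchy-Riemann equations translates into $\lambda(du_0(\partial_\theta))>0$, so $u_0(\partial\D)$ is transverse to the characteristic foliation $\xi\cap T\DD_1$; combined with winding once around $e$, this forces $u_0(\partial\D)$ to meet each characteristic leaf exactly once, and so to be an embedded loop. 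Finally, any interior self-intersection, or interior branch point of $\util_0$, would persist under the $C^\infty_{loc}$-perturbation $\util_n\to\util_0$ by Carleman's Similarity Principle together with positivity and stability of intersections for $\jtil$-holomorphic maps, contradicting the fact that every $\util_n$ is an embedding. The main technical obstacle is the boundary-avoidance of $e$, which relies on the delicate local classification of $\jtil$-holomorphic disks at the $\jtil$-complex elliptic singular point of the characteristic foliation of $\DD_1$.
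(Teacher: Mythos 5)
Your overall plan---upgrade the convergence near $\partial\D$ via Theorem~\ref{bubb_control} and elliptic bootstrapping, pass the boundary conditions to the limit, and establish embeddedness by a maximum-principle argument near $\partial\D$ together with positivity and stability of intersections in the interior---matches the paper's, which is terse precisely because it regards the boundary conditions as routine consequences of the strong maximum principle and focuses on the embedding property.

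There is, however, a genuine gap in the step where you rule out $e \in u_0(\partial\D)$. You appeal to a ``local uniqueness of $\jtil$-holomorphic disks at the elliptic tangency $e$'' together with unique continuation to conclude that $\util_0$ must globally coincide with a Bishop disk $\util_\tau$ or with the constant map. But by the explicit formulae \eqref{disks_formulae} one has $\util_\tau(0) = (-\tau^2/4,\,e)$, so no $\util_\tau$ with $\tau>0$ passes through the point $(0,e)$; and $(0,e)$ is exactly where $\util_0$ would have to sit if $u_0(p_0)=e$ with $a_0(p_0)=0$. Moreover, the local filling/uniqueness statement in~\cite{93} concerns \emph{closed} disks whose boundary stays inside the totally real locus $\DD_1\setminus\{e\}$; $\util_0$ is a punctured disk whose boundary is here hypothesized to touch the degenerate point $e$, and it is not clear how the classification in~\cite{93} or ``unique continuation'' applies to that configuration. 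As written, the argument does not close.

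The fix is already in your proof: the Hopf boundary-point lemma you invoke later is what does the work. Since $a_0$ is subharmonic, $a_0 \equiv 0$ on $\partial\D$, and $a_0 \not\equiv 0$ (indeed $\Gamma_0 \neq \emptyset$ forces $a_0 \to -\infty$ at the punctures), the strong maximum principle and Hopf lemma give $\partial_r a_0 > 0$ on all of $\partial\D$, and the Cauchy--Riemann equations give $\lambda(\partial_\theta u_0) = \partial_r a_0 > 0$ on $\partial\D$. But $T_e\DD_1 = \xi_e = \ker\lambda_e$ precisely because $e$ is a singularity of the characteristic distribution, so $u_0(p_0)=e$ for $p_0\in\partial\D$ would force $\partial_\theta u_0(p_0)\in\xi_e$, i.e.\ $\lambda(\partial_\theta u_0(p_0)) = 0$---a contradiction. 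This elementary argument is what the paper has in mind when it declares only the embedding property non-trivial, and it makes the local-foliation detour both risky and unnecessary.
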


\begin{proof}
It is only non-trivial to show that $\util_0$ is an embedding. Note that a strong maximum principle still holds for $a_0$ and, consequently, $\util_0$ is an embedding near $\partial\D$. The conclusion follows from results of D. McDuff on positivity of self-intersections of pseudo-holomorphic maps, since self-intersections or critical points of $\util_0$ would imply self-intersections of the nearby $\util_n$ (a critical point is also seen as some kind of self-intersection in this theory).
\end{proof}

Fix any $z_0 \in \Gamma_0$ and let $P_0 = (x_0,T_0)$ be the unique Reeb orbit such that $\util_0$ is asymptotic to $P_0$ at $z_0$. The mass
\[
\begin{array}{cc}
 m(z_0) = \lim_{\epsilon\to 0^+}m_\epsilon(z_0), &  m_\epsilon(z_0) = \lim_{n\to\infty} \int_{B_\epsilon(z_0)} u_n^*d\lambda
\end{array}
\]
is defined exactly as in subsection~\ref{bubbling_section}. Fix $\epsilon>0$ such that $m_\epsilon(z_0)-m(z_0) \leq \sigma(C)/2$. Choose $z_n$ defined by $a_n(z_n) = \inf a_n(B_\epsilon(z_0))$. As before, it follows that $z_n \to z_0$. Defining $\delta_n$ by $$ \int_{B_\epsilon(z_0)\setminus B_{\delta_n}(z_n)} u_n^*d\lambda = \sigma(C) $$ then $\delta_n \to 0$ and we can find $R_n \to +\infty$ such that $B_{R_n\delta_n}(z_n) \subset B_\epsilon(z_0)$. Proceeding as in the ``soft-rescalling'' done in subsection~\ref{bubbling_section}, define
\begin{equation}\label{vtil_n_seq_2}
 \begin{aligned} & \vtil_n = (b_n,v_n) : B_{R_n}(0) \to \R \times S^3 \\ & b_n(z) = a_n(z_n+\delta_n z) - a_n(z_n+2\delta_n) \text{ and } v_n(z) = u_n(z_n+\delta_n z) \end{aligned}
\end{equation}
and
\begin{equation}\label{gamma_2}
  \Gamma_1 = \{ z \in \C : \exists n_j \to \infty \ \text{ and } \ \zeta_j \to z \text{ such that } |d\vtil_{n_j}(\zeta_j)| \to +\infty \}.
\end{equation}
Again, up to the choice of a subsequence, we may assume $\#\Gamma_1 < \infty$ and, using elliptic boot strapping arguments, we may further assume that there exists a smooth $\jtil$-holomorphic map
\begin{equation}\label{}
  \vtil = (b,v) : \C \setminus \Gamma_1 \to \R \times S^3
\end{equation}
such that $\vtil_n \to \vtil$ in $C^\infty_{loc}(\C\setminus\Gamma_1)$. It is easy to estimate $0<E(\vtil)\leq C$. Arguing exactly as in subsection~\ref{bubbling_section}, using Theorem~\ref{behavior_HWZ_1} and Lemma~\ref{props_limit_sequence_1}, we conclude that $\vtil$ has a unique positive puncture at $\infty$ and that $\vtil$ is asymptotic to $P_0$ at $\infty$. This shows that $R_n$, $\vtil_n$, $\Gamma_1$ and $\vtil$ satisfy the requirements of Lemma~\ref{estimate_cz_1}, and we obtain $\mu_{CZ}(P_0) \geq 2$. Since $z_0 \in \Gamma_0$ was arbitrary we proved

\begin{lemma}
If $z \in \Gamma_0$ and $\util_0$ is asymptotic to $P$ at $z$ then $\mu_{CZ}(P) \geq 2$.
\end{lemma}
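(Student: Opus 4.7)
The plan is to reduce to Lemma~\ref{estimate_cz_1} by performing a ``soft-rescaling'' at the puncture $z_0 \in \Gamma_0$ in order to manufacture a germinating sequence whose limit is asymptotic at its positive puncture to the orbit $P_0=(x_0,T_0)$.

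First I would set up the rescaling exactly as in subsection~\ref{bubbling_section}. Let $m(z_0)$ be the mass of $z_0$ defined by~\eqref{mass} and choose $\epsilon>0$ small enough that $m_\epsilon(z_0)-m(z_0)\le \sigma(C)/2$, where $C = \sup_n E(\util_n) < \infty$ is uniform on the Bishop family. Pick $z_n \in \overline{B_\epsilon(z_0)}$ minimizing $a_n$; since $z_0$ is a negative puncture of $\util_0$ and $\util_n \to \util_0$ in $C^\infty_{loc}(\D\setminus\Gamma_0)$, we have $z_n \to z_0$. Using $m_\epsilon(z_0) \geq m(z_0) \geq \sigma_1 > \sigma(C)$, choose $0<\delta_n<\epsilon$ with
\[
\int_{B_\epsilon(z_0)\setminus B_{\delta_n}(z_n)} u_n^*d\lambda = \sigma(C),
\]
and the same $\sigma(C)/2$-estimate as in subsection~\ref{bubbling_section} forces $\delta_n \to 0$. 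Choosing any $R_n \to +\infty$ with $B_{R_n\delta_n}(z_n)\subset B_\epsilon(z_0)$, define the rescaled maps $\vtil_n = (b_n,v_n)$ on $B_{R_n}(0)$ by \eqref{vtil_n_seq_2}.

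Next I would extract a limit. Let $\Gamma_1 \subset \D$ be the bubbling set of $\vtil_n$; by Lemma~\ref{standardtool} and the uniform energy bound, we may pass to a subsequence making $\Gamma_1$ finite and contained in $\D$. Elliptic regularity together with the normalization $\vtil_n(2)\in\{0\}\times S^3$ yields a nonconstant $\jtil$-holomorphic $\vtil = (b,v):\C\setminus\Gamma_1 \to \R\times S^3$ with $\vtil_n \to \vtil$ in $C^\infty_{loc}$. Nonconstancy when $\Gamma_1 = \emptyset$ follows from
\[
\int_\D v^*d\lambda = \lim_n \int_{B_{\delta_n}(z_n)} u_n^*d\lambda = m_\epsilon(z_0)-\sigma(C) \ge \sigma_1 - \sigma(C) > 0,
\]
while nonconstancy when $\Gamma_1\neq\emptyset$ is immediate from Lemma~\ref{standardtool}. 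Thus $(R_n,\vtil_n,\Gamma_1,\vtil)$ is a germinating sequence with energy bounded by $C$, and Lemma~\ref{props_limit_sequence_1} gives that $\infty$ is the unique positive puncture of $\vtil$ and each point of $\Gamma_1$ is a negative puncture.

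The main (and essentially only) nontrivial step is identifying the asymptotic limit of $\vtil$ at $\infty$ with $P_0$. This is carried out by the standard Theorem~\ref{small_energy} argument already used in the bubbling-off tree construction: pick an $S^1$-invariant neighborhood $\W$ of the space of parametrized closed Reeb orbits of period $\leq C$ that separates distinct orbits, use Theorem~\ref{behavior_HWZ_1} to find that the loops $t\mapsto v(z_0 + \rho e^{i2\pi t})$ lie in $\W$ for $\rho\le \rho_0$ and the loops $t\mapsto w(R e^{i 2\pi t})$ lie in $\W$ for $R \ge R_0$, and then apply Theorem~\ref{small_energy} to the annulus $[\delta_n R_0, \rho_0]$ (whose intermediate $d\lambda$-energy is $\le \sigma(C)$ by construction) to see that the loops on its two sides lie in the same component of $\W$. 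This forces the asymptotic limits to coincide. Once the identification $\vtil(\infty)\sim P_0$ is in hand, Lemma~\ref{estimate_cz_1} applied to the germinating sequence $(R_n,\vtil_n,\Gamma_1,\vtil)$ yields $\mu_{CZ}(P_0) \geq 2$, completing the proof since $z_0 \in \Gamma_0$ was arbitrary.
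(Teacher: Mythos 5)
Your proposal is correct and follows essentially the same route as the paper: perform the soft-rescaling of subsection~\ref{bubbling_section} at the puncture $z_0$, check that the rescaled data $(R_n,\vtil_n,\Gamma_1,\vtil)$ form a germinating sequence with energy bound $C$, identify the positive asymptotic limit of $\vtil$ with $P_0$ via the Theorem~\ref{small_energy} neck argument, and then invoke Lemma~\ref{estimate_cz_1}. (You have a small notational slip: the ``outer'' loops should be those of $u_0$ and the rescaled limit should be called $v$, but this does not affect the argument.)
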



%
%
%

As a consequence we will obtain the following statement.

\begin{lemma}\label{zera_dlambda_u_0}
$\pi \cdot du_0 \equiv 0$.
\end{lemma}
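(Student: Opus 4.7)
The plan is to argue by contradiction. Suppose $\pi \cdot du_0 \not\equiv 0$. Then $\int_{\D \setminus \Gamma_0} u_0^* d\lambda > 0$, and combining this with Lemma~\ref{props_u_0_2} (embeddedness, the boundary conditions, and winding once around $e$) and the uniform energy bound $E(\util_0)\leq C$, $\util_0$ becomes a solution of the mixed boundary value problem~(\ref{mixed}). The immediately preceding lemma already tells us that $\mu_{CZ}(P_z) \geq 2$ for every $z \in \Gamma_0$. Since $J$ was arranged at the outset to lie in the residual set $\J_{gen}$ provided by Proposition~\ref{bad_problem}, the contrapositive of that proposition rules out any $z_0 \in \Gamma_0$ with $\mu_{CZ}(P_{z_0}) \geq 3$. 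Hence $\mu_{CZ}(P_z) = 2$ for every $z \in \Gamma_0$.

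Next, the linking hypothesis of Proposition~\ref{sufficiency} applies to each such $P_z$, giving $[P_z] \neq 0$ in $H_1(S^3 \setminus \bar x(\R); \Z) \cong \Z$. By Theorem~\ref{delicate}, $u_0(\D \setminus \Gamma_0) \subset S^3 \setminus \bar x(\R)$, so $u_0$ defines a $2$-chain in this complement. The boundary loop $u_0(\partial \D) \subset \DD_1 \setminus \{e\}$ winds once around $e$ and bounds a small disk in $\DD_1$ containing $e$ but avoiding $\partial \DD_1 = \bar x(\R)$; hence $[u_0(\partial \D)] = 0$. The standard homological relation for the chain $u_0$ then yields
\[
 \sum_{z \in \Gamma_0} [P_z] = 0 \quad \text{in } H_1(S^3 \setminus \bar x(\R); \Z),
\]
where the signs come from the negative-puncture orientation convention (each small counter-clockwise loop around $z$ converges to the reversed orbit).

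The hard part will be ruling out sign cancellations in $\sum_{z \in \Gamma_0} [P_z] = 0$. I expect this to follow from positivity of intersection between the $\jtil$-holomorphic curve $\util_0$ and the $\jtil$-holomorphic cylinder $\R \times \bar x(\R)$: since $\util_0$ is disjoint from this cylinder, a Siefring-style asymptotic analysis at each negative puncture should force the asymptotic eigenfunction to wind around $\bar x(\R)$ with a common orientation, so that each $[P_z]$ contributes to the sum with the same sign. Combined with $[P_z] \neq 0$ for every $z \in \Gamma_0$, this forces $\sum_{z\in\Gamma_0} [P_z] \neq 0$, contradicting the homological identity above and proving $\pi \cdot du_0 \equiv 0$.
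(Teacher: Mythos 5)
Your proposal follows the paper for the opening moves: assume by contradiction that $\pi\cdot du_0\not\equiv 0$, note that $\util_0$ then solves the mixed boundary value problem~(\ref{mixed}), and combine the preceding lemma ($\mu_{CZ}(P_z)\geq 2$) with $J\in\J_{gen}$ and Proposition~\ref{bad_problem} to force $\mu_{CZ}(P_z)=2$ for every $z\in\Gamma_0$. Up to this point you are on the same track as the paper.

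After that you diverge, and the divergence contains a genuine gap. The paper does \emph{not} try to read off a contradiction from the homology classes of the asymptotic orbits $P_z$ of $\util_0$ itself. Instead it feeds the soft-rescaled sequence $\vtil_n$ from~(\ref{vtil_n_seq_2}) into Proposition~\ref{compactness_theorem_1}. That proposition pushes the bubbling-off all the way down the tree to a \emph{leaf}, producing a finite-energy plane $\util^*=(a^*,u^*)$ asymptotic to some $P^*$ with $\mu_{CZ}(P^*)=2$, and, crucially, item~(3) of Proposition~\ref{compactness_theorem_1} --- using that each $u_n(\D)$ avoids $\bar x(\R)$, that $\bar P$ is simply covered, and $\mu_{CZ}(\bar P)\geq 3$ --- gives $\cl{u^*(\C)}\cap\bar x(\R)=\emptyset$. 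Since $\cl{u^*(\C)}$ is then a disk in $S^3\setminus\bar x(\R)$ spanning $P^*$, one immediately reads off $[P^*]=0$, i.e.\ $P^*$ is not linked to $\bar P$, contradicting the hypotheses of Proposition~\ref{sufficiency}. Note that $P^*$ need not be one of the $P_z$: it sits at a leaf of the bubbling-off tree, possibly several levels below $\util_0$.

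Your alternative --- the identity $\sum_{z\in\Gamma_0}[P_z]=0$ in $H_1(S^3\setminus\bar x(\R);\Z)\cong\Z$ together with $[P_z]\neq0$ for all $z$ --- does not yield a contradiction by itself, and you are right to flag the ``same sign'' claim as the hard part. That is not a detail; it is the whole argument, and I do not see how to fill it along the lines you sketch. The asymptotic eigenfunction at a negative puncture $z$ is a section of $\xi$ along $P_z$ describing the rate and direction of approach of $u_0$ to $P_z$ \emph{within a tubular neighborhood of $P_z$}; it carries no information about the homology class $[P_z]\in H_1(S^3\setminus\bar x(\R))$, which is a global linking datum determined by where $P_z$ sits relative to $\bar x(\R)$. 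Siefring-type asymptotic intersection theory would give you something only if $\util_0$ and $\R\times\bar x(\R)$ shared an asymptotic orbit, but here the $P_z$ are geometrically distinct from $\bar P$, and the two curves are disjoint, so there is no interaction (asymptotic or interior) to exploit. In fact the hypothesis of Theorem~\ref{main1} merely asserts \emph{nonzero} linking for the index-$2$ orbits; without already knowing that $\bar P$ bounds a disk transverse to the Reeb field, there is nothing forcing a uniform sign. Replacing the hand-waved step by an invocation of Proposition~\ref{compactness_theorem_1}, as the paper does, is both the missing idea and the reason that proposition was proved in Section~\ref{boff_analysis} in the first place.
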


\begin{proof}
Let us assume $\int_{\D\setminus\Gamma_0} u_0^*d\lambda > 0$, by contradiction. Since $J \in \J_{gen}$, Proposition~\ref{bad_problem} and the above lemma imply that if $z \in \Gamma_0$ and $\util_0$ is asymptotic to $P$ at $z$ then $\mu_{CZ}(P) = 2$. Then the sequence $\vtil_n$ defined in (\ref{vtil_n_seq_2}) satisfies (ii) from Proposition~\ref{compactness_theorem_1}. It also satisfies all other assumptions in the statement of Proposition~\ref{compactness_theorem_1}. Consequently we obtain $P^* = (x^*,T^*) \in \P$ with $\mu_{CZ}(P^*) = 2$ and an immersed disk $\DD^*$ with boundary $x^*(\R)$ satisfying $\DD^* \cap \bar x(\R) = \emptyset$. Here we used that the orbit $\bar P = (\bar x,\bar T)$ from Proposition~\ref{sufficiency} is simply covered, satisfies $\mu_{CZ}(\bar P) \geq 3$ and $$ \util = (a,u) \in \M \Rightarrow u(\D) \cap \bar x(\R) = \emptyset. $$
This shows that $P^*$ is not linked to $\bar P$, contradicting the hypotheses of Proposition~\ref{sufficiency}.
\end{proof}

This has important consequences. It follows from Lemma~\ref{zera_dlambda_u_0} that there exists a Reeb trajectory $\tilde x$ such that $u_0(\D\setminus\Gamma_0) \subset \tilde x(\R)$. We know by Lemma~\ref{props_u_0_2} that $\util_0$ is not constant. Thus $\tilde x$ is periodic and $\tilde x(\R) = u_0(\partial \D) \subset \DD_1$. By the properties of the disk $\DD_1$ explained in the statement of Theorem~\ref{convenientdisk} we have $\tilde x(\R) = \bar x(\R)$. As a consequence we obtain $\tau^* = \bar \tau$. Consider the map $F:\D\setminus\{0\} \to \R \times S^3$ given by $$ z = \est \mapsto (\bar Ts,\bar x(\bar Tt)). $$ One can prove in a standard fashion, using Lemma~\ref{zera_dlambda_u_0} and Carleman's Similarity Principle, that $\exists k \in \Z^+$ and a holomorphic map $\varphi : \D \to \D$ satisfying
\begin{itemize}
  \item $\varphi(\partial \D) = \partial \D$,
  \item $\Gamma_0 = \varphi^{-1}(0)$ and $\util_0 = F \circ \varphi$ and
  \item $k$ is the degree of $\varphi|_{\partial \D} : \partial \D \to \partial \D$.
\end{itemize}
Recall that $\bar T$ is the minimal positive period of $\bar x$. Thus $u_0(\partial \D)$ winds $k$ times around $e$ in $\DD_1$. It follows from Lemma~\ref{props_u_0_2} that $k=1$ and that $\varphi \in G$. The proof of Proposition~\ref{disks_behavior} is now complete.

\subsection{A special fast plane asymptotic to $\bar P$}\label{existence_special_plane_section}

By Proposition~\ref{disks_behavior} we can find a sequence $\util_n = (a_n,u_n) \in \Pi^{-1}(\Y)$ such that
\begin{equation}\label{}
  \util_n \to F_{\bar P} \text{ in } C^\infty_{loc}(\D\setminus\{0\},\R\times S^3).
\end{equation}
Here $F_{\bar P}:\D\setminus\{0\} \to \R \times S^3$ is the map $z = \est \mapsto (\bar Ts,\bar x(\bar Tt))$. Thus $0$ is a bubbling-off point of $\{\util_n\}$. It has a mass $m(0) = \bar T$. We fix, as before, $\epsilon>0$ such that $m_\epsilon(0) - m(0) \leq \sigma(C)/2$, and choose $z_n \in \D$ satisfying $a_n(z_n) = \inf_\D a_n$. We also fix $\delta_n>0$ such that $u_n^*d\lambda$ integrates to $\sigma(C)$ over $B_\epsilon(0) \setminus B_{\delta_n}(0)$. Then $z_n \to 0$ and $\delta_n \to 0$. Now choose $R_n \to \infty$ so that $\delta_n R_n \to 0$ and define $\vtil_n = (b_n,v_n)$ and $\Gamma_1$ as in (\ref{vtil_n_seq_2}) and (\ref{gamma_2}), respectively. Up to selection of a subsequence we can assume $\#\Gamma_1<\infty$ and the existence of a non-constant finite-energy $\jtil$-holomorphic map $\vtil = (b,v)$ as in (\ref{limit_sequence_1}) such that $\vtil_n \to \vtil$ in $C^\infty_{loc}(\C\setminus\Gamma_1)$. Clearly $\vtil$ has a unique positive puncture at $\infty$ and negative punctures at the points of $\Gamma_1$. Using Theorem~\ref{small_energy} as in subsection~\ref{bubbling_section} one proves that $\vtil$ is asymptotic to $\bar P$ at $\infty$. It is not hard to show that $0 \in \Gamma_1$ if $\Gamma_1 \not= \emptyset$, this is so because $b_n(0) = \inf b_n(B_{R_n}(0))$ and points of $\Gamma_1$ are negative punctures.

Suppose $\pi \cdot dv$ vanishes identically. Then, by Theorem~\ref{zero_dlmabda_FES}, there exists a polynomial $p : \C \to \C$ such that $\Gamma_1 = p^{-1}(0)$ and $\vtil = F_{\bar P} \circ p$. The polynomial $p$ must have degree $1$ since $\bar P$ is simply covered. Thus $\exists A\not=0$ such that $p(z)=Az$. Here we used that $0$ is a (the only) root of $p$. We can now estimate
\[
 \begin{aligned}
  \bar T &= \int_{\partial \D} v^*\lambda = \lim_{n\to\infty} \int_\D v_n^*d\lambda \\
  &= \lim_{n\to\infty} \int_{B_{\delta_n}(0)} u_n^*d\lambda = \lim_{n\to\infty} \int_{B_\epsilon(0)} u_n^*d\lambda - \int_{B_\epsilon(0)\setminus B_{\delta_n}(0)} u_n^*d\lambda \\
  &= m_\epsilon(0)-\sigma(C) = m(0) + m_\epsilon(0) - m(0) - \sigma(C) \\
  &\leq \bar T + \sigma(C)/2 - \sigma(C) = \bar T - \sigma(C)/2.
 \end{aligned}
\]
This contradiction proves

\begin{lemma}\label{nonzerodlmabda}
$\int_{\C\setminus\Gamma_1} v^*d\lambda > 0$.
\end{lemma}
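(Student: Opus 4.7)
The plan is to argue by contradiction. Assume $\pi\cdot dv\equiv 0$; equivalently the $d\lambda$-energy of $\vtil$ on $\C\setminus\Gamma_1$ vanishes. Then Theorem~\ref{zero_dlmabda_FES} applies and gives a non-constant polynomial $p:\C\to\C$ and a periodic Reeb orbit $P=(x,T)\in\P$ such that $p^{-1}(0)=\Gamma_1$ and $\vtil=F_P\circ p$. We already know $\vtil$ is asymptotic at $\infty$ to the simply covered orbit $\bar P=(\bar x,\bar T)$, so Remark~\ref{rmk_simp_cov} forces $\deg p=1$ (otherwise the asymptotic limit at $\infty$ would be a non-trivial iterate $P^{\deg p}$, which is not simply covered), and in particular $P=\bar P$. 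Since we have already observed that $0\in\Gamma_1$ whenever $\Gamma_1\neq\emptyset$, the set $\Gamma_1=p^{-1}(0)$ must consist of the single point $0$, hence $p(z)=Az$ for some $A\neq 0$.

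Next I would play off two independent computations of $\lim_n \int_\D v_n^*d\lambda$. On one hand, since $\vtil_n\to\vtil$ in $C^\infty_{loc}(\C\setminus\Gamma_1)$ and $\partial\D$ stays away from $\Gamma_1=\{0\}$, Stokes' theorem gives
\begin{equation*}
\lim_{n\to\infty}\int_\D v_n^*d\lambda \;=\;\int_{\partial\D} v^*\lambda \;=\;\bar T,
\end{equation*}
the last equality because $\vtil=F_{\bar P}\circ p$ with $p(z)=Az$ implies that $v|_{\partial\D}$ is a degree-one reparametrization of $t\mapsto\bar x(\bar Tt)$. On the other hand, unwinding the soft-rescaling~\eqref{vtil_n_seq_2},
\begin{equation*}
\int_\D v_n^*d\lambda \;=\;\int_{B_{\delta_n}(z_n)} u_n^*d\lambda \;=\;\int_{B_\epsilon(0)} u_n^*d\lambda \;-\;\int_{B_\epsilon(0)\setminus B_{\delta_n}(z_n)} u_n^*d\lambda;
\end{equation*}
the first term tends to $m_\epsilon(0)$ and the second equals $\sigma(C)$ by the very choice of $\delta_n$. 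Combining with $m(0)=\bar T$ and $m_\epsilon(0)-m(0)\le\sigma(C)/2$, this yields
\begin{equation*}
\bar T \;=\; m_\epsilon(0) - \sigma(C) \;\le\; \bar T + \frac{\sigma(C)}{2} - \sigma(C) \;=\; \bar T - \frac{\sigma(C)}{2},
\end{equation*}
a contradiction since $\sigma(C)>0$.

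The only genuine input is the rigidity statement that when $\pi\cdot dv\equiv 0$ the holomorphic curve $\vtil$ is a branched cover of the trivial cylinder over $\bar P$, together with the simply-covered hypothesis on $\bar P$ that pins the degree down to one. Everything else is book-keeping of $d\lambda$-area forced by the choices made in the rescaling, and I expect no real obstacle beyond being careful that $0$ is indeed the unique puncture of $\vtil$ in $\Gamma_1$, which follows from the fact that $b_n(0)=\inf_{B_{R_n}(0)} b_n$ and negative punctures are characterized as the locations where the $\R$-component escapes to $-\infty$.
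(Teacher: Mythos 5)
Your proposal is correct and follows the paper's own argument essentially verbatim: assume $\pi\cdot dv\equiv 0$, invoke Theorem~\ref{zero_dlmabda_FES} together with the fact that $\bar P$ is simply covered to force $p(z)=Az$ with root at $0\in\Gamma_1$, and then derive $\bar T\le\bar T-\sigma(C)/2$ by computing $\lim_n\int_\D v_n^*d\lambda$ once via $C^\infty_{loc}$-convergence to $\vtil=F_{\bar P}\circ p$ and once via the soft-rescaling bookkeeping (where $m_\epsilon(0)-m(0)\le\sigma(C)/2$ and the annulus integral was fixed to equal $\sigma(C)$). The only cosmetic difference is that you center the inner disk at $z_n$ rather than $0$; since $z_n\to0$, this does not affect the argument.
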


From now on we fix a global non-vanishing section
\begin{equation}\label{section_Z}
  Z : S^3 \to \xi.
\end{equation}
If $f$ is any map defined on a domain of $\C$ we denote
\[
  \begin{array}{ccc}
    \partial_\theta f(z) = \left.\frac{d}{d\theta}\right|_{\theta=0}f(e^{i\theta}z) & \text{ and } & \partial_r f(z) = \left.\frac{d}{dr}\right|_{r=1}f(rz).
  \end{array}
\]

\begin{lemma}\label{nonzerodlambda_un}
The sections $\pi \cdot du_n$ have no zeros if $n$ is large enough.
\end{lemma}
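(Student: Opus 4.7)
The goal is to show that the algebraic count $N_n := \#\{\text{zeros of } \pi\cdot du_n \text{ in } \D\}$ (with multiplicity) equals zero for $n$ large. Since $\pi\cdot du_n$ is a section of the complex line bundle $\mathrm{Hom}_\C(T\D, u_n^*\xi)$ satisfying the Cauchy--Riemann-type equation obtained by projecting~\eqref{cr} to $\xi$, the Similarity Principle recalled in subsection~\ref{alg_inv} implies its zeros are isolated and count positively, so $N_n\in\Z_{\ge 0}$ and it suffices to show $N_n=0$.

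First I would verify that $\pi\cdot du_n$ is non-vanishing on $\partial\D$ for every $n$. The boundary loop $u_n|_{\partial\D}$ is an embedded loop in $\DD_1\setminus\{e\}$ transverse to the characteristic foliation $\xi\cap T\DD_1$ (the strong maximum principle consequence recalled in subsection~\ref{the_bishop_family}), hence $\partial_\theta u_n(z)\notin\xi\cap T\DD_1$ at each $z\in\partial\D$, so $\pi\cdot\partial_\theta u_n\ne 0$; combined with the Cauchy--Riemann equation $\pi\cdot du_n\cdot i=J\cdot\pi\cdot du_n$ this forces $\pi\cdot du_n\ne 0$ everywhere on $\partial\D$. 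Trivializing $T\D$ globally by $\partial_s$ and $u_n^*\xi$ globally by the pullback of $Z$ (valid since $\D$ is contractible), the section $\pi\cdot du_n$ reduces to a non-vanishing $\C$-valued function on $\partial\D$, and by standard degree theory the integer $W_n:=\wind(\pi\cdot\partial_s u_n|_{\partial\D},Z)$ equals $N_n$.

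Second, $W_n$ is locally constant on the connected family $\Pi^{-1}(\Y)$: $\Y$ is connected by definition, and the fibers of $\Pi$ are diffeomorphic to the connected group $G$ of biholomorphisms of $\D$; along the whole family $\pi\cdot du$ remains non-vanishing on $\partial\D$ by Step~1, so $W_n$ is a single integer $W$. To compute $W$ I would evaluate at the small end of the family, where the argument is explicit: for the initial Bishop disks $\util_\tau$ with $\tau\to 0^+$ given by~\eqref{disks_formulae}, a direct calculation in the Darboux frame yields $\pi\cdot\partial_s u_\tau=\tau v_1$, of constant direction in the local frame $(v_1,v_2)$ of $\xi$. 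As $\tau\to 0^+$ the boundary loop $u_\tau(\partial\D)$ collapses to the elliptic point $e$ and $Z$ restricted to it has direction tending to the constant $Z(e)$, so the winding of $\tau v_1$ relative to $Z$ along this collapsing loop is zero. Hence $W=0$, giving $N_n=0$ and $\pi\cdot du_n\ne 0$ throughout $\D$.

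The main subtlety will be addressing the perturbation of $J$ to some $J'\in\J_{gen}$ performed in subsection~\ref{the_bishop_family}, which replaces the explicit family~\eqref{disks_formulae} by a $C^\infty$-close family for the new $\jtil'$. The explicit computation $\pi\cdot\partial_s u_\tau=\tau v_1$ is no longer exact, but by a small perturbation argument the boundary winding of the perturbed small-$\tau$ Bishop disks relative to $Z$ still equals $0$; combined with the preservation of boundary transversality under the perturbation (and hence of the non-vanishing of $\pi\cdot du$ on $\partial\D$), the locally-constant-winding argument goes through. An alternative route, should the connectedness argument prove fragile, would be to compute $W$ on the other end of the family using the asymptotic eigenfunction description of $\vtil$ at $\infty$ (Theorem~\ref{asymp_efunction}) transferred back to $\util_n$ via the rescaling $z\mapsto z_n+\delta_n z$, which gives the winding of $\pi\cdot du_n$ on both $\partial B_{R\delta_n}(z_n)$ and $\partial\D$ in terms of an eigenvalue $\nu<0$ of $A_{\bar P}$ and confirms $W=0$ on matching with the Bishop-disk computation.
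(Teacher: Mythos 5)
Your Step 1 contains a genuine logical error. You deduce ``$\partial_\theta u_n(z)\notin\xi\cap T\DD_1$, so $\pi\cdot\partial_\theta u_n\ne 0$,'' but this implication does not hold. If $\pi\cdot\partial_\theta u_n(z)=0$ then $\partial_\theta u_n(z)\in\R R_{u_n(z)}$, and since $R\notin\xi$, a nonzero multiple of $R$ automatically lies \emph{outside} $\xi\cap T\DD_1$ --- so transversality to the characteristic foliation is entirely consistent with $\pi\cdot\partial_\theta u_n$ vanishing. Concretely: at a point $p\in\interior\DD_1$ where $R_p\in T_p\DD_1$, one has $T_p\DD_1 = \R R_p\oplus\R V_p$, and a boundary curve tangent to $R_p$ there is transverse to $V_p$ yet has vanishing $\xi$-projection. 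The correct argument (which the paper uses) requires the neighborhood $\OO$ from Proposition~\ref{convenientdisk}(2), where $R\notin T\DD_1$ away from $\partial\DD_1$; by Proposition~\ref{disks_behavior} the boundaries $u_n(\partial\D)$ enter $\OO$ only for $n$ large, so the non-vanishing of $\pi\cdot\partial_\theta u_n$ on $\partial\D$ is established only for $n$ past some $n_0$, not ``for every $n$.''

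This propagates into a fatal gap in Step 2. The claim that $W_n$ is constant across the connected family $\Pi^{-1}(\Y)$ requires $\pi\cdot du$ to be nowhere zero on $\partial\D$ for \emph{every} disk in the family, so that the boundary winding is defined and locally constant. You have this only at the two ends (small Bishop disks by explicit formula, $n$ large after fixing Step 1), not in between, so the homotopy argument linking the Bishop-disk value $W=0$ to $W_n$ does not close. Your proposed alternative --- reading off the winding at the collapsing end via Theorem~\ref{asymp_efunction} --- is circular for this lemma: knowing that the asymptotic eigenvalue has winding exactly $1$ is the content of Lemma~\ref{finalwindinfty}, whose proof uses the present lemma. (It would also require controlling zeros of $\pi\cdot du_n$ in the annulus $\D\setminus B_{R\delta_n}(z_n)$, which is again the quantity being sought.) The paper avoids all of this by a direct local computation at $n$ large: it shows $\pi\cdot\partial_\theta u_n$ and $V\circ u_n$ are pointwise linearly independent over $\partial\D$ (using $\OO$), whence $\wind(\pi\cdot\partial_\theta u_n,V\circ u_n)=0$, and then exploits the index $+1$ of the single positive elliptic zero of $V$ to get $\wind(V\circ u_n,Z\circ u_n)=1$, giving $\wind(\pi\cdot\partial_x u_n,Z\circ u_n)=-1+0+1=0$ without any reference to the rest of the Bishop family.
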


\begin{proof}
By Proposition~\ref{disks_behavior} $\exists n_0 \gg 1$ such that $u_n(\partial \D) \subset \OO\setminus \partial \DD_1$ when $n>n_0$. Here $\OO$ is the neighborhood described in Theorem~\ref{convenientdisk}. If $z\in \partial \D$ and $\pi \cdot du_n(z)=0$ then $\partial_\theta u_n(z)$ is a vector on $\R R(u_n(z)) \cap T_{u_n(z)}\DD_1$. However, the properties of $\OO$ imply that this is the zero vector space if $n>n_0$. Thus $\partial_\theta u_n(z) = 0$ if $\pi \cdot du_n(z)=0$, $n>n_0$ and $z\in S^1$. However the strong maximum principle asserts that $\lambda \cdot \partial_\theta u_n(z) = \partial_r a_n(z) > 0$ for every $n$ and $z\in S^1$. This proves that $\pi \cdot du_n(z) \not= 0$ when $n>n_0$ and $z\in S^1$.

We assert that $\pi \cdot \partial_\theta u_n(z)$ and $V(u_n(z))$ are linearly independent vectors in $\xi_{u_n(z)}$ if $n>n_0$ and $|z|=1$. Arguing indirectly, suppose $|z|=1$ and $c_1,c_2 \in \R$ are so that $$ c_1 \pi \cdot \partial_\theta u_n(z) + c_2 V(u_n(z)) = 0. $$ If $c_1 = 0$ then $c_2 V(u_n(z)) = 0$, which implies $c_2 = 0$. Now assume $c_1 \not= 0$. Then
\[
 0 = c_1 \pi \cdot \partial_\theta u_n(z) + c_2 V(u_n(z)) = \pi \cdot \left( c_1\partial_\theta u_n(z) + c_2 V(u_n(z)) \right)
\]
implying $c_1 \partial_\theta u_n(z) + c_2 V(u_n(z)) \in \R R(u_n(z)) \cap T_{u_n(z)}\DD_1$. It follows from the properties of $\OO$ that
\[
 c_1 \partial_\theta u_n(z) + c_2 V(u_n(z)) = 0 \Rightarrow \partial_\theta u_n(z) = -\frac{c_2}{c_1} V(u_n(z))
\]
if $n>n_0$. This proves $\lambda(u_n(z)) \cdot \partial_\theta u_n(z) = 0$, again contradicting the strong maximum principle and proving our assertion. As a consequence we have
\begin{equation}\label{}
  \wind ( t \mapsto \pi \cdot \partial_\theta u_n(e^{i2\pi t}) , t \mapsto V(u_n(e^{i2\pi t})) ) = 0.
\end{equation}
The algebraic count of zeros of $V$ in $\DD_1$ is $+1$ since its only zero is the positive elliptic singularity $e$. Standard degree theory implies
\begin{equation}\label{}
  \wind ( t \mapsto V(\bar x(\bar Tt)) , t \mapsto Z(\bar x(\bar Tt)) ) = 1.
\end{equation}
This proves
\begin{equation}\label{}
  \wind ( t \mapsto V(u_n(e^{i2\pi t})) , t \mapsto Z(u_n(e^{i2\pi t})) ) = 1
\end{equation}
if $n$ is large enough. Now let $x+iy$ be standard complex coordinates in $\D$. Then
\[
 \begin{aligned}
  \pi &\cdot \partial_\theta u_n(e^{i2\pi t}) \\
  &= \pi \cdot du_n(e^{i2\pi t}) \cdot \left( -\sin(2\pi t)\partial_x + \cos(2\pi t)\partial_y \right) \\
  &= -\sin(2\pi t) \pi \cdot \partial_x u_n(e^{i2\pi t}) + \cos(2\pi t) J(u_n(e^{i2\pi t})) \cdot \pi \cdot \partial_x u_n(e^{i2\pi t}).
 \end{aligned}
\]
This implies
\[
 \wind ( t\mapsto \pi \cdot \partial_\theta u_n(e^{i2\pi t}) , t\mapsto \pi \cdot \partial_x u_n(e^{i2\pi t})) = 1.
\]
We can now compute for $n>n_0$:
\[
 \begin{aligned}
  \wind & \left( t\mapsto \pi \cdot \partial_x u_n \left( e^{i2\pi t} \right) , t\mapsto Z\circ u_n \left( e^{i2\pi t} \right) \right) \\
  &= \wind \left( t\mapsto \pi \cdot \partial_x u_n \left( e^{i2\pi t} \right) , t\mapsto \pi \cdot \partial_\theta u_n \left( e^{i2\pi t} \right) \right) \\
  &+ \wind \left( t\mapsto \pi \cdot \partial_\theta u_n \left( e^{i2\pi t} \right) , t\mapsto V \circ u_n \left( e^{i2\pi t} \right) \right) \\
  &+ \wind \left( t\mapsto V \circ u_n \left( e^{i2\pi t} \right) , t\mapsto Z \circ u_n \left( e^{i2\pi t} \right) \right) \\
  &= - 1 + 0 + 1 = 0.
 \end{aligned}
\]
This shows the algebraic count of zeros of $\pi \cdot \partial_x u_n$ is zero. Thus $\pi \cdot \partial_x u_n$ does not vanish in $\D$ if $n>n_0$ because there are only positive isolated zeros.
\end{proof}

\begin{lemma}\label{finalwindinfty}
$\wind_\infty(\vtil,\infty) = 1$.
\end{lemma}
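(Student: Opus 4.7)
The plan is to read off $\wind_\infty(\vtil,\infty)$ from the fact, established inside the proof of Lemma~\ref{nonzerodlambda_un}, that for $n$ large $\pi\cdot\partial_x u_n$ is nowhere vanishing on $\D$ and its algebraic count of zeros, equivalently its winding along $\partial\D$ with respect to the frame $Z\circ u_n$, equals $0$. This winding information transports through the rescaling $v_n(z)=u_n(z_n+\delta_n z)$ to give, in the limit, the winding of $\pi\cdot\partial_x v$ on large circles. A standard computation then converts $\partial_x$-winding into $\partial_r$-winding, producing the extra $+1$ responsible for $\wind_\infty(\vtil,\infty)=1$.

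First I would record the pointwise relation between $\partial_r$ and $\partial_x$. Writing $z=re^{i2\pi t}$ and using $\pi\cdot\partial_y v = J(v)\cdot\pi\cdot\partial_x v$ (by $\jtil$-holomorphicity), one has
\[
 \pi\cdot\partial_r v(re^{i2\pi t}) = \bigl(\cos(2\pi t)\,\mathrm{Id}_\xi + \sin(2\pi t)\,J(v)\bigr)\pi\cdot\partial_x v(re^{i2\pi t}).
\]
By Lemma~\ref{nonzerodlmabda}, $\pi\cdot dv\not\equiv 0$, so the Similarity Principle and Theorem~\ref{asymp_efunction} imply that its zeros are isolated and absent for $|z|$ large; since $\pi\cdot dv=0 \Leftrightarrow \pi\cdot\partial_x v=0$, both $\pi\cdot\partial_x v$ and $\pi\cdot\partial_r v$ are non-vanishing on every sufficiently large circle. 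The identity above then yields
\[
 \wind(\pi\cdot\partial_r v|_{|z|=r},\,Z\circ v) = 1 + \wind(\pi\cdot\partial_x v|_{|z|=r},\,Z\circ v)
\]
for all large $r$, and the left-hand side converges to $\wind_\infty(\vtil,\infty)$ as $r\to\infty$.

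Next I would show that the winding of $\pi\cdot\partial_x v$ on large circles is $0$. Fix $r$ large enough that $\pi\cdot\partial_x v$ is nowhere zero on $|z|=r$; by $C^\infty_{loc}$ convergence, the same holds for $\pi\cdot\partial_x v_n$ for $n$ large, and the windings converge. Since $v_n(z)=u_n(z_n+\delta_n z)$ and the positive scalar $\delta_n$ does not affect winding,
\[
 \wind(\pi\cdot\partial_x v_n|_{|z|=r},\,Z\circ v_n) = \wind(\pi\cdot\partial_x u_n|_{|w-z_n|=\delta_n r},\,Z\circ u_n).
\]
By Lemma~\ref{nonzerodlambda_un}, $\pi\cdot\partial_x u_n$ is a nowhere-vanishing section of $u_n^*\xi$ over the simply connected disk $\D$; comparing it with the nowhere-vanishing frame $Z\circ u_n$ produces a nowhere-vanishing $\C$-valued function on $\D$, whose winding along any loop in $\D$ is therefore $0$. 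Consequently the right-hand side vanishes for every such $n$, giving $\wind(\pi\cdot\partial_x v|_{|z|=r},Z\circ v)=0$. Combining with the previous paragraph yields $\wind_\infty(\vtil,\infty)=1+0=1$. The only delicate point is the legitimacy of transporting winding information through the rescaling, which reduces cleanly to the non-vanishing statement of Lemma~\ref{nonzerodlambda_un} and the simple connectedness of $\D$.
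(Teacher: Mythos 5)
Your proposal is correct and follows essentially the same route as the paper: both arguments reduce $\wind_\infty(\vtil,\infty)$ to the winding of $\pi\cdot\partial_r$ on large circles, pay $+1$ when converting the radial derivative to $\pi\cdot\partial_x$ (using $\pi\cdot\partial_y = J\,\pi\cdot\partial_x$), transport the computation through the rescaling $v_n(z)=u_n(z_n+\delta_n z)$, and conclude that the $\partial_x$-winding vanishes via the non-vanishing of $\pi\cdot du_n$ on $\D$ established in Lemma~\ref{nonzerodlambda_un}. The only cosmetic difference is that the paper moves the winding count from the small circle $|w-z_n|=R\delta_n$ out to $\partial\D$ by degree theory and then evaluates there, whereas you observe directly that a nowhere-vanishing section on the simply connected disk has winding $0$ over any loop, which is the same degree-theoretic fact applied slightly more economically.
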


\begin{proof}
After applying a rotation we can assume $t\mapsto v(Re^{i2\pi t})$ converges to $t\mapsto \bar x(\bar Tt)$ in $C^\infty$ as $R\to\infty$. By Lemma~\ref{nonzerodlmabda} and Theorem~\ref{behavior_HWZ_1} there exists $R_0 \gg1$ such that $\pi \cdot dv(z) \not=0$ if $|z|>R_0$. We compute for $R>R_0$:
\[
 \begin{aligned}
  &\wind(t\mapsto \pi\cdot\partial_r v(Re^{i2\pi t}), t\mapsto Z(v(Re^{i2\pi t}))) \\
  &= \lim_{n\to\infty} \wind(t\mapsto \pi\cdot\partial_r v_n(Re^{i2\pi t}), t\mapsto Z(v_n(Re^{i2\pi t}))) \\
  &= \lim_{n\to\infty} \wind\left( t\mapsto \pi\cdot\left.\frac{d}{d\rho}\right|_{\rho=1} \left[u_n(z_n+\rho R\delta_ne^{i2\pi t})\right], t\mapsto Z(u_n(z_n+R\delta_ne^{i2\pi t})) \right)
 \end{aligned}
\]
By the previous lemma we know that $\pi \cdot du_n$ has no zeros on $\D$. We proceed using standard degree theory:
\[
 \begin{aligned}
  &= \lim_{n\to\infty} \wind(t\mapsto \pi\cdot\partial_r u_n(e^{i2\pi t}), t\mapsto Z(u_n(e^{i2\pi t}))) \\
  &= \lim_{n\to\infty} \wind(t\mapsto \pi\cdot\partial_r u_n(e^{i2\pi t}), t\mapsto \pi\cdot\partial_x u_n(e^{i2\pi t})) \\
  &+ \lim_{n\to\infty} \wind(t\mapsto \pi\cdot\partial_x u_n(e^{i2\pi t}), t\mapsto Z(u_n(e^{i2\pi t}))) \\
  &= 1 + 0 = 1.
 \end{aligned}
\]
Here $x+iy$ are standard complex coordinates in $\D$. The proof is complete in view of the definition of $\wind_\infty$, since $R$ can be taken arbitrarily large.
\end{proof}

The map $\vtil$ is the $C^\infty_{loc}(\C\setminus\Gamma_1,\R\times S^3)$-limit of the sequence $\vtil_n$ satisfying all the requirements of Proposition~\ref{compactness_theorem_1}. Assume $\Gamma_1 \not= \emptyset$. In view of lemmas~\ref{nonzerodlmabda} and~\ref{finalwindinfty} we can apply Proposition~\ref{compactness_theorem_1} and obtain a periodic orbit $P_0$ satisfying $\mu_{CZ}(P_0)=2$ not linked to $\bar P$. This contradiction shows that $\Gamma_1 = \emptyset$, that is, $\vtil$ is a finite-energy plane asymptotic to $\bar P$. Furthermore, it is a fast plane since by Lemma~\ref{finalwindinfty} we know $\wind_\infty(\vtil) = \wind_\infty(\vtil,\infty) = 1$. The identity
\[
 \wind_\pi(\vtil) = \wind_\infty(\vtil) - 1 = 0
\]
shows that $v : \C \to S^3$ is an immersion transversal to the Reeb vector. In particular $\vtil$ is an immersion.

We assert that $\vtil$ is an embedding and argue exactly as in section~\ref{comp_fast_planes}. Let $\Delta \subset \C \times \C$ be the diagonal and consider
\[
 D := \{ (z_1,z_2) \in \C \times \C \setminus \Delta : \vtil(z_1) = \vtil(z_2) \}.
\]
If $D$ has a limit point in $\C \times \C \setminus \Delta$ then we find, using Carleman's Similarity Principle as in~\cite{props2}, a polynomial $p : \C \to \C$ of degree at least $2$ and a $\jtil$-holomorphic map $f : \C \to \R \times S^3$ such that $\vtil = f\circ p$. This forces zeros of $d\vtil$, a contradiction. This proves that $D$ consists only of isolated points in $\C \times \C \setminus \Delta$. If $D \not= \emptyset$ then, using positivity and stability of self-intersections of pseudo-holomorphic immersions, we obtain self-intersections of the maps $\vtil_n$. However we know that each $\vtil_n$ is an embedding since so are the $\util_n$. This shows $D=\emptyset$ and that $\vtil$ is an embedding. We proved

\begin{prop}\label{existence_plane}
Assume $\bar P$ satisfies all the assumptions listed in Proposition~\ref{sufficiency}. Then there exists an embedded fast finite-energy plane asymptotic to $\bar P$.
\end{prop}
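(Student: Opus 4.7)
The plan is to extract the desired fast plane as a limit of soft rescalings of the Bishop family near the bubbling-off point identified in Proposition~\ref{disks_behavior}. More precisely, I would start with a sequence $\util_n = (a_n,u_n) \in \Pi^{-1}(\Y)$ converging to the trivial cylinder $F_{\bar P}$ on $\D\setminus\{0\}$, which forces $0$ to be a bubbling-off point of total mass $m(0)=\bar T$. Choose $\epsilon>0$ with $m_\epsilon(0)-m(0)\leq\sigma(C)/2$, pick $z_n$ achieving the minimum of $a_n$ on $\D$, and choose $\delta_n>0$ so that $\int_{B_\epsilon(0)\setminus B_{\delta_n}(0)} u_n^*d\lambda=\sigma(C)$; then $z_n\to0$ and $\delta_n\to0$. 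Rescale to $\vtil_n(z)=(b_n(z),v_n(z))$ with $b_n(z)=a_n(z_n+\delta_nz)-a_n(z_n+2\delta_n)$ and $v_n(z)=u_n(z_n+\delta_n z)$, defined on expanding balls $B_{R_n}(0)$, with $R_n\to\infty$ and $\delta_nR_n\to0$. Extract a bubbling set $\Gamma_1$ and a non-constant limit $\vtil=(b,v):\C\setminus\Gamma_1\to\R\times S^3$. By the mass control, $(R_n,\vtil_n,\Gamma_1,\vtil)$ is a germinating sequence; the point $0$ sits in $\Gamma_1$ if this set is non-empty, $\infty$ is a positive puncture, and the argument based on Theorem~\ref{small_energy} used throughout Section~\ref{boff_analysis} shows $\vtil$ is asymptotic to $\bar P$ at $\infty$.

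Next I would prove $\int v^*d\lambda>0$. Otherwise, Theorem~\ref{zero_dlmabda_FES} would write $\vtil=F_{\bar P}\circ p$ for some polynomial $p$, and since $\bar P$ is simply covered, $p$ is linear with its unique zero at $0$; a direct mass computation using $m(0)=\bar T$ and $m_\epsilon(0)-m(0)\leq\sigma(C)/2$ would then give the contradiction $\bar T\leq\bar T-\sigma(C)/2$. Then I would establish the crucial winding estimate $\wind_\infty(\vtil,\infty)=1$. This is the main obstacle: it requires first showing that $\pi\cdot du_n$ has no zeros on $\overline\D$ for large $n$. The boundary argument uses the properties of the neighborhood $\OO\subset\DD_1$ from Proposition~\ref{convenientdisk}, together with the strong maximum principle $\lambda\cdot\partial_\theta u_n>0$ on $\partial\D$, to first show $\pi\cdot du_n$ does not vanish on $\partial\D$, and then that $\pi\cdot\partial_\theta u_n$ and the characteristic vector field $V\circ u_n$ are pointwise linearly independent. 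Combined with the fact that $V$ has a single positive elliptic zero at $e$, which gives winding $1$ relative to the global frame $Z$, a standard degree computation shows
\[
\wind(t\mapsto\pi\cdot\partial_x u_n(e^{i2\pi t}),t\mapsto Z(u_n(e^{i2\pi t})))=0,
\]
so $\pi\cdot du_n$ has no interior zeros either. A limit-and-rescale winding computation then yields $\wind_\infty(\vtil,\infty)=1$.

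With these two facts in hand, Proposition~\ref{compactness_theorem_1}(i) becomes applicable to the germinating sequence $\vtil_n$. Were $\Gamma_1\neq\emptyset$, it would produce a periodic orbit $P_0$ with $\mu_{CZ}(P_0)=2$ whose image is disjoint from $\bar x(\R)$; since $\bar P$ is simply covered with $\mu_{CZ}(\bar P)\geq 3$, the stronger conclusion of Proposition~\ref{compactness_theorem_1}(3) applies and $P_0$ is not linked to $\bar P$, contradicting the hypothesis of Proposition~\ref{sufficiency}. Therefore $\Gamma_1=\emptyset$ and $\vtil$ is a finite-energy plane asymptotic to $\bar P$.

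Finally I would verify that $\vtil$ is a fast embedding. From $\wind_\infty(\vtil)=1$ and Theorem~\ref{winds_thm} we obtain $\wind_\pi(\vtil)=0$, so $v:\C\to S^3$ is an immersion transverse to the Reeb vector; in particular $\vtil$ is an immersion and fast. For embeddedness, the standard Carleman Similarity Principle argument rules out limit points of the self-intersection set in $\C\times\C\setminus\Delta$: any such limit point would factor $\vtil$ through a polynomial of degree $\geq 2$, forcing critical points that contradict the immersion property. Isolated self-intersections would then persist in the approximating sequence $\vtil_n$ by positivity and stability of intersections of pseudo-holomorphic immersions, contradicting the embeddedness of each $\util_n$. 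Hence the self-intersection set is empty, and $\vtil$ is the desired embedded fast finite-energy plane asymptotic to $\bar P$.
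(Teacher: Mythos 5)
Your proposal follows essentially the same route as the paper's proof: soft rescaling of the Bishop family near the bubbling point from Proposition~\ref{disks_behavior}, the mass-contradiction argument ruling out $\pi\cdot dv\equiv 0$, the key $\wind_\infty(\vtil,\infty)=1$ estimate via the non-vanishing of $\pi\cdot du_n$ (boundary and degree argument), the application of Proposition~\ref{compactness_theorem_1}(i) together with the linking hypothesis to force $\Gamma_1=\emptyset$, and the Carleman plus positivity-of-intersections argument for embeddedness. All the key lemmas and the order of application match the paper's Section~\ref{existence_special_plane_section}, so the proposal is correct.
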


\subsection{Open book decompositions}\label{ob_decomp}

The following is Theorem 2.5 from~\cite{hryn1}.

\begin{theo}\label{ob2}
Let $\lambda$ be a non-degenerate contact form on the closed $3$-manifold $M$ and let $\xi=\ker\lambda$ be the associated contact structure. Assume $c_1(\xi)$ vanishes. Let $\jtil$ be the almost complex structure on $\R\times M$ defined by (\ref{jtil}) for some $J\in\J(\xi,d\lambda)$. Suppose there exists an embedded fast $\jtil$-holomorphic finite-energy plane $\util_0$ asymptotic to $P=(x,T)$ with $\mu(\util_0) = k \geq 3$. We also suppose that the set of planes $\Lambda(H,P,\lambda,J)$ is $C^\infty_{loc}$-compact for every compact subset $H \subset \R\times M$ satisfying $H \cap (\R\times x(\R)) = \emptyset$. Then for every $l\geq1$ there exists a $C^l$ map $\util = (a,u) : S^1 \times \C \rightarrow \R \times M$ with the following properties.
\begin{enumerate}
 \item $\util(\vartheta,\cdot)$ is an embedded fast finite-energy plane asymptotic to $P$ at the (positive) puncture $\infty$ satisfying $\mu(\util(\vartheta,\cdot)) = k,\ \forall \vartheta \in S^1$.
 \item $u(\vartheta,\C) \cap x(\R) = \emptyset \ \forall \vartheta\in S^1$ and the map $u : S^1 \times \C \rightarrow M \setminus x(\R)$ is an orientation preserving $C^l$-diffeomorphism.
 \item Each $\cl{u(\vartheta,\C)}$ is a smooth global surface of section for the Reeb dynamics. Moreover, the first return map to each $u(\vartheta,\C)$ preserves an area-form of total area equal to $T$.
\end{enumerate}
\end{theo}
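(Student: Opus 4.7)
The plan is to construct a compact one-parameter family of embedded fast planes asymptotic to $P$, diffeomorphic to $S^1$, and show that their projections to $M$ tile $M\setminus x(\R)$ into the pages of the desired open book. I would organize the argument in three steps.

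First I would set up the moduli space
\[
 \M = \left\{ \util = (a,u) \in \Lambda(H_0,P,\lambda,J) : \mu(\util)=k \right\},
\]
where $H_0 \subset \R\times M$ is a fixed compact set disjoint from $\R \times x(\R)$, chosen to kill the $\R$-translation freedom (for instance, enforcing $a(0)=0$). The normalization $\int_{\C\setminus\D} u^*d\lambda = \sigma(T)$ built into $\Lambda$ kills the remaining $z\mapsto Az+B$ freedom up to a finite quotient. By Theorem \ref{lemmafredholm}, each $\util\in\M$ has a neighborhood modeled on an open ball in $\R^2$; one of the two Fredholm parameters is absorbed by the $\R$-translation, so after the normalization $\M$ inherits the structure of a one-dimensional $C^l$-manifold without boundary. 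The compactness hypothesis on $\Lambda(H_0,P,\lambda,J)$, combined with Theorem \ref{lemmafredholm} (which guarantees $u(\C)\cap x(\R)=\emptyset$ for every $\util\in\M$) and Lemma \ref{useful_ends} (which controls the ends), shows $\M$ is compact, hence a disjoint union of circles. Let $\Sigma\subset\M$ be the component containing $\util_0$; a smooth parametrization $\vartheta\in S^1\mapsto\util(\vartheta,\cdot)$ of $\Sigma$, obtained by patching the local charts provided by Theorem \ref{lemmafredholm}, yields a $C^l$ map $\util=(a,u):S^1\times\C\to\R\times M$ satisfying (1).

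Second I would verify (2). Disjointness from $x(\R)$ is built in. To see that $u$ is a local diffeomorphism, observe that each slice $u(\vartheta,\cdot)$ is an immersion transverse to the Reeb vector since $\wind_\pi(\util(\vartheta,\cdot))=0$; moreover, the infinitesimal generator of $\partial_\vartheta$, which is a section of $\util(\vartheta,\cdot)^*T(\R\times M)$ lying in the kernel of the normal linearized Cauchy-Riemann operator and transverse to the $\R$-translation direction, is a nowhere-vanishing section by a Carleman similarity plus winding argument that crucially uses $\mu\geq 3$. Hence $\partial_\vartheta u$ is nowhere tangent to $Tu(\vartheta,\C)$, making $u$ a submersion and thus a local diffeomorphism onto $M\setminus x(\R)$. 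Injectivity is a positivity-of-intersections argument: a coincidence $u(\vartheta_1,z_1)=u(\vartheta_2,z_2)$ with $\vartheta_1\neq\vartheta_2$ would lift, after a suitable $\R$-translate, to an isolated positive intersection of two holomorphic planes in $\R\times M$ asymptotic to the simply covered orbit $P$; however the algebraic intersection number of two such asymptotically cylindrical curves is constant along the one-parameter family in $\Sigma$ and must equal zero (its value on coincident planes), yielding a contradiction. Surjectivity follows from the classical open-and-closed argument: $u(S^1\times\C)$ is open in $M\setminus x(\R)$ by the local diffeomorphism property, and the complement is closed, but by Lemma \ref{useful_ends} every point sufficiently close to $x(\R)$ is in the image, so the complement has no limit point on $x(\R)$ either, forcing it to be empty. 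Orientation preservation is a consequence of the pseudo-holomorphic structure: the complex orientation on each slice together with the fact that $\partial_\vartheta u$ is positively proportional to $R$ modulo the slice (after the $\R$-translation is quotiented out) yields the orientation $\lambda\wedge d\lambda$.

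Third, (3) follows readily from (2). Each $\cl{u(\vartheta,\C)}$ is a properly embedded disk with boundary $x(\R)$, transverse to $R$ on its interior, and since the pages foliate $M\setminus x(\R)$ the Reeb flow must cross each page infinitely often in both time directions. The first return map $\phi_\vartheta$ preserves $d\lambda|_{u(\vartheta,\C)}$ because the Reeb flow preserves $\lambda$, and by Stokes
\[
  \int_{u(\vartheta,\C)} d\lambda = \int_{\partial u(\vartheta,\C)} \lambda = T.
\]

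The main obstacle is the transversality-plus-injectivity step for the projected family $u$: one must upgrade the purely local Fredholm information of Theorem \ref{lemmafredholm} (which describes only a neighborhood of a single plane in $\R \times M$) to the global statement that the entire $S^1$-family of projections is a diffeomorphism of $S^1\times\C$ onto $M\setminus x(\R)$. This couples the non-vanishing of the infinitesimal $\partial_\vartheta$-deformation section (requiring careful winding-index bookkeeping and the assumption $\mu\geq 3$) with positivity of intersections for asymptotically cylindrical curves, and ultimately depends on the compactness hypothesis to prevent the family from breaking before it closes up into a circle.
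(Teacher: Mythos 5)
Note first that the paper does not prove Theorem~\ref{ob2}: it is quoted verbatim from~\cite{hryn1} (``the following is Theorem~2.5 from~\cite{hryn1}'') and used as a black box in Section~\ref{ob_decomp}, so there is no in-paper proof to compare against. I can only assess your reconstruction on its own merits. Your overall strategy --- a $1$-dimensional moduli space of embedded fast planes, compactness forcing it to be a circle, non-vanishing of the deformation section to get a local foliation, positivity of intersections for disjointness of pages, and an open-and-closed argument for surjectivity --- is the correct framework and matches the approach of~\cite{convex,fols,hryn1}. The Stokes computation for item~(3) and the use of Lemma~\ref{useful_ends} near $x(\R)$ are also right.

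Two steps have genuine gaps. First, the normalizations you impose do not actually cut the moduli space down to dimension $1$. The domain reparametrization group $\{z\mapsto Az+B\}$ has real dimension $4$; together with $\R$-translation in the target this gives five gauge directions. Fixing $\int_{\C\setminus\D}u^*d\lambda=\sigma(T)$ kills only $|A|$, and ``$a(0)=0$'' kills only the $\R$-shift, leaving three gauge degrees of freedom unaccounted for. By part~(3) of Theorem~\ref{lemmafredholm} the unparametrized moduli divided by reparametrizations and by $\R$-shift is indeed $1$-dimensional, but to realize it as a slice inside $\Lambda(H_0,P)$ you need more pointwise normalizations (e.g.\ pin down $\util(0)$ completely and a tangent direction at $0$) or work explicitly with the quotient. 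Second, and more seriously, the injectivity argument is wrong as written: ``the algebraic intersection number $\ldots$ must equal zero (its value on coincident planes)'' has no content, because the relative intersection number of asymptotically cylindrical curves does not extend continuously to the diagonal --- coincident curves have a divergent intersection count, not zero. The correct argument computes the Siefring-type intersection number directly from the asymptotic data: both planes are asymptotic to the same simply covered orbit $P$ with $\wind_\infty=1$, so the relative intersection number including the asymptotic contribution vanishes, and positivity of intersections then forces the $M$-projections to be disjoint. This is a computation from the asymptotics, not a limit argument, and it is the technical heart of the proof.
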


For the definition of $\Lambda(H,P,\lambda,J)$ we refer to Section~\ref{comp_fast_planes}. The integer $\mu(\util_0)$ is simply the Conley-Zehnder index of $P$ with respect to the capping disk induced by $u_0(\C)$. We apply the above theorem to the non-degenerate tight contact form $\lambda$ on $S^3$. Let $\bar P = (\bar x, \bar T)$ be a periodic Reeb orbit as in the statement of Proposition~\ref{sufficiency}. Proposition~\ref{existence_plane} gives us an embedded fast finite-energy plane asymptotic to $\bar P$. Theorem~\ref{comp_fast} shows that for every compact $H \subset \R\times S^3$ satisfying $H \cap (\R\times \bar x(\R)) = \emptyset$ the set $\Lambda(H,\bar P)$ is $C^\infty_{loc}(\C,\R\times S^3)$-compact. Then we can apply Theorem~\ref{ob2} and obtain the desired open book decomposition by embedded disks which are global sections for the Reeb dynamics. This completes the proof of Proposition~\ref{sufficiency} and of sufficiency in Theorem~\ref{main1}.

\section{Proof of Necessity in Theorem~\ref{main1}}\label{section_necessity}

Let $P = (x,T)$ be a periodic orbit of the Reeb flow associated to the non-degenerate tight contact form $\lambda$ on $S^3$, where $T>0$ is its minimal period. Assume $P$ is the binding of an open book decomposition with disk-like pages adapted to $\lambda$. Let $F$ be one of its pages, meaning that $F$ is an embedded disk satisfying $\partial F = P$, is transversal to the Reeb vector field $R$ at $\interior F$ and all orbits geometrically distinct to $P$ intersect $\interior F$ infinitely many times, both forward and backward in time. Since $F$ is an embedded disk, $P$ is unknotted.

Let $P' \subset S^3$ be a periodic orbit geometrically distinct to $P$. Its intersection number with $\interior F$ is non-zero, since it must intersect $\interior F$ and all intersections have the same sign by the transversality of $\interior F$ with $R$. This implies that $0\neq [P']\in H_1(S^3 \setminus P, \Z)$ and, therefore, $P'$ is linked to $P$. In particular, all orbits $P'$ satisfying $\mu_{CZ}(P')=2$ are linked to $P$.

We shall prove now that $\sl(P)=-1$. Consider the orientation on $S^3$ induced by $\lambda \wedge d\lambda > 0$. We orient $\partial F$ pointing in the same direction of $R$ and this induces an orientation $o$ on $F$. The orientation $o'$ on $\xi$ is given by $d\lambda|_{\xi}$. Recall the characteristic distribution $D_{\xi}=(TF \cap \xi)^{\bot}$ on $F$ defined in \eqref{char_dist}. Let $V$ be the vector field representing $D_{\xi}$ defined in \eqref{vector_V}, which is transversal to $\partial F$ pointing outside. Since $R$ is transversal to $\interior F$, all singularities have the same sign and they must be positive observing that $ \int_F d\lambda = \int_P \lambda >0$. It is well known that after a $C^{\infty}$ perturbation of $F$ away from a neighborhood of $\partial F$, $V$ becomes Morse-Smale, meaning that there are no saddle connections and all singular points are non-degenerate. This perturbation keeps the transversality of $R$  with $\interior F$. Let $Z_{\xi}$ and $Z_{TF}$ be non-zero sections of $\xi \to F$ and $TF \to F$, respectively. Since $V$ points outside, we have from standard degree theory
\begin{equation}
 \begin{aligned} \wind(V|_{\partial F},Z_{\xi}|_{\partial F}) & = \sum_{V(z)=0} \sign(DV(z):(T_zD,o_z) \to (\xi_z,o'_z))  \\ & =  \sum_{V(z)=0} \sign(DV(z) : (T_zD,o_z) \to (T_zD,o_z) ) \\ & = \wind(V|_{\partial F},Z_{TF} |_{\partial F}) = 1.  \end{aligned}
\end{equation}
Note that $DV(z)$ at a zero $z$ of $V$ does not depend on whether we consider $V$ either as a section of $\xi \to F$ or as a section of $TF \to F$. This explains the second identity above.

Pushing $\partial F=P$ slightly in the direction of $Z_{\xi}$ we find a new closed curve $P_Z$, which we can assume to be transversal to $\interior F$ and to $\xi$, with the orientation induced by $P$. The self-linking number $\sl(P)$ is defined to be the intersection number of $P_Z$ with $\interior F$. It follows that $\sl(P)=\wind(Z_{\xi}|_{\partial F},V|_{\partial F})= -\wind(V|_{\partial F},Z_{\xi}|_{\partial F}) =-1$.

Now we show that $\mu_{CZ}(P)\geq 3$. We can find neighborhoods $\U\subset S^3$ of $P$, $\V\subset S^1\times \R^2$ of $S^1 \times \{0\}$ and a diffeomorphism $\psi:\U \to \V$ which maps $F \cap \U$ onto the set $F_1:=\{y=0,x\geq 0\} \cap \V$. Here $(x,y)$ are the $\R^2$-coordinates and $z$ is the $S^1$-coordinate. Let $P_1:=S^1 \times \{0\}$. Orienting $\V$ by $dx \wedge dy \wedge dz$, we can assume that $\psi$ preserves orientation, $(\psi_* R)|_P=\frac{1}{T} \partial_z$ and $\xi_1|_{P_1}:=(\psi_* \xi)|_{P_1}=\text{span}\{\partial_x,\partial_y \}|_{P_1}$. With these choices, the orientation induced by $d\lambda|_{\interior F_1}$ is the one induced by $dz \wedge dx|_{\interior F_1}$ and, therefore, the Reeb vector field $\hat X:=\psi_* R$ satisfies \begin{equation}\label{eq.pos} \left. \left < \hat X,   \partial_y  \right > \right|_{\interior F_1} > 0. \end{equation} Since $\wind(Z_{\xi}|_{\partial F},V|_{\partial F})=\sl(P)=-1$, we have $\wind(\hat Z:= \psi_* Z_{\xi}|_P,\hat V:=\psi_* V|_P)=-1$, which implies \begin{equation}\label{eq.wind} \wind \left(\hat Z,\partial_x \right)=-1 \end{equation} as sections of $\xi_1|_{P_1}$.

Suppose $\mu_{CZ}(P)\leq 1$. Then, as explained in Section 2.2, we find from \eqref{eq.wind} a constant $c_0>0$ such that for every solution $y(t) = \rho(t)e^{\theta(t)i}$ of the linearized flow over $P_1$ with respect to the trivialization of $\xi_1|_{P_1}$ induced by $\{\partial_x, \partial_y \}|_{P_1}$, we have $\Delta(y,T):= \theta(T) -\theta(0) < -c_0$. This implies that for $t$ large $\Delta(y,t)< -2\pi$ and, therefore, orbits sufficiently near $P_1$ must wind around the $z$-axis by an angle of less than $-2\pi$, in contradiction with \eqref{eq.pos}. If $\mu_{CZ}(P)=2$, then $P$ is hyperbolic and has a $2$-dimensional stable manifold $W^s$. In these local coordinates, the tangent space of $W^s$ over $P_1$ is spanned by $\left\{ \partial_z, w \right\}|_{P_1}$, where $w$ is a smooth section of $\xi_1|_{P_1}$ corresponding to eigenvectors of $D\phi_T$ at every point of $P_1$. Since $\mu_{CZ}(P)=2$ we must have $\wind(w, \hat Z)=1$ and, therefore, \eqref{eq.wind} implies $\wind \left(w,\partial_x \right)=\wind(w,\hat Z) + \wind(\hat Z,\partial_x)=1-1=0$. It follows that an orbit $\gamma(t)$ on $W^s$ does not wind around the $z$-axis as $t \to +\infty$, contradicting \eqref{eq.pos} and the fact that $F$ is a global surface of section.  We conclude that $\mu_{CZ}(P)\geq 3$ and this finishes the proof  of necessity in Theorem \ref{main1}.


\begin{thebibliography}{cccc}
\bibitem{sftcomp}F. Bourgeois, Y. Eliashberg, H. Hofer, K. Wysocki and E. Zehnder. \textit{Compactness results in Symplectic Field Theory.} Geometry and Topology, Vol. 7 (2004), 799-888.
\bibitem{cm}K. Cieliebak and K. Mohnke. \emph{Compactness for punctured holomorphic curves.} Conference on Symplectic Topology. J. Symplectic Geom. {\bf 3} (2005), no. 4, 589--654.

\bibitem{filling}Y. Eliashberg. \textit{Filling by holomorphic discs and its applications, Geometry of low-dimensional manifolds.} 2 (Durham, 1989), London Math. Soc. Lecture Note Ser., vol. 151, Cambridge Univ. Press, Cambridge, (1990) 45-67.
\bibitem{sft}Y. Eliashberg, A. Givental and H. Hofer. \textit{Introduction to Symplectic Field Theory.} Geom. Funct. Anal., Special volume, Part II (2000), 560-673.
\bibitem{franks}J. Franks. \textit{Geodesics on $S^2$ and periodic points of annulus homeomorphisms.} Invent. Math. {\bf 108} (1992), 403-418.





\bibitem{gromov}M. Gromov. \textit{Pseudoholomorphic curves in symplectic manifolds.} Invent. Math. {\bf 82} (1985), 307-347.


\bibitem{93}H. Hofer. \textit{Pseudoholomorphic curves in symplectisations with application to the Weinstein conjecture in dimension three.} Invent. Math. {\bf 114} (1993), 515-563.


\bibitem{props1}H. Hofer, K. Wysocki and E. Zehnder. \textit{Properties of pseudoholomorphic curves in symplectisations I: Asymptotics.} Ann. Inst. H. Poincar\'e Anal. Non Lin\'eaire {\bf 13} (1996), 337-379.
\bibitem{props2}H. Hofer, K. Wysocki and E. Zehnder. \textit{Properties of pseudoholomorphic curves in symplectisations II: Embedding controls and algebraic invariants.} Geom. Funct. Anal. {\bf 5} (1995), no. 2, 270-328.
\bibitem{props3}H. Hofer, K. Wysocki and E. Zehnder. \textit{Properties of pseudoholomorphic curves in symplectizations III: Fredholm theory.} Topics in nonlinear analysis, Birkh\"auser, Basel, (1999), 381-475.

\bibitem{char1}H. Hofer, K. Wysocki and E. Zehnder. \textit{A characterization of the tight three sphere.} Duke Math. J. {\bf 81} (1995), no. 1, 159-226.
\bibitem{errata}H. Hofer, K. Wysocki and E. Zehnder. \textit{Correction to ``A characterization of the tight three sphere''.} Duke Math. J. {\bf 89} (1997), no. 3, 603-617.
\bibitem{char2}H. Hofer, K. Wysocki and E. Zehnder. \textit{A characterization of the tight three sphere II.} Commun. Pure Appl. Math. {\bf 55} (1999), no. 9, 1139-1177.
\bibitem{convex}H. Hofer, K. Wysocki and E. Zehnder. \textit{The dynamics of strictly convex energy surfaces in $\R^4$.} Ann. of Math. {\bf 148} (1998), 197-289.
\bibitem{fols}H. Hofer, K. Wysocki and E. Zehnder. \textit{Finite energy foliations of tight three-spheres and Hamiltonian dynamics.} Ann. Math {\bf 157} (2003), 125-255.


\bibitem{long}H. Hofer, K. Wysocki and E. Zehnder. \textit{Finite energy cylinders with small area.} Ergodic Theory Dynam. Systems {\bf 22} (2002), no. 5, 1451-1486.
\bibitem{hryn1}U. Hryniewicz. \textit{Fast finite-energy planes in symplectizations and applications.} To appear in Trans. Amer. Math. Soc. (arXiv:0812.4076).



\bibitem{dusa}D. Mc Duff. \textit{The local behaviour of J-holomorphic curves in almost-complex 4-manifolds.} J. Differential Geom. {\bf 34} (1991), 143-164.
\bibitem{intro} D. McDuff and D. Salamon, {\it Introduction to Symplectic Topology}, Oxford Math. Monogr., The Clarendon Press, Oxford (1998).
\bibitem{mcdsal}D. McDuff  and D. Salamon. \textit{$J$-holomorphic curves and symplectic topology.} Amer. Math. Soc. Colloq. Publ. {\bf 52} (2004).
\bibitem{wendl}C. Wendl. \textit{Compactness for embedded pseudoholomorphic curves in 3-manifolds}. J. Eur. Math. Soc. (JEMS) {\bf 12} (2010), no. 2, 313-342.
\end{thebibliography}
\end{document}